\setlist[enumerate]{itemsep=0.15cm,label=\emph{\upshape(\alph*)}}
\newcolumntype{C}{>{$}c<{$}}
\let\emph\relax
\DeclareTextFontCommand{\emph}{\bfseries\em}
\DeclareMathAlphabet{\mathscrbf}{OMS}{mdugm}{b}{n}
\DeclareMathAlphabet{\mathpzc}{LS1}{stixscr}{m}{n}
\newcommand{\ie}{i.e.}
\newcommand{\eg}{e.g.}
\newcommand{\cf}{cf.}
\newcommand{\etc}{etc.}
\newcommand{\ver}{verbatim}
\newcommand{\muta}{mutatis mutandis}
\definecolor{mygray}{gray}{0.6}
\definecolor{mygraydark}{gray}{0.4}
\definecolor{mygraylight}{gray}{0.8}
\definecolor{specialgray}{RGB}{135,135,200}
\definecolor{cherry}{RGB}{222,49,99}
\definecolor{cream}{RGB}{255,253,208}
\definecolor{corn}{RGB}{251,236,93}
\definecolor{citron}{RGB}{190,180,90}
\definecolor{spinach}{RGB}{46,139,87}
\definecolor{tomato}{RGB}{255,99,71}
\definecolor{pumpkin}{RGB}{224,180,80}
\definecolor{orchid}{RGB}{143,40,194}
\definecolor{lava}{RGB}{207,16,32}
\definecolor{mydarkblue}{RGB}{10,10,150}
\definecolor{myorange}{RGB}{225,127,0}
\definecolor{mygreen}{RGB}{0,225,0}
\definecolor{mypurple}{RGB}{128,0,128}
\definecolor{myred}{RGB}{255,0,0}
\definecolor{myblue}{RGB}{0,0,195}
\definecolor{myyellow}{RGB}{210,210,0}
\tikzstyle{densely dotted}=[dash pattern=on \pgflinewidth off 0.5pt]
\tikzset{anchorbase/.style={baseline={([yshift=-0.5ex]current bounding box.center)}},
tinynodes/.style={font=\tiny,text height=0.25ex,text depth=0.05ex},
smallnodes/.style={font=\scriptsize,text height=0.75ex,text depth=0.15ex},
usual/.style={line width=0.9,color=black},
dusual/.style={line width=0.9,color=spinach,densely dashed},
pole/.style={line width=3.0,color=specialgray},
crossline/.style={preaction={draw=white,line width=5.0pt,-},preaction={draw=black,line width=0.9pt,-}},
crosspole/.style={preaction={draw=white,line width=6.0pt,-},preaction={draw=specialgray,line width=3.0pt,-}},
mor/.style={line width=0.75,color=black,fill=cream},
blob/.style={circle,fill,minimum size=5.0pt,inner sep=0pt,outer sep=0pt},
blobbed/.style n args={3}{decoration={markings,post length=0.5mm,pre length=0.5mm,
mark=at position #1 with {\node[blob,#3,label=left:$#2\!$]at (0,0){};}
},postaction={decorate}},
rblobbed/.style n args={3}{decoration={markings,post length=0.5mm,pre length=0.5mm,
mark=at position #1 with {\node[blob,#3,label=right:$\!#2$]at (0,0){};}
},postaction={decorate}},
}
\renewcommand{\dots}{\text{...}}
\renewcommand{\vdots}{\raisebox{-0.05cm}{\rotatebox{90}{\text{...}}}}
\renewcommand{\ddots}{\raisebox{0.175cm}{\rotatebox{-45}{\text{...}}}}
\newcommand{\placeholder}{{}_{-}}
\newcommand{\leftsquigarrow}{\,\reflectbox{$\rightsquigarrow$}\,}
\newcommand{\actsleft}{\mathop{\,\;\raisebox{1.7ex}{\rotatebox{-90}{$\circlearrowright$}}\;\,}}
\newcommand{\actsright}{\mathop{\,\;\raisebox{0ex}{\rotatebox{90}{$\circlearrowleft$}}\;\,}}
\newcommand{\setstuff}[1]{\mathrm{#1}}
\newcommand{\obstuff}[1]{\mathtt{#1}}
\newcommand{\morstuff}[1]{\mathrm{#1}}
\newcommand{\idmor}{\morstuff{id}}
\newcommand{\Hom}{\setstuff{Hom}}
\newcommand{\hcirc}{\otimes}
\newcommand{\Z}{\mathbb{Z}}
\newcommand{\KK}{\mathbbm{K}}
\newcommand{\KKL}{\mathbbm{L}}
\newcommand{\N}{\mathbb{N}}
\newcommand{\sand}[1][\lambda]{\mathbb{S}_{#1}}
\newcommand{\sandbasis}[1][\lambda]{\setstuff{B}_{#1}}
\newcommand{\bsym}[1]{\boldsymbol{#1}}
\newcommand{\varsym}[1]{\mathtt{#1}}
\newcommand{\zvar}{\varsym{z}}
\newcommand{\Zf}{\Z_{\zvar,Y}}
\newcommand{\jm}{L}
\newcommand{\jmc}{l}
\newcommand{\qvar}{\varsym{q}}
\newcommand{\cpar}{\bsym{c}}
\newcommand{\cvar}{\varsym{c}}
\newcommand{\bpar}{\bsym{b}}
\newcommand{\bvar}{\varsym{b}}
\newcommand{\dpar}{\bsym{d}}
\newcommand{\dvar}{\varsym{d}}
\newcommand{\bbvar}{\varsym{BN}}
\newcommand{\avar}{\varsym{a}}
\newtheorem{theoremm}{Theorem}[section]
\declaretheoremstyle[
headfont=\bfseries, 
notebraces={[}{]},
bodyfont=\normalfont\itshape,
headpunct={},
postheadspace=1em,
spacebelow=10pt,
spaceabove=10pt, 
]{ourtheo}
\declaretheoremstyle[
headfont=\normalfont\bfseries,
notefont=\mdseries,
notebraces={(}{)},
bodyfont=\normalfont\slshape,
headpunct={},
postheadspace=1em,
spacebelow=10pt,
spaceabove=10pt, 
]{ourdef}
\declaretheorem[style=ourtheo,name=Theorem,numberlike=theoremm]{theorem}
\declaretheorem[style=ourtheo,name=Lemma,numberlike=theoremm]{lemma}
\declaretheorem[style=ourtheo,name=Proposition,numberlike=theoremm]{proposition}
\declaretheorem[style=ourtheo,name=Lemma,qed=$\blacksquare$,numberlike=theoremm]{lemmaqed}
\declaretheorem[style=ourtheo,name=Corollary,qed=$\blacksquare$,numberlike=theoremm]{corollary}
\declaretheorem[style=ourdef,name=Definition,numberlike=theorem]{definition}
\declaretheorem[style=ourdef,name=Example,numberlike=theorem]{example}
\declaretheorem[style=ourdef,name=Remark,numberlike=theorem]{remark}
\declaretheorem[style=ourdef,name=Convention,numberlike=theorem]{convention}
\numberwithin{equation}{section}
\renewcommand{\theequation}{\thesection-\arabic{equation}}
\let\fullref\autoref
\def\makeautorefname#1#2{\expandafter\def\csname#1autorefname\endcsname{#2}}
\newcommand{\nnfootnote}[1]{%
\begin{NoHyper}
\renewcommand\thefootnote{}\footnote{#1}%
\addtocounter{footnote}{-1}%
\end{NoHyper}
}
\title[Handlebody diagram algebras]
{Handlebody diagram algebras}
\author[Daniel Tubbenhauer and Pedro Vaz]{Daniel Tubbenhauer and Pedro Vaz}
\address{D.T.: The University of Sydney,
School of Mathematics and Statistics,
F07 - Carslaw Building,
Office Carslaw 827,
NSW 2006, Australia, \href{www.dtubbenhauer.com}{www.dtubbenhauer.com}}
\email{daniel.tubbenhauer@sydney.edu.au}
\address{P.V.: Institut de recherche en math\'{e}matique et physique (IRMP), Universit\'{e} 
catholique de Louvain, Chemin du Cyclotron 2, building L7.01.02, 1348 Louvain-la-Neuve, Belgium,\newline \href{https://perso.uclouvain.be/pedro.vaz/}{https://perso.uclouvain.be/pedro.vaz/}}
\email{pedro.vaz@uclouvain.be}
\begin{document}

\begin{abstract}
In this paper we study handlebody versions 
of some classical diagram algebras, most prominently, 
handlebody versions of Temperley--Lieb, blob, 
Brauer, BMW, Hecke and Ariki--Koike algebras.
Moreover, motivated by Green--Kazhdan--Lusztig's 
theory of cells, we reformulate the notion of 
(sandwich, inflated or affine) cellular algebras.
We explain this reformulation
and how all of the above algebras 
are part of this theory.
\end{abstract}

\nnfootnote{\textit{Mathematics Subject Classification 2020.} Primary: 16G99; Secondary: 18M30, 20C08, 20F36, 57K14.}
\nnfootnote{\textit{Keywords.} Diagrammatic algebra, cellular algebras, braid groups, handlebodies.}

\renewcommand{\theequation}{\thesection-\arabic{equation}}

\addtocontents{toc}{\protect\setcounter{tocdepth}{1}}

\maketitle

\tableofcontents

\section{Introduction}\label{section:intro}

A large collection of diagram algebras, such as Temperley--Lieb 
or (type A) Hecke algebras, are interesting from at least two perspectives:
they are of fundamental importance in low-dimensional topology 
and they also have a rich representation theory.

Having an eye on the study of links in 
3-manifolds brings the topology of the ambient space into play.
In all the classical examples, like Temperley--Lieb or 
Hecke algebras, the ambient space is the 3-ball, and these algebras 
are related to spherical Coxeter groups and Artin--Tits braid groups.
In the simplest case beyond the 3-ball, 
when one passes to links in a solid torus, 
these diagram algebras get replaced by their (extended) affine versions, 
and the related objects are now affine Coxeter groups and Artin--Tits braid groups.
A natural question is what kind of diagrammatics and Coxeter combinatorics 
one could expect for more general 3-manifolds.

In this paper we consider (three-dimensional) handlebodies of genus $g$. 
The 3-ball and the solid torus correspond to 
$g=0$ (called classical in this paper) and $g=1$, respectively.
As we will see, the Temperley--Lieb and Hecke algebras, their 
affine versions as well 
as algebras along the same lines, 
can be seen as a low genus class
of a more general, higher genus, class of diagram algebras.

In case $g=0$ these diagram algebras and their associated 
braid groups are around for donkey's years. 
For $g=1$ there is a long history of work on this topic which 
goes back to at least Brieskorn \cite{Brieskorn}.
For example (with more references to come later on), see
\cite{allcock} for braid pictures, \cite{Geck-Lambropoulou} or \cite{OrRa-affine-braids} for connections 
to knot theory, \cite{Gr-gen-tl-algebra} 
or \cite{HaOl-cyclotomic-bmw} for affine diagram algebras.
To the best of our knowledge, the first attempts to give a
description of braids in handlebodies are due to Vershinin \cite{Ve-handlebodies}
and H{\"a}ring-Oldenburg--Lambropoulou \cite{La-handlebodies}, \cite{HaOlLa-handlebodies}, 
while Lambropoulou studied links in 3-manifolds even before that \cite{La-PhDThesis}.

From the representation theoretical point of view all these 
algebras share the common feature of being cellular in a certain way
that we will make precise. In a nutshell, in the $g=0$ case 
these algebras are often cellular in the sense of Graham--Lehrer \cite{GrLe-cellular}, 
and in the $g=1$ case they are often affine cellular in the sense of K{\"o}nig--Xi \cite{KoXi-affine-cellular}.

We see this paper as a continuation of these works, focusing on the diagrammatic, 
algebraic and representation theoretical aspects. That is, we generalize 
some diagram algebras to higher genus, and 
we show that they are sandwich cellular.
(Sandwich cellular is a notion that 
generalizes cellularity. Roughly speaking it means that the original 
algebra can be obtained by sandwiching smaller algebras.
One of the upshots of being sandwich cellular is that the 
classification of simple modules is reduced 
from the original algebra to the sandwiched algebras.)

\subsection{What this paper does}

Our starting point is a diagrammatic description of 
handlebody braid groups of genus $g$, {\ie} a diagrammatic 
description of the configuration space of a disk with 
$g$ punctures. The pictures hereby are {\eg}
\begin{gather*}
\scalebox{0.85}{$\text{A handlebody braid for $g=4$}\colon
\begin{tikzpicture}[anchorbase,scale=0.7,tinynodes]
\draw[pole,crosspole] (-0.5,0) to[out=90,in=270] (-0.5,1.5);
\draw[usual,crossline] (2,0) to[out=90,in=270] (-0.25,0.75);
\draw[pole,crosspole] (1,0) to[out=90,in=270] (1,1.5);
\draw[pole,crosspole] (0.5,0) to[out=90,in=270] (0.5,1.5);
\draw[usual,crossline] (1.5,0) to[out=90,in=270] (2,1.5);
\draw[usual,crossline] (2.5,0) to[out=90,in=270] (2.5,1.5);
\draw[pole,crosspole] (0,0) to[out=90,in=270] (0,1.5);
\draw[usual,crossline] (-0.25,0.75) to[out=90,in=270] (1.5,1.5);
\draw[pole,crosspole] (-0.5,1.5) to[out=90,in=270] (-0.5,2.25);
\draw[pole,crosspole] (0,1.5) to[out=90,in=270] (0,2.25);
\draw[pole,crosspole] (0.5,1.5) to[out=90,in=270] (0.5,2.25);
\draw[pole,crosspole] (1,1.5) to[out=90,in=270] (1,2.25);
\draw[usual,crossline] (1.5,1.5) to[out=90,in=270] (2,2.25);
\draw[usual,crossline] (2,1.5) to[out=90,in=270] (2.5,2.25);
\draw[usual,crossline] (2.5,1.5) to[out=90,in=270] (1.5,2.25);
\draw[very thick,specialgray,->] (1.5,-0.75) to[out=180,in=270] (-0.5,-0.1);
\draw[very thick,specialgray,->] (1.5,-0.75) to[out=180,in=270] (0,-0.1);
\draw[very thick,specialgray,->] (1.5,-0.75) to[out=180,in=270] (0.5,-0.1);
\draw[very thick,specialgray,->] (1.5,-0.75)node[right,black]{core strands} to[out=180,in=270] (1,-0.1);
\draw[very thick,black,->] (3,3) to[out=180,in=90] (1.5,2.35);
\draw[very thick,black,->] (3,3) to[out=180,in=90] (2,2.35);
\draw[very thick,black,->] (3,3) node[right,black]{usual strands} to[out=180,in=90] (2.5,2.35);
\end{tikzpicture}$}
.
\end{gather*}
This illustrates a handlebody braid of genus $4$: 
The three strands on the right are usual strands.
The four thick and blue/grayish strands on the left are core
strands and they correspond to the punctures 
of the disk respectively the cores of the handlebody. 
The point is that by an appropriate closure, {\ie} merging 
the core strands at infinity, illustrated by
\begin{gather*}
\scalebox{0.85}{$\text{An Alexander closure}\colon
\begin{tikzpicture}[anchorbase,scale=0.7,tinynodes]
\draw[pole,crosspole] (-0.5,0) to[out=90,in=270] (-0.5,1.5);
\draw[usual,crossline] (2,0) to[out=90,in=270] (-0.25,0.75);
\draw[pole,crosspole] (1,0) to[out=90,in=270] (1,1.5);
\draw[pole,crosspole] (0.5,0) to[out=90,in=270] (0.5,1.5);
\draw[usual,crossline] (1.5,0) to[out=90,in=270] (2,1.5);
\draw[usual,crossline] (2.5,0) to[out=90,in=270] (2.5,1.5);
\draw[pole,crosspole] (0,0) to[out=90,in=270] (0,1.5);
\draw[usual,crossline] (-0.25,0.75) to[out=90,in=270] (1.5,1.5);
\draw[pole,crosspole] (-0.5,1.5) to[out=90,in=270] (-0.5,2.25);
\draw[pole,crosspole] (0,1.5) to[out=90,in=270] (0,2.25);
\draw[pole,crosspole] (0.5,1.5) to[out=90,in=270] (0.5,2.25);
\draw[pole,crosspole] (1,1.5) to[out=90,in=270] (1,2.25);
\draw[usual,crossline] (1.5,1.5) to[out=90,in=270] (2,2.25);
\draw[usual,crossline] (2,1.5) to[out=90,in=270] (2.5,2.25);
\draw[usual,crossline] (2.5,1.5) to[out=90,in=270] (1.5,2.25);
\end{tikzpicture}
\rightsquigarrow
\begin{tikzpicture}[anchorbase,scale=0.7,tinynodes]
\draw[pole,crosspole] (-0.5,0) to[out=90,in=270] (-0.5,1.5);
\draw[usual,crossline] (2,0) to[out=90,in=270] (-0.25,0.75);
\draw[pole,crosspole] (1,0) to[out=90,in=270] (1,1.5);
\draw[pole,crosspole] (0.5,0) to[out=90,in=270] (0.5,1.5);
\draw[usual,crossline] (1.5,0) to[out=90,in=270] (2,1.5);
\draw[usual,crossline] (2.5,0) to[out=90,in=270] (2.5,1.5);
\draw[pole,crosspole] (0,0) to[out=90,in=270] (0,1.5);
\draw[usual,crossline] (-0.25,0.75) to[out=90,in=270] (1.5,1.5);
\draw[pole,crosspole] (-0.5,1.5) to[out=90,in=270] (-0.5,2.25);
\draw[pole,crosspole] (0,1.5) to[out=90,in=270] (0,2.25);
\draw[pole,crosspole] (0.5,1.5) to[out=90,in=270] (0.5,2.25);
\draw[pole,crosspole] (1,1.5) to[out=90,in=270] (1,2.25);
\draw[usual,crossline] (1.5,1.5) to[out=90,in=270] (2,2.25);
\draw[usual,crossline] (2,1.5) to[out=90,in=270] (2.5,2.25);
\draw[usual,crossline] (2.5,1.5) to[out=90,in=270] (1.5,2.25);
\draw[pole] (-0.5,0) to[out=270,in=180] (0.25,-0.75);
\draw[pole] (0,0) to[out=270,in=180] (0.25,-0.75);
\draw[pole] (0.5,0) to[out=270,in=0] (0.25,-0.75);
\draw[pole] (1,0) to[out=270,in=0] (0.25,-0.75);
\draw[pole] (-0.5,2.25) to[out=90,in=180] (0.25,3);
\draw[pole] (0,2.25) to[out=90,in=180] (0.25,3);
\draw[pole] (0.5,2.25) to[out=90,in=0] (0.25,3);
\draw[pole] (1,2.25) to[out=90,in=0] (0.25,3);
\draw node[pole] at (0.25,2.8) {{\color{specialgray}\LARGE$\bullet$}};
\draw node[pole,above] at (0.25,2.95) {\color{specialgray}$\infty$};
\draw node[pole] at (0.25,-0.9) {{\color{specialgray}\LARGE$\bullet$}};
\draw node[pole,below] at (0.25,-0.8) {\color{specialgray}$\infty$};
\draw[usual] (1.5,0) to[out=270,in=180] (2.75,-1) 
to[out=0,in=270] (4,0) to (4,2.25) to[out=90,in=0] (2.75,3.25) to[out=180,in=90] (1.5,2.25);
\draw[usual] (2,0) to[out=270,in=180] (2.75,-0.75) 
to[out=0,in=270] (3.5,0) to (3.5,2.25) to[out=90,in=0] (2.75,3) to[out=180,in=90] (2,2.25);
\draw[usual] (2.5,0) to[out=270,in=180] (2.75,-0.5) 
to[out=0,in=270] (3,0) to (3,2.25) to[out=90,in=0] (2.75,2.75) to[out=180,in=90] (2.5,2.25);
\end{tikzpicture}$}
\,,
\end{gather*}
the core strands correspond to cores of a 
handlebody as explained in {\eg} 
\cite[Section 2]{RoTu-homflypt-typea}, hence the name. All links in such handlebodies can 
be obtained by this closing procedure, and there is also an associated 
Markov theorem. In other words, the handlebody braid group gives an 
algebraic way to study links in handlebodies. The main references 
here are \cite{Ve-handlebodies} and \cite{HaOlLa-handlebodies}.
After explaining this setup more carefully, building upon the aforementioned 
works, in \fullref{section:braids} 
we also study an associated handlebody Coxeter group for which we find a basis 
using versions of Jucys--Murphy elements.

These handlebody braid pictures are also our starting point to define and study 
various diagram algebras associated to handlebodies:

\begin{enumerate}

\item In \fullref{section:tlblob} we study handlebody Temperley--Lieb and blob algebras. The pictures to keep in mind are crossingless matchings 
and core strands
(left, Temper\-ley--Lieb)
respectively crossingless matchings decorated with colored blobs
(ri\-ght, blob):
\begin{gather*}
\text{A Temperley--Lieb picture}\colon
\begin{tikzpicture}[anchorbase,scale=0.7,tinynodes]
\draw[pole,crosspole] (-0.5,0) to[out=90,in=270] (-0.5,1.5);
\draw[usual,crossline] (1,0) to[out=90,in=270] (-0.25,0.5);
\draw[pole,crosspole] (0,0) to[out=90,in=270] (0,1.5);
\draw[pole,crosspole] (0.5,0) to[out=90,in=270] (0.5,1.5);
\draw[usual,crossline] (-0.25,0.5) to[out=90,in=180] (0.25,0.8)
to[out=0,in=180] (0.75,0.8) to[out=0,in=90] (1.5,0);
\draw[usual,crossline] (1,1.5) to[out=270,in=180] (1.25,1.25) 
to[out=0,in=270] (1.5,1.5);
\end{tikzpicture}
,\quad
\text{A blob picture}\colon
\begin{tikzpicture}[anchorbase,scale=0.7,tinynodes]
\draw[usual] (1,1.5) to[out=270,in=180] (1.25,1.25) to[out=0,in=270] (1.5,1.5);
\draw[usual,blobbed={0.25}{u}{spinach}] (2,1.5) 
to[out=270,in=180] (2.25,1.25) to[out=0,in=270] (2.5,1.5);
\draw[usual,blobbed={0.25}{u}{spinach}] (3,1.5) 
to[out=270,in=180] (3.25,1.25) to[out=0,in=270] (3.5,1.5);
\draw[usual,rblobbed={0.33}{w}{orchid},rblobbed={0.66}{v}{tomato}] 
(3,0) to[out=90,in=270] (4,1.5);
\draw[usual,blobbed={0.1}{v}{tomato},blobbed={0.3}{u}{spinach}] (1,0) 
to[out=90,in=180] (1.75,0.75) to[out=0,in=90] (2.5,0);
\draw[usual] (1.5,0) to[out=90,in=180] (1.75,0.25) to[out=0,in=90] (2,0);
\draw[usual] (3.5,0) to[out=90,in=180] (3.75,0.25) to[out=0,in=90] (4,0);
\end{tikzpicture}
.
\end{gather*}
Note that the blobs illustrated in the right picture come in colors, 
corresponding to the various cores strands. These algebras 
generalize Temperley--Lieb algebras and blob algebras: 
If $g=0$, then these two algebras are the same as the classical 
Temperley--Lieb algebra.
For $g\geq 1$ they are not the same anymore (at least 
in our formulation), and have a long history of study
starting with {\eg} \cite{MaSa-blob}.

\item In \fullref{section:brauer} we move on to handlebody versions 
of Brauer and BMW algebras. These are tangle algebras with core strands
and the picture is
\begin{gather*}
\text{A BMW picture}\colon
\begin{tikzpicture}[anchorbase,scale=0.7,tinynodes]
\draw[usual,crossline] (1.5,0) to[out=90,in=270] (1,1.5);
\draw[usual,crossline] (1,0) to[out=90,in=270] (-0.25,0.75);
\draw[pole,crosspole] (0,0) to[out=90,in=270] (0,1.5);
\draw[pole,crosspole] (0.5,0) to[out=90,in=270] (0.5,1.5);
\draw[usual,crossline] (-0.25,0.75) to[out=90,in=270] (1.5,1.5);
\draw[usual,crossline] (1,1.5) to[out=90,in=270] (-0.25,2.25);
\draw[pole,crosspole] (0,1.5) to[out=90,in=270] (0,3);
\draw[pole,crosspole] (0.5,1.5) to[out=90,in=270] (0.5,3);
\draw[usual,crossline] (-0.25,2.25) to[out=90,in=90] (1.5,1.5);
\end{tikzpicture}
.
\end{gather*}
As before, the case $g=0$ is classical and goes back to Brauer, while 
$g=1$ appears in \cite{HaOl-actions-tensor-categories}.
In the same section we also study their cyclotomic quotients.

\item Finally, \fullref{section:hecke} studies handlebody versions of Hecke and Ariki--Koike algebras.
The picture for handlebody Hecke algebras is the same as for handlebody braid groups, 
while we choose to illustrate handlebody versions of Ariki--Koike algebras using blobs, {\eg}
\begin{gather*}
\text{An Ariki--Koike picture}\colon
\begin{tikzpicture}[anchorbase,scale=0.7,tinynodes]
\draw[usual] (2,1) to[out=90,in=270] (2,1.5);
\draw[usual,blobbed={0.58}{u}{spinach},crossline] (2.5,0) 
to (2.5,0.5) to[out=90,in=270] (1.5,1) to[out=90,in=270] (2.5,1.5);
\draw[usual,blobbed={0.4}{v}{tomato},crossline] (2,0) to[out=90,in=270] (2,1);
\end{tikzpicture}
.
\end{gather*}
The cases $g=0$ and $g=1$ are, of course, well-studied and they correspond to 
Hecke respectively extended affine Hecke algebras or cyclotomic quotients.
We learned about the $g>1$ case from \cite{La-handlebodies}
and \cite{Ba-braid-handlebodies}.

\end{enumerate}

We also study a generalization of cellularity in 
\fullref{section:cells}, giving us a toolkit to parameterize the simples 
modules of the aforementioned algebras.
Note hereby that this generalization heavily builds on and borrows from \cite{Gr-semigroups}, \cite{KoXi-cellular-inflation-morita}, \cite{GuWi-almost-cellular} 
or \cite{EhTu-relcell}. 
Although it might be known to experts,
our exposition is new. 

\subsection{Speculations}

Let us mention a number of possible future directions.

\begin{enumerate}[label=$\bullet$]

\item \textbf{Quantum topology.} 
A manifest direction which we do not explore in 
this work would be to study these algebras 
in connections to quantum topology and its ramifications.

For example, for $g=1$ \cite{Geck-Lambropoulou} and \cite{OrRa-affine-braids}
construct link invariants 
from Markov traces, and
these link invariants admit categorifications \cite{WeWi}. 
For $g>1$ \cite{RoTu-homflypt-typea} takes a few first steps towards 
categorical handlebody link invariants, 
but this direction appears to be widely open otherwise.

Moreover, for $g=0$ most of these diagram algebras are related to 
representation theory by some form of Schur--Weyl duality. 
(In fact, this was the reason to define {\eg} the Temperley--Lieb 
algebras to begin with, see \cite{RuTeWe}.) 
Some work for higher genus on this representation theoretical 
aspect is done, 
{\eg} in relation to Verma modules \cite{IoLeZh-verma-schur-weyl}, 
\cite{LV}, \cite{DaRa-two-boundary-hecke} or complex reflection groups \cite{MaSt}, \cite{SaSh}. Following this track for higher genus
seems to be a worthwhile goal.

\item \textbf{Diagram algebras.} There are plenty 
of diagram algebras that we do not considered in this 
paper, but for which (some version of) our discussion goes through.

Examples of such algebras that come to mind that appear in classical 
literature are partition algebras \cite{Martin},
rook monoid algebras \cite{Solomon}, walled 
Brauer algebras \cite{Koike} and alike. 
Other examples are related to knot theory and categorification 
such as (type A) webs appearing in a version 
of Schur--Weyl duality \cite{CKM}, \cite{RoTu-symmetric-howe}, 
\cite{QS}, 
\cite{TVW}.

Diagrammatic algebras are also important in categorification. 
After the introduction of the diagrammatic version of the 
KLR algebra \cite{KL1} 
(see also \cite{Rouquier}) 
they have become quite popular, and might admit handlebody extensions.
For example, alongside with KLR algebras 
Webster's tensor product algebras \cite{webster}, 
algebras related to Verma categorifications 
\cite{naissevaz2}, \cite{naissevaz3}, \cite{maksimau-vaz}, \cite{lacabanne-naisse-vaz}, Soergel diagrammatics \cite{EW}, 
potentially admit handlebody versions, just to name a few.

We also expect these handlebody diagram 
algebras to have ``nice'' sandwich cellular bases.

\item \textbf{Categories instead of algebras.}
All of our algebras and concepts under study also have appropriate 
categorical versions.

For example, it should be fairly straightforward 
to generalize our discussion of \fullref{section:cells} 
to cellular categories \cite{We-tensors-cellular-categories},
\cite{ElLa-trace-hecke}.
However, let us mention that a reason why we have not 
touched upon categorical versions of our handlebody diagram 
algebras is that these do not form monoidal categories 
for $g>0$ (at least not in any reasonable sense as far as we are aware), but 
rather module categories. We think this deserves a thorough 
treatment, following for example \cite{HaOl-actions-tensor-categories} 
or \cite{SaTu}.

\end{enumerate}

\subsection{How to read this paper}

\fullref{section:cells} explains our generalization 
of cellularity and is independent of the rest. It can be easily skipped 
during a first reading.
\fullref{section:braids}
treats handlebody braid and Coxeter groups and
is fundamental for 
all sections following it. Moreover, to avoid 
too much repetition, we decided to construct the remaining sections 
assuming the reader knows \fullref{section:tlblob}, in which we
define handlebody Temperley--Lieb and blob algebras. So 
this section is mandatory if one wants to read either \fullref{section:brauer}, the BMW part, 
or \fullref{section:hecke}, the Hecke part.

\medskip

\noindent\textbf{Acknowledgments.}
We thank a referee, Abel Lacabanne, David Rose, Catharina Stroppel and Arik Wilbert for comments on this paper and discussions related 
to the diagrammatics of handlebodies.

D.T. does not deserve to be supported, but was still supported
by the Hausdorff Research Institute for Mathematics (HIM) 
during the Junior Trimester Program New Trends in Representation Theory.
Part of this paper were written during that program, which 
is gratefully acknowledged.
P.V. was supported by the Fonds de la Recherche
Scientifique - FNRS under Grant no. MIS-F.4536.19.

\section{A generalization of cellularity}\label{section:cells}

Let $\KK$ be a unital, commutative, Noetherian domain, {\eg} 
the integers or a field, or polynomial rings over these. Everything 
in this paper is linear over $\KK$. 
In particular, algebras are $\KK$-algebras.

\subsection{Inflation by algebras}\label{subsection:groups-inflation}

We start by defining sandwich cellular algebras.

\begin{remark}\label{remark:green}
Our discussion below is motivated by Green's 
theory of cells \cite{Gr-semigroups},
often called Green's relations, 
and the Clifford--Munn--Ponizovski\u{\i} theorem 
(see {\eg} \cite{GaMaSt-irreps-semigroups} 
for a modern formulation). 
In fact, we have borrowed 
part of the terminology from the literature on semigroups.
\end{remark}

The following generalization of the notion 
of cellularity from \cite{GrLe-cellular} is well-known 
to experts, see {\eg} \cite{KoXi-cellular-inflation-morita}
or \cite{GuWi-almost-cellular} for
basis-free formulations.
Nevertheless, we will state this generalization and some consequences 
of it.

\begin{definition}\label{definition:cellular}
A \emph{sandwich cellular algebra} over 
$\KK$ is an associative, unital algebra 
$\setstuff{A}$ together with a \emph{sandwich cellular datum}, that is:
\begin{enumerate}[label=$\bullet$]

\item A partial ordered set $\Lambda=\text{(}\Lambda,\leq_{\Lambda}\text{)}$ (we also write $<_{\Lambda}$ {\etc} having the usual meaning);

\item finite sets $M_{\lambda}$ (bottom) and $N_{\lambda}$ (top) for all $\lambda\in\Lambda$;

\item an algebra $\sand$ and a fixed basis $\sandbasis$ of it for all $\lambda\in\Lambda$;

\item a $\KK$-basis $\{c_{D,b,U}^{\lambda}\mid\lambda\in\Lambda,D\in M_{\lambda},U\in N_{\lambda},
b\in\sandbasis\}$ of $\setstuff{A}$;

\end{enumerate}
such that we have

\begin{enumerate}

\item For all $x\in\setstuff{A}$ there exist scalars 
$r(S,D)\in R$ that do not depend
on $U$ or on $b$, such that
\begin{gather}\label{eq:cell-mult}
xc_{D,b,U}^{\lambda}\equiv
\sum_{S\in M_{\lambda},a\in\sandbasis}r(S,D)\cdot c_{S,a,U}^{\lambda}\pmod{\setstuff{A}^{<_{\Lambda}\lambda}},
\end{gather}
where $\setstuff{A}^{<_{\Lambda}\lambda}$ is the $\KK$-submodule of $\setstuff{A}$ spanned by the set
$\{c_{D,b,U}^{\mu}|\mu\in\Lambda,\mu<_{\Lambda}\lambda,D\in M_{\mu},U\in N_{\mu},b\in\sandbasis[\mu]\}$. We also have a similar condition for right 
multiplication.

\item Let $\setstuff{A}(\lambda)=\setstuff{A}^{\leq_{\Lambda}\lambda}/\setstuff{A}^{<_{\Lambda}\lambda}$, where 
$\setstuff{A}^{\leq_{\Lambda}\lambda}$ is the $\KK$-submodule of $\setstuff{A}$ spanned 
by the set $\{c_{D,b,U}^{\mu} |\mu\in\Lambda,\mu\leq_{\Lambda}\lambda,D\in M_{\mu},U\in N_{\mu},b\in\sandbasis[\mu]\}$. 
Then $\setstuff{A}(\lambda)$ is isomorphic to 
$\Delta(\lambda)\hcirc_{\sand[\lambda]}(\lambda)\Delta$ for free graded right and left 
$\sand[\lambda]$-modules $\Delta(\lambda)$ and $(\lambda)\Delta$, respectively.

\end{enumerate}

The set $\{c_{D,b,U}^{\lambda}\mid\lambda\in\Lambda,D\in M_{\lambda},U\in
N_{\lambda},b\in\sandbasis\}$
is called a \emph{(sandwich) cellular basis}.
\end{definition}

We very often have $M_{\lambda}=N_{\lambda}$, and we will then 
omit $N_{\lambda}$ from the notation. In particular, from \autoref{subsection:brauer} 
onward we always have $M_{\lambda}=N_{\lambda}$.

\begin{remark}
One of the advantages of the basis-focused formulation 
above is that
\fullref{definition:cellular} works, {\muta}, for relative 
cellular algebras as in \cite{EhTu-relcell} 
or (strictly object-adapted) cellular 
categories \cite{We-tensors-cellular-categories}, \cite{ElLa-trace-hecke}.
\end{remark}

We also define:

\begin{definition}\label{definition:cellular-involution}
A sandwich cellular algebra $\setstuff{A}$ is called \emph{involutive} 
if $M_{\lambda}=N_{\lambda}$ for all $\lambda\in\Lambda$ 
and $\setstuff{A}$ admits an antiinvolution $(\placeholder)^{\star}\colon\setstuff{A}\to\setstuff{A}$ 
compatible with the cell structure. That is, 
$(\placeholder)^{\star}$ restricts to
an antiinvolution $(\placeholder)^{\star}\colon\sand\to\sand$ that is a bijection on $\sandbasis$
for all $\lambda\in\Lambda$, and we have
\begin{gather}\label{eq:antiinvolution}
(c_{D,b,U}^{\lambda})^{\star}=c_{U,b^{\star},D}^{\lambda}
.
\end{gather}
\end{definition} 

\begin{convention}\label{convention:diagram-conventions}
We will use diagrammatics from now on. 
Our reading conventions for diagrams are
summarized by 
\begin{gather*}
\begin{tikzpicture}[anchorbase,scale=0.5,tinynodes]
\draw[usual] (-1,0) to[out=90,in=270] (-1,0.5);
\draw[usual] (0,0) to[out=90,in=270] (0,0.5);
\draw[usual] (-1,1) to[out=90,in=270] (-1,2);
\draw[usual] (0,1) to[out=90,in=270] (0,2);
\draw[usual] (-1,2.5) to[out=90,in=270] (-1,3);
\draw[usual] (0,2.5) to[out=90,in=270] (0,3);
\draw[mor] (-1.25,0.5) rectangle (0.25,1);
\draw[mor] (-1.25,2) rectangle (0.25,2.5);
\node at (-0.5,0.7) {$a$};
\node at (-0.5,2.17) {$b$};
\draw[densely dotted,->] (-2,0)to (-2,1.5)node[left]{read}to (-2,3);
\end{tikzpicture}
\leftrightsquigarrow
ab
\leftrightsquigarrow
\text{left=bottom and right=top}
,
\end{gather*}
which is bottom to top.
We omit data, such as a label, if it is not of importance for the 
situation at hand.
Moreover, we use colors in this paper, but they 
are non-essential and for illustration purpose only. 
We however still recommend to read the paper in color.
\end{convention}

The pictures for \eqref{eq:cell-mult} (with $x=c_{D^{\prime},b^{\prime},U^{\prime}}^{\lambda^{\prime}}$)
and \eqref{eq:antiinvolution} are
\begin{gather}\label{eq:cell-action}
\scalebox{0.85}{$\begin{tikzpicture}[anchorbase,scale=1]
\draw[mor] (0,-0.5) to (0.25,0) to (0.75,0) to (1,-0.5) to (0,-0.5);
\node at (0.5,-0.25){$D^{\prime}$};
\draw[mor] (0,1) to (0.25,0.5) to (0.75,0.5) to (1,1) to (0,1);
\node at (0.5,0.75){$U^{\prime}$};
\draw[mor] (0.25,0) to (0.25,0.5) to (0.75,0.5) to (0.75,0) to (0.25,0);
\node at (0.5,0.25){$b^{\prime}$};
\draw[mor] (0,1) to (0.25,1.5) to (0.75,1.5) to (1,1) to (0,1);
\node at (0.5,1.25){$D$};
\draw[mor] (0,2.5) to (0.25,2) to (0.75,2) to (1,2.5) to (0,2.5);
\node at (0.5,2.25){$U$};
\draw[mor] (0.25,1.5) to (0.25,2) to (0.75,2) to (0.75,1.5) to (0.25,1.5);
\node at (0.5,1.75){$b$};
\end{tikzpicture}
\equiv
r(U^{\prime},D)\cdot
\begin{tikzpicture}[anchorbase,scale=1]
\draw[mor] (0,-0.5) to (0.25,0) to (0.75,0) to (1,-0.5) to (0,-0.5);
\node at (0.5,-0.25){$D^{\prime}$};
\draw[mor] (0,1) to (0.25,0.5) to (0.75,0.5) to (1,1) to (0,1);
\node at (0.5,0.75){$U$};
\draw[mor] (0.25,0) to (0.25,0.5) to (0.75,0.5) to (0.75,0) to (0.25,0);
\node at (0.5,0.25){$b^{\prime}b$};
\end{tikzpicture}
\pmod{\setstuff{A}^{<_{\Lambda}\lambda}}
,\quad
\left(
\begin{tikzpicture}[anchorbase,scale=1]
\draw[mor] (0,-0.5) to (0.25,0) to (0.75,0) to (1,-0.5) to (0,-0.5);
\node at (0.5,-0.25){$D$};
\draw[mor] (0,1) to (0.25,0.5) to (0.75,0.5) to (1,1) to (0,1);
\node at (0.5,0.75){$U$};
\draw[mor] (0.25,0) to (0.25,0.5) to (0.75,0.5) to (0.75,0) to (0.25,0);
\node at (0.5,0.25){$b$};
\end{tikzpicture}
\right)^{\star}
=
\begin{tikzpicture}[anchorbase,scale=1]
\draw[mor] (0,-0.5) to (0.25,0) to (0.75,0) to (1,-0.5) to (0,-0.5);
\node at (0.5,-0.25){$U$};
\draw[mor] (0,1) to (0.25,0.5) to (0.75,0.5) to (1,1) to (0,1);
\node at (0.5,0.75){$D$};
\draw[mor] (0.25,0) to (0.25,0.5) to (0.75,0.5) to (0.75,0) to (0.25,0);
\node at (0.5,0.25){$b^{\star}$};
\end{tikzpicture}$}
.
\end{gather}
The picture in \eqref{eq:cell-action} is an accurate description for all diagram algebras that we use in this paper. However, these pictures should be taken with care as the definition of a sandwich cellular datum is more general.

Throughout the rest of this section 
we write $\setstuff{A}$ for a sandwich cellular algebra 
with a fixed sandwich cell datum, using the notation from 
\fullref{definition:cellular}. 
We will use the terminology of being a sandwich cellular algebra 
in the sense that we have fixed a sandwich cell datum.
As we will see in {\eg} \fullref{theorem:classification}, our focus 
is indeed not on whether an algebra is sandwich cellular but rather 
whether one can find a useful sandwich cell datum. Here is an explicit example 
of a not very useful sandwich cell datum:

\begin{example}\label{example:groups}
For any group $\setstuff{G}$ the group element basis 
is a sandwich cellular basis in the sense of \fullref{definition:cellular}. 
To see this we let $\Lambda=\{\bullet\}=M_{\bullet}=N_{\bullet}$ be trivial,
and set $c_{\bullet,b,\bullet}=b$ for $b\in\setstuff{G}$
seen as an element of $\sand[\bullet]=\KK\setstuff{G}$.
For this choice \fullref{theorem:classification} 
does not reduce the classification 
problem of finding the simple $\KK\setstuff{G}$-modules.
\end{example}

\begin{example}
For (important) special cases, the above has appeared in the literature. 
If $\setstuff{A}$ is involutive, then:
\begin{enumerate}

\item If $\sand=\KK$ for all $\lambda\in\Lambda$, 
then the notion of a sandwich cell datum above is the same as 
the classical one from \cite{GrLe-cellular}. Conversely, any cell datum in 
the sense of \cite{GrLe-cellular} is a sandwich cell datum in the above sense 
by letting $\sand=\KK$ for all $\lambda\in\Lambda$.

\item If $\sand=\KK[X,X^{-1}]$ 
for all $\lambda\in\Lambda$, 
then a sandwich cellular algebras is affine cellular 
as in \cite{KoXi-affine-cellular}. Allowing 
any quotient of a finite polynomial ring as $\sand$, 
the converse is also true as one can check.

\end{enumerate}
Having \fullref{section:braids} in mind,
we note that $\KK\cong\KK\big[\pi_{1}(D_{0})\big]$ 
and $\KK[X,X^{-1}]\cong\KK\big[\pi_{1}(D_{1})\big]$,
where $\pi_{1}(D_{0})$ respectively $\pi_{1}(D_{1})$
are the fundamental groups of a disc 
$D_{0}$ or a punctured disc $D_{1}$.
\end{example}

\begin{example}\label{example:sandwich}
The notion of being (involutive) sandwich cellular 
is a strict generalization of 
being cellular. An easy, albeit silly, example is to take 
$\Lambda=\{\bullet\}=M_{\bullet}=N_{\bullet}$ and $\sand[\bullet]$
to be a non-cellular algebra. As an explicit example 
consider the set of upper triangular 
$2$x$2$ matrices over $\KK$, and view it as a semigroup 
so that $\{c_{\bullet,b_{i},\bullet}^{\bullet}=b_{i}\}_{i}$ 
is the semigroup basis of the semigroup ring 
$\sand[\bullet]$.
See \cite{EhTu-relcell} for several examples of non-cellular algebras
which one could take as $\sand[\bullet]$.
\end{example}

The comparison of sandwich cellular to cellular algebras is:

\begin{proposition}\label{proposition:cell-compare}
An involutive sandwich cellular algebra $\setstuff{A}$ 
such that all $\sand$ are cellular 
(with the same antiinvolution $(\placeholder)^{\star}$) is cellular 
with a refined sandwich cell datum. 
Conversely, if at least one $\sand$ is non-cellular, then 
$\setstuff{A}$ is non-cellular.
\end{proposition}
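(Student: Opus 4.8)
The plan is to handle the two directions separately: the forward implication by an explicit refinement of the cell datum, and the converse by realising each $\sand$ as an idempotent corner. For the forward direction, I would fix for every $\lambda\in\Lambda$ a Graham--Lehrer cell datum for $\sand$, with poset $\Gamma_{\lambda}$, index sets $T_{\lambda}(\gamma)$ for $\gamma\in\Gamma_{\lambda}$, cellular basis $\{s^{\gamma}_{P,Q}\}$, and antiinvolution the restriction of $(\placeholder)^{\star}$; here is where the hypothesis that all $\sand$ are cellular \emph{with the same} $(\placeholder)^{\star}$ enters. I would then define a refined datum for $\setstuff{A}$ with poset $\widetilde{\Lambda}=\{(\lambda,\gamma)\mid\lambda\in\Lambda,\gamma\in\Gamma_{\lambda}\}$, ordered by $(\lambda,\gamma)<(\mu,\delta)$ iff $\lambda<_{\Lambda}\mu$, or $\lambda=\mu$ and $\gamma<_{\Gamma_{\lambda}}\delta$; with index sets $M_{\lambda}\times T_{\lambda}(\gamma)$; and with basis
\[
c^{(\lambda,\gamma)}_{(D,P),(U,Q)}:=c^{\lambda}_{D,s^{\gamma}_{P,Q},U}.
\]
As $\{s^{\gamma}_{P,Q}\}$ runs over a basis of $\sand$, this is again a $\KK$-basis of $\setstuff{A}$.

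Next I would verify the two axioms of \cite{GrLe-cellular}. The antiinvolution axiom is immediate from \eqref{eq:antiinvolution} and the cellularity of $\sand$: one computes $(c^{\lambda}_{D,s^{\gamma}_{P,Q},U})^{\star}=c^{\lambda}_{U,(s^{\gamma}_{P,Q})^{\star},D}=c^{\lambda}_{U,s^{\gamma}_{Q,P},D}$, so $(\placeholder)^{\star}$ swaps $(D,P)\leftrightarrow(U,Q)$ as needed. For the multiplication axiom I would compose the two cell rules. Modulo $\setstuff{A}^{<_{\Lambda}\lambda}$, the sandwich rule \eqref{eq:cell-mult} (pictured in \eqref{eq:cell-action}) says that left multiplication by $x\in\setstuff{A}$ preserves the top index $U$ and acts on the middle of $c^{\lambda}_{D,m,U}$ by left multiplication in $\sand$ by an element that depends on $x$ and the bottom indices but not on $U$; feeding $m=s^{\gamma}_{P,Q}$ into the cell rule of $\sand$ then expands the result, modulo $\sand^{<_{\Gamma_{\lambda}}\gamma}$, as a sum over bottom targets $S$ and over $P'$ with coefficients independent of both $U$ and $Q$, and these index exactly the refined left indices $(S,P')$. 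The error terms lie either in $\sand^{<_{\Gamma_{\lambda}}\gamma}$ in the middle or in $\setstuff{A}^{<_{\Lambda}\lambda}$ in the bottom, and both are spanned by refined basis vectors strictly below $(\lambda,\gamma)$ in $\widetilde{\Lambda}$. Hence the expansion has exactly the cellular shape, and $\setstuff{A}$ is cellular.

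For the converse I would argue contrapositively and realise $\sand$ as an idempotent corner of the cell subquotient $\setstuff{A}(\lambda)=\setstuff{A}^{\leq_{\Lambda}\lambda}/\setstuff{A}^{<_{\Lambda}\lambda}$. By the second part of \fullref{definition:cellular} this subquotient is the inflation $\Delta(\lambda)\hcirc_{\sand}(\lambda)\Delta$ of $\sand$ along $M_{\lambda},N_{\lambda}$, with structure constants given by the pairing $r(\placeholder,\placeholder)$ appearing in \eqref{eq:cell-action}. Assuming this pairing is not identically zero on the $\lambda$-layer, I would pick $D_{0},U_{0}$ with $r(U_{0},D_{0})\neq0$, preferably with $U_{0}=D_{0}$; then the class of $r(U_{0},D_{0})^{-1}c^{\lambda}_{D_{0},1,U_{0}}$ (middle the unit $1\in\sand$) is an idempotent $e$ in $\setstuff{A}(\lambda)$, fixed by $(\placeholder)^{\star}$ when $U_{0}=D_{0}$, and the corner $e\,\setstuff{A}(\lambda)\,e$ is isomorphic to $\sand$ as an algebra. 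So it remains to propagate non-cellularity of $\sand\cong e\,\setstuff{A}(\lambda)\,e$ upward to $\setstuff{A}$.

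The main obstacle is exactly this propagation. I would factor it through two inputs: cellularity is inherited by corners $e(\placeholder)e$ at $(\placeholder)^{\star}$-fixed idempotents, and the inflation $\setstuff{A}(\lambda)$ is cellular if and only if its base $\sand$ is --- both in the spirit of \cite{KoXi-cellular-inflation-morita}, \cite{KoXi-affine-cellular}. The genuinely delicate point, which I expect to be the real work, is that an intrinsic cell chain for a putatively cellular $\setstuff{A}$ need not refine the sandwich filtration $\setstuff{A}^{<_{\Lambda}\lambda}\subset\setstuff{A}^{\leq_{\Lambda}\lambda}$, so the implication ``$\setstuff{A}$ cellular $\Rightarrow\setstuff{A}(\lambda)$ cellular'' is not formal; it is here that the iterated-inflation characterisation of cellular algebras must be used to match the two filtrations. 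Finally I would dispose of the degenerate cases separately, namely when $r$ vanishes on the $\lambda$-layer (so $\setstuff{A}(\lambda)$ is square-zero and $\sand$ is decoupled) and when $U_{0}=D_{0}$ cannot be arranged with $r(U_{0},D_{0})\neq0$, where one instead symmetrises the idempotent and compares $e\,\setstuff{A}(\lambda)\,e$ with a matrix algebra over $\sand$.
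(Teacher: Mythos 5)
Your proof follows the paper's own argument essentially verbatim: the forward direction is the same refinement of $\Lambda$ by the posets of the $\sand$ with basis elements $c^{\lambda}_{D,s^{\gamma}_{P,Q},U}$, and the converse is, in both cases, deferred to \cite[Sections 3 and 4]{KoXi-cellular-inflation-morita} (the paper simply states that inflating along a non-cellular algebra cannot yield a cellular algebra, which is the content your corner-idempotent sketch unpacks). The only caveat worth noting is that your idempotent $r(U_{0},D_{0})^{-1}c^{\lambda}_{D_{0},1,U_{0}}$ needs $r(U_{0},D_{0})$ \emph{invertible} in $\KK$, not merely nonzero, since $\KK$ is only assumed to be a Noetherian domain; this does not affect the overall correctness because that step is subsumed by the cited König--Xi machinery.
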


Note that an algebra can be sandwich cellular without the 
$\sand$ being cellular, {\cf} \fullref{example:sandwich}.

\begin{proof}
The trick is to use the sandwich cell datum of the 
$\sand$ 
(let us fix any such datum compatible with 
$(\placeholder)^{\star}$) to make $\Lambda$ finer. The picture is
\begin{gather}\label{eq:finer-cell}
\begin{tikzpicture}[anchorbase,scale=1]
\draw[mor] (0,-0.5) to (0.25,0) to (0.75,0) to (1,-0.5) to (0,-0.5);
\node at (0.5,-0.25){$D$};
\draw[mor] (0,1) to (0.25,0.5) to (0.75,0.5) to (1,1) to (0,1);
\node at (0.5,0.75){$U$};
\draw[mor] (0.25,0) to (0.25,0.5) to (0.75,0.5) to (0.75,0) to (0.25,0);
\node at (0.5,0.25){$b$};
\end{tikzpicture}
\rightsquigarrow
\begin{tikzpicture}[anchorbase,scale=1]
\draw[mor] (0,-0.5) to (0.25,0) to (0.75,0) to (1,-0.5) to (0,-0.5);
\node at (0.5,-0.25){$D$};
\draw[mor] (0,1.5) to (0.25,1) to (0.75,1) to (1,1.5) to (0,1.5);
\node at (0.5,1.25){$U$};
\draw[mor] (0.25,0) to (0.35,0.5) to (0.65,0.5) to (0.75,0) to (0.25,0);
\node at (0.5,0.25){$\scalebox{0.75}{$D_{\mu}$}$};
\draw[mor] (0.25,1) to (0.35,0.5) to (0.65,0.5) to (0.75,1) to (0.25,1);
\node at (0.5,0.75){$\scalebox{0.75}{$U_{\nu}$}$};
\end{tikzpicture}
,\quad
\left(
\begin{tikzpicture}[anchorbase,scale=1]
\draw[mor] (0,-0.5) to (0.25,0) to (0.75,0) to (1,-0.5) to (0,-0.5);
\node at (0.5,-0.25){$D$};
\draw[mor] (0,1.5) to (0.25,1) to (0.75,1) to (1,1.5) to (0,1.5);
\node at (0.5,1.25){$U$};
\draw[mor] (0.25,0) to (0.35,0.5) to (0.65,0.5) to (0.75,0) to (0.25,0);
\node at (0.5,0.25){$\scalebox{0.75}{$D_{\mu}$}$};
\draw[mor] (0.25,1) to (0.35,0.5) to (0.65,0.5) to (0.75,1) to (0.25,1);
\node at (0.5,0.75){$\scalebox{0.75}{$U_{\nu}$}$};
\end{tikzpicture}
\right)^{\star}
=
\begin{tikzpicture}[anchorbase,scale=1]
\draw[mor] (0,-0.5) to (0.25,0) to (0.75,0) to (1,-0.5) to (0,-0.5);
\node at (0.5,-0.25){$U$};
\draw[mor] (0,1.5) to (0.25,1) to (0.75,1) to (1,1.5) to (0,1.5);
\node at (0.5,1.25){$D$};
\draw[mor] (0.25,0) to (0.35,0.5) to (0.65,0.5) to (0.75,0) to (0.25,0);
\node at (0.5,0.25){$\scalebox{0.75}{$U_{\mu}$}$};
\draw[mor] (0.25,1) to (0.35,0.5) to (0.65,0.5) to (0.75,1) to (0.25,1);
\node at (0.5,0.75){$\scalebox{0.75}{$D_{\nu}$}$};
\end{tikzpicture}
.
\end{gather}
Precisely, we define 
$\Lambda^{\prime}=\{(\lambda,\mu)
\mid\lambda\in\Lambda,\mu\in\Lambda_{\lambda}\}$ with $\Lambda_{\lambda}$ 
being the poset associated to $\sand$. The order on 
$\Lambda^{\prime}$ is $(\lambda,\mu)\leq_{\Lambda^{\prime}}(\lambda^{\prime},\mu^{\prime})$ 
if ($\lambda<_{\Lambda}\lambda^{\prime}$) or 
($\lambda=\lambda^{\prime}$ and $\mu\leq_{\Lambda_{\lambda}}\mu^{\prime}$). 
The sets $M_{(\lambda,\mu)}$ are now tuples $(D,D_{\mu})$ 
with $D\in M_{\lambda}$ and $D_{\mu}\in M_{\mu}$, while the basis 
elements are $c_{(D,D_{\mu}),(U,U_{\nu})}^{(\lambda,\mu)}$, defined 
as in \eqref{eq:finer-cell}, where we also 
indicated the antiinvolution. By construction, this 
is a cell datum for $\setstuff{A}$ in the sense of \cite{GrLe-cellular}.

For the converse one can apply (or rather copy)
\cite[Sections 3 and 4]{KoXi-cellular-inflation-morita}: if one 
inflates along a non-cellular algebra, then the result can not 
be cellular.
\end{proof}

\subsection{Cell modules}\label{subsection:cell-modules}

The theory of cellular algebras is particularly nice 
for finite-dimensional algebras. This is however not always the 
case in the situation we have in mind, 
see {\eg} \fullref{section:tlblob}. Nevertheless, 
parts of the theory still goes through for 
infinite-dimensional cellular algebras, see \cite{GrLe-cellular}, \cite{KoXi-affine-cellular} or \cite{EhTu-relcell}. In particular, the 
existence of cell modules, cells 
and some of their properties, as we will discuss now.

Recall from \autoref{convention:diagram-conventions} that, in pictures, left actions and left multiplications are stacking from the bottom.
For each $\lambda\in\Lambda$ and $U\in N_{\lambda}$ we have a \emph{left
cell} $\mathcal{L}(\lambda,U)$ given by
\begin{gather*}
\mathcal{L}(\lambda,U)=\KK\{
c_{D,b,U}^{\lambda}\mid D\in M_{\lambda},b\in\sandbasis\}
,\quad
x\actsleft
c_{D,b,U}^{\lambda}
=
\eqref{eq:cell-mult},
\end{gather*}
which we endow with the left $\setstuff{A}$-module 
structure given above. 
(In this paper actions are distinguished from multiplications 
by using the symbols $\actsleft$ respectively 
$\actsright$.) In pictures, this means acting on the bottom.
There is also a \emph{right 
cell} $\mathcal{R}(\lambda,D)$ 
defined {\ver}, where the action is from the top.

\begin{lemma}\label{lemma:delta-welldefined}
We have the following.
\begin{enumerate}
\item The left and right cells are $\setstuff{A}$-modules. 

\item As $\setstuff{A}$-modules, $\mathcal{L}(\lambda,U)
\cong\mathcal{L}(\lambda,U^{\prime})$ and 
$\mathcal{R}(\lambda,D)\cong
\mathcal{R}(\lambda,D^{\prime})$ for all $U$, $U^{\prime}$ and $D$, $D^{\prime}$.
\end{enumerate}
\end{lemma}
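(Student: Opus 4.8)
The plan is to derive everything from the multiplication rule \eqref{eq:cell-mult} together with its right-handed analogue, and the decisive feature I will exploit throughout is that the structure constants $r(S,D)$ depend only on $S$ and $D$ and \emph{not} on the top index $U$ (respectively, for the right version, not on the bottom index $D$). Both parts of the lemma turn out to be bookkeeping around this single fact.

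For part (a), I would first record that $\setstuff{A}^{<_{\Lambda}\lambda}$ and $\setstuff{A}^{\leq_{\Lambda}\lambda}$ are two-sided ideals of $\setstuff{A}$. This is immediate from \eqref{eq:cell-mult}: left-multiplying a basis element $c^{\mu}_{D,b,U}$ by any $x$ produces, modulo $\setstuff{A}^{<_{\Lambda}\mu}$, a combination of elements $c^{\mu}_{S,a,U}$, so $xc^{\mu}_{D,b,U}\in\setstuff{A}^{\leq_{\Lambda}\mu}$; the right-handed condition handles right multiplication, and transitivity of $\leq_{\Lambda}$ yields the claim for both $\setstuff{A}^{<_{\Lambda}\lambda}$ and $\setstuff{A}^{\leq_{\Lambda}\lambda}$. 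Consequently $\setstuff{A}(\lambda)=\setstuff{A}^{\leq_{\Lambda}\lambda}/\setstuff{A}^{<_{\Lambda}\lambda}$ is an $\setstuff{A}$-bimodule. Now, again by \eqref{eq:cell-mult}, the left $\setstuff{A}$-action on $\setstuff{A}(\lambda)$ never alters the top index: it maps the $\KK$-span of $\{c^{\lambda}_{D,b,U}\mid D,b\}$ into itself. Hence $\setstuff{A}(\lambda)$ decomposes, as a left $\setstuff{A}$-module, as the direct sum over $U\in N_{\lambda}$ of these $U$-homogeneous pieces, and the reduction map identifies each piece with $\mathcal{L}(\lambda,U)$ equipped with exactly the action in the statement. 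This realises $\mathcal{L}(\lambda,U)$ as a left $\setstuff{A}$-submodule of the bimodule $\setstuff{A}(\lambda)$, so in particular the action is well-defined, unital and associative. The right cells are treated {\muta} using the right-handed form of \eqref{eq:cell-mult}.

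For part (b), the isomorphism is forced by the $U$-independence of the constants. I would define the $\KK$-linear map $\Phi\colon\mathcal{L}(\lambda,U)\to\mathcal{L}(\lambda,U^{\prime})$ on bases by $c^{\lambda}_{D,b,U}\mapsto c^{\lambda}_{D,b,U^{\prime}}$; this is manifestly a $\KK$-module isomorphism, being a bijection of bases. To see that it is $\setstuff{A}$-linear, I apply \eqref{eq:cell-mult} to both sides: one has $x\actsleft c^{\lambda}_{D,b,U}=\sum_{S,a}r(S,D)\,c^{\lambda}_{S,a,U}$ and $x\actsleft c^{\lambda}_{D,b,U^{\prime}}=\sum_{S,a}r(S,D)\,c^{\lambda}_{S,a,U^{\prime}}$ with the \emph{same} scalars $r(S,D)$, whence $\Phi(x\actsleft c^{\lambda}_{D,b,U})=x\actsleft\Phi(c^{\lambda}_{D,b,U})$. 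Thus $\Phi$ is an isomorphism of left $\setstuff{A}$-modules, and the symmetric argument, now using that the right-multiplication constants are independent of the bottom index $D$, gives $\mathcal{R}(\lambda,D)\cong\mathcal{R}(\lambda,D^{\prime})$.

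I do not expect a genuine obstacle: the entire content is the $U$-independence built into \eqref{eq:cell-mult}. The one point that deserves care is the very first step, namely checking that $\setstuff{A}^{<_{\Lambda}\lambda}$ is a two-sided ideal so that the recipe ``multiply, then discard lower terms'' is associative, since without it the prescription $x\actsleft c^{\lambda}_{D,b,U}=\eqref{eq:cell-mult}$ would not obviously define a module. Alternatively, one can bypass the hands-on verification by invoking condition (b) of \fullref{definition:cellular}: under the identification $\setstuff{A}(\lambda)\cong\Delta(\lambda)\hcirc_{\sand}(\lambda)\Delta$ the left $\setstuff{A}$-action touches only the left tensor factor $\Delta(\lambda)$, so each $\mathcal{L}(\lambda,U)$ is identified with $\Delta(\lambda)$ independently of $U$; this proves (a) and (b) simultaneously and explains conceptually why the $U$-independence in (b) must hold.
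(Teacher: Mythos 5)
Your proposal is correct and is essentially an expanded write-up of the argument the paper leaves implicit: the paper's own proof of this lemma is simply ``This is immediate from the definitions,'' and the content you supply (that $\setstuff{A}^{<_{\Lambda}\lambda}$ is an ideal, that the left action preserves the top index $U$, and that the $U$-independence of the structure constants in \eqref{eq:cell-mult} makes $c^{\lambda}_{D,b,U}\mapsto c^{\lambda}_{D,b,U^{\prime}}$ an $\setstuff{A}$-isomorphism) is exactly the standard verification being alluded to. Your closing alternative via part (b) of \fullref{definition:cellular} is also consistent with how the paper uses $\Delta(\lambda)$ immediately after the lemma.
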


\begin{proof}
This is immediate from the definitions.
\end{proof}

Using \fullref{lemma:delta-welldefined},
we will write $\Delta(\lambda)$ and $(\lambda)\Delta$ (see 
also (b) of \autoref{definition:cellular}) for 
any choice of $D\in M_{\lambda},U\in N_{\lambda}$ (for the basis elements 
of these modules we omit the fixed index). 
In the theory of cellular algebras, 
these are then also called left, respectively right, 
cell modules, so we call them \emph{sandwich cell modules}.

The space $\mathcal{H}_{\lambda,D,U}=\mathcal{R}(\lambda,D)\hcirc_{\setstuff{A}}
\mathcal{L}(\lambda,U)$ 
is called an 
\emph{$\mathcal{H}$-cell}.
Moreover, the space $\mathcal{J}_{\lambda}=\mathcal{L}(\lambda,U)
\hcirc_{\sand}\mathcal{R}(\lambda,D)$ 
is called a two-sided cell or \emph{$\mathcal{J}$-cell}. 
As free $\KK$-modules we clearly have ({\cf} part (b) of \autoref{definition:cellular})
\begin{gather}\label{eq:hj-cells}
\begin{gathered}
\scalebox{0.8}{$\mathcal{L}(\lambda,U)\cong M_{\lambda}\hcirc_{\KK}\sand
\leftrightsquigarrow
\begin{tikzpicture}[anchorbase,scale=1]
\draw[mor] (0,-0.5) to (0.25,0) to (0.75,0) to (1,-0.5) to (0,-0.5);
\node at (0.5,-0.25){$D$};
\draw[mor,orchid] (0,1) to (0.25,0.5) to (0.75,0.5) to (1,1) to (0,1);
\node at (0.5,0.75){$U$};
\draw[mor] (0.25,0) to (0.25,0.5) to (0.75,0.5) to (0.75,0) to (0.25,0);
\node at (0.5,0.25){$b$};
\end{tikzpicture}
,\quad
\mathcal{R}(\lambda,D)\cong 
\sand\hcirc_{\KK}N_{\lambda}
\leftrightsquigarrow
\begin{tikzpicture}[anchorbase,scale=1]
\draw[mor,orchid] (0,-0.5) to (0.25,0) to (0.75,0) to (1,-0.5) to (0,-0.5);
\node at (0.5,-0.25){$D$};
\draw[mor] (0,1) to (0.25,0.5) to (0.75,0.5) to (1,1) to (0,1);
\node at (0.5,0.75){$U$};
\draw[mor] (0.25,0) to (0.25,0.5) to (0.75,0.5) to (0.75,0) to (0.25,0);
\node at (0.5,0.25){$b$};
\end{tikzpicture}$}
,
\\
\scalebox{0.8}{$\mathcal{J}_{\lambda}\cong M_{\lambda}\hcirc_{\KK}\sand\hcirc_{\KK}N_{\lambda}
\leftrightsquigarrow
\begin{tikzpicture}[anchorbase,scale=1]
\draw[mor] (0,-0.5) to (0.25,0) to (0.75,0) to (1,-0.5) to (0,-0.5);
\node at (0.5,-0.25){$D$};
\draw[mor] (0,1) to (0.25,0.5) to (0.75,0.5) to (1,1) to (0,1);
\node at (0.5,0.75){$U$};
\draw[mor] (0.25,0) to (0.25,0.5) to (0.75,0.5) to (0.75,0) to (0.25,0);
\node at (0.5,0.25){$b$};
\end{tikzpicture}
,\quad
\mathcal{H}_{\lambda,D,U}\cong
\sand
\leftrightsquigarrow
\begin{tikzpicture}[anchorbase,scale=1]
\draw[mor,orchid] (0,-0.5) to (0.25,0) to (0.75,0) to (1,-0.5) to (0,-0.5);
\node at (0.5,-0.25){$D$};
\draw[mor,orchid] (0,1) to (0.25,0.5) to (0.75,0.5) to (1,1) to (0,1);
\node at (0.5,0.75){$U$};
\draw[mor] (0.25,0) to (0.25,0.5) to (0.75,0.5) to (0.75,0) to (0.25,0);
\node at (0.5,0.25){$b$};
\end{tikzpicture}$}
.
\end{gathered}
\end{gather}
In \eqref{eq:hj-cells} we highlighted 
the parts which are fixed and do not vary.

Note that 
$\mathcal{J}$-cells are $\setstuff{A}$-$\setstuff{A}$-bimodules 
isomorphic to $\Delta(\lambda)\hcirc_{\sand}(\lambda)\Delta$,
by definition, and thus, in general non-unital, algebras.
In contrast, the $\mathcal{H}$-cells are only $\KK$-modules, but 
are multiplicatively closed, as follows from 
\eqref{eq:cell-action} and \eqref{eq:hj-cells} below, 
so they form, in general non-unital, subalgebras of the $\mathcal{J}$-cells.

The above can be illustrated by
\begin{gather}\label{eq:cell-diagram}
\scalebox{0.8}{$\begin{tikzpicture}[baseline=(A.center),every node/.style=
{anchor=base,minimum width=1.4cm,minimum height=1cm}]
\matrix (A) [matrix of math nodes,ampersand replacement=\&] 
{
c_{D_{1}*U_{1}} \& c_{D_{1}*U_{2}} 
\& c_{D_{1}*U_{3}} \& c_{D_{1}*U_{4}} \& \dots \\
c_{D_{2}*U_{1}} \& c_{D_{2}*U_{2}} 
\& c_{D_{2}*U_{3}} \& c_{D_{2}*U_{4}} \& \ddots \\
c_{D_{3}*U_{1}} \& c_{D_{3}*U_{2}} 
\& c_{D_{3}*U_{3}} \& c_{D_{3}*U_{4}} \& \ddots \\
c_{D_{4}*U_{1}} \& c_{D_{4}*U_{2}} 
\& c_{D_{4}*U_{3}} \& c_{D_{4}*U_{4}} \& \ddots \\
\vdots \& \ddots \& \ddots \& \ddots \& \ddots \\
};
\draw[fill=blue,opacity=0.25] (A-3-1.north west) node[blue,left,yshift=-0.5cm,opacity=1] 
{$\mathcal{R}(\lambda,D_{3})$} to (A-3-5.north east) 
to (A-3-5.south east) to (A-3-1.south west) to (A-3-1.north west);
\draw[fill=red,opacity=0.25] (A-1-3.north west) node[red,above,xshift=0.7cm,opacity=1] 
{$\mathcal{L}(\lambda,U_{3})$} to (A-5-3.south west) 
to (A-5-3.south east) to (A-1-3.north east) to (A-1-3.north west);
\draw[black] (A-1-1.north west) node[black,above] {$\mathcal{J}_{\lambda}$} to 
(A-1-5.north east) to (A-5-5.south east) 
to (A-5-1.south west) to (A-1-1.north west);
\draw[black,->] ($(A-3-5.south east)+(0.5,0.15)$) 
node[right]{$\mathcal{H}_{\lambda,D_{3},U_{3}}$} 
to[out=180,in=0] ($(A-3-3.south east)+(0,0.2)$);
\draw[black,->] ($(A-2-5.south east)+(0.5,0.15)$) 
node[right]{$\mathcal{H}_{\lambda,D_{2},U_{3}}$} 
to[out=180,in=0] ($(A-2-3.south east)+(0,0.2)$);
\end{tikzpicture}$
},
\end{gather}
where $*$ runs over all $b\in\sandbasis$.

\begin{lemma}\label{lemma:h-subalgebra}
Assume that $c_{D^{\prime},b^{\prime},U^{\prime}}^{\lambda^{\prime}}c_{D,b,U}^{\lambda}=
r(U^{\prime},D)\equiv c_{D,b^{\prime}Fb,U^{\prime}}^{\lambda}
\pmod{\setstuff{A}^{<_{\Lambda}\lambda}}$ for some $F\in\sand$.
If $r(U^{\prime},D)$ is invertible in $\KK$, then 
$\mathcal{H}_{\lambda,D,U}\cong\sand$ as algebras.
\end{lemma}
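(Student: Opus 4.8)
The plan is to transport the (non-unital) algebra structure on the $\mathcal{H}$-cell back to $\sand$ along the $\KK$-module isomorphism recorded in \eqref{eq:hj-cells}, and then to undo the deformation caused by the capping element using invertibility of $r(U,D)$. Concretely, write $r=r(U,D)$ and recall that $\{c_{D,b,U}^{\lambda}\mid b\in\sandbasis\}$ is a $\KK$-basis of $\mathcal{H}_{\lambda,D,U}$, so that $\iota\colon\sand\to\mathcal{H}_{\lambda,D,U}$, $b\mapsto c_{D,b,U}^{\lambda}$, is an isomorphism of free $\KK$-modules. By the assumed multiplication rule this $\iota$ intertwines the product on $\mathcal{H}_{\lambda,D,U}$ with the \emph{deformed} product $b^{\prime}\star b=r\cdot b^{\prime}Fb$ on the underlying module of $\sand$; thus it suffices to exhibit an algebra isomorphism $(\sand,\star)\cong(\sand,\cdot)$.

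First I would observe that $\star$ is associative and that the only obstruction to $(\sand,\star)$ being $\sand$ on the nose is the constant $r$ together with the insertion of $F$. Using that $r$ is invertible in $\KK$ (and that $F$ is a unit of $\sand$, which holds in all our diagram algebras, where in fact $F=\munit$ by \eqref{eq:cell-action}), I would define
\begin{gather*}
\Phi\colon\sand\to\mathcal{H}_{\lambda,D,U},\quad\Phi(b)=r^{-1}\cdot c_{D,F^{-1}b,U}^{\lambda}.
\end{gather*}
A direct computation from the hypothesis gives $\Phi(b_{1})\Phi(b_{2})\equiv r^{-2}\cdot r\cdot c_{D,F^{-1}b_{1}b_{2},U}^{\lambda}=\Phi(b_{1}b_{2})$, where the two stray factors of $r$ and the internal $FF^{-1}$ telescope; so $\Phi$ is an algebra homomorphism. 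It is bijective because it is $r^{-1}$ times the relabelling $b\mapsto c_{D,F^{-1}b,U}^{\lambda}$ of a $\KK$-basis, and $r$ and $F$ are invertible. Finally, $\Phi(\munit)=r^{-1}c_{D,F^{-1},U}^{\lambda}$ is then a two-sided identity, i.e.\ the idempotent normalising this (``group'') $\mathcal{H}$-cell, in line with the Clifford--Munn--Ponizovski\u{\i} picture of \fullref{remark:green}.

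The main obstacle is bookkeeping rather than ideas. One must check that multiplication is genuinely well defined on $\mathcal{H}_{\lambda,D,U}$ as a subquotient sitting inside $\mathcal{J}_{\lambda}$: the products above are only asserted modulo $\setstuff{A}^{<_{\Lambda}\lambda}$, so one invokes the multiplicative closedness of $\mathcal{H}$-cells established just before the statement (from \eqref{eq:cell-action} and \eqref{eq:hj-cells}) to see that passing to the quotient does not leak out of the cell. The single place where the hypothesis does work is the invertibility of $r$: without it $\star$ is a degenerate product, $\Phi$ fails to be surjective, and $\mathcal{H}_{\lambda,D,U}$ need not be unital at all, matching the dichotomy between regular and non-regular $\mathcal{H}$-classes.
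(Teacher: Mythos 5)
Your proof is correct and follows essentially the same route as the paper's: identify $\mathcal{H}_{\lambda,D,U}$ with $\sand$ via the basis $\{c_{D,b,U}^{\lambda}\}$, then undo the deformation by rescaling with $r(U,D)^{-1}$ and twisting by $F^{-1}$ (the paper multiplies by $F^{-1}$ on the right where you do so on the left, which is immaterial). You also correctly flag the one point the lemma statement leaves implicit — that $F$ must be invertible in $\sand$ for this to work — which is exactly the assumption the paper's own proof makes.
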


\begin{proof}
Note that $\mathcal{H}_{\lambda,D,U}$ 
has a $\KK$-basis given by 
$\{c_{D,b,U}^{\lambda}\mid b\in\sandbasis\}$. 
For $F(U,D)=1$ we have
$c_{D,b,U}^{\lambda}c_{D,b^{\prime},U}^{\lambda}
=r(U,D)\cdot c_{D,bb^{\prime},U}^{\lambda}$, and hence 
we can scale everything by $1/r(U,D)$ to get the result.
For $F(U,D)$ being invertible we calculate 
\begin{gather*}
c_{D,bF(U,D)^{-1},U}^{\lambda}c_{D,b^{\prime}F(U,D)^{-1},U}^{\lambda}
=r(U,D)\cdot c_{D,bb^{\prime}F(U,D)^{-1},U}^{\lambda}, 
\end{gather*}
so the map $b\mapsto bF(U,D)^{-1}$ induces an isomorphism 
upon division by $r(U,D)$.
\end{proof}

\begin{lemma}\label{lemma:bilinearform}
The concatenation multiplication on the $\setstuff{A}$-$\setstuff{A}$-bimodule
$\Delta(\lambda)\hcirc_{\sand}(\lambda)\Delta$ is determined 
by a bilinear map
\begin{gather*}
\phi^{\lambda}\colon
(\lambda)\Delta\hcirc_{\setstuff{A}}
\Delta(\lambda)\to\sand
\end{gather*}
which satisfies, for all $a\in\setstuff{A}$, that
\begin{gather*}
\phi^{\lambda}
(x,y\actsright a)=
\phi^{\lambda}
(a\actsleft x,y).
\end{gather*}
It gives rise to a symmetric bilinear form $\phi^{\lambda}$ on $\Delta(\lambda)$. 
This symmetric bilinear form extends to 
a symmetric bilinear form $\phi^{\lambda}_{K}$
on $\Delta(\lambda,K)=\Delta(\lambda)\hcirc_{\sand}K$ 
for any simple $\sand$-module $K$.
\end{lemma}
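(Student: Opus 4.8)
The plan is to read everything off the cell-multiplication axiom \eqref{eq:cell-mult} together with its diagrammatic form \eqref{eq:cell-action}. First I would pin down the concatenation product on $\mathcal{J}_{\lambda}=\Delta(\lambda)\hcirc_{\sand}(\lambda)\Delta$: writing a typical element of $\Delta(\lambda)$ as a ``bottom'' $D$ carrying an $\sand$-coefficient and a typical element of $(\lambda)\Delta$ as a ``top'' $U$ carrying an $\sand$-coefficient (see \eqref{eq:hj-cells}), the product of two basis elements $c_{D',b',U'}^{\lambda}$ and $c_{D,b,U}^{\lambda}$ in $\setstuff{A}/\setstuff{A}^{<_{\Lambda}\lambda}$ is, by \eqref{eq:cell-action}, again supported on the same $\mathcal{J}$-cell, and only the inner pair $(U',D)$ interacts. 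The scalar $r(U',D)$ of \eqref{eq:cell-action} is, in the sandwich setting, an element $\phi^{\lambda}(U',D)\in\sand$ inserted between $b'$ and $b$, so $c_{D',b',U'}^{\lambda}c_{D,b,U}^{\lambda}\equiv c_{D',\,b'\phi^{\lambda}(U',D)b,\,U}^{\lambda}\pmod{\setstuff{A}^{<_{\Lambda}\lambda}}$. Extending bilinearly, the product reads $\alpha_{1}\hcirc\beta_{1}\cdot\alpha_{2}\hcirc\beta_{2}=\alpha_{1}\hcirc\phi^{\lambda}(\beta_{1},\alpha_{2})\cdot\beta_{2}$, which is exactly the assertion that concatenation is determined by the bilinear map $\phi^{\lambda}\colon(\lambda)\Delta\hcirc_{\setstuff{A}}\Delta(\lambda)\to\sand$.

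Next I would check that $\phi^{\lambda}$ descends across the quotient $\hcirc_{\setstuff{A}}$, {\ie} the stated adjointness $\phi^{\lambda}(x,y\actsright a)=\phi^{\lambda}(a\actsleft x,y)$. Diagrammatically, acting by $a\in\setstuff{A}$ on the adjacent boundaries of the two factors inserts $a$ into the common inner region between $U'$ and $D$; associativity of the product in $\setstuff{A}$, used modulo $\setstuff{A}^{<_{\Lambda}\lambda}$, then equates the two sides. For the symmetric form on $\Delta(\lambda)$ I use that $\setstuff{A}$ is involutive (\fullref{definition:cellular-involution}): since $(c_{D,b,U}^{\lambda})^{\star}=c_{U,b^{\star},D}^{\lambda}$, the antiinvolution $(\placeholder)^{\star}$ identifies $\Delta(\lambda)$ with $(\lambda)\Delta$, and I set $\langle v,w\rangle^{\lambda}=\phi^{\lambda}(v^{\star},w)$ for $v,w\in\Delta(\lambda)$ — this is the form $\phi^{\lambda}$ of the statement. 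Applying $(\placeholder)^{\star}$ to the product relation above and using that $\star$ reverses multiplication and restricts to an antiinvolution of $\sand$ gives $\phi^{\lambda}(U',D)^{\star}=\phi^{\lambda}(D^{\star},(U')^{\star})$, hence the symmetry $\langle v,w\rangle^{\lambda}=\big(\langle w,v\rangle^{\lambda}\big)^{\star}$; when $\sand=\KK$ with trivial $\star$ this is the usual Graham--Lehrer symmetric form.

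Finally, for a simple $\sand$-module $K$ I would extend $\langle-,-\rangle^{\lambda}$ to $\Delta(\lambda,K)=\Delta(\lambda)\hcirc_{\sand}K$. Fixing a symmetric $\sand$-invariant form $\langle-,-\rangle_{K}\colon K\times K\to\KK$ (one satisfying $\langle sk,l\rangle_{K}=\langle k,s^{\star}l\rangle_{K}$), I set $\phi^{\lambda}_{K}(v\hcirc k,w\hcirc l)=\langle k,\phi^{\lambda}(v,w)\cdot l\rangle_{K}$. Well-definedness over $\hcirc_{\sand}$ is a direct check: moving $s\in\sand$ across the left tensor sign changes $\phi^{\lambda}$ by $s^{\star}$ on the left and across the right tensor sign by $s$ on the right, both of which are absorbed by the invariance of $\langle-,-\rangle_{K}$; symmetry of $\phi^{\lambda}_{K}$ then follows from the symmetry of $\phi^{\lambda}$ and of $\langle-,-\rangle_{K}$. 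The one genuine obstacle is the input I just assumed: a simple $\sand$-module need not be self-dual for $\star$, so a nonzero symmetric invariant form on $K$ need not exist in general. I would therefore either restrict to $\star$-self-dual $K$ (automatic in the cases of interest, {\eg} $\sand=\KK$, or $\sand=\KK[X,X^{-1}]$ with $X^{\star}=X^{-1}$ and $K$ supported at a $\star$-fixed point), allowing the form to vanish otherwise, or reinterpret $\phi^{\lambda}_{K}$ as a perfect pairing between $\Delta(\lambda,K)$ and $\Delta(\lambda,{}^{\star}\!K)$. This self-duality point, rather than any of the formal manipulations, is where the real care is needed.
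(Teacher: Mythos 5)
Your construction of $\phi^{\lambda}$ from \eqref{eq:cell-mult} and \eqref{eq:cell-action}, the adjointness via associativity modulo $\setstuff{A}^{<_{\Lambda}\lambda}$, and the use of the antiinvolution to transport the pairing to a form on $\Delta(\lambda)$ all match the paper's (very terse) argument, which just points to the diagrammatic picture and to a standard ring-theoretic lemma; your observation that the resulting form is symmetric only up to $(\placeholder)^{\star}$ acting on its values in $\sand$ is a point the paper glosses over, and it is correct. The genuine divergence is in the extension to $K$. You try to manufacture a $\KK$-valued form on $\Delta(\lambda,K)$ by contracting against a $\star$-invariant form on $K$, and you rightly note that such a form need not exist for a general simple $\sand$-module. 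The paper avoids this entirely: it sets $\phi^{\lambda}_{K}=\phi^{\lambda}\hcirc_{\sand}\idmor_{K}$, {\ie} it tensors the $\sand$-valued pairing with the identity of $K$, so that $\phi^{\lambda}_{K}$ is a pairing valued in $K$ --- equivalently, the induced map $\Delta(\lambda,K)\to\Hom_{\KK}\big((\lambda)\Delta,\KK\big)\hcirc_{\sand}K$ that is used immediately afterwards to define $\mathrm{rad}(\lambda,K)$. No choice of form on $K$ and no self-duality hypothesis enter, and ``symmetric'' is again to be read up to the identification furnished by $(\placeholder)^{\star}$. So the obstruction you flag is real for your formulation but is an artifact of insisting on a $\KK$-valued form; substituting the paper's definition for your last step closes the gap and yields the statement for every simple $K$, while everything else in your argument stands.
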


\begin{proof}
In general this follows from a standard lemma in 
ring theory, see {\eg} \cite[Lemma 2]{GuWi-almost-cellular}. 
In the restricted setting of \eqref{eq:cell-action} one has a nice diagrammatic description:
The map $\phi^{\lambda}$ is clearly bilinear, 
as it is just the multiplication in $\mathcal{J}_{\lambda}$.
The remaining claim can be illustrated by
\begin{gather*}
\phi^{\lambda}
\left(
\begin{tikzpicture}[anchorbase,scale=1]
\draw[mor] (0,0.5) to (0.25,0) to (0.75,0) to (1,0.5) to (0,0.5);
\node at (0.5,0.25){$U$};
\draw[mor] (0.25,-0.5) to (0.25,0) to (0.75,0) to (0.75,-0.5) to (0.25,-0.5);
\node at (0.5,-0.25){$b$};
\draw[mor] (0,0.5) to (0,1) to (1,1) to (1,0.5) to (0,0.5);
\node at (0.5,0.75){$a$};
\end{tikzpicture}
,
\begin{tikzpicture}[anchorbase,scale=1]
\draw[mor] (0,1) to (0.25,1.5) to (0.75,1.5) to (1,1) to (0,1);
\node at (0.5,1.25){$D^{\prime}$};
\draw[mor] (0.25,1.5) to (0.25,2) to (0.75,2) to (0.75,1.5) to (0.25,1.5);
\node at (0.5,1.75){$b^{\prime}$};
\end{tikzpicture}
\right)
\quad\rightsquigarrow\quad
\begin{tikzpicture}[anchorbase,scale=1]
\draw[mor] (0,0.5) to (0.25,0) to (0.75,0) to (1,0.5) to (0,0.5);
\node at (0.5,0.25){$U$};
\draw[mor] (0.25,-0.5) to (0.25,0) to (0.75,0) to (0.75,-0.5) to (0.25,-0.5);
\node at (0.5,-0.25){$b$};
\draw[mor] (0,0.5) to (0,1) to (1,1) to (1,0.5) to (0,0.5);
\node at (0.5,0.75){$a$};
\draw[mor] (0,1) to (0.25,1.5) to (0.75,1.5) to (1,1) to (0,1);
\node at (0.5,1.25){$D^{\prime}$};
\draw[mor] (0.25,1.5) to (0.25,2) to (0.75,2) to (0.75,1.5) to (0.25,1.5);
\node at (0.5,1.75){$b^{\prime}$};
\end{tikzpicture}
\quad\leftsquigarrow\quad
\phi^{\lambda}
\left(
\begin{tikzpicture}[anchorbase,scale=1]
\draw[mor] (0,0.5) to (0.25,0) to (0.75,0) to (1,0.5) to (0,0.5);
\node at (0.5,0.25){$U$};
\draw[mor] (0.25,-0.5) to (0.25,0) to (0.75,0) to (0.75,-0.5) to (0.25,-0.5);
\node at (0.5,-0.25){$b$};
\end{tikzpicture}
,
\begin{tikzpicture}[anchorbase,scale=1]
\draw[mor] (0,0.5) to (0,1) to (1,1) to (1,0.5) to (0,0.5);
\node at (0.5,0.75){$a$};
\draw[mor] (0,1) to (0.25,1.5) to (0.75,1.5) to (1,1) to (0,1);
\node at (0.5,1.25){$D^{\prime}$};
\draw[mor] (0.25,1.5) to (0.25,2) to (0.75,2) to (0.75,1.5) to (0.25,1.5);
\node at (0.5,1.75){$b^{\prime}$};
\end{tikzpicture}
\right)
.
\end{gather*}
The extension to $K$ is then just 
$\phi^{\lambda}_{K}=\phi^{\lambda}\hcirc_{\sand}\idmor_{K}$, 
having the same properties.
\end{proof}

Denote by $\mathrm{rad}(\lambda,K)$ the radical of the 
bilinear form $\phi^{\lambda}_{K}$ defined as follows. 
Let $\bar{\phi}^{\lambda}_{K}$ be the associated map from $\Delta(\lambda)\hcirc_{\sand}K\to\Hom_{\KK}\big((\lambda)\Delta,\KK\big)\hcirc_{\sand}K$. Then the radical $\mathrm{rad}(\lambda,K)$ is defined as the kernel of this map.

\begin{lemma}\label{lemma:simples}
Let $\KK$ be a field and $K$ be a simple 
$\sand$-module.
If $\phi^{\lambda}_{K}$ is not constant zero, then
$L(\lambda,K)=\Delta(\lambda,K)/\mathrm{rad}(\lambda,K)$ is 
the simple head of $\Delta(\lambda,K)$.
\end{lemma}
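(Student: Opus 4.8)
The plan is to transport the classical Graham--Lehrer argument (\cite{GrLe-cellular}) into the sandwich setting, carrying the sandwich algebra $\sand$ along and invoking simplicity of $K$ only at the very end. Write $W=\Delta(\lambda,K)$. First I would verify that $\mathrm{rad}(\lambda,K)$ is an $\setstuff{A}$-submodule of $W$, so that $L(\lambda,K)$ is a bona fide $\setstuff{A}$-module. This is formal from the $\setstuff{A}$-balancing of the form in \fullref{lemma:bilinearform}: the identity relating $\phi^{\lambda}_{K}(x\actsright a,y)$ and $\phi^{\lambda}_{K}(x,a\actsleft y)$ shows that if $\bar{\phi}^{\lambda}_{K}(y)=0$ then also $\bar{\phi}^{\lambda}_{K}(a\actsleft y)=0$ for every $a\in\setstuff{A}$, since the $a$ can be pushed onto the (right cell module) argument on which the pairing is already being tested.

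The heart of the matter is to show that, when $\phi^{\lambda}_{K}\neq 0$, every submodule $V\subseteq W$ is either contained in $\mathrm{rad}(\lambda,K)$ or equal to $W$; together with the observation that $\mathrm{rad}(\lambda,K)\neq W$ precisely because $\phi^{\lambda}_{K}\neq 0$, this simultaneously yields simplicity of $W/\mathrm{rad}(\lambda,K)$ and the fact that $\mathrm{rad}(\lambda,K)$ is the \emph{unique} maximal submodule, hence the radical of $W$, so that $L(\lambda,K)$ is the simple head. The computational input is the multiplication rule \eqref{eq:cell-mult}, in the diagrammatic form \eqref{eq:cell-action}: the product $c_{D^{\prime},b^{\prime},U^{\prime}}^{\lambda}c_{D,b,U}^{\lambda}$ collapses, modulo $\setstuff{A}^{<_{\Lambda}\lambda}$, to $\phi^{\lambda}(U^{\prime},D)\cdot c_{D^{\prime},b^{\prime}b,U}^{\lambda}$ with contraction value $\phi^{\lambda}(U^{\prime},D)\in\sand$. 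Translating to the module $W$, acting by $c_{D^{\prime},b^{\prime},U^{\prime}}^{\lambda}$ sends a generator $c_{D}^{\lambda}\otimes k$ to $c_{D^{\prime}}^{\lambda}\otimes\big(b^{\prime}\phi^{\lambda}(U^{\prime},D)\actsleft k\big)$. Given $v\in V\setminus\mathrm{rad}(\lambda,K)$, the definition of $\mathrm{rad}(\lambda,K)$ as the kernel of $\bar{\phi}^{\lambda}_{K}$ guarantees a pairing partner witnessing that some contraction $\phi^{\lambda}(U^{\prime},D)$ acts nontrivially on the corresponding $K$-component of $v$.

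From here simplicity of $K$ finishes the argument and is the main obstacle. Because $\phi^{\lambda}$ is $\sand$-valued rather than $\KK$-valued, producing a single nonzero $K$-component does not yet generate $W$; I would use that any nonzero element of $K$ generates it over $\sand$ (simplicity) to fill out the entire $K$-factor of a fixed slot $c_{D^{\prime}}^{\lambda}\otimes K$, and the free choice of the bottom index $D^{\prime}\in M_{\lambda}$ of the acting element to reach every slot. Thus the submodule generated by any $v\notin\mathrm{rad}(\lambda,K)$ maps onto all of $W/\mathrm{rad}(\lambda,K)$, which is what we wanted. The delicate point is precisely this last step: one must upgrade the hypothesis $\phi^{\lambda}_{K}\neq 0$ to an honest nonvanishing $\phi^{\lambda}(U^{\prime},D)\actsleft k\neq 0$ against the simple module and then clear the leftover $\sand$-action, so that the classical rank-one reduction runs over $\sand$ in place of $\KK$. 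This is the step that genuinely requires $K$ simple, and it is cleanest to model on the almost-cellular treatment of \cite{GuWi-almost-cellular}.
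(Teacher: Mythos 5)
Your proposal is correct and follows essentially the same route as the paper, which simply cites the argument of \cite[Section 3A]{EhTu-relcell}: $\mathrm{rad}(\lambda,K)$ is a submodule by \fullref{lemma:bilinearform}, and every element outside it generates $\Delta(\lambda,K)$, so the radical is the unique maximal submodule and the quotient is the simple head. Your additional care about where simplicity of $K$ enters (upgrading a nonzero $\sand$-valued contraction against $K$ to generation of a full slot) is exactly the point the paper leaves implicit in its reference.
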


\begin{proof}
The same arguments as in \cite[Section 3A]{EhTu-relcell} work: 
Any $z\in\Delta(\lambda,K)\setminus\mathrm{rad}(\lambda,K)$ is a generator 
of $\Delta(\lambda,K)$, while $\mathrm{rad}(\lambda,K)$ is a submodule 
of $\Delta(\lambda,K)$ by \fullref{lemma:bilinearform}. 
Thus, $L(\lambda,K)$ is a well-defined and 
simple submodule. It is the head of $\Delta(\lambda,K)$ since $\mathrm{rad}(\lambda,K)$ equals 
the representation-theoretical radical because all 
$z\in\Delta(\lambda,K)\setminus\mathrm{rad}(\lambda,K)$ generate.
\end{proof}

\subsection{Classification of simple modules}\label{subsection:simples}

For an $\setstuff{A}$-module $M$ let 
$\mathrm{Ann}_{\setstuff{A}}(M)=
\{a\in A\mid a\actsleft M=0\}$ be the annihilator.

\begin{definition}\label{definition:apex}
An \emph{apex} of an $\setstuff{A}$-module $M$ 
is a $\lambda\in\Lambda$ such that 
$\mathrm{Ann}_{\setstuff{A}}(M)=
\mathcal{J}_{{>_{\Lambda}}\lambda}=
\bigcup_{\mu>_{\Lambda}\lambda}\mathcal{J}_{\mu}$ 
and $\phi^{\lambda}$ is not constant zero.
\end{definition}

Recall that
in the setting of non-unital algebras 
simple modules are defined using the usual assumption of 
having no non-trivial submodules but one also additionally 
assumes that at least one element acts not as zero.

\begin{lemma}\label{lemma:apex}
We have the following.

\begin{enumerate}

\item Every simple $\setstuff{A}$-module $L$ has a unique apex 
$\lambda\in\Lambda$.

\item If $\KK$ is a field, then the simple modules $L(\lambda,K)$ 
from \fullref{lemma:simples}
have apex $\lambda$.

\item A simple $\setstuff{A}$-module $L$ of apex $\lambda$ is a 
simple $\mathcal{J}_{\lambda}$-module. Conversely, 
every simple $\mathcal{J}_{\lambda}$-module $L$ is a simple 
$\setstuff{A}$-module $L$ with apex $\lambda$, by inflation.

\end{enumerate}
\end{lemma}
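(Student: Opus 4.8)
The plan is to run the Clifford--Munn--Ponizovski\u{\i} argument (as formulated in \cite{GaMaSt-irreps-semigroups}, \cite{EhTu-relcell}) relative to the cell filtration. Throughout I would use that each $\setstuff{A}^{\leq_{\Lambda}\lambda}$ and $\setstuff{A}^{<_{\Lambda}\lambda}$ is a two-sided ideal, that $\setstuff{A}^{\leq_{\Lambda}\lambda}/\setstuff{A}^{<_{\Lambda}\lambda}\cong\mathcal{J}_{\lambda}$ as (non-unital) algebras, and that the multiplication on $\mathcal{J}_{\lambda}\cong\Delta(\lambda)\hcirc_{\sand}(\lambda)\Delta$ is controlled by $\phi^{\lambda}$ through \fullref{lemma:bilinearform}. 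The one preliminary fact I would record is that for $\mu<_{\Lambda}\lambda$ a product $c^{\mu}_{D^{\prime},b^{\prime},U^{\prime}}c^{\lambda}_{D,b,U}$ lies in $\setstuff{A}^{\leq_{\Lambda}\mu}\cap\setstuff{A}^{\leq_{\Lambda}\lambda}\subseteq\setstuff{A}^{<_{\Lambda}\lambda}$ (and symmetrically on the right), so every cell strictly below $\lambda$ annihilates $\Delta(\lambda)$ and any subquotient of it.

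For (a) I would separate existence and uniqueness. For existence, choose $\lambda$ minimal in the nonempty set $\{\mu\mid\setstuff{A}^{\leq_{\Lambda}\mu}\actsleft L=L\}$ (nonempty since $\setstuff{A}\actsleft L=L\neq0$, and $\Lambda$ is well-founded in all our applications); minimality makes $\setstuff{A}^{<_{\Lambda}\lambda}\actsleft L$ a proper submodule, hence $0$, so $L$ is a module over $\setstuff{A}/\setstuff{A}^{<_{\Lambda}\lambda}$ on which the ideal $\mathcal{J}_{\lambda}$ acts with $\mathcal{J}_{\lambda}\actsleft L=L$. Were $\phi^{\lambda}$ constant zero, \fullref{lemma:bilinearform} would give $\mathcal{J}_{\lambda}\mathcal{J}_{\lambda}\subseteq\setstuff{A}^{<_{\Lambda}\lambda}$, whence $L=\mathcal{J}_{\lambda}\actsleft(\mathcal{J}_{\lambda}\actsleft L)=(\mathcal{J}_{\lambda}\mathcal{J}_{\lambda})\actsleft L=0$, which is absurd; so $\phi^{\lambda}\neq0$ and $\lambda$ meets \fullref{definition:apex}. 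For uniqueness, if $\lambda\neq\lambda^{\prime}$ both worked, then $\setstuff{A}^{\leq_{\Lambda}\lambda}\setstuff{A}^{\leq_{\Lambda}\lambda^{\prime}}\subseteq\setstuff{A}^{\leq_{\Lambda}\lambda}\cap\setstuff{A}^{\leq_{\Lambda}\lambda^{\prime}}$ is contained in $\setstuff{A}^{<_{\Lambda}\lambda}$ or in $\setstuff{A}^{<_{\Lambda}\lambda^{\prime}}$, and combining this with $\setstuff{A}^{\leq_{\Lambda}\lambda}\actsleft L=\setstuff{A}^{\leq_{\Lambda}\lambda^{\prime}}\actsleft L=L$ and the vanishing of the two strict ideals on $L$ forces $L=0$. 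Part (b) is then short: the preliminary fact shows $\setstuff{A}^{<_{\Lambda}\lambda}$ annihilates $\Delta(\lambda,K)$, hence its head $L(\lambda,K)$, while the hypothesis $\phi^{\lambda}_{K}\neq0$ of \fullref{lemma:simples} gives $\mathcal{J}_{\lambda}\actsleft L(\lambda,K)=L(\lambda,K)\neq0$, and the characterization from (a) names $\lambda$ as the unique apex.

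For (c) I would exploit that $\mathcal{J}_{\lambda}$ is a two-sided ideal of the unital algebra $\bar{\setstuff{A}}=\setstuff{A}/\setstuff{A}^{<_{\Lambda}\lambda}$. If $L$ has apex $\lambda$, then $L$ is a simple $\bar{\setstuff{A}}$-module with $\mathcal{J}_{\lambda}\actsleft L=L$; for a nonzero $\mathcal{J}_{\lambda}$-submodule $W$ one has $\bar{\setstuff{A}}W=L$ (as $\bar{\setstuff{A}}$ is unital and $L$ is simple), so $L=\mathcal{J}_{\lambda}\actsleft L=\mathcal{J}_{\lambda}\actsleft(\bar{\setstuff{A}}W)=(\mathcal{J}_{\lambda}\bar{\setstuff{A}})\actsleft W\subseteq\mathcal{J}_{\lambda}\actsleft W\subseteq W$, proving $L$ is simple over $\mathcal{J}_{\lambda}$. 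Conversely, I would inflate a simple $\mathcal{J}_{\lambda}$-module along $\setstuff{A}^{\leq_{\Lambda}\lambda}\twoheadrightarrow\mathcal{J}_{\lambda}$ and pass to the associated simple $\setstuff{A}$-module; the sandwich presentation $\mathcal{J}_{\lambda}\cong\Delta(\lambda)\hcirc_{\sand}(\lambda)\Delta$ matches simple $\mathcal{J}_{\lambda}$-modules with simple $\sand$-modules $K$ and the output with $L(\lambda,K)$ of \fullref{lemma:simples}, which has apex $\lambda$ by (b); the two constructions are then mutually inverse.

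The step I expect to be hardest is the converse in (c): making the word \emph{inflation} precise for the non-unital subquotient $\mathcal{J}_{\lambda}$, and verifying that the Morita-type correspondence between simple $\mathcal{J}_{\lambda}$-modules and simple $\sand$-modules is compatible with the explicit head construction of $L(\lambda,K)$, so that the assignment is a genuine bijection onto the simples of apex $\lambda$. The remaining ideal-versus-module manipulations are routine and can be quoted from \cite{GaMaSt-irreps-semigroups}, \cite{GuWi-almost-cellular} or \cite{EhTu-relcell}.
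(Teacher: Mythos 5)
Your argument is correct in substance and follows the same Clifford--Munn--Ponizovski\u{\i} route as the paper, which itself is a terse reformulation of \cite[Lemma 3.1 and Corollary 3.2]{KoXi-affine-cellular}: existence of the apex via a minimality argument on the cell filtration plus non-vanishing of $\phi^{\lambda}$, the forward half of (c) via the ideal manipulation $L=\mathcal{J}_{\lambda}\actsleft(\setstuff{A}\actsleft W)\subseteq W$, and the converse of (c) deferred to the inflation results of K{\"o}nig--Xi/Green in both treatments, so you lose nothing there. The one point you must address is an orientation mismatch with \fullref{definition:apex}. You work throughout with the ideals $\setstuff{A}^{\leq_{\Lambda}\lambda}$, which is what \eqref{eq:cell-mult} literally provides (products fall into \emph{lower} cells), and you end up proving $\mathrm{Ann}_{\setstuff{A}}(L)\supseteq\setstuff{A}^{<_{\Lambda}\lambda}$ together with $\mathcal{J}_{\lambda}\actsleft L=L$ and $\phi^{\lambda}\neq 0$. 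The definition of apex as printed instead demands $\mathrm{Ann}_{\setstuff{A}}(L)=\mathcal{J}_{>_{\Lambda}\lambda}$, and the paper's own proof accordingly treats $\mathcal{J}_{\geq_{\Lambda}\lambda}$ as the two-sided ideal; these two readings differ by reversing $\leq_{\Lambda}$, and under the orientation of \eqref{eq:cell-mult} the set $\mathcal{J}_{>_{\Lambda}\lambda}$ is not even an ideal (for the Brauer algebra it contains the identity, which annihilates nothing). So you should either state explicitly that you are verifying the apex condition in the form ``everything not $\geq_{\Lambda}\lambda$ annihilates $L$ and $\mathcal{J}_{\lambda}$ does not'', or flip the order once and for all; as written, your proof never establishes the literal equality $\mathrm{Ann}_{\setstuff{A}}(L)=\mathcal{J}_{>_{\Lambda}\lambda}$, nor could it.

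Two smaller remarks. In the uniqueness step, $\setstuff{A}^{\leq_{\Lambda}\lambda}\cap\setstuff{A}^{\leq_{\Lambda}\lambda^{\prime}}$ is contained in the \emph{sum} $\setstuff{A}^{<_{\Lambda}\lambda}+\setstuff{A}^{<_{\Lambda}\lambda^{\prime}}$ rather than in one of the two summands (each basis element $c^{\mu}_{D,b,U}$ in the intersection has $\mu<_{\Lambda}\lambda$ or $\mu<_{\Lambda}\lambda^{\prime}$, but which alternative holds depends on $\mu$); since both strict ideals act as zero on $L$, your conclusion is unaffected. And in the existence step you should say a word on why $\{\mu\mid\setstuff{A}^{\leq_{\Lambda}\mu}\actsleft L=L\}$ is nonempty when $\Lambda$ is not totally ordered: write $1$ as a finite sum of basis elements, so that $L$ is a finite sum of submodules $\setstuff{A}^{\leq_{\Lambda}\mu}\actsleft L$, and simplicity forces one of them to be all of $L$.
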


\begin{proof}
One can reformulate {\eg} \cite[Corollary 3.2]{KoXi-affine-cellular} 
to get the claimed results. (Here we use that $\KK$ is 
Noetherian and a domain.) Precisely:	

\textit{(a).} Since $\setstuff{A}\actsleft L\neq 0$ there 
is a $\leq_{\Lambda}$-minimal $\lambda$ such that 
$\mathcal{J}_{{\geq_{\Lambda}}\lambda}=
\bigcup_{\mu\geq_{\Lambda}\lambda}\mathcal{J}_{\mu}$ is not contained
in $\mathrm{Ann}_{\setstuff{A}}(M)$.
It is not hard to see 
that $\mathrm{Ann}_{\setstuff{A}}(M)$ is a maximal 
left ideal of $\mathcal{J}_{{\geq_{\Lambda}}\lambda}=
\bigcup_{\mu\geq_{\Lambda}\lambda}\mathcal{J}_{\mu}$, so it has to 
be $\mathcal{J}_{{>_{\Lambda}}\lambda}$. In particular, 
$\mathcal{J}_{\lambda}\actsleft L\neq 0$ and actually 
$\mathcal{J}_{\lambda}\actsleft L=L$. This can only happen 
if some linear combination of the $c_{D,b,U}^{\lambda}$ is an idempotent.

\textit{(b).} By construction.

\textit{(c).} By (a), all elements bigger than $\lambda$ 
annihilate $L$ which together with the partial ordering 
implies that $\mathcal{J}_{\lambda}\actsleft L$, by the same formulas, 
and this action is not zero. Since $\mathrm{Ann}_{\setstuff{A}}(L)$ 
is maximal, it follows that 
$L\cong\mathcal{J}_{{\geq_{\Lambda}}\lambda}/\mathrm{Ann}_{\setstuff{A}}(L)$
stays simple as a $\mathcal{J}_{\lambda}$-module.
Conversely, take a simple $\mathcal{J}_{\lambda}$-module $L$ 
and inflate it to a $\mathcal{J}_{{\geq_{\Lambda}}\lambda}$-module 
such that its annihilator is $\mathcal{J}_{{>_{\Lambda}}\lambda}$. 
Note that $\mathcal{J}_{{\geq_{\Lambda}}\lambda}$ is an ideal in 
$\setstuff{A}$, so $\setstuff{A}\actsleft L$.
Then the same arguments as in \cite[Lemma 3.1]{KoXi-affine-cellular} 
imply that $L$ stays simple.
\end{proof}

We get an analog of the Clifford--Munn--Ponizovski\u{\i} theorem ({\cf} 
\fullref{remark:green}):

\begin{theorem}\label{theorem:classification}
Let $\KK$ be a field.
\begin{enumerate}

\item A $\lambda\in\Lambda$ is an apex if and 
only if the form $\phi^{\lambda}$ is not constant zero 
if and only if the form $\phi^{\lambda}_{K}$ is not constant zero 
for any simple $\sand$-module $K$. If the assumptions of \autoref{lemma:h-subalgebra} hold, then $\lambda\in\Lambda$ is an apex if and 
only if $r(U,D)\neq 0$ for some $D\in M_{\lambda},U\in N_{\lambda}$.

\item Assume that $\sand$ is unital and Artinian.
For a fixed apex $\lambda\in\Lambda$
the simple $\setstuff{A}$-modules of that apex are 
parameterized by simple 
modules of $\sand$. In other words, we have
\begin{gather*}
\left\{
\text{simple $\setstuff{A}$-modules with apex $\lambda$}
\right\}
\xleftrightarrow{1:1}
\left\{
\text{simple $\sand$-modules}
\right\}
.
\end{gather*}
Under this bijection the 
simple $\setstuff{A}$-module $L(\lambda,K)$ 
associated to the simple $\sand$-module $K$
is the head of
$\Delta(\lambda,K)$.

\item Assume that the assumptions of \autoref{lemma:h-subalgebra} hold. For a fixed apex $\lambda\in\Lambda$ there exists 
$\mathcal{H}_{\lambda,D,U}$ such that there is a 1:1-correspondence
\begin{gather*}
\left\{
\text{simple $\setstuff{A}$-modules with apex $\lambda$}
\right\}
\xleftrightarrow{1:1}
\left\{
\text{simple $\mathcal{H}_{\lambda,D,U}$-modules}
\right\}
.
\end{gather*}	
Under this bijection the 
simple $\setstuff{A}$-module $L(\lambda,K)$ 
for the simple $\mathcal{H}_{\lambda,D,U}$-module $K$
is the head of the induced module 
$\mathrm{Ind}_{\mathcal{H}_{\lambda,D,U}}^{\setstuff{A}}(K)$.

\end{enumerate}
\end{theorem}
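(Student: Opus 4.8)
The plan is to prove the three parts of \fullref{theorem:classification} by leveraging the structural results already established, chiefly \fullref{lemma:apex}, \fullref{lemma:simples}, \fullref{lemma:bilinearform}, and \fullref{lemma:h-subalgebra}. The central organizing idea is that \fullref{lemma:apex}(c) already reduces the classification of simple $\setstuff{A}$-modules with a fixed apex $\lambda$ to the classification of simple $\mathcal{J}_{\lambda}$-modules; each part then amounts to identifying the representation theory of $\mathcal{J}_{\lambda}\cong\Delta(\lambda)\hcirc_{\sand}(\lambda)\Delta$ with that of a more tractable algebra (namely $\sand$ or an $\mathcal{H}$-cell).

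\medskip

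\textit{Part (a).} First I would establish the chain of equivalences about $\phi^{\lambda}$. The equivalence ``$\lambda$ is an apex $\iff\phi^{\lambda}$ not constant zero'' is essentially the definition of apex combined with \fullref{lemma:apex}(a)--(b): if $\phi^{\lambda}\neq0$ then \fullref{lemma:simples} produces a simple module $L(\lambda,K)$ of apex $\lambda$ (for suitable $K$), so $\lambda$ is an apex; conversely if $\lambda$ is an apex then some simple has it as apex, forcing $\phi^{\lambda}\neq0$. For the equivalence with $\phi^{\lambda}_{K}$, recall from \fullref{lemma:bilinearform} that $\phi^{\lambda}_{K}=\phi^{\lambda}\hcirc_{\sand}\idmor_{K}$; since $K$ is a nonzero module over $\sand$ and $\phi^{\lambda}$ takes values in $\sand$, nonvanishing of $\phi^{\lambda}$ is equivalent to nonvanishing of $\phi^{\lambda}_{K}$ for every simple $K$ (here one uses that a nonzero element of $\sand$ acts nontrivially on some simple module). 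The final sentence of (a) follows from \fullref{lemma:h-subalgebra}: under its hypotheses the form $\phi^{\lambda}$ is governed by the scalars $r(U,D)$, so $\phi^{\lambda}\neq0$ reduces to $r(U,D)\neq0$ for some $D,U$.

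\medskip

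\textit{Part (b).} Here I would combine \fullref{lemma:apex}(c) with a Morita-type argument. By \fullref{lemma:apex}(c), simple $\setstuff{A}$-modules of apex $\lambda$ correspond bijectively to simple $\mathcal{J}_{\lambda}$-modules. Since $\mathcal{J}_{\lambda}\cong\Delta(\lambda)\hcirc_{\sand}(\lambda)\Delta$ and $\sand$ is unital and Artinian, the algebra $\mathcal{J}_{\lambda}$ is an inflation of $\sand$ along a pair of free modules equipped with the form $\phi^{\lambda}$; the standard theory of such inflations (as in \cite{KoXi-cellular-inflation-morita} or \cite{GuWi-almost-cellular}) gives that, on the apex where $\phi^{\lambda}\neq0$, the simple $\mathcal{J}_{\lambda}$-modules are parameterized by the simple $\sand$-modules. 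Concretely, the correspondence sends a simple $\sand$-module $K$ to $\Delta(\lambda,K)=\Delta(\lambda)\hcirc_{\sand}K$, and by \fullref{lemma:simples} the head $L(\lambda,K)=\Delta(\lambda,K)/\mathrm{rad}(\lambda,K)$ is the associated simple module; injectivity and surjectivity of the assignment follow from the nondegeneracy of $\phi^{\lambda}_{K}$ on the quotient.

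\medskip

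\textit{Part (c).} Under the hypotheses of \fullref{lemma:h-subalgebra} we have $\mathcal{H}_{\lambda,D,U}\cong\sand$ as algebras for a suitable choice of $D,U$ with $r(U,D)$ invertible. I would therefore pick such an $\mathcal{H}$-cell, so that (c) is a transport of (b) across this isomorphism. The bijection with simple $\mathcal{H}_{\lambda,D,U}$-modules then reads off immediately, and the identification of $L(\lambda,K)$ with the head of $\mathrm{Ind}_{\mathcal{H}_{\lambda,D,U}}^{\setstuff{A}}(K)$ follows because induction from the $\mathcal{H}$-cell reproduces the cell module $\Delta(\lambda,K)$ up to the radical, matching the description in part (b).

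\medskip

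\textbf{The main obstacle} I anticipate is part (b): making the inflation-to-Morita step fully rigorous, in particular verifying that passing to the quotient by $\mathrm{rad}(\lambda,K)$ yields a bijection rather than merely a surjection onto the simples. The subtlety is that $\sand$ is only assumed Artinian (not semisimple), so one must ensure that distinct simple $\sand$-modules $K$ give non-isomorphic heads $L(\lambda,K)$ and that every simple $\mathcal{J}_{\lambda}$-module arises this way; this is where the nondegeneracy of the induced form $\phi^{\lambda}_{K}$ and the maximality of annihilators from \fullref{lemma:apex} must be invoked carefully, and where I would lean most heavily on the cited results of \cite{KoXi-affine-cellular} and \cite{KoXi-cellular-inflation-morita}.
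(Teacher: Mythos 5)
Your parts (a) and (b) track the paper's own argument: (a) is exactly the combination of \fullref{lemma:simples}, \fullref{lemma:apex} and the definition of an apex, and (b) is, as in the paper, deferred to the inflation/Morita machinery of \cite{KoXi-cellular-inflation-morita} and \cite{GuWi-almost-cellular}. The genuine gap is in part (c). You propose to obtain (c) as ``a transport of (b) across the isomorphism $\mathcal{H}_{\lambda,D,U}\cong\sand$'', but (b) is only available under the additional hypothesis that $\sand$ is unital and Artinian, whereas (c) assumes only the hypotheses of \fullref{lemma:h-subalgebra}. The paper flags precisely this point immediately after the theorem: (c) is \emph{not} a special case of (b), because the sandwiched algebras in (c) need not be unital and Artinian (this matters in the infinite-dimensional examples the paper cares about, e.g.\ group algebras of free groups and positive braid monoids). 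So your argument establishes (c) only in the situation already covered by (b) and leaves the general case open.

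The paper's proof of (c) is independent of (b) and in fact comes first: one chooses $D,U$ with $r(U,D)$ invertible, forms the idempotent $e=1/r(U,D)\cdot c_{D,1,U}^{\lambda}$, checks that $\mathcal{H}_{\lambda,D,U}\cong e\mathcal{J}_{\lambda}e$, and then invokes Green's classical theorem on idempotent truncations (simple $e\mathcal{J}_{\lambda}e$-modules correspond to simple $\mathcal{J}_{\lambda}$-modules not annihilated by $e$), combined with \fullref{lemma:apex}(c) to pass between $\setstuff{A}$ and $\mathcal{J}_{\lambda}$. The remaining step --- that no simple $\mathcal{J}_{\lambda}$-module of apex $\lambda$ is annihilated by $e$ --- is handled by showing that any two pseudo-idempotents $c_{D,1,U}^{\lambda}$ with $r(U,D)\neq 0$ are related by conjugation up to invertible scalars, so $e$ kills a given simple module if and only if every such $e^{\prime}$ does, which is impossible since the remaining $\mathcal{H}$-cells contain no idempotents at all. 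To repair your proposal you should replace the reduction of (c) to (b) by this idempotent-truncation argument; your final claim that induction from $\mathcal{H}_{\lambda,D,U}$ has head $L(\lambda,K)$ is then exactly what Green's theorem delivers.
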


Note that (c) is not a special case of (b) as the sandwiched algebras in (c) need not to be unital and Artinian. This is only relevant in the infinite-dimensional world.

\begin{proof}
\textit{(a).} Clearly, $\phi^{\lambda}$ is not constant zero 
if and only if $\phi^{\lambda}_{K}$ is not constant zero. Moreover, 
$\phi^{\lambda}$ being not constant zero implies that 
$L(\lambda,K)$ exists, see \fullref{lemma:simples},
and has apex $\lambda$ by \fullref{lemma:apex}. The converse 
follows by the definition of an apex.

\textit{(c).} First, we choose $\mathcal{H}_{\lambda,D,U}$ such that 
it contains an idempotent $e=1/r(U,D)\cdot c_{D,1,U}^{\lambda}$, 
which we can do by (a) and the calculation
$1/r(U,D)\cdot c_{D,1,U}^{\lambda}1/r(U,D)\cdot c_{D,1,U}^{\lambda}
=1/r(U,D)\cdot c_{D,1,U}^{\lambda}$.
By part (c) of \fullref{lemma:apex} 
we can reduce the question to matching simple $\mathcal{J}_{\lambda}$-modules 
and simple $\mathcal{H}_{\lambda,D,U}$-modules. The picture
\begin{gather*}
\begin{tikzpicture}[anchorbase,scale=1]
\draw[mor,orchid] (0,1) to (0.25,0.5) to (0.75,0.5) to (1,1) to (0,1);
\node at (0.5,0.75){$U$};
\draw[mor] (0,1) to (0.25,1.5) to (0.75,1.5) to (1,1) to (0,1);
\node at (0.5,1.25){$D^{\prime}$};
\draw[mor] (0,2.5) to (0.25,2) to (0.75,2) to (1,2.5) to (0,2.5);
\node at (0.5,2.25){$U^{\prime}$};
\draw[mor] (0.25,1.5) to (0.25,2) to (0.75,2) to (0.75,1.5) to (0.25,1.5);
\node at (0.5,1.75){$b$};
\draw[mor,orchid] (0,2.5) to (0.25,3) to (0.75,3) to (1,2.5) to (0,2.5);
\node at (0.5,2.75){$D$};
\end{tikzpicture}
=
r(U,D^{\prime})r(U^{\prime},D)\cdot
\begin{tikzpicture}[anchorbase,scale=1]
\draw[mor] (0.25,0) to (0.25,0.5) to (0.75,0.5) to (0.75,0) to (0.25,0);
\node at (0.5,0.25){$b$};
\end{tikzpicture}
\end{gather*}
shows that $\mathcal{H}_{\lambda,D,U}\cong e\mathcal{J}_{\lambda}e$ 
(note that at least $r(U,D)$ 
is invertible, so $e\mathcal{J}_{\lambda}e\neq 0$).
By a classical theorem of Green, see {\eg} 
\cite[Lemma 6]{GaMaSt-irreps-semigroups}, 
it remains to show that simple $\mathcal{J}_{\lambda}$-modules 
are not annihilated by $e$. To see this we observe that 
any two pseudo-idempotents of the form $c_{D,1,U}^{\lambda}$ 
are related by appropriate conjugation. That is, 
for $e=1/r(U,D)\cdot c_{D,1,U}^{\lambda}$ and 
$e^{\prime}=1/r(U^{\prime},D^{\prime})
\cdot c_{D^{\prime},1,U^{\prime}}^{\lambda}$ we have
\begin{gather*}
r(U,D)r(U^{\prime},D^{\prime})\cdot e^{\prime}=
c_{D^{\prime},1,U}^{\lambda}ec_{D,1,U^{\prime}}^{\lambda},
\end{gather*}
and all appearing scalars are non-zero, and thus invertible. Hence, $e$ 
annihilates a simple $\mathcal{J}_{\lambda}$-module 
if and only if $e^{\prime}$ does. All other $\mathcal{H}$-cells 
do not contain idempotents, so a simple 
$\mathcal{J}_{\lambda}$-module can not be annihilated by any $e$ in 
$\mathcal{J}_{\lambda}$.

\textit{(b).} The proof is not much different from the one in (c), and follows by well-established arguments in the theory of cellular algebras, see {\eg} \cite{GrLe-cellular}, \cite{KoXi-affine-cellular}, \cite{AnStTu-cellular-tilting}, \cite{EhTu-relcell} or \cite{GuWi-almost-cellular}. However, the general proof requires 
the sandwiched algebras to be unital and Artinian.
\end{proof}

\begin{remark}
Note a crucial difference to the case $\sand=\KK$, 
which is (up to having an involution) the cellular case: 
one apex can have any number of simples associated to it.
\end{remark}

\subsection{The Brauer algebra as an example}\label{subsection:brauer}

Let $\setstuff{Br}_{n}(\cvar)$ be the \emph{Brauer algebra} in $n$-strands 
with circle evaluation parameter $\cvar\in\KK$.
The reader unfamiliar with the Brauer algebra is referred 
to {\eg} \cite[Section 4]{GrLe-cellular}. Alternatively,
take $g=0$ in \fullref{section:brauer} below. Let also $\setstuff{S}_{\lambda}$ 
denote the symmetric group in $\lambda$ strands (or on $\{1,\dots,\lambda\}$).

\begin{remark}
The construction we present below 
is not new, see {\eg} \cite{FG} 
or \cite{KoXi-brauer}. However, 
our exposition is new and might be of some use.
\end{remark}

The Brauer algebra has a well-known diagrammatic description 
given by \emph{perfect mat\-chings of $2n$ points}, and typical 
\emph{Brauer diagrams} for $n=4$ are
\begin{gather*}
\begin{tikzpicture}[anchorbase,scale=0.7,tinynodes]
\draw[usual] (0.5,0) to[out=90,in=180] (1,0.25) to[out=0,in=90] (1.5,0);
\draw[usual] (0.5,0.75) to[out=270,in=180] (1,0.5) to[out=0,in=270] (1.5,0.75);
\draw[usual] (0,0) to (1,0.75);
\draw[usual] (1,0) to (0,0.75);
\end{tikzpicture}
,\mspace{60mu}
\left(
\begin{tikzpicture}[anchorbase,scale=0.7,tinynodes]
\draw[usual] (0.5,0) to[out=270,in=180] (1,-0.25) to[out=0,in=270] (1.5,0);
\draw[usual] (0,-0.75) to[out=90,in=180] (0.25,-0.5) to[out=0,in=90] (0.5,-0.75);
\draw[usual] (0,0) to (1.5,-0.75);
\draw[usual] (1,0) to (1,-0.75);
\end{tikzpicture}
\right)^{\star}
=
\begin{tikzpicture}[anchorbase,scale=0.7,tinynodes]
\draw[usual] (0.5,0) to[out=90,in=180] (1,0.25) to[out=0,in=90] (1.5,0);
\draw[usual] (0,0.75) to[out=270,in=180] (0.25,0.5) to[out=0,in=270] (0.5,0.75);
\draw[usual] (0,0) to (1.5,0.75);
\draw[usual] (1,0) to (1,0.75);
\end{tikzpicture}
.
\end{gather*}
We have also illustrated the antiinvolution $(\placeholder)^{\star}$ on 
$\setstuff{Br}_{n}(\cvar)$ given by vertical mirroring.
Note that Brauer diagrams also make sense in a categorical setting, meaning 
with a different number of bottom and top points.

It is known that the Brauer algebra is cellular, see 
\cite[Section 4]{GrLe-cellular} or \cite[Section 5]{AnStTu-cellular-tilting}. 
To make $\setstuff{Br}_{n}(\cvar)$ cellular one needs 
$\mathcal{H}$-cells of size one, which is achieved in 
\cite[Section 4]{GrLe-cellular} by using 
that $\KK\setstuff{S}_{n}$ is 
a subalgebra of $\setstuff{Br}_{n}(\cvar)$. Then
they work with
the Kazhdan--Lusztig 
basis of $\KK\setstuff{S}_{n}$.
As we will describe now this is not necessary if one wants 
to parameterize simple modules.

We let $\Lambda=(\{n,n-2,n-4,\dots\},\leq_{\N})\subset\N$ 
with the usual partial order $\leq_{\N}$.
The $\lambda\in\Lambda$ are the \emph{through strands} of the Brauer diagrams,
that is, we let the set $M_{\lambda}$ consists of all Brauer diagrams 
from $n$ bottom points to $\lambda$ top points. These are the diagrams of the form $D,U$ below.
Moreover, we let $\sand=\KK\setstuff{S}_{\lambda}$ with the group element basis $\sandbasis$.
As our $\KK$-basis we choose 
\begin{gather*}
\{c_{D,b,U}^{\lambda}\mid\lambda\in\Lambda,D,U\in M_{\lambda},
b\in\sandbasis\}.
\end{gather*}
The picture for $n=4$ and $\lambda=2$ is: 
\begin{gather*}
\begin{aligned}
\begin{tikzpicture}[anchorbase,scale=1]
\draw[mor] (0,1) to (0.25,0.5) to (0.75,0.5) to (1,1) to (0,1);
\node at (0.5,0.75){$U$};
\end{tikzpicture}
&=
\begin{tikzpicture}[anchorbase,scale=0.7,tinynodes]
\draw[usual] (-0.5,0) to[out=270,in=90] (-0.5,-0.75);
\draw[usual] (0.5,0) to[out=270,in=90] (0,-0.75);
\draw[usual] (0,0) to[out=270,in=90] (0.5,-0.75);
\draw[usual] (1,0) to[out=270,in=90] (1,-0.75);
\draw[usual] (-0.5,-0.75) to[out=270,in=90] (-0.5,-1.5);
\draw[usual] (0,-0.75) to[out=270,in=90] (0,-1.5);
\draw[usual] (0.5,-0.75) to[out=270,in=180] (0.75,-1) to[out=0,in=270] (1,-0.75);
\end{tikzpicture}
,
\\
\begin{tikzpicture}[anchorbase,scale=1]
\draw[mor] (0.25,0) to (0.25,0.5) to (0.75,0.5) to (0.75,0) to (0.25,0);
\node at (0.5,0.25){$b$};
\node at (0.8,0.25){\phantom{$b$}};
\end{tikzpicture}
&=
\begin{tikzpicture}[anchorbase,scale=0.7,tinynodes]
\draw[usual] (0.5,0) to[out=90,in=270] (0,0.75);
\draw[usual] (0,0) to[out=90,in=270] (0.5,0.75);
\end{tikzpicture}
,
\\
\begin{tikzpicture}[anchorbase,scale=1]
\draw[mor] (0,-0.5) to (0.25,0) to (0.75,0) to (1,-0.5) to (0,-0.5);
\node at (0.5,-0.25){$D$};
\end{tikzpicture}
&=
\begin{tikzpicture}[anchorbase,scale=0.7,tinynodes]
\draw[usual] (-0.5,0) to[out=90,in=270] (-0.5,0.75);
\draw[usual] (0.5,0) to[out=90,in=270] (0,0.75);
\draw[usual] (0,0) to[out=90,in=270] (0.5,0.75);
\draw[usual] (1,0) to[out=90,in=270] (1,0.75);
\draw[usual] (-0.5,0.75) to[out=90,in=270] (-0.5,1.5);
\draw[usual] (0,0.75) to[out=90,in=270] (0,1.5);
\draw[usual] (0.5,0.75) to[out=90,in=180] (0.75,1) to[out=0,in=90] (1,0.75);
\end{tikzpicture}
.
\end{aligned}
\end{gather*}
That is, we divide a Brauer diagram into 
a diagram only containing caps and crossings, a diagram 
only containing cups and crossings, and a part only containing 
crossings.

\begin{proposition}
The above defines an involutive sandwich cell datum for $\setstuff{Br}_{n}(\cvar)$.
\end{proposition}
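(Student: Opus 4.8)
The plan is to verify the four requirements of \autoref{definition:cellular} together with the involutivity of \autoref{definition:cellular-involution}, using that the number of through strands (propagating lines) of a Brauer diagram is the organizing invariant.

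First I would establish the factorization underlying the basis claim. Every Brauer diagram on $n$ bottom and $n$ top points has a well-defined number $\lambda$ of through strands, and reading it bottom-to-top it factors uniquely as a cap diagram $D\in M_\lambda$ (the bottom caps together with the canonical in-order routing of the through strands), a permutation $b\in\setstuff{S}_\lambda$ of the through strands, and a cup diagram on top, which is the vertical mirror of some $U\in M_\lambda$. Conversely, any such triple $(D,b,U)$ assembles to a Brauer diagram with $\lambda$ through strands, so $(D,b,U)\mapsto c^\lambda_{D,b,U}$ is a bijection onto the diagram basis of $\setstuff{Br}_{n}(\cvar)$ and the $c^\lambda_{D,b,U}$ form a $\KK$-basis. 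As a sanity check one verifies $\sum_\lambda |M_\lambda|^2\,|\setstuff{S}_\lambda| = (2n-1)!!$.

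Next I would record the monotonicity that makes the filtration work: composing Brauer diagrams can only preserve or decrease the number of through strands, since gluing can merge propagating lines into new caps or cups but never creates them. This shows $\setstuff{A}^{<_\Lambda\lambda}$ and $\setstuff{A}^{\leq_\Lambda\lambda}$ are two-sided ideals and that $\Lambda=(\{n,n-2,\dots\},\leq_{\N})$ is the correct poset. For the multiplication rule \eqref{eq:cell-mult} it suffices by linearity to left-multiply $c^\lambda_{D,b,U}$ by a single diagram, which I reduce to another $c^{\mu}_{D',b',U'}$: the case $\mu<\lambda$ lands in $\setstuff{A}^{<_\Lambda\lambda}$, and for $\mu\geq\lambda$ the only interaction is the gluing of the top cup diagram $U'$ of the left factor against the bottom cap diagram $D$ of the right factor. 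If no through strand of $U'$ runs into a cap of $D$, the through-strand count is preserved and this gluing evaluates to $r(U',D)\cdot F(U',D)$ with $r(U',D)=\cvar^{\ell}$ ($\ell$ the number of closed loops created) and $F(U',D)\in\setstuff{S}_\lambda$ the induced permutation; otherwise the count drops and the product lies in $\setstuff{A}^{<_\Lambda\lambda}$. Either way the result is $r(U',D)\cdot c^\lambda_{D',\,b'F(U',D)b,\,U}$ modulo $\setstuff{A}^{<_\Lambda\lambda}$, as schematically depicted in \eqref{eq:cell-action}. Since the entire computation takes place at the bottom of the diagram, the top datum $U$ is untouched and the scalar $r(U',D)=\cvar^{\ell}$ depends on neither $U$ nor $b$, which is exactly axiom (a); the right-handed statement is its vertical mirror.

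For axiom (b), the subquotient $\setstuff{A}(\lambda)=\setstuff{A}^{\leq_\Lambda\lambda}/\setstuff{A}^{<_\Lambda\lambda}$ has basis $\{c^\lambda_{D,b,U}\}$ with $\lambda$ fixed, and the factorization together with the multiplication rule above identifies it with $\Delta(\lambda)\hcirc_{\KK\setstuff{S}_\lambda}(\lambda)\Delta$, where $\Delta(\lambda)$ and $(\lambda)\Delta$ are the free left, respectively right, $\KK\setstuff{S}_\lambda$-modules on the sets of cap, respectively cup, diagrams. Finally, involutivity follows because the vertical-mirror antiinvolution $(\placeholder)^{\star}$ flips the stack $(D,b,U)$, sending the cap diagram $D$ to the cup diagram indexed by the same element of $M_\lambda$, the cup diagram $U$ to the corresponding cap diagram, and the permutation $b$ to $b^{-1}=b^{\star}$; this restricts on each $\sand=\KK\setstuff{S}_\lambda$ to inversion, a bijection of the group basis $\sandbasis$, and yields $(c^\lambda_{D,b,U})^{\star}=c^\lambda_{U,b^{\star},D}$, which is \eqref{eq:antiinvolution}. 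I expect the main obstacle to be the multiplication rule \eqref{eq:cell-mult}: one must argue carefully that left multiplication affects only the $(D,b)$-part modulo lower through-strand terms and that the surviving scalars are genuine powers of $\cvar$ independent of $U$, keeping honest track of exactly when through strands are destroyed and the product collapses into $\setstuff{A}^{<_\Lambda\lambda}$.
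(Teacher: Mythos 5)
Your proposal is correct and follows the same route as the paper, which simply asserts that all axioms are easily verified once Brauer diagrams are read as immersed one-dimensional cobordisms; you have written out exactly the verification that assertion compresses (unique factorization into $D$, $b$, $U$, monotonicity of through strands under composition, loop evaluation giving the $U$- and $b$-independent scalar $r(U^{\prime},D)=\cvar^{\ell}$, and the mirror antiinvolution). Your explicit normalization of $M_{\lambda}$ via the canonical in-order routing of through strands is the correct reading of the paper's slightly ambiguous definition, as confirmed by the count $\sum_{\lambda}|M_{\lambda}|^{2}\,|\setstuff{S}_{\lambda}|=(2n-1)!!$ and by the $6\times 6$ cell picture in the paper's $n=4$, $\lambda=2$ example.
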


\begin{proof}
Identifying Brauer diagrams with immersed one-dimensional cobordisms 
(which is a well-known identification), all axioms are easily verified.
\end{proof}

\begin{example}\label{example:brauer}
The following illustrates basis elements and the cell structure 
of the cell $\mathcal{J}_{2}$ with two through strands for $n=4$ 
(using the same conventions as in \eqref{eq:cell-diagram}). 
In particular, the columns are $\mathcal{L}$-cells, the rows are $\mathcal{R}$-cells 
and the small boxes are $\mathcal{H}$-cells.
\begin{gather*}
\noalign{\global\arrayrulewidth=1mm}
\scalebox{0.85}{$\begin{tabular}{C!{\color{tomato}\vrule width 1mm}C!{\color{tomato}\vrule width 1mm}C!{\color{tomato}\vrule width 1mm}C!{\color{tomato}\vrule width 1mm}C!{\color{tomato}\vrule width 1mm}C}
\arrayrulecolor{tomato}
\begin{gathered}
\begin{tikzpicture}[anchorbase,scale=0.7,tinynodes,yscale=1]
\draw[usual] (0,0) to[out=90,in=180] (0.25,0.25) to[out=0,in=90] (0.5,0);
\draw[usual] (0,0.75) to[out=270,in=180] (0.25,0.5) to[out=0,in=270] (0.5,0.75);
\draw[usual] (1,0) to (1,0.75);
\draw[usual] (1.5,0) to (1.5,0.75);
\end{tikzpicture}
\\[3pt]
\begin{tikzpicture}[anchorbase,scale=0.7,tinynodes,yscale=1]
\draw[usual] (0,0) to[out=90,in=180] (0.25,0.25) to[out=0,in=90] (0.5,0);
\draw[usual] (0,0.75) to[out=270,in=180] (0.25,0.5) to[out=0,in=270] (0.5,0.75);
\draw[usual] (1,0) to (1.5,0.75);
\draw[usual] (1.5,0) to (1,0.75);
\end{tikzpicture}
\end{gathered} &
\begin{gathered}
\begin{tikzpicture}[anchorbase,scale=0.7,tinynodes,yscale=1]
\draw[usual] (0,0) to[out=90,in=180] (0.25,0.25) to[out=0,in=90] (0.5,0);
\draw[usual] (0.5,0.75) to[out=270,in=180] (0.75,0.5) to[out=0,in=270] (1,0.75);
\draw[usual] (1,0) to (0,0.75);
\draw[usual] (1.5,0) to (1.5,0.75);
\end{tikzpicture}
\\[3pt]
\begin{tikzpicture}[anchorbase,scale=0.7,tinynodes,yscale=1]
\draw[usual] (0,0) to[out=90,in=180] (0.25,0.25) to[out=0,in=90] (0.5,0);
\draw[usual] (0.5,0.75) to[out=270,in=180] (0.75,0.5) to[out=0,in=270] (1,0.75);
\draw[usual] (1.5,0) to (0,0.75);
\draw[usual] (1,0) to (1.5,0.75);
\end{tikzpicture}
\end{gathered} &
\begin{gathered}
\begin{tikzpicture}[anchorbase,scale=0.7,tinynodes,yscale=1]
\draw[usual] (0,0) to[out=90,in=180] (0.25,0.25) to[out=0,in=90] (0.5,0);
\draw[usual] (1,0.75) to[out=270,in=180] (1.25,0.5) to[out=0,in=270] (1.5,0.75);
\draw[usual] (1,0) to (0,0.75);
\draw[usual] (1.5,0) to (0.5,0.75);
\end{tikzpicture}
\\[3pt]
\begin{tikzpicture}[anchorbase,scale=0.7,tinynodes,yscale=1]
\draw[usual] (0,0) to[out=90,in=180] (0.25,0.25) to[out=0,in=90] (0.5,0);
\draw[usual] (1,0.75) to[out=270,in=180] (1.25,0.5) to[out=0,in=270] (1.5,0.75);
\draw[usual] (1,0) to (0.5,0.75);
\draw[usual] (1.5,0) to (0,0.75);
\end{tikzpicture}
\end{gathered} &
\begin{gathered}
\begin{tikzpicture}[anchorbase,scale=0.7,tinynodes,yscale=1]
\draw[usual] (0,0) to[out=90,in=180] (0.25,0.25) to[out=0,in=90] (0.5,0);
\draw[usual] (0,0.75) to[out=270,in=180] (0.5,0.5) to[out=0,in=270] (1,0.75);
\draw[usual] (1,0) to (0.5,0.75);
\draw[usual] (1.5,0) to (1.5,0.75);
\end{tikzpicture}
\\[3pt]
\begin{tikzpicture}[anchorbase,scale=0.7,tinynodes,yscale=1]
\draw[usual] (0,0) to[out=90,in=180] (0.25,0.25) to[out=0,in=90] (0.5,0);
\draw[usual] (0,0.75) to[out=270,in=180] (0.5,0.5) to[out=0,in=270] (1,0.75);
\draw[usual] (1,0) to (1.5,0.75);
\draw[usual] (1.5,0) to (0.5,0.75);
\end{tikzpicture}
\end{gathered} &
\begin{gathered}
\begin{tikzpicture}[anchorbase,scale=0.7,tinynodes,yscale=1]
\draw[usual] (0,0) to[out=90,in=180] (0.25,0.25) to[out=0,in=90] (0.5,0);
\draw[usual] (0.5,0.75) to[out=270,in=180] (1,0.5) to[out=0,in=270] (1.5,0.75);
\draw[usual] (1,0) to (0,0.75);
\draw[usual] (1.5,0) to (1,0.75);
\end{tikzpicture}
\\[3pt]
\begin{tikzpicture}[anchorbase,scale=0.7,tinynodes,yscale=-1]
\draw[usual] (0.5,0) to[out=90,in=180] (1,0.25) to[out=0,in=90] (1.5,0);
\draw[usual] (0,0.75) to[out=270,in=180] (0.25,0.5) to[out=0,in=270] (0.5,0.75);
\draw[usual] (1,0) to (1,0.75);
\draw[usual] (0,0) to (1.5,0.75);
\end{tikzpicture}
\end{gathered} &
\begin{gathered}
\begin{tikzpicture}[anchorbase,scale=0.7,tinynodes,yscale=-1]
\draw[usual] (0,0) to[out=90,in=180] (0.75,0.3) to[out=0,in=90] (1.5,0);
\draw[usual] (0,0.75) to[out=270,in=180] (0.25,0.5) to[out=0,in=270] (0.5,0.75);
\draw[usual] (0.5,0) to (1,0.75);
\draw[usual] (1,0) to (1.5,0.75);
\end{tikzpicture}
\\[3pt]
\begin{tikzpicture}[anchorbase,scale=0.7,tinynodes,yscale=-1]
\draw[usual] (0,0) to[out=90,in=180] (0.75,0.3) to[out=0,in=90] (1.5,0);
\draw[usual] (0,0.75) to[out=270,in=180] (0.25,0.5) to[out=0,in=270] (0.5,0.75);
\draw[usual] (0.5,0) to (1.5,0.75);
\draw[usual] (1,0) to (1,0.75);
\end{tikzpicture}
\end{gathered}
\\
\arrayrulecolor{tomato}\hline
\begin{gathered}
\begin{tikzpicture}[anchorbase,scale=0.7,tinynodes,yscale=1]
\draw[usual] (0.5,0) to[out=90,in=180] (0.75,0.25) to[out=0,in=90] (1,0);
\draw[usual] (0,0.75) to[out=270,in=180] (0.25,0.5) to[out=0,in=270] (0.5,0.75);
\draw[usual] (0,0) to (1,0.75);
\draw[usual] (1.5,0) to (1.5,0.75);
\end{tikzpicture}
\\[3pt]
\begin{tikzpicture}[anchorbase,scale=0.7,tinynodes,yscale=1]
\draw[usual] (0.5,0) to[out=90,in=180] (0.75,0.25) to[out=0,in=90] (1,0);
\draw[usual] (0,0.75) to[out=270,in=180] (0.25,0.5) to[out=0,in=270] (0.5,0.75);
\draw[usual] (0,0) to (1.5,0.75);
\draw[usual] (1.5,0) to (1,0.75);
\end{tikzpicture}
\end{gathered} &
\begin{gathered}
\begin{tikzpicture}[anchorbase,scale=0.7,tinynodes,yscale=1]
\draw[usual] (0.5,0) to[out=90,in=180] (0.75,0.25) to[out=0,in=90] (1,0);
\draw[usual] (0.5,0.75) to[out=270,in=180] (0.75,0.5) to[out=0,in=270] (1,0.75);
\draw[usual] (0,0) to (0,0.75);
\draw[usual] (1.5,0) to (1.5,0.75);
\end{tikzpicture}
\\[3pt]
\begin{tikzpicture}[anchorbase,scale=0.7,tinynodes,yscale=1]
\draw[usual] (0.5,0) to[out=90,in=180] (0.75,0.25) to[out=0,in=90] (1,0);
\draw[usual] (0.5,0.75) to[out=270,in=180] (0.75,0.5) to[out=0,in=270] (1,0.75);
\draw[usual] (0,0) to (1.5,0.75);
\draw[usual] (1.5,0) to (0,0.75);
\end{tikzpicture}
\end{gathered} &
\begin{gathered}
\begin{tikzpicture}[anchorbase,scale=0.7,tinynodes,yscale=1]
\draw[usual] (0.5,0) to[out=90,in=180] (0.75,0.25) to[out=0,in=90] (1,0);
\draw[usual] (1,0.75) to[out=270,in=180] (1.25,0.5) to[out=0,in=270] (1.5,0.75);
\draw[usual] (0,0) to (0,0.75);
\draw[usual] (1.5,0) to (0.5,0.75);
\end{tikzpicture}
\\[3pt]
\begin{tikzpicture}[anchorbase,scale=0.7,tinynodes,yscale=1]
\draw[usual] (0.5,0) to[out=90,in=180] (0.75,0.25) to[out=0,in=90] (1,0);
\draw[usual] (1,0.75) to[out=270,in=180] (1.25,0.5) to[out=0,in=270] (1.5,0.75);
\draw[usual] (1.5,0) to (0,0.75);
\draw[usual] (0,0) to (0.5,0.75);
\end{tikzpicture}
\end{gathered} & 
\begin{gathered}
\begin{tikzpicture}[anchorbase,scale=0.7,tinynodes,yscale=1]
\draw[usual] (0.5,0) to[out=90,in=180] (0.75,0.25) to[out=0,in=90] (1,0);
\draw[usual] (0,0.75) to[out=270,in=180] (0.5,0.5) to[out=0,in=270] (1,0.75);
\draw[usual] (0,0) to (0.5,0.75);
\draw[usual] (1.5,0) to (1.5,0.75);
\end{tikzpicture}
\\[3pt]
\begin{tikzpicture}[anchorbase,scale=0.7,tinynodes,yscale=1]
\draw[usual] (0.5,0) to[out=90,in=180] (0.75,0.25) to[out=0,in=90] (1,0);
\draw[usual] (0,0.75) to[out=270,in=180] (0.5,0.5) to[out=0,in=270] (1,0.75);
\draw[usual] (0,0) to (1.5,0.75);
\draw[usual] (1.5,0) to (0.5,0.75);
\end{tikzpicture}
\end{gathered} &
\begin{gathered}
\begin{tikzpicture}[anchorbase,scale=0.7,tinynodes,yscale=1]
\draw[usual] (0.5,0) to[out=90,in=180] (0.75,0.25) to[out=0,in=90] (1,0);
\draw[usual] (0.5,0.75) to[out=270,in=180] (1,0.5) to[out=0,in=270] (1.5,0.75);
\draw[usual] (0,0) to (0,0.75);
\draw[usual] (1.5,0) to (1,0.75);
\end{tikzpicture}
\\[3pt]
\begin{tikzpicture}[anchorbase,scale=0.7,tinynodes,yscale=1]
\draw[usual] (0.5,0) to[out=90,in=180] (0.75,0.25) to[out=0,in=90] (1,0);
\draw[usual] (0.5,0.75) to[out=270,in=180] (1,0.5) to[out=0,in=270] (1.5,0.75);
\draw[usual] (1.5,0) to (0,0.75);
\draw[usual] (0,0) to (1,0.75);
\end{tikzpicture}
\end{gathered} &
\begin{gathered}
\begin{tikzpicture}[anchorbase,scale=0.7,tinynodes,yscale=-1]
\draw[usual] (0,0) to[out=90,in=180] (0.75,0.3) to[out=0,in=90] (1.5,0);
\draw[usual] (0.5,0.75) to[out=270,in=180] (0.75,0.5) to[out=0,in=270] (1,0.75);
\draw[usual] (0.5,0) to (0,0.75);
\draw[usual] (1,0) to (1.5,0.75);
\end{tikzpicture}
\\[3pt]
\begin{tikzpicture}[anchorbase,scale=0.7,tinynodes,yscale=-1]
\draw[usual] (0,0) to[out=90,in=180] (0.75,0.3) to[out=0,in=90] (1.5,0);
\draw[usual] (0.5,0.75) to[out=270,in=180] (0.75,0.5) to[out=0,in=270] (1,0.75);
\draw[usual] (0.5,0) to (1.5,0.75);
\draw[usual] (1,0) to (0,0.75);
\end{tikzpicture}
\end{gathered}
\\
\arrayrulecolor{tomato}\hline
\begin{gathered}
\begin{tikzpicture}[anchorbase,scale=0.7,tinynodes,yscale=1]
\draw[usual] (1,0) to[out=90,in=180] (1.25,0.25) to[out=0,in=90] (1.5,0);
\draw[usual] (0,0.75) to[out=270,in=180] (0.25,0.5) to[out=0,in=270] (0.5,0.75);
\draw[usual] (0,0) to (1,0.75);
\draw[usual] (0.5,0) to (1.5,0.75);
\end{tikzpicture}
\\[3pt]
\begin{tikzpicture}[anchorbase,scale=0.7,tinynodes,yscale=1]
\draw[usual] (1,0) to[out=90,in=180] (1.25,0.25) to[out=0,in=90] (1.5,0);
\draw[usual] (0,0.75) to[out=270,in=180] (0.25,0.5) to[out=0,in=270] (0.5,0.75);
\draw[usual] (0,0) to (1.5,0.75);
\draw[usual] (0.5,0) to (1,0.75);
\end{tikzpicture}
\end{gathered} &
\begin{gathered}
\begin{tikzpicture}[anchorbase,scale=0.7,tinynodes,yscale=1]
\draw[usual] (1,0) to[out=90,in=180] (1.25,0.25) to[out=0,in=90] (1.5,0);
\draw[usual] (0.5,0.75) to[out=270,in=180] (0.75,0.5) to[out=0,in=270] (1,0.75);
\draw[usual] (0,0) to (0,0.75);
\draw[usual] (0.5,0) to (1.5,0.75);
\end{tikzpicture}
\\[3pt]
\begin{tikzpicture}[anchorbase,scale=0.7,tinynodes,yscale=1]
\draw[usual] (1,0) to[out=90,in=180] (1.25,0.25) to[out=0,in=90] (1.5,0);
\draw[usual] (0.5,0.75) to[out=270,in=180] (0.75,0.5) to[out=0,in=270] (1,0.75);
\draw[usual] (0,0) to (1.5,0.75);
\draw[usual] (0.5,0) to (0,0.75);
\end{tikzpicture}
\end{gathered} &
\begin{gathered}
\begin{tikzpicture}[anchorbase,scale=0.7,tinynodes,yscale=1]
\draw[usual] (1,0) to[out=90,in=180] (1.25,0.25) to[out=0,in=90] (1.5,0);
\draw[usual] (1,0.75) to[out=270,in=180] (1.25,0.5) to[out=0,in=270] (1.5,0.75);
\draw[usual] (0,0) to (0,0.75);
\draw[usual] (0.5,0) to (0.5,0.75);
\end{tikzpicture}
\\[3pt]
\begin{tikzpicture}[anchorbase,scale=0.7,tinynodes,yscale=1]
\draw[usual] (1,0) to[out=90,in=180] (1.25,0.25) to[out=0,in=90] (1.5,0);
\draw[usual] (1,0.75) to[out=270,in=180] (1.25,0.5) to[out=0,in=270] (1.5,0.75);
\draw[usual] (0,0) to (0.5,0.75);
\draw[usual] (0.5,0) to (0,0.75);
\end{tikzpicture}
\end{gathered} &
\begin{gathered}
\begin{tikzpicture}[anchorbase,scale=0.7,tinynodes,yscale=1]
\draw[usual] (1,0) to[out=90,in=180] (1.25,0.25) to[out=0,in=90] (1.5,0);
\draw[usual] (0,0.75) to[out=270,in=180] (0.5,0.5) to[out=0,in=270] (1,0.75);
\draw[usual] (0,0) to (0.5,0.75);
\draw[usual] (0.5,0) to (1.5,0.75);
\end{tikzpicture}
\\[3pt]
\begin{tikzpicture}[anchorbase,scale=0.7,tinynodes,yscale=1]
\draw[usual] (1,0) to[out=90,in=180] (1.25,0.25) to[out=0,in=90] (1.5,0);
\draw[usual] (0,0.75) to[out=270,in=180] (0.5,0.5) to[out=0,in=270] (1,0.75);
\draw[usual] (0.5,0) to (0.5,0.75);
\draw[usual] (0,0) to (1.5,0.75);
\end{tikzpicture}
\end{gathered} &
\begin{gathered}
\begin{tikzpicture}[anchorbase,scale=0.7,tinynodes,yscale=1]
\draw[usual] (1,0) to[out=90,in=180] (1.25,0.25) to[out=0,in=90] (1.5,0);
\draw[usual] (0.5,0.75) to[out=270,in=180] (1,0.5) to[out=0,in=270] (1.5,0.75);
\draw[usual] (0,0) to (0,0.75);
\draw[usual] (0.5,0) to (1,0.75);
\end{tikzpicture}
\\[3pt]
\begin{tikzpicture}[anchorbase,scale=0.7,tinynodes,yscale=1]
\draw[usual] (1,0) to[out=90,in=180] (1.25,0.25) to[out=0,in=90] (1.5,0);
\draw[usual] (0.5,0.75) to[out=270,in=180] (1,0.5) to[out=0,in=270] (1.5,0.75);
\draw[usual] (0.5,0) to (0,0.75);
\draw[usual] (0,0) to (1,0.75);
\end{tikzpicture}
\end{gathered} &
\begin{gathered}
\begin{tikzpicture}[anchorbase,scale=0.7,tinynodes,yscale=-1]
\draw[usual] (0,0) to[out=90,in=180] (0.75,0.3) to[out=0,in=90] (1.5,0);
\draw[usual] (1,0.75) to[out=270,in=180] (1.25,0.5) to[out=0,in=270] (1.5,0.75);
\draw[usual] (0.5,0) to (0,0.75);
\draw[usual] (1,0) to (0.5,0.75);
\end{tikzpicture}
\\[3pt]
\begin{tikzpicture}[anchorbase,scale=0.7,tinynodes,yscale=-1]
\draw[usual] (0,0) to[out=90,in=180] (0.75,0.3) to[out=0,in=90] (1.5,0);
\draw[usual] (1,0.75) to[out=270,in=180] (1.25,0.5) to[out=0,in=270] (1.5,0.75);
\draw[usual] (0.5,0) to (0.5,0.75);
\draw[usual] (1,0) to (0,0.75);
\end{tikzpicture}
\end{gathered}
\\
\arrayrulecolor{tomato}\hline
\begin{gathered}
\begin{tikzpicture}[anchorbase,scale=0.7,tinynodes,yscale=1]
\draw[usual] (0,0) to[out=90,in=180] (0.5,0.25) to[out=0,in=90] (1,0);
\draw[usual] (0,0.75) to[out=270,in=180] (0.25,0.5) to[out=0,in=270] (0.5,0.75);
\draw[usual] (0.5,0) to (1,0.75);
\draw[usual] (1.5,0) to (1.5,0.75);
\end{tikzpicture}
\\[3pt]
\begin{tikzpicture}[anchorbase,scale=0.7,tinynodes,yscale=1]
\draw[usual] (0,0) to[out=90,in=180] (0.5,0.25) to[out=0,in=90] (1,0);
\draw[usual] (0,0.75) to[out=270,in=180] (0.25,0.5) to[out=0,in=270] (0.5,0.75);
\draw[usual] (1.5,0) to (1,0.75);
\draw[usual] (0.5,0) to (1.5,0.75);
\end{tikzpicture}
\end{gathered} &
\begin{gathered}
\begin{tikzpicture}[anchorbase,scale=0.7,tinynodes,yscale=1]
\draw[usual] (0,0) to[out=90,in=180] (0.5,0.25) to[out=0,in=90] (1,0);
\draw[usual] (0.5,0.75) to[out=270,in=180] (0.75,0.5) to[out=0,in=270] (1,0.75);
\draw[usual] (0.5,0) to (0,0.75);
\draw[usual] (1.5,0) to (1.5,0.75);
\end{tikzpicture}
\\[3pt]
\begin{tikzpicture}[anchorbase,scale=0.7,tinynodes,yscale=1]
\draw[usual] (0,0) to[out=90,in=180] (0.5,0.25) to[out=0,in=90] (1,0);
\draw[usual] (0.5,0.75) to[out=270,in=180] (0.75,0.5) to[out=0,in=270] (1,0.75);
\draw[usual] (0.5,0) to (1.5,0.75);
\draw[usual] (1.5,0) to (0,0.75);
\end{tikzpicture}
\end{gathered} &
\begin{gathered}
\begin{tikzpicture}[anchorbase,scale=0.7,tinynodes,yscale=1]
\draw[usual] (0,0) to[out=90,in=180] (0.5,0.25) to[out=0,in=90] (1,0);
\draw[usual] (1,0.75) to[out=270,in=180] (1.25,0.5) to[out=0,in=270] (1.5,0.75);
\draw[usual] (0.5,0) to (0,0.75);
\draw[usual] (1.5,0) to (0.5,0.75);
\end{tikzpicture}
\\[3pt]
\begin{tikzpicture}[anchorbase,scale=0.7,tinynodes,yscale=1]
\draw[usual] (0,0) to[out=90,in=180] (0.5,0.25) to[out=0,in=90] (1,0);
\draw[usual] (1,0.75) to[out=270,in=180] (1.25,0.5) to[out=0,in=270] (1.5,0.75);
\draw[usual] (0.5,0) to (0.5,0.75);
\draw[usual] (1.5,0) to (0,0.75);
\end{tikzpicture}
\end{gathered} &
\begin{gathered}
\begin{tikzpicture}[anchorbase,scale=0.7,tinynodes,yscale=1]
\draw[usual] (0,0) to[out=90,in=180] (0.5,0.25) to[out=0,in=90] (1,0);
\draw[usual] (0,0.75) to[out=270,in=180] (0.5,0.5) to[out=0,in=270] (1,0.75);
\draw[usual] (0.5,0) to (0.5,0.75);
\draw[usual] (1.5,0) to (1.5,0.75);
\end{tikzpicture}
\\[3pt]
\begin{tikzpicture}[anchorbase,scale=0.7,tinynodes,yscale=1]
\draw[usual] (0,0) to[out=90,in=180] (0.5,0.25) to[out=0,in=90] (1,0);
\draw[usual] (0,0.75) to[out=270,in=180] (0.5,0.5) to[out=0,in=270] (1,0.75);
\draw[usual] (0.5,0) to (1.5,0.75);
\draw[usual] (1.5,0) to (0.5,0.75);
\end{tikzpicture}
\end{gathered} & 
\begin{gathered}
\begin{tikzpicture}[anchorbase,scale=0.7,tinynodes,yscale=1]
\draw[usual] (0,0) to[out=90,in=180] (0.5,0.25) to[out=0,in=90] (1,0);
\draw[usual] (0.5,0.75) to[out=270,in=180] (1,0.5) to[out=0,in=270] (1.5,0.75);
\draw[usual] (0.5,0) to (0,0.75);
\draw[usual] (1.5,0) to (1,0.75);
\end{tikzpicture}
\\[3pt]
\begin{tikzpicture}[anchorbase,scale=0.7,tinynodes,yscale=1]
\draw[usual] (0,0) to[out=90,in=180] (0.5,0.25) to[out=0,in=90] (1,0);
\draw[usual] (0.5,0.75) to[out=270,in=180] (1,0.5) to[out=0,in=270] (1.5,0.75);
\draw[usual] (1.5,0) to (0,0.75);
\draw[usual] (0.5,0) to (1,0.75);
\end{tikzpicture}
\end{gathered} &
\begin{gathered}
\begin{tikzpicture}[anchorbase,scale=0.7,tinynodes,yscale=-1]
\draw[usual] (0,0) to[out=90,in=180] (0.75,0.3) to[out=0,in=90] (1.5,0);
\draw[usual] (0,0.75) to[out=270,in=180] (0.5,0.5) to[out=0,in=270] (1,0.75);
\draw[usual] (0.5,0) to (0.5,0.75);
\draw[usual] (1,0) to (1.5,0.75);
\end{tikzpicture}
\\[3pt]
\begin{tikzpicture}[anchorbase,scale=0.7,tinynodes,yscale=-1]
\draw[usual] (0,0) to[out=90,in=180] (0.75,0.3) to[out=0,in=90] (1.5,0);
\draw[usual] (0,0.75) to[out=270,in=180] (0.5,0.5) to[out=0,in=270] (1,0.75);
\draw[usual] (0.5,0) to (1.5,0.75);
\draw[usual] (1,0) to (0.5,0.75);
\end{tikzpicture}
\end{gathered}
\\
\arrayrulecolor{tomato}\hline
\cellcolor{mydarkblue!25}
\begin{gathered}
\begin{tikzpicture}[anchorbase,scale=0.7,tinynodes,yscale=1]
\draw[usual] (0.5,0) to[out=90,in=180] (1,0.25) to[out=0,in=90] (1.5,0);
\draw[usual] (0,0.75) to[out=270,in=180] (0.25,0.5) to[out=0,in=270] (0.5,0.75);
\draw[usual] (0,0) to (1,0.75);
\draw[usual] (1,0) to (1.5,0.75);
\end{tikzpicture}
\\[3pt]
\begin{tikzpicture}[anchorbase,scale=0.7,tinynodes,yscale=1]
\draw[usual] (0.5,0) to[out=90,in=180] (1,0.25) to[out=0,in=90] (1.5,0);
\draw[usual] (0,0.75) to[out=270,in=180] (0.25,0.5) to[out=0,in=270] (0.5,0.75);
\draw[usual] (0,0) to (1.5,0.75);
\draw[usual] (1,0) to (1,0.75);
\end{tikzpicture}
\end{gathered} &
\begin{gathered}
\begin{tikzpicture}[anchorbase,scale=0.7,tinynodes,yscale=1]
\draw[usual] (0.5,0) to[out=90,in=180] (1,0.25) to[out=0,in=90] (1.5,0);
\draw[usual] (0.5,0.75) to[out=270,in=180] (0.75,0.5) to[out=0,in=270] (1,0.75);
\draw[usual] (0,0) to (0,0.75);
\draw[usual] (1,0) to (1.5,0.75);
\end{tikzpicture}
\\[3pt]
\begin{tikzpicture}[anchorbase,scale=0.7,tinynodes,yscale=1]
\draw[usual] (0.5,0) to[out=90,in=180] (1,0.25) to[out=0,in=90] (1.5,0);
\draw[usual] (0.5,0.75) to[out=270,in=180] (0.75,0.5) to[out=0,in=270] (1,0.75);
\draw[usual] (0,0) to (1.5,0.75);
\draw[usual] (1,0) to (0,0.75);
\end{tikzpicture}
\end{gathered} &
\begin{gathered}
\begin{tikzpicture}[anchorbase,scale=0.7,tinynodes,yscale=1]
\draw[usual] (0.5,0) to[out=90,in=180] (1,0.25) to[out=0,in=90] (1.5,0);
\draw[usual] (1,0.75) to[out=270,in=180] (1.25,0.5) to[out=0,in=270] (1.5,0.75);
\draw[usual] (0,0) to (0,0.75);
\draw[usual] (1,0) to (0.5,0.75);
\end{tikzpicture}
\\[3pt]
\begin{tikzpicture}[anchorbase,scale=0.7,tinynodes,yscale=1]
\draw[usual] (0.5,0) to[out=90,in=180] (1,0.25) to[out=0,in=90] (1.5,0);
\draw[usual] (1,0.75) to[out=270,in=180] (1.25,0.5) to[out=0,in=270] (1.5,0.75);
\draw[usual] (0,0) to (0.5,0.75);
\draw[usual] (1,0) to (0,0.75);
\end{tikzpicture}
\end{gathered} &
\begin{gathered}
\begin{tikzpicture}[anchorbase,scale=0.7,tinynodes,yscale=1]
\draw[usual] (0.5,0) to[out=90,in=180] (1,0.25) to[out=0,in=90] (1.5,0);
\draw[usual] (0,0.75) to[out=270,in=180] (0.5,0.5) to[out=0,in=270] (1,0.75);
\draw[usual] (0,0) to (0.5,0.75);
\draw[usual] (1,0) to (1.5,0.75);
\end{tikzpicture}
\\[3pt]
\begin{tikzpicture}[anchorbase,scale=0.7,tinynodes,yscale=1]
\draw[usual] (0.5,0) to[out=90,in=180] (1,0.25) to[out=0,in=90] (1.5,0);
\draw[usual] (0,0.75) to[out=270,in=180] (0.5,0.5) to[out=0,in=270] (1,0.75);
\draw[usual] (0,0) to (1.5,0.75);
\draw[usual] (1,0) to (0.5,0.75);
\end{tikzpicture}
\end{gathered} &
\begin{gathered}
\begin{tikzpicture}[anchorbase,scale=0.7,tinynodes,yscale=1]
\draw[usual] (0.5,0) to[out=90,in=180] (1,0.25) to[out=0,in=90] (1.5,0);
\draw[usual] (0.5,0.75) to[out=270,in=180] (1,0.5) to[out=0,in=270] (1.5,0.75);
\draw[usual] (0,0) to (0,0.75);
\draw[usual] (1,0) to (1,0.75);
\end{tikzpicture}
\\[3pt]
\begin{tikzpicture}[anchorbase,scale=0.7,tinynodes,yscale=1]
\draw[usual] (0.5,0) to[out=90,in=180] (1,0.25) to[out=0,in=90] (1.5,0);
\draw[usual] (0.5,0.75) to[out=270,in=180] (1,0.5) to[out=0,in=270] (1.5,0.75);
\draw[usual] (0,0) to (1,0.75);
\draw[usual] (1,0) to (0,0.75);
\end{tikzpicture}
\end{gathered} &
\begin{gathered}
\begin{tikzpicture}[anchorbase,scale=0.7,tinynodes,yscale=-1]
\draw[usual] (0,0) to[out=90,in=180] (0.75,0.3) to[out=0,in=90] (1.5,0);
\draw[usual] (0.5,0.75) to[out=270,in=180] (1,0.5) to[out=0,in=270] (1.5,0.75);
\draw[usual] (0.5,0) to (0,0.75);
\draw[usual] (1,0) to (1,0.75);
\end{tikzpicture}
\\[3pt]
\begin{tikzpicture}[anchorbase,scale=0.7,tinynodes,yscale=-1]
\draw[usual] (0,0) to[out=90,in=180] (0.75,0.3) to[out=0,in=90] (1.5,0);
\draw[usual] (0.5,0.75) to[out=270,in=180] (1,0.5) to[out=0,in=270] (1.5,0.75);
\draw[usual] (0.5,0) to (1,0.75);
\draw[usual] (1,0) to (0,0.75);
\end{tikzpicture}
\end{gathered}
\\
\arrayrulecolor{tomato}\hline
\begin{gathered}
\begin{tikzpicture}[anchorbase,scale=0.7,tinynodes,yscale=1]
\draw[usual] (0,0) to[out=90,in=180] (0.75,0.3) to[out=0,in=90] (1.5,0);
\draw[usual] (0,0.75) to[out=270,in=180] (0.25,0.5) to[out=0,in=270] (0.5,0.75);
\draw[usual] (0.5,0) to (1,0.75);
\draw[usual] (1,0) to (1.5,0.75);
\end{tikzpicture}
\\[3pt]
\begin{tikzpicture}[anchorbase,scale=0.7,tinynodes,yscale=1]
\draw[usual] (0,0) to[out=90,in=180] (0.75,0.3) to[out=0,in=90] (1.5,0);
\draw[usual] (0,0.75) to[out=270,in=180] (0.25,0.5) to[out=0,in=270] (0.5,0.75);
\draw[usual] (0.5,0) to (1.5,0.75);
\draw[usual] (1,0) to (1,0.75);
\end{tikzpicture}
\end{gathered} &
\begin{gathered}
\begin{tikzpicture}[anchorbase,scale=0.7,tinynodes,yscale=1]
\draw[usual] (0,0) to[out=90,in=180] (0.75,0.3) to[out=0,in=90] (1.5,0);
\draw[usual] (0.5,0.75) to[out=270,in=180] (0.75,0.5) to[out=0,in=270] (1,0.75);
\draw[usual] (0.5,0) to (0,0.75);
\draw[usual] (1,0) to (1.5,0.75);
\end{tikzpicture}
\\[3pt]
\begin{tikzpicture}[anchorbase,scale=0.7,tinynodes,yscale=1]
\draw[usual] (0,0) to[out=90,in=180] (0.75,0.3) to[out=0,in=90] (1.5,0);
\draw[usual] (0.5,0.75) to[out=270,in=180] (0.75,0.5) to[out=0,in=270] (1,0.75);
\draw[usual] (0.5,0) to (1.5,0.75);
\draw[usual] (1,0) to (0,0.75);
\end{tikzpicture}
\end{gathered} &
\begin{gathered}
\begin{tikzpicture}[anchorbase,scale=0.7,tinynodes,yscale=1]
\draw[usual] (0,0) to[out=90,in=180] (0.75,0.3) to[out=0,in=90] (1.5,0);
\draw[usual] (1,0.75) to[out=270,in=180] (1.25,0.5) to[out=0,in=270] (1.5,0.75);
\draw[usual] (0.5,0) to (0,0.75);
\draw[usual] (1,0) to (0.5,0.75);
\end{tikzpicture}
\\[3pt]
\begin{tikzpicture}[anchorbase,scale=0.7,tinynodes,yscale=1]
\draw[usual] (0,0) to[out=90,in=180] (0.75,0.3) to[out=0,in=90] (1.5,0);
\draw[usual] (1,0.75) to[out=270,in=180] (1.25,0.5) to[out=0,in=270] (1.5,0.75);
\draw[usual] (0.5,0) to (0.5,0.75);
\draw[usual] (1,0) to (0,0.75);
\end{tikzpicture}
\end{gathered} &
\begin{gathered}
\begin{tikzpicture}[anchorbase,scale=0.7,tinynodes,yscale=1]
\draw[usual] (0,0) to[out=90,in=180] (0.75,0.3) to[out=0,in=90] (1.5,0);
\draw[usual] (0,0.75) to[out=270,in=180] (0.5,0.5) to[out=0,in=270] (1,0.75);
\draw[usual] (0.5,0) to (0.5,0.75);
\draw[usual] (1,0) to (1.5,0.75);
\end{tikzpicture}
\\[3pt]
\begin{tikzpicture}[anchorbase,scale=0.7,tinynodes,yscale=1]
\draw[usual] (0,0) to[out=90,in=180] (0.75,0.3) to[out=0,in=90] (1.5,0);
\draw[usual] (0,0.75) to[out=270,in=180] (0.5,0.5) to[out=0,in=270] (1,0.75);
\draw[usual] (0.5,0) to (1.5,0.75);
\draw[usual] (1,0) to (0.5,0.75);
\end{tikzpicture}
\end{gathered} &
\begin{gathered}
\begin{tikzpicture}[anchorbase,scale=0.7,tinynodes,yscale=1]
\draw[usual] (0,0) to[out=90,in=180] (0.75,0.3) to[out=0,in=90] (1.5,0);
\draw[usual] (0.5,0.75) to[out=270,in=180] (1,0.5) to[out=0,in=270] (1.5,0.75);
\draw[usual] (0.5,0) to (0,0.75);
\draw[usual] (1,0) to (1,0.75);
\end{tikzpicture}
\\[3pt]
\begin{tikzpicture}[anchorbase,scale=0.7,tinynodes,yscale=1]
\draw[usual] (0,0) to[out=90,in=180] (0.75,0.3) to[out=0,in=90] (1.5,0);
\draw[usual] (0.5,0.75) to[out=270,in=180] (1,0.5) to[out=0,in=270] (1.5,0.75);
\draw[usual] (0.5,0) to (1,0.75);
\draw[usual] (1,0) to (0,0.75);
\end{tikzpicture}
\end{gathered} &
\begin{gathered}
\begin{tikzpicture}[anchorbase,scale=0.7,tinynodes,yscale=1]
\draw[usual] (0,0) to[out=90,in=180] (0.75,0.25) to[out=0,in=90] (1.5,0);
\draw[usual] (0,0.75) to[out=270,in=180] (0.75,0.5) to[out=0,in=270] (1.5,0.75);
\draw[usual] (0.5,0) to (0.5,0.75);
\draw[usual] (1,0) to (1,0.75);
\end{tikzpicture}
\\[3pt]
\begin{tikzpicture}[anchorbase,scale=0.7,tinynodes,yscale=1]
\draw[usual] (0,0) to[out=90,in=180] (0.75,0.25) to[out=0,in=90] (1.5,0);
\draw[usual] (0,0.75) to[out=270,in=180] (0.75,0.5) to[out=0,in=270] (1.5,0.75);
\draw[usual] (0.5,0) to (1,0.75);
\draw[usual] (1,0) to (0.5,0.75);
\end{tikzpicture}
\end{gathered}
\end{tabular}$}
\quad,
\\
\begin{gathered}
\text{multiplication}
\\[-5pt]
\text{table of the}
\\[-5pt]
\text{colored box}
\end{gathered}
\colon
\begin{gathered}
\begin{tikzpicture}[anchorbase,scale=0.7,tinynodes]
\draw[usual] (0.5,0) to[out=90,in=180] (1,0.25) to[out=0,in=90] (1.5,0);
\draw[usual] (0,0.75) to[out=270,in=180] (0.25,0.5) to[out=0,in=270] (0.5,0.75);
\draw[usual] (0,0) to (1.5,0.75);
\draw[usual] (1,0) to (1,0.75);
\draw[usual] (0.5,0.75) to[out=90,in=180] (1,1) to[out=0,in=90] (1.5,0.75);
\draw[usual] (0,1.5) to[out=270,in=180] (0.25,1.25) to[out=0,in=270] (0.5,1.5);
\draw[usual] (0,0.75) to (1.5,1.5);
\draw[usual] (1,0.75) to (1,1.5);
\end{tikzpicture}
=
\begin{tikzpicture}[anchorbase,scale=0.7,tinynodes]
\draw[usual] (0.5,0) to[out=90,in=180] (1,0.25) to[out=0,in=90] (1.5,0);
\draw[usual] (0,0.75) to[out=270,in=180] (0.25,0.5) to[out=0,in=270] (0.5,0.75);
\draw[usual] (0,0) to (1.5,0.75);
\draw[usual] (1,0) to (1,0.75);
\end{tikzpicture}
,\quad
\begin{tikzpicture}[anchorbase,scale=0.7,tinynodes]
\draw[usual] (0.5,0) to[out=90,in=180] (1,0.25) to[out=0,in=90] (1.5,0);
\draw[usual] (0,0.75) to[out=270,in=180] (0.25,0.5) to[out=0,in=270] (0.5,0.75);
\draw[usual] (0,0) to (1,0.75);
\draw[usual] (1,0) to (1.5,0.75);
\draw[usual] (0.5,0.75) to[out=90,in=180] (1,1) to[out=0,in=90] (1.5,0.75);
\draw[usual] (0,1.5) to[out=270,in=180] (0.25,1.25) to[out=0,in=270] (0.5,1.5);
\draw[usual] (0,0.75) to (1.5,1.5);
\draw[usual] (1,0.75) to (1,1.5);
\end{tikzpicture}
=
\begin{tikzpicture}[anchorbase,scale=0.7,tinynodes]
\draw[usual] (0.5,0) to[out=90,in=180] (1,0.25) to[out=0,in=90] (1.5,0);
\draw[usual] (0,0.75) to[out=270,in=180] (0.25,0.5) to[out=0,in=270] (0.5,0.75);
\draw[usual] (0,0) to (1,0.75);
\draw[usual] (1,0) to (1.5,0.75);
\end{tikzpicture}
,
\\
\begin{tikzpicture}[anchorbase,scale=0.7,tinynodes]
\draw[usual] (0.5,0) to[out=90,in=180] (1,0.25) to[out=0,in=90] (1.5,0);
\draw[usual] (0,0.75) to[out=270,in=180] (0.25,0.5) to[out=0,in=270] (0.5,0.75);
\draw[usual] (0,0) to (1.5,0.75);
\draw[usual] (1,0) to (1,0.75);
\draw[usual] (0.5,0.75) to[out=90,in=180] (1,1) to[out=0,in=90] (1.5,0.75);
\draw[usual] (0,1.5) to[out=270,in=180] (0.25,1.25) to[out=0,in=270] (0.5,1.5);
\draw[usual] (0,0.75) to (1,1.5);
\draw[usual] (1,0.75) to (1.5,1.5);
\end{tikzpicture}
=
\begin{tikzpicture}[anchorbase,scale=0.7,tinynodes]
\draw[usual] (0.5,0) to[out=90,in=180] (1,0.25) to[out=0,in=90] (1.5,0);
\draw[usual] (0,0.75) to[out=270,in=180] (0.25,0.5) to[out=0,in=270] (0.5,0.75);
\draw[usual] (0,0) to (1,0.75);
\draw[usual] (1,0) to (1.5,0.75);
\end{tikzpicture}
,\quad
\begin{tikzpicture}[anchorbase,scale=0.7,tinynodes]
\draw[usual] (0.5,0) to[out=90,in=180] (1,0.25) to[out=0,in=90] (1.5,0);
\draw[usual] (0,0.75) to[out=270,in=180] (0.25,0.5) to[out=0,in=270] (0.5,0.75);
\draw[usual] (0,0) to (1,0.75);
\draw[usual] (1,0) to (1.5,0.75);
\draw[usual] (0.5,0.75) to[out=90,in=180] (1,1) to[out=0,in=90] (1.5,0.75);
\draw[usual] (0,1.5) to[out=270,in=180] (0.25,1.25) to[out=0,in=270] (0.5,1.5);
\draw[usual] (0,0.75) to (1,1.5);
\draw[usual] (1,0.75) to (1.5,1.5);
\end{tikzpicture}
=
\begin{tikzpicture}[anchorbase,scale=0.7,tinynodes]
\draw[usual] (0.5,0) to[out=90,in=180] (1,0.25) to[out=0,in=90] (1.5,0);
\draw[usual] (0,0.75) to[out=270,in=180] (0.25,0.5) to[out=0,in=270] (0.5,0.75);
\draw[usual] (0,0) to (1.5,0.75);
\draw[usual] (1,0) to (1,0.75);
\end{tikzpicture}
.
\end{gathered}
\end{gather*}
Each $\mathcal{H}$-cell is of size two, and
the colored box is an $\mathcal{H}$-cell
isomorphic to $\KK\setstuff{S}_{2}$ regardless
of $\cvar$. This is illustrated on the right.
\end{example}

We obtain the well-known classification of simple 
$\setstuff{Br}_{n}(\cvar)$-modules, {\cf} \cite[Theorem 4.17]{GrLe-cellular}:

\begin{theorem}\label{theorem:brauer}
Let $\KK$ be a field.
\begin{enumerate}

\item If $\cvar\neq 0$, or $\cvar=0$ and $\lambda\neq 0$ is odd, 
then all $\lambda\in\Lambda$ are apexes. In the remaining case, 
$\cvar=0$ and $\lambda=0$ (this only happens if $n$ is even), 
all $\lambda\in\Lambda-\{0\}$ are apexes, but $\lambda=0$ is not an apex.

\item The simple $\setstuff{Br}_{n}(\cvar)$-modules of 
apex $\lambda\in\Lambda$ 
are parameterized by simple $\KK\setstuff{S}_{\lambda}$-modules.

\item The simple $\setstuff{Br}_{n}(\cvar)$-modules of 
apex $\lambda\in\Lambda$ can be constructed as 
the simple heads of
$\mathrm{Ind}_{\KK\setstuff{S}_{\lambda}}^{\setstuff{Br}_{n}(\cvar)}(K)$, 
where $K$ runs over (equivalence classes of) 
simple $\KK\setstuff{S}_{\lambda}$-modules.

\end{enumerate}
\end{theorem}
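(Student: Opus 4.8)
The plan is to read all three parts off \fullref{theorem:classification}, applied to the involutive sandwich cell datum for $\setstuff{Br}_n(\cvar)$ just constructed. The only genuine computation is the determination of the structure scalars $r(U,D)$, which feeds part (a); parts (b) and (c) are then formal. First I would make the $\mathcal{H}$-cell multiplication explicit: gluing the cup part $U$ of one basis diagram to the cap part $D$ of another joins the $\lambda$ through strands (the residual permutations multiplying in $\setstuff{S}_\lambda$) and closes up some number $\ell(U,D)\geq 0$ of free circles, each evaluating to $\cvar$. Hence $c_{D,b',U}^{\lambda}c_{D,b,U}^{\lambda}=\cvar^{\ell(U,D)}\cdot c_{D,b'b,U}^{\lambda}$, which is precisely the hypothesis of \fullref{lemma:h-subalgebra} with $F=1$ and $r(U,D)=\cvar^{\ell(U,D)}$. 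Note also that $\sand=\KK\setstuff{S}_\lambda$ is unital and, being a finite-dimensional group algebra over a field, Artinian, so every hypothesis needed for \fullref{theorem:classification} is met.

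For part (a) I would invoke \fullref{theorem:classification}(a): since the assumptions of \fullref{lemma:h-subalgebra} hold, $\lambda$ is an apex if and only if $r(U,D)=\cvar^{\ell(U,D)}\neq 0$ for some $D,U\in M_\lambda$. If $\cvar\neq 0$, then every such scalar is a nonzero power of $\cvar$, so all $\lambda\in\Lambda$ are apexes. If $\cvar=0$, the scalar is nonzero exactly when $\ell(U,D)=0$, that is, when $D$ and $U$ can be glued without creating a circle. The combinatorial heart of the statement is that such a circle-free gluing exists precisely when there is at least one through strand available to route the arcs to the boundary: for any $\lambda\geq 1$ one can choose $D,U$ with $\ell(U,D)=0$ (the colored $\mathcal{H}$-cell of \fullref{example:brauer} exhibits exactly such a pairing, with $r=1$, for $n=4,\lambda=2$), whereas for $\lambda=0$ — which occurs only when $n$ is even — the diagrams $D$ and $U$ are perfect matchings on the same $n$ points, and gluing two perfect matchings always closes at least one circle, forcing $\ell(U,D)\geq 1$ and hence $r(U,D)=0$. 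Thus $\lambda=0$ fails to be an apex exactly when $\cvar=0$, while all $\lambda\geq 1$ are apexes in every case.

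Parts (b) and (c) then follow immediately. For (b) I would apply \fullref{theorem:classification}(b) with the unital Artinian algebra $\sand=\KK\setstuff{S}_\lambda$: the simple $\setstuff{Br}_n(\cvar)$-modules of apex $\lambda$ are in bijection with the simple $\KK\setstuff{S}_\lambda$-modules, and $L(\lambda,K)$ is the head of $\Delta(\lambda,K)$. For (c), part (a) guarantees that whenever $\lambda$ is an apex we may select $D,U$ with $r(U,D)$ invertible in the field $\KK$, so \fullref{lemma:h-subalgebra} yields $\mathcal{H}_{\lambda,D,U}\cong\KK\setstuff{S}_\lambda$; then \fullref{theorem:classification}(c) realizes each simple of apex $\lambda$ as the head of $\mathrm{Ind}_{\KK\setstuff{S}_\lambda}^{\setstuff{Br}_n(\cvar)}(K)$. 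The main obstacle is the single combinatorial lemma underlying (a): verifying that circle-free gluings of half-diagrams exist exactly for $\lambda\geq 1$, and in particular that two perfect matchings on $n$ points never glue without producing a circle; everything else is a direct translation of the general machinery.
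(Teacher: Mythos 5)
Your proposal is correct and follows essentially the same route as the paper: verify the hypothesis of \fullref{lemma:h-subalgebra}, then read (a) off \fullref{theorem:classification}(a) via the nonvanishing of some $r(U,D)$ (with the circle-free gluing for $\cvar=0$, $\lambda\neq 0$ and the degeneracy of $\lambda=0$), and deduce (b) and (c) from \fullref{theorem:classification}(b),(c). One small overclaim: the formula $r(U,D)=\cvar^{\ell(U,D)}$, and hence the assertion that for $\cvar\neq 0$ \emph{every} such scalar is nonzero, ignores the pairs $(U,D)$ whose gluing caps off through strands, where the product drops to a lower cell and $r(U,D)=0$ irrespective of $\cvar$; this is harmless since the apex criterion only requires one nonzero pairing, which the pair $D=U^{\star}$ (giving $r=\cvar^{(n-\lambda)/2}$) or the straightened cup--cap pairing supplies.
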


\begin{proof}
We apply \fullref{theorem:classification}.(c) together with the 
following observations.	

Firstly, we are clearly in the situation of \autoref{lemma:h-subalgebra} 
for the Brauer algebras. 
Second, it is easy to see that the $\mathcal{H}$-cells are of a 
similar form 
as in \fullref{example:brauer}, and,
if $\cvar\neq 0$, then all $\mathcal{H}$-cells are 
isomorphic to $\KK\setstuff{S}_{\lambda}$.
For $\cvar=0$ and $\lambda\neq 0$, and any 
two half-diagrams 
we can find an element such that their pairing is one, by 
straightening cups-caps. 
Here is an example that easily generalizes:
\begin{gather*}
\phi^{\lambda}
\left(
\begin{tikzpicture}[anchorbase,scale=0.7,tinynodes]
\draw[usual] (-1,0) to[out=270,in=0] (-1.25,-0.25) 
to[out=180,in=270] (-1.5,0);
\draw[usual] (0,0) to (0,-0.75);
\draw[usual] (-0.5,0) to (-0.5,-0.75);
\end{tikzpicture}
,
\begin{tikzpicture}[anchorbase,scale=0.7,tinynodes]
\draw[usual] (0,-0.75) to[out=90,in=0] (-0.5,-0.5) 
to[out=180,in=90] (-1,-0.75);
\draw[usual] (0,0) to (-0.5,-0.75);
\draw[usual] (-1,0) to (-1.5,-0.75);
\end{tikzpicture}
\right)
=
\begin{tikzpicture}[anchorbase,scale=0.7,tinynodes]
\draw[usual] (0,-0.75) to[out=90,in=0] (-0.5,-0.5) 
to[out=180,in=90] (-1,-0.75);
\draw[usual] (0,0) to (-0.5,-0.75);
\draw[usual] (-1,0) to (-1.5,-0.75);
\draw[usual] (-1,-0.75) to[out=270,in=0] (-1.25,-1) 
to[out=180,in=270] (-1.5,-0.75);
\draw[usual] (0,-0.75) to (0,-1.5);
\draw[usual] (-0.5,-0.75) to (-0.5,-1.5);
\end{tikzpicture}
=
1\cdot
\begin{tikzpicture}[anchorbase,scale=0.7,tinynodes]
\draw[usual] (0,0) to (0.5,0.75);
\draw[usual] (0.5,0) to (0,0.75);
\end{tikzpicture}
.
\end{gather*}
This trick works unless $\lambda=0$, 
which is clearly degenerate if $\cvar=0$.
\end{proof}

\begin{remark}
The same strategy works, {\muta}, for the 
oriented (or walled) Brauer algebra, other diagram algebras 
in the same spirit, {\eg} partition algebras, and 
the quantum versions of these diagram algebras such as 
the Birman--Murakami--Wenzl 
algebra (we will treat this case for higher genus in 
\fullref{section:brauer} below).
We leave the details to the interested reader. 
\end{remark}

\section{Handlebody braid and Coxeter groups}\label{section:braids}

Throughout, we fix the \emph{genus} $g\in\N$ as well as the 
\emph{number of strands} $n\in\N_{>0}$. As 
a general conventions, all notions involving $g$ are vacuous if $g=0$, 
and similarly all notions involving $n$ are vacuous for $n=1$.

\subsection{Handlebody braid diagrams}\label{subsection:braids}

In this section we consider 
\emph{handlebody braid diagrams} (in $n$ strands and of genus $g$).
These diagrams are similar to classical braid diagrams with $g+n$ strands 
in the following sense. We let
\begin{gather}\label{eq:lefttoright-labels}
\tau_{u}
=
\begin{tikzpicture}[anchorbase,scale=0.7,tinynodes]
\draw[usual,crossline] (1.5,0)node[below,black,yshift=-1pt]{$1$} 
to[out=90,in=270] (-0.25,0.75);
\draw[pole,crosspole] (-0.5,0)node[below,black,yshift=-1pt]{$u{-}1$} 
to[out=90,in=270] (-0.5,1.5)node[above,black,yshift=-3pt]{$u{-}1$};
\draw[pole,crosspole] (-1,0)node[below,black,yshift=-1pt]{$1$} 
to[out=90,in=270] (-1,1.5)node[above,black,yshift=-3pt]{$1$};
\draw[pole,crosspole] (0.5,0)node[below,black,yshift=-1pt]{$u{+}1$} 
to[out=90,in=270] (0.5,1.5)node[above,black,yshift=-3pt]{$u{+}1$};
\draw[pole,crosspole] (1,0)node[below,black,yshift=-1pt]{$g$} 
to[out=90,in=270] (1,1.5)node[above,black,yshift=-3pt]{$g$};
\draw[pole,crosspole] (0,0)node[below,black,yshift=-1pt]{$u$} 
to[out=90,in=270] (0,1.5)node[above,black,yshift=-3pt]{$u$};
\draw[usual,crossline] (-0.25,0.75) to[out=90,in=270] (1.5,1.5)node[above,black,yshift=-3pt]{$1$};
\node at (0.75,0.75) {$\dots$};
\node at (-0.75,0.75) {$\dots$};
\end{tikzpicture}
,\quad
\beta_{i}
=
\begin{tikzpicture}[anchorbase,scale=0.7,tinynodes]
\draw[usual,crossline] (0.5,0)node[below,black,yshift=-1pt]{$i{+}1$} 
to[out=90,in=270] (0,0.75)node[above,black,yshift=-3pt]{$i$};
\draw[usual,crossline] (0,0)node[below,black,yshift=-1pt]{$i$} 
to[out=90,in=270] (0.5,0.75)node[above,black,yshift=-3pt]{$i{+}1$};
\end{tikzpicture}
.
\end{gather}
Here the numbers indicate the corresponding positions, reading left to right.
We have \emph{usual strands}, illustrated in black, and \emph{core strands}, 
illustrated thick and blue-grayish. We note that all of our diagrams have $g$ core strands 
and $n$ usual strands, but we tend to illustrate local pictures, 
as we already did above. The elements of the form $\tau_{u}$ 
and their inverses are called \emph{positive coils} 
and \emph{negative coils}, respectively. We also 
say \emph{coils} for short.

\begin{definition}\label{definition:handlebody-braidgroup}
We let the \emph{handlebody braid group} (in $n$ strands and of genus $g$)
$\setstuff{B}_{g,n}$ be the group generated by 
$\{\tau_{u},\beta_{i}\mid 1\leq u\leq g,1\leq i\leq n-1\}$ modulo
\begin{gather}\label{eq:handlebody-summary1}
\beta_{i}\beta_{j}=\beta_{j}\beta_{i}\;\text{if }|i-j|>1,\quad
\beta_{i}\beta_{j}\beta_{i}=\beta_{j}\beta_{i}\beta_{j}\;\text{if }|i-j|=1,
\\ 
\label{eq:handlebody-summary2}
\tau_{u}\beta_{i}
=\beta_{i}\tau_{u}\;\text{if }i>1,
\quad
\tau_{v}(\beta_{1}\tau_{u}\beta_{1})
=(\beta_{1}\tau_{u}\beta_{1})\tau_{v}
\;\text{if }u\leq v.
\end{gather}
\end{definition}

We think of $\setstuff{B}_{g,n}$ as a 
handlebody generalization of the extended affine braid group of type A.
Note that is $\setstuff{B}_{g,n}$ not attached to a Coxeter group in any 
straightforward way, see {\eg} \cite[Remark 4]{La-handlebodies} 
and \cite[(1-7)]{RoTu-homflypt-typea}. 

\begin{remark}\label{remark:braidcox}
Special cases of \fullref{definition:handlebody-braidgroup} are:	
\begin{enumerate}

\setlength\itemsep{0.15cm}

\item The case $g=0$ is the classical braid group $\setstuff{B}_{n}=\setstuff{B}_{0,n}$.

\item For $g=1$ the handlebody braid group is the 
braid group of extended affine type A, which 
is also the braid group of Coxeter type C$=$B, 
see \cite{Brieskorn} or \cite{allcock}.

\item A perhaps surprising fact is that the handlebody braid group 
for $g=2$ is 
isomorphic to the braid group of affine Coxeter type C, 
see \cite{allcock}.

\item To the best of our knowledge the case $g>2$ was first 
studied in \cite{Ve-handlebodies}, 
and then further in \cite{HaOlLa-handlebodies}.

\end{enumerate}
\end{remark}

\begin{remark}\label{remark:handlebody-closing}
The handlebody braid group describes 
the configuration space of a disk with $g$ punctures \cite{Ve-handlebodies}, 
\cite{La-handlebodies},
\cite{HaOlLa-handlebodies}. 
Moreover, after taking an appropriate 
version of an Alexander closure, as explained {\eg} in 
\cite[Theorem 2]{HaOlLa-handlebodies} or 
\cite[Section 2]{RoTu-homflypt-typea} 
and illustrated in \eqref{eq:handlebody-closing}, these braid 
groups give an algebraic way to study links in handlebodies. 
\begin{gather}\label{eq:handlebody-closing}
\begin{gathered}
\text{The closure operation}
\\[-10pt]
\text{merges cores at infinity:}
\end{gathered}
\quad
\begin{tikzpicture}[anchorbase,scale=0.7,tinynodes]
\draw[pole,crosspole] (-0.5,0) to[out=90,in=270] (-0.5,1.5);
\draw[usual,crossline] (1,0) to[out=90,in=270] (-0.25,0.75);
\draw[pole,crosspole] (0,0) to[out=90,in=270] (0,1.5);
\draw[pole,crosspole] (0.5,0) to[out=90,in=270] (0.5,1.5);
\draw[usual,crossline] (-0.25,0.75) to[out=90,in=270] (1,1.5);
\draw[pole,crosspole] (-0.5,1.5) to[out=90,in=270] (-0.5,3);
\draw[usual,crossline] (1,1.5) to[out=90,in=270] (0.25,2.25);
\draw[pole,crosspole] (0,1.5) to[out=90,in=270] (0,3);
\draw[pole,crosspole] (0.5,1.5) to[out=90,in=270] (0.5,3);
\draw[usual,crossline] (0.25,2.25) to[out=90,in=270] (1,3);
\end{tikzpicture}
\rightsquigarrow
\scalebox{0.85}{$\begin{tikzpicture}[anchorbase,scale=0.7]
\draw[pole,crosspole] (-0.5,0) to[out=90,in=270] (-0.5,1.5);
\draw[usual,crossline] (1,0) to[out=90,in=270] (-0.25,0.75);
\draw[pole,crosspole] (0,0) to[out=90,in=270] (0,1.5);
\draw[pole,crosspole] (0.5,0) to[out=90,in=270] (0.5,1.5);
\draw[usual,crossline] (-0.25,0.75) to[out=90,in=270] (1,1.5);
\draw[pole,crosspole] (-0.5,1.5) to[out=90,in=270] (-0.5,3);
\draw[usual,crossline] (1,1.5) to[out=90,in=270] (0.25,2.25);
\draw[pole,crosspole] (0,1.5) to[out=90,in=270] (0,3);
\draw[pole,crosspole] (0.5,1.5) to[out=90,in=270] (0.5,3);
\draw[usual,crossline] (0.25,2.25) to[out=90,in=270] (1,3);
\draw[pole] (-0.5,0) to[out=270,in=180] (0,-0.75);
\draw[pole] (0,0) to[out=270,in=90] (0,-0.75);
\draw[pole] (0.5,0) to[out=270,in=0] (0,-0.75);
\draw[pole] (-0.5,3) to[out=90,in=180] (0,3.75);
\draw[pole] (0,3) to[out=90,in=270] (0,3.75);
\draw[pole] (0.5,3) to[out=90,in=0] (0,3.75);
\draw node[pole] at (0,3.7) {{\color{specialgray}\LARGE$\bullet$}};
\draw node[pole,above] at (0,3.7) {\color{specialgray}$\infty$};
\draw node[pole] at (0,-0.8) {{\color{specialgray}\LARGE$\bullet$}};
\draw node[pole,below] at (0,-0.75) {\color{specialgray}$\infty$};
\draw[usual] (1,0) to[out=270,in=180] (1.25,-0.5) 
to[out=0,in=270] (1.5,0) to (1.5,3) to[out=90,in=0] (1.25,3.5) to[out=180,in=90] (1,3);
\end{tikzpicture}$}
.
\end{gather}
In the topological interpretation, as explained 
{\eg} in \cite[Section 2]{RoTu-homflypt-typea}, 
the core strands correspond to the cores of the handles 
of a handlebody.
This motivates our nomenclature.
\end{remark}

The diagrammatic interpretations of the relations
\eqref{eq:handlebody-summary1} and \eqref{eq:handlebody-summary2} are
\begin{gather*}
\begin{tikzpicture}[anchorbase,scale=0.7,tinynodes]
\draw[usual,crossline] (0.5,0) to[out=90,in=270] (0,0.75) to (0,1.5);
\draw[usual,crossline] (0,0) to[out=90,in=270] (0.5,0.75) to (0.5,1.5);
\draw[usual,crossline] (1.5,0) to (1.5,0.75) to[out=90,in=270] (1,1.5);
\draw[usual,crossline] (1,0) to (1,0.75) to[out=90,in=270] (1.5,1.5);
\end{tikzpicture}
=
\begin{tikzpicture}[anchorbase,scale=0.7,tinynodes]
\draw[usual,crossline] (0.5,0) to (0.5,0.75) to[out=90,in=270] (0,1.5);
\draw[usual,crossline] (0,0) to (0,0.75) to[out=90,in=270] (0.5,1.5);
\draw[usual,crossline] (1.5,0) to[out=90,in=270] (1,0.75) to (1,1.5);
\draw[usual,crossline] (1,0) to[out=90,in=270] (1.5,0.75) to (1.5,1.5);
\end{tikzpicture}
,\quad
\begin{tikzpicture}[anchorbase,scale=0.7,tinynodes]
\draw[usual,crossline] (0.5,0) to[out=90,in=270] (0,0.75);
\draw[usual,crossline] (0,0) to[out=90,in=270] (0.5,0.75);
\draw[usual,crossline] (1,0) to[out=90,in=270] (1,0.75);
\draw[usual,crossline] (1,0.75) to[out=90,in=270] (0.5,1.5);
\draw[usual,crossline] (0.5,0.75) to[out=90,in=270] (1,1.5);
\draw[usual,crossline] (0,0.75) to[out=90,in=270] (0,1.5);
\draw[usual,crossline] (0.5,1.5) to[out=90,in=270] (0,2.25);
\draw[usual,crossline] (0,1.5) to[out=90,in=270] (0.5,2.25);
\draw[usual,crossline] (1,1.5) to[out=90,in=270] (1,2.25);
\end{tikzpicture}
=
\begin{tikzpicture}[anchorbase,scale=0.7,tinynodes]
\draw[usual,crossline] (1,0) to[out=90,in=270] (0.5,0.75);
\draw[usual,crossline] (0.5,0) to[out=90,in=270] (1,0.75);
\draw[usual,crossline] (0,0) to[out=90,in=270] (0,0.75);
\draw[usual,crossline] (0.5,0.75) to[out=90,in=270] (0,1.5);
\draw[usual,crossline] (0,0.75) to[out=90,in=270] (0.5,1.5);
\draw[usual,crossline] (1,0.75) to[out=90,in=270] (1,1.5);
\draw[usual,crossline] (1,1.5) to[out=90,in=270] (0.5,2.25);
\draw[usual,crossline] (0.5,1.5) to[out=90,in=270] (1,2.25);
\draw[usual,crossline] (0,1.5) to[out=90,in=270] (0,2.25);
\end{tikzpicture}
,
\\
\begin{tikzpicture}[anchorbase,scale=0.7,tinynodes]
\draw[pole,crosspole] (-0.5,0) to[out=90,in=270] (-0.5,2.25);
\draw[usual,crossline] (1.5,0) to[out=90,in=270] (-0.25,0.75);
\draw[pole,crosspole] (0.5,0) to[out=90,in=270] (0.5,1.5) to (0.5,2.25);
\draw[pole,crosspole] (1,0) to[out=90,in=270] (1,1.5) to (1,2.25);
\draw[pole,crosspole] (0,0) to[out=90,in=270] (0,1.5) to (0,2.25);
\draw[usual,crossline] (-0.25,0.75) to[out=90,in=270] (1.5,1.5) to (1.5,2.25);
\draw[usual,crossline] (2.5,0) to (2.5,1.5) to[out=90,in=270] (2,2.25);
\draw[usual,crossline] (2,0) to (2,1.5) to[out=90,in=270] (2.5,2.25);
\end{tikzpicture}
=
\begin{tikzpicture}[anchorbase,scale=0.7,tinynodes]
\draw[pole,crosspole] (-0.5,0) to[out=90,in=270] (-0.5,2.25);
\draw[usual,crossline] (1.5,0) to (1.5,0.75) to[out=90,in=270] (-0.25,1.5);
\draw[pole,crosspole] (0.5,0) to (0.5,0.75) to[out=90,in=270] (0.5,2.25);
\draw[pole,crosspole] (1,0) to (1,0.75) to[out=90,in=270] (1,2.25);
\draw[pole,crosspole] (0,0) to (0,0.75) to[out=90,in=270] (0,2.25);
\draw[usual,crossline] (-0.25,1.5) to[out=90,in=270] (1.5,2.25);
\draw[usual,crossline] (2.5,0) to[out=90,in=270] (2,0.75) to (2,2.25);
\draw[usual,crossline] (2,0) to[out=90,in=270] (2.5,0.75) to (2.5,2.25);
\end{tikzpicture},
\quad
\scalebox{0.85}{$\begin{tikzpicture}[anchorbase,scale=0.7,tinynodes]
\draw[pole,crosspole] (-0.5,0) to[out=90,in=270] (-0.5,4.5);
\draw[usual,crossline] (1.5,0) to[out=90,in=270] (0.25,0.75);
\draw[pole,crosspole] (0.5,0)node[below,black,yshift=-1pt]{$v$} 
to[out=90,in=270] (0.5,1.5) to (0.5,2.25);
\draw[pole,crosspole] (1,0) to[out=90,in=270] (1,1.5) to (1,2.25);
\draw[pole,crosspole] (0,0)node[below,black,yshift=-1pt]{$u$} 
to[out=90,in=270] (0,1.5) to (0,2.25);
\draw[usual,crossline] (2,0) to (2,1.5) to[out=90,in=270] (1.5,2.25);
\draw[usual,crossline] (0.25,0.75) to[out=90,in=270] (1.5,1.5);
\draw[usual,crossline] (1.5,1.5) to[out=90,in=270] (2,2.25);
\draw[usual,crossline] (1.5,2.25) to[out=90,in=270] (-0.25,3);
\draw[pole,crosspole] (1,2.25) to[out=90,in=270] (1,3.75) to (1,4.5);
\draw[pole,crosspole] (0,2.25) to[out=90,in=270] (0,3.75) 
to (0,4.5)node[above,black,yshift=-3pt]{$u$};
\draw[pole,crosspole] (0.5,2.25) to[out=90,in=270] (0.5,3.75) 
to (0.5,4.5)node[above,black,yshift=-3pt]{$v$};
\draw[usual,crossline] (-0.25,3) to[out=90,in=270] (1.5,3.75);
\draw[usual,crossline] (2,2.25) to (2,3.75) to[out=90,in=270] (1.5,4.5);
\draw[usual,crossline] (1.5,3.75) to[out=90,in=270] (2,4.5);
\end{tikzpicture}
=
\begin{tikzpicture}[anchorbase,scale=0.7,tinynodes]
\draw[pole,crosspole] (-0.5,0) to[out=90,in=270] (-0.5,4.5);
\draw[usual,crossline] (1.5,0.75) to[out=90,in=270] (-0.25,1.5);
\draw[pole,crosspole] (1,0) to (1,0.75) to[out=90,in=270] (1,2.25);
\draw[pole,crosspole] (0,0)node[below,black,yshift=-1pt]{$u$} 
to (0,0.75) to[out=90,in=270] (0,2.25);
\draw[pole,crosspole] (0.5,0)node[below,black,yshift=-1pt]{$v$} 
to (0.5,0.75) to[out=90,in=270] (0.5,2.25);
\draw[usual,crossline] (-0.25,1.5) to[out=90,in=270] (1.5,2.25);
\draw[usual,crossline] (2,0) to[out=90,in=270] (1.5,0.75);
\draw[usual,crossline] (1.5,0) to[out=90,in=270] (2,0.75) to (2,2.25);
\draw[usual,crossline] (1.5,3) to[out=90,in=270] (0.25,3.75);
\draw[pole,crosspole] (0.5,2.25) to (0.5,3) to[out=90,in=270] 
(0.5,4.5)node[above,black,yshift=-3pt]{$v$};
\draw[pole,crosspole] (1,2.25) to (1,3) to[out=90,in=270] (1,4.5);
\draw[pole,crosspole] (0,2.25) to (0,3) to[out=90,in=270] 
(0,4.5)node[above,black,yshift=-3pt]{$u$};
\draw[usual,crossline] (0.25,3.75) to[out=90,in=270] (1.5,4.5);
\draw[usual,crossline] (2,2.25) to[out=90,in=270] (1.5,3);
\draw[usual,crossline] (1.5,2.25) to[out=90,in=270] (2,3) to (2,4.5);
\end{tikzpicture}$}
\;\text{if }u\leq v
.
\end{gather*}
We will use these diagrammatics whenever appropriate. 
For completeness, and to
make connection to the presentation from 
\cite[Theorem 2]{HaOlLa-handlebodies} or \cite[Section 2]{RoTu-homflypt-typea},
for $u=1,\dots,g$ we defined recursively $\tilde{\tau}_{g}$ by
$\tilde{\tau}_{g}=
\tau_{g}$, and for $1\leq u<g$ we let
\begin{gather*}
\tilde{\tau}_{u}=
\tilde{\tau}_{u-1}^{-1}
\dots\tilde{\tau}_{g}^{-1}\tau_{u}
=
\begin{tikzpicture}[anchorbase,scale=0.7,tinynodes]
\draw[pole,crosspole] (-0.5,0)node[below,black,yshift=-1pt]{$u{-}1$} 
to[out=90,in=270] (-0.5,1.5)node[above,black,yshift=-3pt]{$u{-}1$};
\draw[pole,crosspole] (-1,0)node[below,black,yshift=-1pt]{$1$} 
to[out=90,in=270] (-1,1.5)node[above,black,yshift=-3pt]{$1$};
\draw[pole,crosspole] (0.5,0)node[below,black,yshift=-1pt]{$u{+}1$} 
to[out=90,in=270] (0.5,1.5)node[above,black,yshift=-3pt]{$u{+}1$};
\draw[pole,crosspole] (1,0)node[below,black,yshift=-1pt]{$g$} 
to[out=90,in=270] (1,1.5)node[above,black,yshift=-3pt]{$g$};
\draw[usual,crossline] (1.5,0)node[below,black,yshift=-1pt]{$1$} 
to[out=90,in=270] (-0.25,0.75);
\draw[pole,crosspole] (0,0)node[below,black,yshift=-1pt]{$u$} 
to[out=90,in=270] (0,1.5)node[above,black,yshift=-3pt]{$u$};
\draw[usual,crossline] (-0.25,0.75) to[out=90,in=270] 
(1.5,1.5)node[above,black,yshift=-3pt]{$1$};
\node at (0.75,0.75) {$\dots$};
\node at (-0.75,0.75) {$\dots$};
\end{tikzpicture}
.
\end{gather*}

\begin{proposition}\label{proposition:alternative-presentation}
The handlebody braid group admits the following alternative presentation.
\begin{gather*}
\setstuff{B}_{g,n}\cong
\Bigg\langle
\begin{gathered}
\tilde{\tau}_{u},\,u=1,\dots,g,
\\
\beta_{i},\,i=1,\dots,n-1
\end{gathered}
\,\Bigg\vert\,
\scalebox{0.85}{$
\begin{gathered}
\beta_{i}\beta_{j}=\beta_{j}\beta_{i}\;\text{if }|i-j|>1,\quad
\beta_{i}\beta_{j}\beta_{i}=\beta_{j}\beta_{i}\beta_{j}\;\text{if }|i-j|=1,
\\
\tilde{\tau}_{u}\beta_{i}
=\beta_{i}\tilde{\tau}_{u}\;\text{if }i>1,
\quad
\tilde{\tau}_{u}\beta_{1}\tilde{\tau}_{u}\beta_{1}
=
\beta_{1}\tilde{\tau}_{u}\beta_{1}\tilde{\tau}_{u},
\quad
\\
\tilde{\tau}_{v}(\beta_{1}\tilde{\tau}_{u}\beta_{1}^{-1})
=(\beta_{1}\tilde{\tau}_{u}\beta_{1}^{-1})\tilde{\tau}_{v}
\;\text{if }u<v
\end{gathered}
$}
\Bigg\rangle.
\end{gather*}
\end{proposition}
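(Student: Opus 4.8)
The plan is to treat this as a change of generators (a Tietze transformation) rather than anything geometric, since both presentations share the generators $\beta_i$ and the braid relations \eqref{eq:handlebody-summary1}, and differ only in how the coils are packaged. First I would unwind the recursive definition of the $\tilde\tau_u$: telescoping the products collapses it to the closed form $\tilde\tau_u=\tau_{u+1}^{-1}\tau_u$ for $1\le u<g$ (with $\tilde\tau_g=\tau_g$), and this is immediately invertible, $\tau_u=\tilde\tau_g\tilde\tau_{g-1}\cdots\tilde\tau_{u+1}\tilde\tau_u$. In particular the substitution $\{\tau_u\}\leftrightarrow\{\tilde\tau_u\}$ is triangular in the index $u$ and fixes every $\beta_i$, so it defines mutually inverse bijections on the two generating sets. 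It therefore suffices to check that, under this substitution, each defining relation of one presentation is a consequence of the defining relations of the other; doing this in both directions produces mutually inverse homomorphisms and hence the asserted isomorphism.

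The relations not involving $\beta_1$ are disposed of at once. The braid relations \eqref{eq:handlebody-summary1} are common to both presentations. Since $\tilde\tau_u$ is a word in $\tau_u,\dots,\tau_g$ only (and, conversely, $\tau_u$ a word in $\tilde\tau_u,\dots,\tilde\tau_g$), the far-commutation relation $\tilde\tau_u\beta_i=\beta_i\tilde\tau_u$ for $i>1$ follows termwise from $\tau_v\beta_i=\beta_i\tau_v$ ($i>1$) in \eqref{eq:handlebody-summary2}, and symmetrically in the other direction. Everything therefore reduces to the coil--coil relations, i.e.\ those built from $\beta_1$ and the coils.

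Here the bookkeeping already lines up: the relation $\tau_v\beta_1\tau_u\beta_1=\beta_1\tau_u\beta_1\tau_v$ ($u\le v$) of \eqref{eq:handlebody-summary2} splits into $g$ relations with $u=v$ and $\binom{g}{2}$ relations with $u<v$, while the target relations consist of the $g$ type-B relations $\tilde\tau_u\beta_1\tilde\tau_u\beta_1=\beta_1\tilde\tau_u\beta_1\tilde\tau_u$ and the $\binom{g}{2}$ commutations $\tilde\tau_v(\beta_1\tilde\tau_u\beta_1^{-1})=(\beta_1\tilde\tau_u\beta_1^{-1})\tilde\tau_v$ ($u<v$); the counts match. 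I would prove the correspondence by induction on the genus, peeling off the top coil $\tau_g=\tilde\tau_g$. The base case is free: the $u=v=g$ instance of \eqref{eq:handlebody-summary2} is exactly the type-B relation for $\tilde\tau_g$. For the inductive step I substitute $\tilde\tau_u=\tau_{u+1}^{-1}\tau_u$ and move the $\tau_{u+1}^{-1}$-factors through the words using the available commutations, namely that $\tau_v$ commutes with $\beta_1\tau_u\beta_1$ whenever $u\le v$.

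The main obstacle is the mismatch between the non-conjugation word $\beta_1\tau_u\beta_1$ appearing in \eqref{eq:handlebody-summary2} and the genuine conjugation $\beta_1\tilde\tau_u\beta_1^{-1}$ in the target commutation relation. Reconciling them forces one to track precisely the compensating $\tau$-factors and the power of $\beta_1$ produced by the substitution, and to check that they cancel using only the commutations that are actually available; here the asymmetry $u\le v$ is essential, since $\tau_u$ need not commute with $\beta_1\tau_v\beta_1$ when $u>v$, so the induction must always peel from the top (largest) index. Before attempting the general step I would verify the whole correspondence by hand for $g=2$, where $\setstuff{B}_{2,n}$ is the affine type C braid group; this already exhibits the conjugation-versus-non-conjugation phenomenon and would pin down the exact form of the intermediate identities needed for the induction.
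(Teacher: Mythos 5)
Your change-of-generators setup is sound, and it is worth noting up front that you are on a genuinely different track from the paper: the paper's proof of \fullref{proposition:alternative-presentation} is a one-line pointer to the topological model of \cite{HaOlLa-handlebodies} (both presentations are identified with the surface braid group of the $g$-punctured disk), whereas you propose a direct algebraic Tietze argument. Within that plan, the telescoping to $\tilde{\tau}_{u}=\tau_{u+1}^{-1}\tau_{u}$ (reading the paper's recursion with $\tilde{\tau}_{u+1}^{-1}\cdots\tilde{\tau}_{g}^{-1}\tau_{u}$, as clearly intended), the invertibility $\tau_{u}=\tilde{\tau}_{g}\cdots\tilde{\tau}_{u}$, the termwise transfer of the braid and far-commutation relations, and the base case $u=v=g$ are all correct.

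The gap is that the only step carrying real content --- showing that the coil--coil relations of \eqref{eq:handlebody-summary2} and the type-B-plus-conjugation relations of the target presentation have the same normal closure under your substitution --- is located but not performed, and it is not mere bookkeeping. Test it at $g=2$: set $A=\beta_{1}\tau_{1}\beta_{1}$ and $B=\beta_{1}\tau_{2}\beta_{1}$, so \eqref{eq:handlebody-summary2} says $\tau_{2}$ commutes with $A$ and $B$. Then $\tau_{2}$ commutes with $B^{-1}A=\beta_{1}^{-1}(\tau_{2}^{-1}\tau_{1})\beta_{1}=\beta_{1}^{-1}\tilde{\tau}_{1}\beta_{1}$, which is the \emph{opposite} conjugate from the one in the target relation $\tilde{\tau}_{2}(\beta_{1}\tilde{\tau}_{1}\beta_{1}^{-1})=(\beta_{1}\tilde{\tau}_{1}\beta_{1}^{-1})\tilde{\tau}_{2}$; converting $\beta_{1}^{-1}\tau_{2}\beta_{1}$ into $\tau_{2}(\beta_{1}\tau_{2}\beta_{1}^{-1})\tau_{2}^{-1}$ via the quadratic relation introduces a $\tau_{2}$-conjugation that does not act trivially on $\tilde{\tau}_{1}$, so the desired commutation does not drop out. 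By contrast, with the opposite normalization $\tilde{\tau}_{1}=\tau_{1}\tau_{2}^{-1}$ one gets $\beta_{1}\tilde{\tau}_{1}\beta_{1}^{-1}=AB^{-1}$ and the target relation is immediate. So the precise order of the factors in your closed form --- which is exactly what the recursive definition (with its $u{-}1$ versus $u{+}1$ ambiguity) leaves delicate --- decides whether the correspondence of relations works as stated; you must pin the closed form down against the diagram defining $\tilde{\tau}_{u}$ and then actually carry out the two derivations, or the induction on genus has no inductive step to run.
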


\begin{proof}
A pleasant exercise (see also {\eg} \cite[Section 5]{HaOlLa-handlebodies}).
\end{proof}

The following allows us to use topological 
arguments and is used several times.

\begin{proposition}\label{proposition:embedding}
The rule
\begin{gather*}
\begin{tikzpicture}[anchorbase,scale=0.7,tinynodes]
\draw[usual,crossline] (1.5,0)node[below,black,yshift=-1pt]{$1$} 
to[out=90,in=270] (-0.25,0.75);
\draw[pole,crosspole] (-0.5,0) to[out=90,in=270] (-0.5,1.5);
\draw[pole,crosspole] (0.5,0) to[out=90,in=270] (0.5,1.5);
\draw[pole,crosspole] (1,0) to[out=90,in=270] (1,1.5);
\draw[pole,crosspole] (0,0)node[below,black,yshift=-1pt]{$u$} to[out=90,in=270] 
(0,1.5)node[above,black,yshift=-3pt]{$u$};
\draw[usual,crossline] (-0.25,0.75) to[out=90,in=270] 
(1.5,1.5)node[above,black,yshift=-3pt]{$1$};
\end{tikzpicture}
\mapsto
\begin{tikzpicture}[anchorbase,scale=0.7,tinynodes]
\draw[usual,crossline] (1.5,0)node[below,black,yshift=-1pt]{$g{+}1$} 
to[out=90,in=270] (-0.25,0.75);
\draw[usual,crossline] (-0.5,0) to[out=90,in=270] (-0.5,1.5);
\draw[usual,crossline] (0.5,0) to[out=90,in=270] (0.5,1.5);
\draw[usual,crossline] (1,0) to[out=90,in=270] (1,1.5);
\draw[usual,crossline] (0,0)node[below,black,yshift=-1pt]{$u$} 
to[out=90,in=270] (0,1.5)node[above,black,yshift=-3pt]{$u$};
\draw[usual,crossline] (-0.25,0.75) to[out=90,in=270] 
(1.5,1.5)node[above,black,yshift=-3pt]{$g{+}1$};
\end{tikzpicture}
,\quad
\begin{tikzpicture}[anchorbase,scale=0.7,tinynodes]
\draw[usual,crossline] (0.5,0)node[below,black,yshift=-1pt]{$i{+}1$} 
to[out=90,in=270] (0,0.75)node[above,black,yshift=-3pt]{$i$};
\draw[usual,crossline] (0,0)node[below,black,yshift=-1pt]{$i$} 
to[out=90,in=270] (0.5,0.75)node[above,black,yshift=-3pt]{$i{+}1$};
\end{tikzpicture}
\mapsto
\begin{tikzpicture}[anchorbase,scale=0.7,tinynodes]
\draw[usual,crossline] (0.5,0) to[out=90,in=270] (0,0.75)node[above,black,yshift=-3pt,xshift=-2pt]{$g{+}i$};
\draw[usual,crossline] (0,0)node[below,black,xshift=-2pt]{$g{+}i$} 
to[out=90,in=270] (0.5,0.75);
\draw[usual,white] (0.75,0)node[below,black,xshift=2pt]{$g{+}i{+}1$} 
to[out=90,in=270] (0.75,0.75)node[above,black,yshift=-3pt,xshift=2pt]{$g{+}i{+}1$};
\end{tikzpicture}
,
\end{gather*}
defines an injective group homomorphism
$\iota_{g,n}\colon\setstuff{B}_{g,n}\hookrightarrow\setstuff{B}_{g+n}$.
\end{proposition}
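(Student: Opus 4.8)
The plan is to establish well-definedness and injectivity separately, the latter being the substantial point. For well-definedness I would simply check that $\iota_{g,n}$ respects the defining relations \eqref{eq:handlebody-summary1} and \eqref{eq:handlebody-summary2} of \fullref{definition:handlebody-braidgroup}. Since the images of $\tau_u$ and $\beta_i$ are honest braids on $g+n$ strands, each relation becomes the assertion that two explicit braid words in $\setstuff{B}_{g+n}$ coincide, which one verifies by a braid isotopy. The braid relations among the $\beta_i$ map to the braid relations among the $\beta_{g+i}$ verbatim; the commutation $\tau_u\beta_i=\beta_i\tau_u$ for $i>1$ holds because the relevant strands have disjoint support; and the coil relation in \eqref{eq:handlebody-summary2} translates into the statement that two nested coils commute as braids, which is precisely the isotopy the relation was designed to encode. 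Granting these routine diagrammatic checks, $\iota_{g,n}$ is a well-defined group homomorphism.

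For injectivity I would pass to the topological model of \fullref{remark:handlebody-closing}: fix $g$ distinct punctures $Q=\{p_1,\dots,p_g\}$ in a disk $D$ and identify $\setstuff{B}_{g,n}$ with $\pi_1$ of the space $F=\mathrm{UConf}_n(D\setminus Q)$ of $n$ unordered points in the punctured disk, following \cite{Ve-handlebodies} and \cite{HaOlLa-handlebodies}. Let $E$ be the space of configurations consisting of $g$ ordered points $Q'$ together with $n$ unordered points in $D\setminus Q'$. The forgetful map $E\to\mathrm{UConf}_{g+n}(D)$ is a finite covering, hence induces an injection $\pi_1(E)\hookrightarrow\setstuff{B}_{g+n}$, and the topological inclusion $F\hookrightarrow E$ (inserting the fixed reference punctures $Q$) induces on $\pi_1$ exactly the map $\iota_{g,n}$: a loop of one movable point around $p_u$ becomes the $(g{+}1)$-st strand coiling around the $u$-th, and $\beta_i$ becomes $\beta_{g+i}$, matching the stated rule. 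It therefore suffices to show that $\pi_1(F)\to\pi_1(E)$ is injective.

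The key tool is the Fadell--Neuwirth fibration $E\to\mathrm{OConf}_g(D)$ that forgets the $n$ unordered points, whose fiber over $Q$ is $F$. Its long exact sequence reads
\begin{gather*}
\pi_2\big(\mathrm{OConf}_g(D)\big)\to\pi_1(F)\xrightarrow{\ \iota_{g,n}\ }\pi_1(E)\to\pi_1\big(\mathrm{OConf}_g(D)\big).
\end{gather*}
Since $\mathrm{OConf}_g(D)$ is aspherical, a $K(\pi,1)$ for the pure braid group, its $\pi_2$ vanishes, so $\iota_{g,n}$ is injective; composing with $\pi_1(E)\hookrightarrow\setstuff{B}_{g+n}$ gives the claim. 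The main obstacle is the middle paragraph: rigorously matching the geometrically defined fiber inclusion $\pi_1(F)\to\pi_1(E)$ with the combinatorial rule on generators, together with careful bookkeeping of the ordered-versus-unordered distinction between the two groups of strands. Once this dictionary between loops in $F$ and the braid pictures is fixed, the fibration argument for injectivity is entirely standard.
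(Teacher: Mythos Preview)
Your argument is correct. The well-definedness check is routine, and the injectivity argument via the Fadell--Neuwirth fibration $E\to\mathrm{OConf}_g(D)$ with fiber $F=\mathrm{UConf}_n(D\setminus Q)$ is the standard topological route: asphericity of $\mathrm{OConf}_g(D)$ kills $\pi_2$ in the long exact sequence, forcing $\pi_1(F)\to\pi_1(E)$ to inject, and the covering $E\to\mathrm{UConf}_{g+n}(D)$ then embeds $\pi_1(E)$ into $\setstuff{B}_{g+n}$. The identification of the fiber-inclusion map on $\pi_1$ with the stated rule on generators is indeed just bookkeeping once the basepoints are fixed.

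The paper does not spell out a proof at all: it invokes the alternative presentation (\fullref{proposition:alternative-presentation}) so that the generators match those of \cite{Ve-handlebodies} and \cite{La-handlebodies}, and then cites those references for the embedding. What you have written is essentially the argument carried out in \cite{Ve-handlebodies}, so your approach is the same as the one behind the citation; the only difference is that you work directly with the $\tau_u$ rather than passing through the $\tilde\tau_u$ to match the literature. Your version has the advantage of being self-contained; the paper's has the advantage of brevity and of pointing the reader to where the configuration-space model is set up carefully.
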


\begin{proof}
Using \fullref{proposition:alternative-presentation}, this is 
\cite[Theorem 1]{Ve-handlebodies}
and \cite[Section 5]{La-handlebodies}.
\end{proof}

We use the presentation without the tildes in this paper, but 
the other presentation can also be chosen, if preferred.

\subsection{Handlebody Coxeter groups}\label{subsection:coxdia}

The appropriate Coxeter groups in this setup are the following.

\begin{definition}\label{definition:handlebody-coxetergroup}
We let the \emph{handlebody Coxeter group} (in $n$ strands and of genus $g$)
$\setstuff{W}_{g,n}$ be the quotient group of 
$\setstuff{B}_{g,n}$ by the relations
\begin{gather*}
\beta_{i}^{2}=1
\;\text{for }i=1,\dots,n-1.
\end{gather*}
We write $t_{u}$ and $s_{i}$ for the images of 
$\tau_{u}$ respectively $\beta_{i}$ in the quotient.
\end{definition}

Similarly as for the handlebody braid group $\setstuff{B}_{g,n}$, we think of \fullref{definition:handlebody-coxetergroup} as a 
handlebody generalization of the extended affine Coxeter group of type A.

The asymmetry in \eqref{eq:handlebody-summary2} 
vanishes and we have the defining relations:
\begin{gather}\label{eq:coxhandlebody-summary1}
s_{i}^{2}=1
,\quad
s_{i}s_{j}=s_{j}s_{i}\;\text{if }|i-j|>1,
\quad
s_{i}s_{j}s_{i}=s_{j}s_{i}s_{j}\;\text{if }|i-j|=1,
\\
\label{eq:coxhandlebody-summary2}
t_{u}s_{i}
=s_{i}t_{u}\;\text{if }i>1,
\quad
t_{v}(s_{1}t_{u}s_{1})
=(s_{1}t_{u}s_{1})t_{v}
\;\forall
u,v
.
\end{gather}
By \eqref{eq:coxhandlebody-summary1}
we see that 
we have an embedding of groups $\setstuff{S}_{n}\cong\setstuff{W}_{0,n}\hookrightarrow\setstuff{W}_{g,n}$
by identifying the simple transpositions with the $s_{i}$.
Moreover, the $t_{u}$ span a free group $\setstuff{F}_{g}\cong\setstuff{W}_{g,1}$, 
and $\setstuff{F}_{g}$ is thus also a subgroup of $\setstuff{W}_{g,n}$. 
We use both below. 
Note also that $\setstuff{F}_{g}$, and thus $\setstuff{W}_{g,n}$, 
is of infinite order unless $g=0$. 
Hence, the analog of \fullref{proposition:embedding} 
does not hold for $\setstuff{W}_{g,n}$. 

\begin{remark}
Special cases of \fullref{definition:handlebody-coxetergroup} are:
\begin{enumerate}

\setlength\itemsep{0.15cm}

\item The case $g=0$ is the symmetric group $\setstuff{S}_{n}$.

\item For $g=1$ the handlebody Coxeter group is not the 
Coxeter group of type C$=$B, but rather the extended affine 
Coxeter (Weyl) group of type A.

\end{enumerate}
\end{remark}

For the further study of $\setstuff{W}_{g,n}$ we use
$\Zf=\Z[\zvar^{\pm 1},Y_{1},\dots,Y_{g}]$ as our ground ring.

\begin{definition}\label{definition:action}
We define a (right) action $
\Zf[X_{1},\dots,X_{n}]\actsright\setstuff{W}_{g,n}$
by:
\begin{enumerate}

\setlength\itemsep{0.15cm}

\item The generators $s_{i}$ act by the permutation action of
$\setstuff{S}_{n}$.

\item The generators $t_{u}$ act by
\begin{gather}\label{eq:action}
\begin{aligned}
X_{i}\actsright t_{u}
&=
\begin{cases}
\zvar^{u}X_{1}+Y_{u}&\text{if }i=1,
\\
X_{i}&\text{otherwise},
\end{cases}
\\
X_{i}\actsright t_{u}^{-1}
&=
\begin{cases}
\zvar^{-u}X_{1}-\zvar^{-u}Y_{u}&\text{if }i=1,
\\
X_{i}&\text{otherwise}.
\end{cases}
\end{aligned}
\end{gather}

\end{enumerate}
\end{definition}

The pictorial version of the action \eqref{eq:action} is
\begin{gather*}
\begin{tikzpicture}[anchorbase,scale=0.7,tinynodes]
\draw[usual,crossline] (1.5,0)node[below,black,yshift=-2pt]{$X_{1}$} 
to[out=90,in=270] (-0.25,0.75);
\draw[pole,crosspole] (-0.5,0) to[out=90,in=270] (-0.5,1.5);
\draw[pole,crosspole] (0.5,0) to[out=90,in=270] (0.5,1.5);
\draw[pole,crosspole] (1,0) to[out=90,in=270] (1,1.5);
\draw[pole,crosspole] (0,0)node[below,black,yshift=-1pt]{$u$} 
to[out=90,in=270] (0,1.5)node[above,black,yshift=-3pt]{$u$};
\draw[usual,crossline] (-0.25,0.75) to[out=90,in=270] 
(1.5,1.5)node[above,black,yshift=-2pt]{$\zvar^{u}X_{1}{+}Y_{u}$};
\end{tikzpicture}
.
\end{gather*}

\begin{remark}\label{remark:action}
Special cases of \fullref{definition:action} are:	
\begin{enumerate}

\item The case $g=0$ and $\zvar=1$ is the permutation 
representation of $\setstuff{S}_{n}$ on the polynomial ring 
$\Z[X_{1},\dots,X_{n}]$.

\item The case $g=1$ and $\zvar=1$ recovers the usual 
polynomial representation of the extended affine Weyl group of type A.

\item The case $g=1$, $\zvar=-1$ and $Y_{1}=0$ recovers 
the root-theoretic version of Tits' reflection representation 
of type C$=$B, {\ie} the representation where the type 
$A$ subdiagram acts by permutation and the 
additional generator acts as 
$-1$ on $X_{1}$.
\end{enumerate}
\end{remark}

\begin{lemma}\label{lemma:action}
The action in \fullref{definition:action} is well-defined.
\end{lemma}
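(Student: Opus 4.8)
The plan is to verify that the assignment in Definition~\ref{definition:action} respects all the defining relations of $\setstuff{W}_{g,n}$ listed in \eqref{eq:coxhandlebody-summary1} and \eqref{eq:coxhandlebody-summary2}. Since the action is declared on generators $s_i$ and $t_u$, well-definedness amounts to checking that each relator acts as the identity on the polynomial ring $\Zf[X_1,\dots,X_n]$. Because the ring is generated by $X_1,\dots,X_n$ as a $\Zf$-algebra and each generator acts by a $\Zf$-algebra automorphism (the $s_i$ permute the $X_i$, and each $t_u$ sends $X_1$ to the affine-linear expression $\zvar^u X_1 + Y_u$ and fixes the others), it suffices to check each relation on the variables $X_1,\dots,X_n$ alone.

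First I would dispatch the purely symmetric-group relations in \eqref{eq:coxhandlebody-summary1}: $s_i^2 = 1$, commutation for $|i-j|>1$, and the braid relation for $|i-j|=1$. These hold because the $s_i$ act through the standard permutation representation of $\setstuff{S}_n$, which is well-known to satisfy the Coxeter relations; no computation involving $\zvar$ or the $Y_u$ is needed since these only move indices. Next I would check $t_u s_i = s_i t_u$ for $i>1$ in \eqref{eq:coxhandlebody-summary2}. Here $s_i$ only permutes $X_i, X_{i+1}$ with $i\geq 2$, while $t_u$ only alters $X_1$; on each variable the two maps act on disjoint coordinates, so they visibly commute. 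One should also confirm $t_u t_u^{-1} = 1$ on $X_1$, i.e.\ that the two formulas in \eqref{eq:action} are genuinely inverse: applying $t_u^{-1}$ after $t_u$ gives $\zvar^{-u}(\zvar^u X_1 + Y_u) - \zvar^{-u}Y_u = X_1$, and conversely, so the generator is invertible as claimed.

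The substantive step is the last relation $t_v(s_1 t_u s_1) = (s_1 t_u s_1)t_v$ for all $u,v$. I would first compute the action of the conjugate $w_u := s_1 t_u s_1$. Since $s_1$ swaps $X_1 \leftrightarrow X_2$, one finds that $w_u$ fixes $X_1$, sends $X_2 \mapsto \zvar^u X_2 + Y_u$, and fixes all other $X_i$; thus $w_u$ is an affine transformation supported entirely on the $X_2$-coordinate. Meanwhile $t_v$ is supported on the $X_1$-coordinate. The two maps therefore act on disjoint coordinates, and I would conclude they commute by evaluating both composites on each $X_i$ separately: on $X_1$ only $t_v$ acts, on $X_2$ only $w_u$ acts, and all other variables are fixed by both. \textbf{The main obstacle} is bookkeeping rather than conceptual: one must be careful that composing affine-linear maps (right actions, so reading order matters) does not introduce cross terms, and in particular that $w_u$ really does leave $X_1$ untouched after the two swaps sandwich $t_u$. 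Once the support-disjointness of $t_v$ and $w_u$ is established, commutativity for \emph{all} $u,v$ is immediate and symmetric in $u,v$, which is exactly why the asymmetric condition $u\leq v$ of the braid group relaxes in the Coxeter quotient. The whole verification is thus a short finite check on each coordinate, and I would present it as such.
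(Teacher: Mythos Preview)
Your proposal is correct and follows essentially the same approach as the paper: verify invertibility of $t_u$ via the explicit formulas, dispatch the symmetric-group relations and the commutation $t_u s_i = s_i t_u$ for $i>1$ as immediate, and then check the final relation in \eqref{eq:coxhandlebody-summary2} coordinate by coordinate. Your observation that $w_u = s_1 t_u s_1$ is supported on $X_2$ while $t_v$ is supported on $X_1$ is a slightly cleaner packaging of exactly the direct computation the paper carries out on $X_1$ (and says is similar on $X_2$).
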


\begin{proof}
Note that 
$t_{u}$ and $t_{u}^{-1}$ act slightly asymmetrically, 
but this ensures that their actions invert each other since {\eg}
\begin{gather*}
X_{1}\actsright t_{u}t_{u}^{-1}
=
(\zvar^{u}X_{1}+Y_{u})\actsright t_{u}^{-1}
=
\zvar^{u}t_{u}^{-1}(X_{1})+Y_{u}
=
\zvar^{u}(\zvar^{-u}X_{1}-\zvar^{-u}Y_{u})+Y_{u}
=
X_{1}.
\end{gather*}
Moreover, by construction of the action, the only 
non-trivial check is that 
\eqref{eq:coxhandlebody-summary2} holds.
The left equality in \eqref{eq:coxhandlebody-summary2} 
is immediate, and for the right we compute
\begin{gather*}
\begin{aligned}
X_{1}\actsright t_{u}s_{1}t_{v}s_{1}
=\zvar^{u}X_{2}\actsright t_{v}s_{1}+Y_{u}
=\zvar^{u}X_{2}\actsright s_{1}+Y_{u}
&=\zvar^{u}X_{1}+Y_{u},
\\
X_{1}\actsright s_{1}t_{v}s_{1}t_{u}=
X_{2}\actsright t_{v}s_{1}t_{u}
=X_{2}\actsright s_{1}t_{u}
=X_{1}\actsright t_{u}
&=\zvar^{u}X_{1}+Y_{u}.
\end{aligned}
\end{gather*}
Thus, on the value $X_{1}$ the actions agree. Furthermore, 
a very similar calculation 
shows that they also agree on the value $X_{2}$.
For all other values the relation \eqref{eq:coxhandlebody-summary2} 
holds evidently.
\end{proof}

The action in \fullref{definition:action} is actually 
faithful, and this is what we are going to show next.

\begin{definition}\label{definition:jm-elements}
Define \emph{Jucys--Murphy elements} 
$\jm_{u,i}^{\pm 1}\in\setstuff{B}_{g,n}$ as follows. 
We let $\jm_{u,1}^{\pm 1}=\tau_{u}^{\pm 1}$.
For $i>1$ we define $\jm_{u,i}^{\pm 1}
=\beta_{i-1}^{\pm 1}\dots\beta_{1}^{\pm 1}
\tau_{u}^{\pm 1}\beta_{1}^{\pm 1}\dots\beta_{i-1}^{\pm 1}$, or diagrammatically
\begin{gather}\label{eq:jm-elements}
\jm_{u,i}
=
\begin{tikzpicture}[anchorbase,scale=0.7,tinynodes]
\draw[pole,crosspole] (-0.5,0) to[out=90,in=270] (-0.5,1.5);
\draw[usual,crossline] (2.5,0)node[below,black,yshift=-1pt]{$i$} 
to[out=90,in=270] (-0.25,0.75);
\draw[usual,crossline] (1.5,0) to[out=90,in=270] (1.5,1.5);
\draw[usual,crossline] (2,0) to[out=90,in=270] (2,1.5);
\draw[pole,crosspole] (0.5,0) to[out=90,in=270] (0.5,1.5);
\draw[pole,crosspole] (1,0) to[out=90,in=270] (1,1.5);
\draw[pole,crosspole] (0,0)node[below,black,yshift=-1pt]{$u$} 
to[out=90,in=270] (0,1.5)node[above,black,yshift=-3pt]{$u$};
\draw[usual,crossline] (-0.25,0.75) to[out=90,in=270] 
(2.5,1.5)node[above,black,yshift=-3pt]{$i$};
\end{tikzpicture}
,\quad
\jm_{u,i}^{-1}
=
\begin{tikzpicture}[anchorbase,scale=0.7,tinynodes]
\draw[pole,crosspole] (-0.5,0) to[out=90,in=270] (-0.5,1.5);
\draw[usual,crossline] (-0.25,0.75) to[out=90,in=270] 
(2.5,1.5)node[above,black,yshift=-3pt]{$i$};
\draw[pole,crosspole] (0.5,0) to[out=90,in=270] (0.5,1.5);
\draw[pole,crosspole] (1,0) to[out=90,in=270] (1,1.5);
\draw[usual,crossline] (1.5,0) to[out=90,in=270] (1.5,1.5);
\draw[usual,crossline] (2,0) to[out=90,in=270] (2,1.5);
\draw[pole,crosspole] (0,0)node[below,black,yshift=-1pt]{$u$} 
to[out=90,in=270] (0,1.5)node[above,black,yshift=-3pt]{$u$};
\draw[usual,crossline] (2.5,0)node[below,black,yshift=-1pt]{$i$} 
to[out=90,in=270] (-0.25,0.75);
\end{tikzpicture}
.
\end{gather}
\end{definition}

\begin{lemma}\label{lemma:jm-elements}
We have
\begin{gather}\label{eq:jm-relations}
\begin{gathered}
\jm_{u,i}^{\pm 1}\jm_{u,i}^{\mp 1}=1
,\quad
\beta_{j}\jm_{u,i}^{\pm 1}
=
\jm_{u,i}^{\pm 1}\beta_{j}
\;\text{if }i-1,i\neq j,
\\
\beta_{i-1}^{-1}\jm_{u,i}
=
\jm_{u,i-1}\beta_{i-1}
,\quad
\beta_{i}\jm_{u,i}
=
\jm_{u,i+1}\beta_{i}^{-1}
,\\
\beta_{i-1}\jm_{u,i}^{-1}
=
\jm_{u,i-1}^{-1}\beta_{i-1}^{-1}
,\quad
\beta_{i}^{-1}\jm_{u,i}^{-1}
=
\jm_{u,i+1}^{-1}\beta_{i}
,
\\
\jm_{u,i}\jm_{v,j}^{\pm 1}
=\jm_{v,j}^{\pm 1}\jm_{u,i}
\text{ and }
\jm_{u,i}^{-1}\jm_{v,j}^{\pm 1}
=\jm_{v,j}^{\pm 1}\jm_{u,i}^{-1}
\;\text{if }[v,j]\subset[u,i[,
\end{gathered}
\end{gather}
where $[v,j]\subset[u,i[$ means $u\leq v$ and $j<i$. 
\end{lemma}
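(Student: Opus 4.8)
The plan is to prove the relations of \eqref{eq:jm-relations} in the order in which they become available, treating the final (nested) commutation as the real content. The first relation $\jm_{u,i}^{\pm 1}\jm_{u,i}^{\mp 1}=1$ is immediate from \fullref{definition:jm-elements}: writing $\jm_{u,i}=A\tau_uB$ with $A=\beta_{i-1}\dots\beta_1$ and $B=\beta_1\dots\beta_{i-1}$, one has $A'=B^{-1}$ and $B'=A^{-1}$ for the corresponding factors of $\jm_{u,i}^{-1}$, so the two words are manifestly inverse. The key observation for the next four is the recursion $\jm_{u,i+1}=\beta_i\jm_{u,i}\beta_i$, itself an immediate cancellation of $\beta_i\beta_i^{-1}$ in the definition. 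Rearranging this gives relation four $\beta_i\jm_{u,i}=\jm_{u,i+1}\beta_i^{-1}$, and rearranging it at $i-1$ gives relation three $\beta_{i-1}^{-1}\jm_{u,i}=\jm_{u,i-1}\beta_{i-1}$; relations five and six are then obtained simply by taking inverses of relations three and four, so they cost nothing further.

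For the far commutation (second relation) $\beta_j\jm_{u,i}^{\pm 1}=\jm_{u,i}^{\pm 1}\beta_j$ with $j\neq i-1,i$, I would induct on $i$ through $\jm_{u,i}=\beta_{i-1}\jm_{u,i-1}\beta_{i-1}$. When $j\geq i+1$ or $j\leq i-3$ the generator $\beta_j$ has support disjoint from $\beta_{i-1}$ and, by the inductive hypothesis, commutes with $\jm_{u,i-1}$, hence with the whole word. The only boundary case is $j=i-2$; here the two flanking copies of $\beta_{i-1}$ are pushed across using the braid relation $\beta_{i-2}\beta_{i-1}\beta_{i-2}=\beta_{i-1}\beta_{i-2}\beta_{i-1}$ twice together with the already-known commutation of $\beta_{i-1}$ with $\jm_{u,i-2}$, which is a short mechanical computation. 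The $-1$ versions follow by inversion.

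The heart of the lemma is the nested commutation in the last line of \eqref{eq:jm-relations}. First I would reduce the general admissible pair $u\leq v$, $j<i$ to the consecutive case $j=i-1$: for $j\leq i-2$ relation two shows $\beta_{i-1}$ commutes with $\jm_{v,j}$, so the recursion gives $\jm_{u,i}\jm_{v,j}=\beta_{i-1}\bigl(\jm_{u,i-1}\jm_{v,j}\bigr)\beta_{i-1}$, and the inductive hypothesis for the pair $(i-1,j)$ (which still satisfies $u\leq v$ and $j<i-1$) lets me interchange the two middle factors; reassembling yields $\jm_{v,j}\jm_{u,i}$. Iterating collapses every admissible pair to a consecutive one $\jm_{u,m+1}\jm_{v,m}$ with $u\leq v$, whose base $m=1$ is exactly the defining relation $\tau_v(\beta_1\tau_u\beta_1)=(\beta_1\tau_u\beta_1)\tau_v$ of \eqref{eq:handlebody-summary2}.

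The main obstacle is the consecutive case $\jm_{u,m+1}\jm_{v,m}$ for $m>1$: because the Jucys--Murphy words carry equal (not opposite) signs on their two flanking braids, $\jm_{u,m}$ is a genuine pure-braid ``orbit'' rather than a naive conjugate of $\tau_u$, so there is no clean conjugation lowering $m$ and the obvious algebraic moves reintroduce stray factors $\beta_m^{\pm 2}$. I would therefore settle this case topologically via the embedding $\iota_{g,n}\colon\setstuff{B}_{g,n}\hookrightarrow\setstuff{B}_{g+n}$ of \fullref{proposition:embedding}: under $\iota_{g,n}$ the element $\jm_{u,i}$ is the honest braid in which the strand in position $g+i$ loops around the core strand in position $u$, enclosing the intervening usual strands. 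For $u\leq v$ the loop of $\jm_{v,m}$ (around core $v$ and the strands to its left) sits nested inside the loop of $\jm_{u,m+1}$ (around core $u\leq v$ and the strands up to $m$), so the two braids are isotopic to braids with disjoint nested support and hence commute; injectivity of $\iota_{g,n}$ transports the identity back to $\setstuff{B}_{g,n}$. Since commuting elements have commuting inverses, the $\pm 1$ variants in \eqref{eq:jm-relations} follow at once. The one point demanding genuine care is making the nesting isotopy rigorous rather than merely pictorial.
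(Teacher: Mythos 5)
Your proof is correct, and it is worth noting how it differs from the paper's. The paper's proof of this lemma is entirely diagrammatic: it draws the handlebody braid pictures for the middle and bottom relations of \eqref{eq:jm-relations}, performs the isotopies visibly, and declares the remaining relations verifiable ``verbatim'' --- the whole argument leaning on the topological interpretation licensed by \fullref{proposition:embedding}. You instead derive the first six relations algebraically: the inverse relation is tautological from \fullref{definition:jm-elements}, relations three through six are free cancellations against the recursion $\jm_{u,i+1}=\beta_{i}\jm_{u,i}\beta_{i}$, and the far commutation is an induction using the braid relations (your handling of the boundary case $j=i-2$ via two applications of $\beta_{i-2}\beta_{i-1}\beta_{i-2}=\beta_{i-1}\beta_{i-2}\beta_{i-1}$ checks out). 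This buys a sharper accounting of what each relation actually depends on, and your reduction of the nested commutation to the consecutive case $\jm_{u,m+1}\jm_{v,m}$, with base case exactly the defining relation $\tau_{v}(\beta_{1}\tau_{u}\beta_{1})=(\beta_{1}\tau_{u}\beta_{1})\tau_{v}$ from \eqref{eq:handlebody-summary2}, correctly isolates the one place where the presentation alone does not suffice. For that remaining consecutive case you do essentially what the paper does: pass through $\iota_{g,n}$ and observe that $\jm_{u,i}$ becomes the loop of strand $g+i$ about the disk enclosing positions $u$ through $g+i-1$ (the equal-sign flanking words are what force the intervening strands to be enclosed), so that $u\leq v$ and $j<i$ put the loop and base point of $\jm_{v,j}$ strictly inside the disk of $\jm_{u,i}$; the two point-pushing maps then have disjoint supports and commute. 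Your closing caveat is the right one --- the nesting isotopy is the entire content of the paper's pictures, and if you wanted to remove the pictures altogether you would need to verify that the image of $\jm_{u,i}$ under $\iota_{g,n}$ really is the stated band generator, which is a finite check of over/under conventions.
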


\begin{proof}
Using topological arguments, all of these are easy 
to verify. For example, the middle relations in 
\eqref{eq:jm-relations} are of the form
\begin{gather*}
\scalebox{0.85}{$\begin{tikzpicture}[anchorbase,scale=0.7,tinynodes]
\draw[pole,crosspole] (-0.5,0) to[out=90,in=270] (-0.5,1.5);
\draw[usual,crossline] (2,0) to[out=90,in=270] (-0.25,0.75);
\draw[pole,crosspole] (1,0) to[out=90,in=270] (1,1.5);
\draw[pole,crosspole] (0.5,0) to[out=90,in=270] (0.5,1.5);
\draw[usual,crossline] (1.5,0) to[out=90,in=270] (1.5,1.5);
\draw[usual,crossline] (2.5,0) to[out=90,in=270] (2.5,1.5);
\draw[pole,crosspole] (0,0) to[out=90,in=270] (0,1.5);
\draw[usual,crossline] (-0.25,0.75) to[out=90,in=270] (2,1.5);
\draw[pole,crosspole] (-0.5,1.5) to[out=90,in=270] (-0.5,2.25);
\draw[pole,crosspole] (0,1.5) to[out=90,in=270] (0,2.25);
\draw[pole,crosspole] (0.5,1.5) to[out=90,in=270] (0.5,2.25);
\draw[pole,crosspole] (1,1.5) to[out=90,in=270] (1,2.25);
\draw[usual,crossline] (1.5,1.5) to[out=90,in=270] (2,2.25);
\draw[usual,crossline] (2,1.5) to[out=90,in=270] (1.5,2.25);
\draw[usual,crossline] (2.5,1.5) to[out=90,in=270] (2.5,2.25);
\end{tikzpicture}
=
\begin{tikzpicture}[anchorbase,scale=0.7,tinynodes]
\draw[pole,crosspole] (-0.5,0) to[out=90,in=270] (-0.5,1.5);
\draw[usual,crossline] (2,0) to[out=90,in=270] (-0.25,0.75);
\draw[pole,crosspole] (1,0) to[out=90,in=270] (1,1.5);
\draw[pole,crosspole] (0.5,0) to[out=90,in=270] (0.5,1.5);
\draw[usual,crossline] (1.5,0) to[out=90,in=270] (2,1.5);
\draw[usual,crossline] (2.5,0) to[out=90,in=270] (2.5,1.5);
\draw[pole,crosspole] (0,0) to[out=90,in=270] (0,1.5);
\draw[usual,crossline] (-0.25,0.75) to[out=90,in=270] (1.5,1.5);
\draw[pole,crosspole] (-0.5,1.5) to[out=90,in=270] (-0.5,2.25);
\draw[pole,crosspole] (0,1.5) to[out=90,in=270] (0,2.25);
\draw[pole,crosspole] (0.5,1.5) to[out=90,in=270] (0.5,2.25);
\draw[pole,crosspole] (1,1.5) to[out=90,in=270] (1,2.25);
\draw[usual,crossline] (1.5,1.5) to[out=90,in=270] (1.5,2.25);
\draw[usual,crossline] (2,1.5) to[out=90,in=270] (2,2.25);
\draw[usual,crossline] (2.5,1.5) to[out=90,in=270] (2.5,2.25);
\end{tikzpicture}$}
,
\end{gather*}
The bottom relations in \eqref{eq:jm-relations} take the form
\begin{gather*}
\scalebox{0.85}{$\begin{tikzpicture}[anchorbase,scale=0.7,tinynodes]
\draw[pole,crosspole] (-0.5,0) to[out=90,in=270] (-0.5,3);
\draw[usual,crossline] (2.5,0) to[out=90,in=270] (-0.25,0.75);
\draw[pole,crosspole] (0.5,0) to[out=90,in=270] (0.5,1.5);
\draw[pole,crosspole] (1,0) to[out=90,in=270] (1,1.5);
\draw[usual,crossline] (1.5,0) to[out=90,in=270] (1.5,1.5);
\draw[usual,crossline] (2,0) to[out=90,in=270] (2,1.5);
\draw[pole,crosspole] (0,0) to[out=90,in=270] (0,1.5);
\draw[usual,crossline] (-0.25,0.75) to[out=90,in=270] (2.5,1.5);
\draw[usual,crossline] (2,1.5) to[out=90,in=270] (0.25,2.25);
\draw[pole,crosspole] (1,1.5) to[out=90,in=270] (1,3);
\draw[usual,crossline] (1.5,1.5) to[out=90,in=270] (1.5,3);
\draw[usual,crossline] (2.5,1.5) to[out=90,in=270] (2.5,3);
\draw[pole,crosspole] (0.5,1.5) to[out=90,in=270] (0.5,3);
\draw[pole,crosspole] (0,1.5) to[out=90,in=270] (0,3);
\draw[usual,crossline] (0.25,2.25) to[out=90,in=270] (2,3);
\node at (0,-0.5) {\phantom{a}};
\node at (0,3.5) {\phantom{a}};
\end{tikzpicture}
=
\begin{tikzpicture}[anchorbase,scale=0.7,tinynodes]
\draw[pole,crosspole] (-0.5,0) to[out=90,in=270] (-0.5,3);
\draw[usual,crossline] (2.5,0) to[out=90,in=270] (-0.25,0.75);
\draw[pole,crosspole] (0.5,0) to[out=90,in=270] (0.5,0.75);
\draw[pole,crosspole] (1,0) to[out=90,in=270] (1,0.75);
\draw[usual,crossline] (1.5,0) to[out=90,in=270] (1.5,0.75);
\draw[usual,crossline] (2,0) to[out=90,in=270] (2,0.75);
\draw[pole,crosspole] (0,0) to[out=90,in=270] (0,0.75);
\draw[usual,crossline] (2,0.75) to[out=90,in=270] (0.25,1.5);
\draw[pole,crosspole] (1,0.75) to[out=90,in=270] (1,2.25);
\draw[usual,crossline] (1.5,0.75) to[out=90,in=270] (1.5,2.25);
\draw[pole,crosspole] (0.5,0.75) to[out=90,in=270] (0.5,2.25);
\draw[pole,crosspole] (0,0.75) to[out=90,in=270] (0,2.25);
\draw[usual,crossline] (0.25,1.5) to[out=90,in=270] (2,2.25);
\draw[pole,crosspole] (0.5,2.25) to[out=90,in=270] (0.5,3);
\draw[pole,crosspole] (1,2.25) to[out=90,in=270] (1,3);
\draw[usual,crossline] (1.5,2.25) to[out=90,in=270] (1.5,3);
\draw[usual,crossline] (2,2.25) to[out=90,in=270] (2,3);
\draw[pole,crosspole] (0,2.25) to[out=90,in=270] (0,3);
\draw[usual,crossline] (-0.25,2.25) to[out=90,in=270] (2.5,3);
\draw[usual,crossline] (-0.25,0.75) to (-0.25,2.25);
\node at (0,-0.5) {\phantom{a}};
\node at (0,3.5) {\phantom{a}};
\end{tikzpicture}
=
\begin{tikzpicture}[anchorbase,scale=0.7,tinynodes]
\draw[pole,crosspole] (-0.5,0) to[out=90,in=270] (-0.5,3);
\draw[usual,crossline] (2,0) to[out=90,in=270] (0.25,0.75);
\draw[pole,crosspole] (1,0) to[out=90,in=270] (1,1.5);
\draw[usual,crossline] (1.5,0) to[out=90,in=270] (1.5,1.5);
\draw[usual,crossline] (2.5,0) to[out=90,in=270] (2.5,1.5);
\draw[pole,crosspole] (0,0) to[out=90,in=270] (0,1.5);
\draw[pole,crosspole] (0.5,0) to[out=90,in=270] (0.5,1.5);
\draw[usual,crossline] (0.25,0.75) to[out=90,in=270] (2,1.5);
\draw[usual,crossline] (2.5,1.5) to[out=90,in=270] (-0.25,2.25);
\draw[pole,crosspole] (0.5,1.5) to[out=90,in=270] (0.5,3);
\draw[pole,crosspole] (1,1.5) to[out=90,in=270] (1,3);
\draw[usual,crossline] (1.5,1.5) to[out=90,in=270] (1.5,3);
\draw[usual,crossline] (2,1.5) to[out=90,in=270] (2,3);
\draw[pole,crosspole] (0,1.5) to[out=90,in=270] (0,3);
\draw[usual,crossline] (-0.25,2.25) to[out=90,in=270] (2.5,3);
\node at (0,-0.5) {\phantom{a}};
\node at (0,3.5) {\phantom{a}};
\end{tikzpicture}
,\quad
\begin{tikzpicture}[anchorbase,scale=0.7,tinynodes]
\draw[pole,crosspole] (-0.5,0) to[out=90,in=270] (-0.5,3);
\draw[usual,crossline] (2,0) to[out=90,in=270] (-0.25,0.75);
\draw[pole,crosspole] (1,0) to[out=90,in=270] (1,1.5);
\draw[pole,crosspole] (0.5,0) to[out=90,in=270] (0.5,1.5);
\draw[usual,crossline] (1.5,0) to[out=90,in=270] (1.5,1.5);
\draw[usual,crossline] (2.5,0) to[out=90,in=270] (2.5,1.5);
\draw[pole,crosspole] (0,0) to[out=90,in=270] (0,1.5);
\draw[usual,crossline] (-0.25,0.75) to[out=90,in=270] (2,1.5);
\draw[usual,crossline] (2.5,1.5) to[out=90,in=270] (-0.25,2.25);
\draw[pole,crosspole] (0.5,1.5) to[out=90,in=270] (0.5,3);
\draw[pole,crosspole] (1,1.5) to[out=90,in=270] (1,3);
\draw[usual,crossline] (1.5,1.5) to[out=90,in=270] (1.5,3);
\draw[usual,crossline] (2,1.5) to[out=90,in=270] (2,3);
\draw[pole,crosspole] (0,1.5) to[out=90,in=270] (0,3);
\draw[usual,crossline] (-0.25,2.25) to[out=90,in=270] (2.5,3);
\node at (0,-0.5) {\phantom{a}};
\node at (0,3.5) {\phantom{a}};
\end{tikzpicture}
=
\begin{tikzpicture}[anchorbase,scale=0.7,tinynodes]
\draw[pole,crosspole] (-0.5,0) to[out=90,in=270] (-0.5,3);
\draw[usual,crossline] (2.5,0) to[out=90,in=270] (-0.25,0.75);
\draw[pole,crosspole] (0.5,0) to[out=90,in=270] (0.5,1.5);
\draw[pole,crosspole] (1,0) to[out=90,in=270] (1,1.5);
\draw[usual,crossline] (1.5,0) to[out=90,in=270] (1.5,1.5);
\draw[usual,crossline] (2,0) to[out=90,in=270] (2,1.5);
\draw[pole,crosspole] (0,0) to[out=90,in=270] (0,1.5);
\draw[usual,crossline] (-0.25,0.75) to[out=90,in=270] (2.5,1.5);
\draw[usual,crossline] (2,1.5) to[out=90,in=270] (-0.25,2.25);
\draw[pole,crosspole] (1,1.5) to[out=90,in=270] (1,3);
\draw[pole,crosspole] (0.5,1.5) to[out=90,in=270] (0.5,3);
\draw[usual,crossline] (1.5,1.5) to[out=90,in=270] (1.5,3);
\draw[usual,crossline] (2.5,1.5) to[out=90,in=270] (2.5,3);
\draw[pole,crosspole] (0,1.5) to[out=90,in=270] (0,3);
\draw[usual,crossline] (-0.25,2.25) to[out=90,in=270] (2,3);
\node at (0,-0.5) {\phantom{a}};
\node at (0,3.5) {\phantom{a}};
\end{tikzpicture}$}
,
\end{gather*}
The other relations can be verified {\ver}.
\end{proof}

Note that if $[v,j]\not\subset[u,i[$ 
in the final relation in \eqref{eq:jm-relations}, 
then the displayed elements do not commute (unless $\jm_{u,i}^{\pm 1}\jm_{u,i}^{\mp 1}=1$ applies). For example,
\begin{gather*}
\scalebox{0.85}{$\begin{tikzpicture}[anchorbase,scale=0.7,tinynodes]
\draw[pole,crosspole] (-0.5,0) to[out=90,in=270] (-0.5,3);
\draw[usual,crossline] (2.5,0) to[out=90,in=270] (0.25,0.75);
\draw[pole,crosspole] (0.5,0) to[out=90,in=270] (0.5,1.5);
\draw[pole,crosspole] (1,0) to[out=90,in=270] (1,1.5);
\draw[usual,crossline] (1.5,0) to[out=90,in=270] (1.5,1.5);
\draw[usual,crossline] (2,0) to[out=90,in=270] (2,1.5);
\draw[pole,crosspole] (0,0) to[out=90,in=270] (0,1.5);
\draw[usual,crossline] (0.25,0.75) to[out=90,in=270] (2.5,1.5);
\draw[usual,crossline] (2,1.5) to[out=90,in=270] (-0.25,2.25);
\draw[pole,crosspole] (1,1.5) to[out=90,in=270] (1,3);
\draw[usual,crossline] (1.5,1.5) to[out=90,in=270] (1.5,3);
\draw[usual,crossline] (2.5,1.5) to[out=90,in=270] (2.5,3);
\draw[pole,crosspole] (0.5,1.5) to[out=90,in=270] (0.5,3);
\draw[pole,crosspole] (0,1.5) to[out=90,in=270] (0,3);
\draw[usual,crossline] (-0.25,2.25) to[out=90,in=270] (2,3);
\node at (0,-0.5) {\phantom{a}};
\node at (0,3.5) {\phantom{a}};
\end{tikzpicture}
\neq
\begin{tikzpicture}[anchorbase,scale=0.7,tinynodes]
\draw[pole,crosspole] (-0.5,0) to[out=90,in=270] (-0.5,3);
\draw[usual,crossline] (2,0) to[out=90,in=270] (-0.25,0.75);
\draw[pole,crosspole] (1,0) to[out=90,in=270] (1,1.5);
\draw[usual,crossline] (1.5,0) to[out=90,in=270] (1.5,1.5);
\draw[usual,crossline] (2.5,0) to[out=90,in=270] (2.5,1.5);
\draw[pole,crosspole] (0,0) to[out=90,in=270] (0,1.5);
\draw[pole,crosspole] (0.5,0) to[out=90,in=270] (0.5,1.5);
\draw[usual,crossline] (-0.25,0.75) to[out=90,in=270] (2,1.5);
\draw[usual,crossline] (2.5,1.5) to[out=90,in=270] (0.25,2.25);
\draw[pole,crosspole] (0.5,1.5) to[out=90,in=270] (0.5,3);
\draw[pole,crosspole] (1,1.5) to[out=90,in=270] (1,3);
\draw[usual,crossline] (1.5,1.5) to[out=90,in=270] (1.5,3);
\draw[usual,crossline] (2,1.5) to[out=90,in=270] (2,3);
\draw[pole,crosspole] (0,1.5) to[out=90,in=270] (0,3);
\draw[usual,crossline] (0.25,2.25) to[out=90,in=270] (2.5,3);
\node at (0,-0.5) {\phantom{a}};
\node at (0,3.5) {\phantom{a}};
\end{tikzpicture}$}
.
\end{gather*}

\begin{lemma}\label{lemma:jm-order-lemma}
Any word in the Jucys--Murphy elements can be ordered 
such that $\jm_{u,i}^{\pm}$ appears right=above of
$\jm_{v,j}^{\pm}$ only if $j<i$.
\end{lemma}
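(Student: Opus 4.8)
The plan is to carry out the sorting inside the Coxeter quotient $\setstuff{W}_{g,n}$, where the defining asymmetry of \eqref{eq:handlebody-summary2} disappears; this is what makes the ordering possible, and it is the one genuinely nontrivial input. The first and crucial step is to upgrade the \emph{constrained} commutation in \eqref{eq:jm-relations} to an \emph{unconditional} commutation between Jucys--Murphy elements on distinct strands. In $\setstuff{B}_{g,n}$ the identity $\jm_{u,i}^{\pm1}\jm_{v,j}^{\pm1}=\jm_{v,j}^{\pm1}\jm_{u,i}^{\pm1}$ holds only when $[v,j]\subset[u,i[$, and, as the displayed $\neq$ confirms, it fails in general. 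However, in $\setstuff{W}_{g,n}$ relation \eqref{eq:coxhandlebody-summary2} reads $t_v(s_1t_us_1)=(s_1t_us_1)t_v$ for \emph{all} $u,v$; since $\jm_{v,1}=t_v$ and $\jm_{u,2}=s_1t_us_1$, this says $\jm_{v,1}\jm_{u,2}=\jm_{u,2}\jm_{v,1}$ for all coil indices $u,v$. I would then conjugate this identity by elements of $\setstuff{S}_n\hookrightarrow\setstuff{W}_{g,n}$, using that the middle relations of \eqref{eq:jm-relations} together with $s_k^2=1$ give $s_k\jm_{c,m}s_k=\jm_{c,s_k(m)}$, and hence $w\jm_{c,m}w^{-1}=\jm_{c,w(m)}$ for every $w\in\setstuff{S}_n$. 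Choosing $w$ with $w(1)=j$ and $w(2)=i$ (possible as $i\neq j$ and $n\geq 2$) produces $\jm_{u,i}^{\pm1}\jm_{v,j}^{\pm1}=\jm_{v,j}^{\pm1}\jm_{u,i}^{\pm1}$ for all coils $u,v$ and all signs, valid whenever $i\neq j$.

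With unconditional cross-strand commutation available, the second step is a routine bubble sort. Given a word $\jm_{u_1,i_1}^{\epsilon_1}\cdots\jm_{u_k,i_k}^{\epsilon_k}$ (read bottom to top), I would call an adjacent pair $\jm_{u_\ell,i_\ell}^{\epsilon_\ell}\jm_{u_{\ell+1},i_{\ell+1}}^{\epsilon_{\ell+1}}$ \emph{out of order} when $i_\ell>i_{\ell+1}$, and swap it by the commutation from the first step; each such swap strictly decreases the number of inversions (pairs in which the lower element has strictly larger strand index), so the procedure terminates. The key bookkeeping point is that same-strand pairs, for which $i_\ell=i_{\ell+1}$, are \emph{never} swapped and therefore keep their relative order; this is essential, since on a single strand the $\jm_{u,i}$ generate a free group $\setstuff{F}_g$ and do not commute. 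When no out-of-order adjacent pair remains, the strand indices are weakly increasing from bottom to top, so any $\jm_{u,i}^{\pm}$ sitting above a $\jm_{v,j}^{\pm}$ on a different strand has $j<i$, which is exactly the assertion, with the same-strand elements assembled into the obvious contiguous blocks.

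Thus the only real obstacle is the first step: the bare braid relations in \eqref{eq:jm-relations} are insufficient, as the $\neq$ example shows, and one must genuinely use the collapse of the asymmetry of \eqref{eq:handlebody-summary2} in $\setstuff{W}_{g,n}$ to obtain commutation across strands for all coil indices. Once that is in place, the remainder is a standard inversion-counting argument, and the care required is purely combinatorial, namely never transposing two Jucys--Murphy elements that share a strand.
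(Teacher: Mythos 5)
Your proof is correct, but it takes a genuinely different route from the paper's. The paper stays inside $\setstuff{B}_{g,n}$ and runs an induction on the maximal strand index: all $\jm_{u,n}^{\pm}$ are pulled to the top using only the final relation of \eqref{eq:jm-relations}, the elements with maximal second index are frozen, and one recurses. You instead pass to the Coxeter quotient $\setstuff{W}_{g,n}$, upgrade the constrained commutation to an unconditional cross-strand commutation via \eqref{eq:coxhandlebody-summary2} together with the conjugation formula $w\jmc_{c,m}w^{-1}=\jmc_{c,w(m)}$ for $w\in\setstuff{S}_{n}$ (which you correctly extract from the middle relations of \eqref{eq:jm-relations} and $s_{k}^{2}=1$), and then bubble-sort by inversion count, never transposing two letters on the same strand. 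Both halves of your argument check out. The trade-off is this: the paper's argument, if it went through, would prove the statement already in $\setstuff{B}_{g,n}$, whereas yours proves it only in $\setstuff{W}_{g,n}$ (and in any quotient where the symmetrized relation survives). But your scepticism about the braid-group version is well founded: the final relation of \eqref{eq:jm-relations} lets one move $\jm_{u,n}^{\pm}$ upward past $\jm_{v,j}^{\pm}$ with $j<n$ only when $u\leq v$, and for $u>v$ the intervals $[u,n[$ and $[v,j[$ are linked rather than nested --- exactly the configuration of the displayed $\neq$ following \fullref{lemma:jm-elements} --- so in that case the swap is not licensed by the stated relations (and such pairs genuinely fail to commute in $\setstuff{B}_{g,n}$). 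Since the lemma is only ever invoked for $\Z\setstuff{W}_{g,n}$, in \fullref{lemma:jm-elements-span} and \fullref{proposition:jm-elements-basis}, your detour through the Coxeter quotient supplies precisely the commutations that are actually needed there; it is not a stylistic variant of the paper's induction but a repair of its missing swaps, at the cost of not covering a literal braid-group reading of the statement.
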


\begin{proof}
We use the final relation in \eqref{eq:jm-relations}
inductively: First, start with $i=n$ and pull 
all $\jm_{u,n}^{\pm}$, for all $u$, to the right=top, 
without changing their order (they form a free group).
We freeze these elements in their positions and
we can thus use induction 
on the remaining Jucys--Murphy elements 
as their maximal second index is $n-1$.
\end{proof}

Let us denote the images of the Jucys--Murphy elements
in $\setstuff{W}_{g,n}$ by $\jmc_{u,i}^{\pm 1}$.

\begin{lemma}\label{lemma:jm-elements-span}
The set
\begin{gather}\label{eq:jm-basis}
\left\{ 
\jmc_{u_{1},i_{1}}^{a_{1}}\dots 
\jmc_{u_{m},i_{m}}^{a_{m}}w 
\,\middle\vert\,
\begin{gathered}
w\in\setstuff{S}_{n},
m\in\N,
\bsym{a}\in\Z^{m},
\\
(\bsym{u},\bsym{i})\in(\{1,\dots,g\}\times\{1,\dots,n\})^{m},
i_{1}\leq\dots\leq i_{m}
\end{gathered}
\right\}
\end{gather}
spans $\Z\setstuff{W}_{g,n}$.
\end{lemma}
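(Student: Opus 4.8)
The plan is to prove the stronger statement that \emph{every} element of $\setstuff{W}_{g,n}$ can be rewritten in the displayed form; since the group elements form a $\Z$-basis of $\Z\setstuff{W}_{g,n}$, this immediately yields the spanning claim. The two ingredients will be a conjugation formula relating the permutation part to the Jucys--Murphy elements, together with the ordering result of \fullref{lemma:jm-order-lemma}.

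First I would record the conjugation formula
\[
w\,\jmc_{u,i}^{\pm 1}\,w^{-1}=\jmc_{u,w(i)}^{\pm 1}\qquad(w\in\setstuff{S}_{n}).
\]
This follows by passing to images in $\setstuff{W}_{g,n}$ of the middle and bottom relations of \eqref{eq:jm-relations}: in the Coxeter quotient $s_{i}^{2}=1$, so those relations become $s_{i-1}\jmc_{u,i}^{\pm}s_{i-1}=\jmc_{u,i-1}^{\pm}$, $s_{i}\jmc_{u,i}^{\pm}s_{i}=\jmc_{u,i+1}^{\pm}$, and $s_{j}\jmc_{u,i}^{\pm}s_{j}=\jmc_{u,i}^{\pm}$ for $j\neq i-1,i$. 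Hence conjugation by $s_{j}$ acts on the second index exactly as the transposition $s_{j}$ acts on $\{1,\dots,n\}$; iterating over a word in the $s_{j}$ that expresses $w$ gives the formula, and then over arbitrary powers since $w\jmc_{u,i}^{a}w^{-1}=(w\jmc_{u,i}w^{-1})^{a}$.

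Next, since $\setstuff{W}_{g,n}$ is generated by the $s_{i}\in\setstuff{S}_{n}$ together with the $t_{u}=\jmc_{u,1}$, I would take an arbitrary word in these generators and push all permutation letters to the right by induction on word length, keeping the running expression in the shape $P\cdot w$ with $P$ a product of Jucys--Murphy elements and $w\in\setstuff{S}_{n}$. Appending an $s_{i}$ only updates $w\mapsto ws_{i}$; appending $t_{u}^{\pm}=\jmc_{u,1}^{\pm}$ is handled by the conjugation formula, which rewrites $w\jmc_{u,1}^{\pm}$ as $\jmc_{u,w(1)}^{\pm}w$, so the new letter is absorbed into $P$ while $w$ is unchanged. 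After finitely many steps every group element is written as (a product of Jucys--Murphy elements) times an element of $\setstuff{S}_{n}$.

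Finally I would apply \fullref{lemma:jm-order-lemma} --- whose defining commutation relation descends from $\setstuff{B}_{g,n}$ to $\setstuff{W}_{g,n}$, hence holds verbatim for the $\jmc_{u,i}$ --- to reorder the accumulated product $P$ so that the second indices are non-decreasing, collecting repeated $\jmc_{u,i}$ into integer powers. This produces precisely the normal form $\jmc_{u_{1},i_{1}}^{a_{1}}\dots\jmc_{u_{m},i_{m}}^{a_{m}}w$ with $i_{1}\le\dots\le i_{m}$ of \eqref{eq:jm-basis}. The point demanding care is this reordering step: the commutation in \eqref{eq:jm-relations} is asymmetric, valid only when $[v,j]\subset[u,i[$, so one cannot freely transpose two arbitrary Jucys--Murphy elements; this is exactly why \fullref{lemma:jm-order-lemma}, with its inductive ``largest second index first'' argument, is required rather than a one-line commutation. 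Everything else reduces to routine bookkeeping.
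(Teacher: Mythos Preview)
Your proof is correct and follows essentially the same approach as the paper's: generate everything from $t_{u}=\jmc_{u,1}$ and $s_{i}$, push the $s_{i}$ to the right using the Jucys--Murphy relations \eqref{eq:jm-relations} (which in the Coxeter quotient yield exactly your conjugation formula $w\jmc_{u,i}^{\pm}w^{-1}=\jmc_{u,w(i)}^{\pm}$), and then invoke \fullref{lemma:jm-order-lemma} to order the second indices. The paper's proof is a three-line sketch of precisely this; your version just makes the conjugation step explicit.
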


\begin{proof}
Since $\tau_{u}=\jm_{u,1}$, words in 
$\jm_{u,i}$ and $\beta_{i}$ can clearly 
generate arbitrary words in $\setstuff{B}_{g,n}$, and thus in 
$\setstuff{W}_{g,n}$.
Further, using the relations \eqref{eq:jm-relations}, 
we see that we can always pull all
$s_{i}\in\setstuff{W}_{g,n}$ to the right=top, since 
$s_{i}=s_{i}^{-1}$. Finally, with the last 
relation in \eqref{eq:jm-relations} we can order the $\jm_{u,i}$ 
as claimed, {\cf} \fullref{lemma:jm-order-lemma}, which of course 
also works for the handlebody Coxeter group.
\end{proof}

\begin{proposition}\label{proposition:jm-elements-basis}
The set in \eqref{eq:jm-basis}
is a $\Z$-basis of $\Z\setstuff{W}_{g,n}$.
\end{proposition}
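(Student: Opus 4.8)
The plan is to combine the spanning statement of \fullref{lemma:jm-elements-span} with linear independence. Since distinct elements of any group are $\Z$-linearly independent in its group algebra, the set \eqref{eq:jm-basis} is a basis as soon as it spans; the real content is therefore to show that the displayed expressions are a genuine normal form, \ie that distinct (reduced) data give distinct elements of $\setstuff{W}_{g,n}$. First I would isolate the permutation: let $N=\langle\jmc_{u,i}\rangle\leq\setstuff{W}_{g,n}$. Reading the relations of \fullref{lemma:jm-elements} in the Coxeter quotient (where $s_i=s_i^{-1}$) shows that conjugation by $s_i$ permutes the generators of $N$, interchanging the strand indices $i$ and $i+1$ and fixing the others; hence $N$ is normal and $\setstuff{W}_{g,n}=N\rtimes\setstuff{S}_n$. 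Thus it is enough to prove that the ordered monomials $\jmc_{u_1,i_1}^{a_1}\dots\jmc_{u_m,i_m}^{a_m}$ with $i_1\leq\dots\leq i_m$ form a $\Z$-basis of $\Z N$.

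Next I would exploit the action of \fullref{definition:action} to pin down the coarse invariants. Conjugating $X_1\actsright t_u=\zvar^uX_1+Y_u$ by the permutation defining $\jmc_{u,i}$ shows that $\jmc_{u,i}$ fixes every $X_k$ with $k\neq i$ and sends $X_i\mapsto\zvar^uX_i+Y_u$. Consequently a normal-form element $x=\jmc_{u_1,i_1}^{a_1}\dots\jmc_{u_m,i_m}^{a_m}w$ acts by $X_k\actsright x=\zvar^{e_k}X_{\sigma(k)}+p_k$, where $\sigma$ is the permutation underlying $w$, where $e_k=\sum_{i_j=k}a_ju_j$, and where $p_k\in\Z[\zvar^{\pm 1},Y_1,\dots,Y_g]$ has $X$-degree $0$. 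Reading off which variable occurs recovers $w$, and reading off the coefficient $\zvar^{e_k}$ recovers the total exponent $e_k$ on each strand. This separates the permutation and the strand-wise $\zvar$-data, and in particular (since $\setstuff{S}_n$ acts faithfully by permuting variables while $N$ acts strand-diagonally) it reconfirms $N\cap\setstuff{S}_n=1$, so the semidirect product is genuine.

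The remaining and hardest point is the fine per-strand data: for each fixed $i$ the elements $\jmc_{1,i},\dots,\jmc_{g,i}$ generate a free group $\setstuff{F}_g$ (they are conjugates of the free generators $t_u$ of $\setstuff{W}_{g,1}$), and I must show that the word recorded on strand $i$ is uniquely determined. Here the polynomial action does \emph{not} suffice: the coefficients $\zvar^u$ are all powers of a single $\zvar$, so on one strand $\setstuff{F}_g$ maps into the metabelian group $\{X\mapsto\zvar^kX+r\}$ and the translation $p_k$ only sees the metabelianization of $\setstuff{F}_g$. Thus per-strand uniqueness must come from the group structure rather than from \fullref{definition:action}. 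The approach I would take is to exhibit $N$ as an iterated semidirect product of the free groups $\setstuff{F}_g$, one per strand, organised so that the strand-$n$ factors stand to the right; the partial commutation relations of \eqref{eq:jm-relations} are exactly what is needed to move every generator into its ordered position, and \fullref{lemma:jm-order-lemma} already supplies the rewriting to the ordered form, so the outstanding issue is confluence. I expect the main obstacle to be proving that the listed relations are \emph{complete}, \ie that there are no hidden relations among the $\jmc_{u,i}$; the cleanest way to certify this is to produce a faithful model of $N$ (for instance from the configuration-space interpretation of \fullref{remark:handlebody-closing}, or by transporting the known structure of the pure subgroup along the braid-group embedding $\iota_{g,n}$ of \fullref{proposition:embedding}) in which the ordered monomials are manifestly independent. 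Once $\Z N$ has the ordered monomials as a $\Z$-basis, the decomposition $\setstuff{W}_{g,n}=N\rtimes\setstuff{S}_n$ immediately yields the basis of $\Z\setstuff{W}_{g,n}$ claimed in \eqref{eq:jm-basis}.
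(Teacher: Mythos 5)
Your route differs from the paper's, and your diagnosis of why the polynomial action cannot finish the job is correct --- in fact it exposes a gap in the paper's own proof. The paper argues linear independence purely from \fullref{definition:action}: it recovers the permutation $w$ from the faithful permutation action and then asserts that the strand-$i$ data are distinguished by the appearing variables $Y_{u}$, ``while their order does not matter''. But on a fixed strand the elements $\jmc_{u,i}$ act by affine maps $X_{i}\mapsto\zvar^{u}X_{i}+Y_{u}$ of a line over a commutative ring, and the affine group of a line is metabelian; so for $g\geq 2$ the per-strand copy of $\setstuff{F}_{g}$ does not act faithfully (for instance $[[\jmc_{1,i},\jmc_{2,i}],[\jmc_{1,i},\jmc_{2,i}^{-1}]]$ lies in the second derived subgroup, hence acts as the identity, yet is a nontrivial reduced word). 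The action therefore determines only the permutation, the exponent sums $e_{k}$ and the metabelian shadow of each strand word --- exactly the three invariants you isolate --- and some genuinely group-theoretic input is needed to separate distinct reduced strand words.

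Where your proposal falls short is that you leave that input as an expectation rather than a proof, and one of your two suggested certificates is unavailable: the paper itself notes that the analog of \fullref{proposition:embedding} fails for $\setstuff{W}_{g,n}$ (the $t_{u}$ generate an infinite free group, which cannot embed into a finite symmetric group), so you cannot transport freeness along $\iota_{g,n}$. The missing step can, however, be closed elementarily along the lines you sketch: the assignment $s_{i}\mapsto(i,i+1)$ and $t_{u}\mapsto$ ($u$-th free generator sitting in the first factor) respects all of \eqref{eq:coxhandlebody-summary1}--\eqref{eq:coxhandlebody-summary2} and hence defines a homomorphism $\setstuff{W}_{g,n}\to\setstuff{F}_{g}^{\times n}\rtimes\setstuff{S}_{n}$ with $\setstuff{S}_{n}$ permuting the factors; conversely $\jmc_{u,i}\mapsto$ ($u$-th generator of the $i$-th factor) is well defined by \eqref{eq:jm-relations}, so $\setstuff{W}_{g,n}\cong\setstuff{F}_{g}^{\times n}\rtimes\setstuff{S}_{n}$ and the ordered monomials of \eqref{eq:jm-basis} --- provided each strand word is taken reduced, a hypothesis that should be made explicit since otherwise the indexing set is redundant --- are pairwise distinct group elements and therefore $\Z$-linearly independent. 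With that paragraph supplied your argument is complete and, unlike the action-only argument, actually proves the statement for all $g$; it also shows that \fullref{theorem:action} should be treated with care for $g\geq 2$.
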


\begin{proof}
By \fullref{lemma:jm-elements-span}, it only remains to verify that 
the elements of the set \eqref{eq:jm-basis}
are linearly independent. To this end, we 
first observe that we can let $w$ be trivial 
since the action of $\setstuff{S}_{n}$ 
used in \fullref{lemma:jm-elements-span} is 
the faithful permutation action. Now, the 
only non-trivial action of $\jmc_{u,i}$ is on $X_{i}$:
\begin{gather*}
X_{j}\actsright\jmc_{u,i}
=
\begin{cases}
\zvar^{u}X_{i}+Y_{u}&\text{if }i=j,
\\
X_{j}&\text{otherwise},
\end{cases}
\leftrightsquigarrow
\begin{tikzpicture}[anchorbase,scale=0.7,tinynodes]
\draw[pole,crosspole] (-0.5,0) to[out=90,in=270] (-0.5,1.5);
\draw[usual,crossline] (2.5,0)node[below,black,yshift=-1pt]{$X_{i}$} 
to[out=90,in=270] (-0.25,0.75);
\draw[pole,crosspole] (0.5,0) to[out=90,in=270] (0.5,1.5);
\draw[pole,crosspole] (1,0) to[out=90,in=270] (1,1.5);
\draw[pole,crosspole] (0,0)node[below,black,yshift=-1pt]{$u$} 
to[out=90,in=270] (0,1.5)node[above,black,yshift=-3pt]{$u$};
\draw[usual,crossline] (-0.25,0.75) to[out=90,in=270] 
(2.5,1.5)node[above,black,yshift=-2pt]{$\zvar^{u}X_{i}{+}Y_{u}$};
\draw[usual] (1.5,0) to[out=90,in=270] (1.5,1.5);
\draw[usual] (2,0) to[out=90,in=270] (2,1.5);
\node at (0.75,0.75) {$\dots$};
\node at (1.75,0.75) {$\dots$};
\end{tikzpicture}
.
\end{gather*}
Note that the involved strand $i$ is only relevant for 
$\jmc_{u,i}$, for all $u$, and its powers. Moreover, we 
can distinguish $\jmc_{u,i}$ and $\jmc_{v,i}$ by the appearing 
variables $Y_{u}$ respectively $Y_{v}$, while their 
order does not matter due to \eqref{eq:jm-relations}.
The same argument works {\muta} 
for the inverses $\jmc_{u,i}^{-1}$, of course.
Taking all this together shows that the elements of 
\eqref{eq:jm-basis} are linearly independent.
\end{proof}

\begin{theorem}\label{theorem:action}
The action of $\setstuff{W}_{g,n}$ in \fullref{definition:action} is faithful.
\end{theorem}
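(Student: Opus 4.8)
The plan is to read faithfulness directly off the normal form established in \fullref{proposition:jm-elements-basis}. Concretely, I want to show that the kernel of the action homomorphism $\rho\colon\setstuff{W}_{g,n}\to\Aut\big(\Zf[X_1,\dots,X_n]\big)$ is trivial. So I would take $w\in\setstuff{W}_{g,n}$ acting as the identity and write it in the basis \eqref{eq:jm-basis}, say $w=\jmc_{u_1,i_1}^{a_1}\dots\jmc_{u_m,i_m}^{a_m}\sigma$ with $\sigma\in\setstuff{S}_n$ and $i_1\leq\dots\leq i_m$. The whole point is then to use the explicit formulas of \fullref{definition:action} to force all the data $(\bsym{u},\bsym{i},\bsym{a})$ and $\sigma$ to be trivial, so that $w=1$.

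First I would record how the two parts of $w$ act. Since $\jmc_{u,i}$ fixes every $X_j$ with $j\neq i$ and sends $X_i\mapsto\zvar^uX_i+Y_u$ by \eqref{eq:action} (with $\jmc_{u,i}^{-1}$ acting by the inverse affine map), the coil part $\jmc_{u_1,i_1}^{a_1}\dots\jmc_{u_m,i_m}^{a_m}$ acts on each variable by an affine substitution $X_j\mapsto\zvar^{e_j}X_j+p_j$, where $e_j\in\Z$ collects the signed exponents of the coils sitting in column $j$ and $p_j\in\Z[\zvar^{\pm1},Y_1,\dots,Y_g]$ involves no $X$-variable. The factor $\sigma$ then permutes the variables, so $X_j\actsright w=\zvar^{e_j}X_{\sigma(j)}+p_j$. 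Imposing $X_j\actsright w=X_j$ and comparing the (unique) degree-one $X$-monomial on each side forces $\sigma=\idmor$ and $e_j=0$ for all $j$, since $\zvar$ is a unit of infinite order and the $X_k$ are algebraically independent; comparing constant terms then forces $p_j=0$. This already kills the $\setstuff{S}_n$-part, using the faithfulness of the permutation representation as in \fullref{lemma:jm-elements-span}.

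It remains to see that $e_j=0$ and $p_j=0$ for all $j$ force the coil word to be trivial, and I expect this to be the main obstacle: the subgroup generated by the $t_u$ is the \emph{free} group $\setstuff{F}_g\cong\setstuff{W}_{g,1}$, so one must check that the affine data $p_j$ genuinely records the entire column word and not merely some abelianised shadow of it. The cleanest way to handle this is exactly the bookkeeping from the proof of \fullref{proposition:jm-elements-basis}: within a fixed column $i$ the elements $\jmc_{u,i}$ enter $p_i$ through the distinct variables $Y_u$, so the $\zvar$-weighted $Y$-coefficients separate the different coils and their powers, pinning down the column word from the vanishing of $p_i$. In this sense faithfulness is essentially a restatement of the normal-form theorem, with \fullref{definition:action} supplying the separating invariants; so I would ultimately phrase the conclusion as an appeal to the uniqueness of the expansion in the basis \eqref{eq:jm-basis}, namely that once the $\setstuff{S}_n$-part and all exponent and $Y$-data are shown to coincide with those of $1$, \fullref{proposition:jm-elements-basis} gives $w=1$.
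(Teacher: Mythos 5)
Your strategy coincides with the paper's: the printed proof of \fullref{theorem:action} consists of the single sentence ``directly from the proof of \fullref{proposition:jm-elements-basis}'', and that proof is exactly your separation-of-variables argument (kill the $\setstuff{S}_{n}$-part by faithfulness of the permutation action, then try to distinguish the column words through the variables $Y_{u}$ and the powers of $\zvar$). So there is no difference of route to report, and your reduction to $\sigma=\idmor$, $e_{j}=0$ and $p_{j}=0$ is correct as far as it goes.

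However, the step you yourself flag as ``the main obstacle'' is a genuine gap, and neither your bookkeeping nor the paper's closes it. The action of the column subgroup $\langle t_{1},\dots,t_{g}\rangle$ on $X_{1}$ (and likewise of the Jucys--Murphy elements of a fixed column $i$ on $X_{i}$) takes values in the group of affine substitutions $X_{1}\mapsto aX_{1}+b$ with $a$ a unit and $b\in\Zf$. That group is metabelian: its commutator subgroup consists of translations $X_{1}\mapsto X_{1}+b$, and translations commute. Hence every element of the second derived subgroup of $\setstuff{F}_{g}$ acts trivially on every $X_{j}$; for $g\geq 2$ this subgroup is nontrivial, e.g. $\bigl[[t_{1},t_{2}],[t_{1},t_{2}^{2}]\bigr]\neq 1$ in $\setstuff{F}_{g}\cong\setstuff{W}_{g,1}$, yet both inner commutators act as translations of $X_{1}$, so their commutator acts as the identity. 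In other words the data $(e_{j},p_{j})$ records only the image of the column word in a metabelian quotient, not ``the entire column word'', and the vanishing of $p_{j}$ cannot force the word to be trivial. This is not a defect you introduced: the same objection applies to the linear-independence argument in the paper's proof of \fullref{proposition:jm-elements-basis} (distinct normal forms supported on one column can have identical affine data), and the statement of \fullref{theorem:action} already fails for $n=1$, $g\geq 2$, where the action is just $X_{1}\mapsto\zvar^{u}X_{1}+Y_{u}$. Any repair needs a representation that does not factor through an affine, hence solvable, group on each column (a Magnus/Fox-derivative type embedding, for instance), or a different argument altogether.
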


\begin{proof}
Directly from the proof of \fullref{proposition:jm-elements-basis}.
\end{proof}

\section{Handlebody Temperley--Lieb and blob algebras}\label{section:tlblob}

Recall that $\KK$ denotes a unital, commutative, Noetherian domain. 
The example the reader should keep in mind throughout this section is
$\KK=\Z[\cpar]$ where $\cpar=(\cvar_{\gamma})_{\gamma}$ is a 
collection of variables $\cvar_{\gamma}$ 
which are circle evaluations.
Recall further that we have fixed $g\in\N$ and $n\in\N_{>0}$.

\subsection{Handlebody Temperley--Lieb algebras}\label{subsection:handlebody-tl}

In this section we
consider non-topological
\emph{crossingless matchings of $2n$ points of genus $g$}, which 
are all pairings $(i,k)$ of integers from $\{1,\dots,2n\}$ 
such that $i<j<k<l$ for all $(i,k)$ and $(j,l)$ together 
with a choice of a reduced word in the free group $\setstuff{F}_{g}$
for each appearing pair.

In slightly misleading pictures, {\cf} \fullref{remark:non-topological},
these are crossingless matchings of $2n$ points where usual strands 
can wind around the cores, but not among themselves.
Here we use the same conventions as in \fullref{section:braids} in the sense 
that all cores are to the left.
For example, if $g=3$ and $n=2$, then
\begin{gather}\label{eq:twist}
\begin{tikzpicture}[anchorbase,scale=0.7,tinynodes]
\draw[pole,crosspole] (-0.5,0) to[out=270,in=90] (-0.5,-1.5);
\draw[usual,crossline] (-0.25,-0.5) to[out=270,in=180] (0.25,-0.8) 
to[out=0,in=180] (0.75,-0.8) 
to[out=0,in=270] (1.5,0);
\draw[pole,crosspole] (0,0)node[above,white,yshift=-3pt]{$u$} to[out=270,in=90] (0,-1.5)node[below,white,yshift=-1pt]{$u$};
\draw[pole,crosspole] (0.5,0) to[out=270,in=90] (0.5,-1.5);
\draw[usual,crossline] (1,0) to[out=270,in=90] (-0.25,-0.5);
\draw[usual,crossline] (1,-1.5) to[out=90,in=180] (1.25,-1.25) 
to[out=0,in=90] (1.5,-1.5);
\end{tikzpicture}
,\quad
\tau_{u}=
\begin{tikzpicture}[anchorbase,scale=0.7,tinynodes]
\draw[pole,crosspole] (-0.5,0) to[out=90,in=270] (-0.5,1.5);
\draw[usual,crossline] (1,0) to[out=90,in=270] (-0.25,0.75);
\draw[usual,crossline] (1.5,0) to[out=90,in=270] (1.5,1.5);
\draw[pole,crosspole] (0,0)node[below,black,yshift=-1pt]{$u$} 
to[out=90,in=270] (0,1.5)node[above,black,yshift=-3pt]{$u$};
\draw[pole,crosspole] (0.5,0) to[out=90,in=270] (0.5,1.5);
\draw[usual,crossline] (-0.25,0.75) to[out=90,in=270] (1,1.5);
\end{tikzpicture}
,
\end{gather}
are examples of such crossingless matchings.

\begin{remark}
We will use a similar notation as in 
\fullref{section:braids}, {\eg} $\tau_{u}$ for elements 
as illustrated on the right in \eqref{eq:twist}.
\end{remark}

These crossingless matchings are not allowed to have 
any circles (circles, by definition, are a connected components of usual strings not touching the bottom or top).
But such circles, as usual for these types of algebras, could appear after concatenation. To address this we need to associate circles in such diagrams to conjugacy classes in $\setstuff{F}_{g}$.

To this end, for each circle in the diagrams we will
associate a word in $\setstuff{F}_{g}$ by starting somewhere 
generic on the circle, say the rightmost point, and read clockwise. 
This gives an element of $\setstuff{F}_{g}$, keeping the identification of 
coils and generators of $\setstuff{F}_{g}$ in mind, associated to each circle 
which is well-defined up to conjugacy. We then say the circles are \emph{$\setstuff{F}_{g}$-colored}, meaning 
tuples of a circle and a representative of a conjugacy class in $\setstuff{F}_{g}$. These circles will index our parameters momentarily.

For the 
following definition we choose a set of parameters 
$\cpar=(\cvar_{\gamma})_{\gamma}\in\KK$, one for 
each $\setstuff{F}_{g}$-colored circle $\gamma$. Thus, the parameters 
are constant on conjugacy classes in $\setstuff{F}_{g}$.

\begin{definition}
The evaluation of 
a $\setstuff{F}_{g}$-colored circle $\gamma$ is defined to be the removal 
of a closed component, contributing a factor 
$\cvar_{\gamma}$. We call this \emph{circle evaluation}.
\end{definition}

We could choose all $\cvar_{\gamma}$ to be different 
or equal, there is no restriction on the choice of these parameters 
(see however \fullref{remark:non-topological} below).
Note that all non-essential circles are associated to 
the trivial coloring $\gamma=1$ and are evaluated to $\cvar_{1}$.

\begin{example}\label{example:essential-loops}
Here are a few examples:
\begin{gather*}
\begin{tikzpicture}[anchorbase,scale=0.7,tinynodes]
\draw[pole,crosspole] (-2,0) to[out=90,in=270] (-2,1.5);
\draw[pole,crosspole] (-1.5,0)node[below,black]{$u$} 
to[out=90,in=270] (-1.5,1.5)node[above,black,yshift=-3pt]{$u$};
\draw[pole,crosspole] (-1,0) to[out=90,in=270] (-1,1.5);
\draw[pole,crosspole] (-0.5,0)node[below,black]{$v$} 
to[out=90,in=270] (-0.5,1.5)node[above,black,yshift=-3pt]{$v$};
\draw[usual,crossline] (1.25,0.75) to[out=270,in=270] (-0.25,0.75);
\draw[usual,crossline] (-0.25,0.75) to[out=90,in=90] (1.25,0.75);
\end{tikzpicture}
\leftrightsquigarrow
\cvar_{1}
,\quad
\begin{tikzpicture}[anchorbase,scale=0.7,tinynodes]
\draw[pole,crosspole] (0.5,0) to[out=90,in=270] (0.5,1.5);
\draw[pole,crosspole] (-0.5,0) to[out=90,in=270] (-0.5,1.5);
\draw[usual,crossline] (1.25,0.75) to[out=270,in=270] (-0.25,0.75);
\draw[pole,crosspole] (0,0)node[below,black]{$u$} 
to[out=90,in=270] (0,1.5)node[above,black,yshift=-3pt]{$u$};
\draw[pole,crosspole] (1,0)node[below,black]{$v$} 
to[out=90,in=270] (1,1.5)node[above,black,yshift=-3pt]{$v$};
\draw[usual,crossline] (-0.25,0.75) to[out=90,in=90] (1.25,0.75);
\end{tikzpicture}
\leftrightsquigarrow
\cvar_{\gamma=uv}
,\quad
\begin{tikzpicture}[anchorbase,scale=0.7,tinynodes]
\draw[pole,crosspole] (0.5,0) to[out=90,in=270] (0.5,1.5);
\draw[pole,crosspole] (-0.5,0) to[out=90,in=270] (-0.5,1.5);
\draw[usual,crossline] (1.25,0.75) to[out=270,in=270] (-0.25,0.55);
\draw[usual,crossline] (0.25,0.75) to[out=90,in=270] (-0.25,0.95);
\draw[pole,crosspole] (0,0)node[below,black]{$u$} 
to[out=90,in=270] (0,1.5)node[above,black,yshift=-3pt]{$u$};
\draw[pole,crosspole] (1,0)node[below,black]{$v$} 
to[out=90,in=270] (1,1.5)node[above,black,yshift=-3pt]{$v$};
\draw[usual,crossline] (-0.25,0.9) to[out=90,in=90] (1.25,0.75);
\draw[usual,crossline] (-0.25,0.55) to[out=90,in=270] (0.25,0.75);
\end{tikzpicture}
\leftrightsquigarrow
\cvar_{\gamma^{\prime}=u^{2}v}.
\end{gather*}
The first word in $\setstuff{F}_{g}$ is the trivial word, 
the second is $uv$, and the third is $u^{2}v$.
\end{example}

To each usual strand in a crossingless matching we associate 
a word in $\setstuff{F}_{g}$ by starting at the rightmost boundary point 
of the string and read to the other one, again using the 
identification of coils and generators of $\setstuff{F}_{g}$.
A \emph{$\setstuff{F}_{g}$-colored concatenation} of 
crossingless matchings $x$ and $y$ of $2n$ points of genus $g$ 
is, up to colors, a concatenation $xy$ in the usual sense 
and the colors are concatenated reading bottom to top along 
the strings respectively starting anywhere and moving 
clockwise along closed components.

\begin{definition}\label{definition:handlebody-tl}
We let $\setstuff{TL}_{g,n}(\cpar)$, 
the \emph{handlebody Temperley--Lieb algebra} 
(in $n$ strands and of genus $g$), be 
the algebra whose underlying free $\KK$-module is the
$\KK$-linear span of all 
crossingless matchings of $2n$ points of genus $g$, and
with multiplication given by $\setstuff{F}_{g}$-colored 
concatenation of diagrams modulo 
circle evaluation.
\end{definition}

\begin{remark}\label{remark:non-topological}
There are various ways to define 
Temperley--Lieb algebras beyond the classical case 
and some of them are not topological in nature, and 
the construction in \fullref{definition:handlebody-tl} 
is one of those that are not topological. 
In particular,
the \emph{Kauffman skein relation} 
\begin{gather}\label{eq:kauffman}
\begin{tikzpicture}[anchorbase,scale=0.7,tinynodes]
\draw[usual,crossline] (0.5,0) to[out=90,in=270] (0,0.75);
\draw[usual,crossline] (0,0) to[out=90,in=270] (0.5,0.75);
\end{tikzpicture}
=
\qvar^{1/2}\cdot
\begin{tikzpicture}[anchorbase,scale=0.7,tinynodes]
\draw[usual,crossline] (0,0) to[out=90,in=270] (0,0.75);
\draw[usual,crossline] (0.5,0) to[out=90,in=270] (0.5,0.75);
\end{tikzpicture}
+
\qvar^{-1/2}\cdot
\begin{tikzpicture}[anchorbase,scale=0.7,tinynodes]
\draw[usual,crossline] (1,0) to[out=90,in=180] (1.25,0.25) to[out=0,in=90] (1.5,0);
\draw[usual,crossline] (1,0.75) to[out=270,in=180] (1.25,0.5) to[out=0,in=270] (1.5,0.75);
\end{tikzpicture}
\end{gather}
does not behave topologically in $\setstuff{TL}_{g,n}(\cpar)$, 
even for appropriate choices of parameters. 
We will come back to a topological model of the 
handlebody Temperley--Lieb algebra in 
\fullref{subsection:handlebody-tl-topology} and for now we just note that:
\begin{enumerate}

\item One reason why we do not want \eqref{eq:kauffman} for the time 
being is that this relation implies that coils satisfy
an order two relation. This follows from the calculation
\begin{gather}\label{eq:order-two}
\begin{aligned}
\begin{tikzpicture}[anchorbase,scale=0.7,tinynodes]
\draw[pole,crosspole] (-0.5,0) to[out=90,in=270] (-0.5,1.5);
\draw[usual,crossline] (1,0) to[out=90,in=270] (-0.25,0.75);
\draw[pole,crosspole] (0,0) to[out=90,in=270] (0,1.5);
\draw[pole,crosspole] (0.5,0) to[out=90,in=270] (0.5,1.5);
\draw[usual,crossline] (-0.25,0.75) to[out=90,in=270] 
(1,1.5);
\end{tikzpicture}
&=
\begin{tikzpicture}[anchorbase,scale=0.7,tinynodes]
\draw[usual,crossline] (0.5,0.5) to[out=90,in=270] (0,1);
\draw[usual,crossline] (0,0.5) to[out=90,in=270] (0.5,1);
\draw[usual,crossline] (0.5,0.5) to[out=270,in=90] (0,0);
\draw[usual,crossline] (0,1.5) to[out=270,in=90] (0.5,1);
\draw[usual,crossline] (0,1) to[out=90,in=90] (-1.25,0.75);
\draw[pole,crosspole] (-1.5,0) to[out=90,in=270] (-1.5,1.5);
\draw[pole,crosspole] (-1,0) to[out=90,in=270] (-1,1.5);
\draw[pole,crosspole] (-0.5,0) to[out=90,in=270] (-0.5,1.5);
\draw[usual,crossline] (-1.25,0.75) to[out=270,in=270] 
(0,0.5);
\end{tikzpicture}
=
\qvar^{1/2}\cdot
\begin{tikzpicture}[anchorbase,scale=0.7,tinynodes]
\draw[usual,crossline] (0.5,0.5) to[out=90,in=270] (0.5,1);
\draw[usual,crossline] (0,0.5) to[out=90,in=270] (0,1);
\draw[usual,crossline] (0.5,0.5) to[out=270,in=90] (0,0);
\draw[usual,crossline] (0,1.5) to[out=270,in=90] (0.5,1);
\draw[usual,crossline] (0,1) to[out=90,in=90] (-1.25,0.75);
\draw[pole,crosspole] (-1.5,0) to[out=90,in=270] (-1.5,1.5);
\draw[pole,crosspole] (-1,0) to[out=90,in=270] (-1,1.5);
\draw[pole,crosspole] (-0.5,0) to[out=90,in=270] (-0.5,1.5);
\draw[usual,crossline] (-1.25,0.75) to[out=270,in=270] 
(0,0.5);
\end{tikzpicture}
+
\qvar^{-1/2}\cdot
\begin{tikzpicture}[anchorbase,scale=0.7,tinynodes]
\draw[usual,crossline] (0,0.5) to[out=90,in=90] (0.5,0.5);
\draw[usual,crossline] (0,1) to[out=270,in=270] (0.5,1);
\draw[usual,crossline] (0.5,0.5) to[out=270,in=90] (0,0);
\draw[usual,crossline] (0,1.5) to[out=270,in=90] (0.5,1);
\draw[usual,crossline] (0,1) to[out=90,in=90] (-1.25,0.75);
\draw[pole,crosspole] (-1.5,0) to[out=90,in=270] (-1.5,1.5);
\draw[pole,crosspole] (-1,0) to[out=90,in=270] (-1,1.5);
\draw[pole,crosspole] (-0.5,0) to[out=90,in=270] (-0.5,1.5);
\draw[usual,crossline] (-1.25,0.75) to[out=270,in=270] 
(0,0.5);
\end{tikzpicture}
\\
&=
\qvar^{1/2}\cvar_{\gamma}\cdot
\begin{tikzpicture}[anchorbase,scale=0.7,tinynodes]
\draw[pole,crosspole] (-0.5,0) to[out=90,in=270] (-0.5,1.5);
\draw[pole,crosspole] (0,0) to[out=90,in=270] (0,1.5);
\draw[pole,crosspole] (0.5,0) to[out=90,in=270] (0.5,1.5);
\draw[usual,crossline] (1,0) to[out=90,in=270] 
(1,1.5);
\end{tikzpicture}
+
\qvar^{-1/2}\cdot
\begin{tikzpicture}[anchorbase,scale=0.7,tinynodes]
\draw[usual,crossline] (-0.25,0.75) to[out=90,in=270] 
(1,1.5);
\draw[pole,crosspole] (-0.5,0) to[out=90,in=270] (-0.5,1.5);
\draw[pole,crosspole] (0,0) to[out=90,in=270] (0,1.5);
\draw[pole,crosspole] (0.5,0) to[out=90,in=270] (0.5,1.5);
\draw[usual,crossline] (1,0) to[out=90,in=270] (-0.25,0.75);
\end{tikzpicture}
.
\end{aligned}
\end{gather}

\item In a topological model \eqref{eq:kauffman} also implies relations among 
the circle parameters $\cvar_{\gamma}$:
\begin{gather*}
\begin{tikzpicture}[anchorbase,scale=0.7,tinynodes]
\draw[pole,crosspole] (0.5,0) to[out=90,in=270] (0.5,1.5);
\draw[usual,crossline] (1.25,0.75) to[out=270,in=270] (-0.25,0.75);
\draw[pole,crosspole] (0,0)node[below,black]{$u$} 
to[out=90,in=270] (0,1.5)node[above,black,yshift=-3pt]{$u$};
\draw[pole,crosspole] (1,0)node[below,black]{$v$} 
to[out=90,in=270] (1,1.5)node[above,black,yshift=-3pt]{$v$};
\draw[usual,crossline] (-0.25,0.75) to[out=90,in=90] (1.25,0.75);
\end{tikzpicture}
=
\begin{tikzpicture}[anchorbase,scale=0.7,tinynodes]
\draw[pole,crosspole] (0.5,0) to[out=90,in=270] (0.5,1.5);
\draw[usual,crossline] (1.25,0.75) to[out=90,in=270] (-0.25,0.75);
\draw[pole,crosspole] (0,0)node[below,black]{$u$} 
to[out=90,in=270] (0,1.5)node[above,black,yshift=-3pt]{$u$};
\draw[pole,crosspole] (1,0)node[below,black]{$v$} 
to[out=90,in=270] (1,1.5)node[above,black,yshift=-3pt]{$v$};
\draw[usual,crossline] (-0.25,0.75) to[out=90,in=270] (1.25,0.75);
\end{tikzpicture}
=
\qvar^{1/2}
\begin{tikzpicture}[anchorbase,scale=0.7,tinynodes]
\draw[pole,crosspole] (1,0)node[below,black]{$v$} 
to[out=90,in=270] (1,0.75);
\draw[pole,crosspole] (0.5,0) to[out=90,in=270] (0.5,1.5);
\draw[usual,crossline] (1.25,0.75) to[out=270,in=270] (-0.25,0.75);
\draw[pole,crosspole] (0,0)node[below,black]{$u$} 
to[out=90,in=270] (0,1.5)node[above,black,yshift=-3pt]{$u$};
\draw[usual,crossline] (-0.25,0.75) to[out=90,in=90] (1.25,0.75);
\draw[pole,crosspole] (1,0.75) 
to[out=90,in=270] (1,1.5)node[above,black,yshift=-3pt]{$v$};
\end{tikzpicture}
+
\qvar^{-1/2}\cdot
\begin{tikzpicture}[anchorbase,scale=0.7,tinynodes]
\draw[pole,crosspole] (0.5,0) to[out=90,in=270] (0.5,1.5);
\draw[usual,crossline] (-0.25,0.75) to[out=270,in=270] (0.25,0.75);
\draw[usual,crossline] (1.25,0.75) to[out=270,in=270] (0.75,0.75);
\draw[pole,crosspole] (0,0)node[below,black]{$u$} 
to[out=90,in=270] (0,1.5)node[above,black,yshift=-3pt]{$u$};
\draw[pole,crosspole] (1,0)node[below,black]{$v$} 
to[out=90,in=270] (1,1.5)node[above,black,yshift=-3pt]{$v$};
\draw[usual,crossline] (-0.25,0.75) to[out=90,in=90] (0.25,0.75);
\draw[usual,crossline] (0.75,0.75) to[out=90,in=90] (1.25,0.75);
\end{tikzpicture}
.
\end{gather*}

\end{enumerate}

\end{remark}

For completeness we note an easy fact:

\begin{lemma}\label{lemma:tl-basis}
The algebra $\setstuff{TL}_{g,n}(\cpar)$ is an associative, unital 
algebra with a $\KK$-basis given by all crossingless 
matchings of $2n$ points of genus $g$.
\end{lemma}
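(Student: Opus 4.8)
The plan is to treat the three assertions — that the crossingless matchings form a $\KK$-basis, that the multiplication is unital, and that it is associative — separately, spending almost all the effort on associativity. The basis claim is essentially built into \fullref{definition:handlebody-tl}: the underlying $\KK$-module is by construction the free module on the set of crossingless matchings of $2n$ points of genus $g$, so these automatically form a $\KK$-basis of the module. What must be checked is that $\setstuff{F}_{g}$-colored concatenation modulo circle evaluation actually lands back in this module. Here I would observe that concatenating two basis diagrams produces a single diagram whose closed components are removed one at a time, each contributing a scalar $\cvar_{\gamma}$, and whose remaining strands again form a crossingless matching once the words read off along them are reduced in the free group $\setstuff{F}_{g}$. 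Since no skein relation such as \eqref{eq:kauffman} is imposed, the product of two basis elements is a \emph{single} scalar multiple of a \emph{single} basis element; this gives closure and $\KK$-bilinearity at once.

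For unitality, I would exhibit the identity as the crossingless matching consisting of $n$ straight through-strands, each colored by the trivial word in $\setstuff{F}_{g}$. Stacking this below or above any diagram $x$ neither creates circles nor alters the connectivity, and prepending or appending the trivial word leaves every reduced word unchanged; hence it acts as a two-sided unit.

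The hard part will be associativity, and the only genuine subtlety there lies in the bookkeeping of circles and their colors. For three basis diagrams $x,y,z$ I would consider the total stacked diagram and argue in two halves. The uncolored underlying matching of $(xy)z$ and of $x(yz)$ agree by the classical associativity of Temperley--Lieb concatenation, and the words carried by the through-strands agree because concatenation of words is associative and reduction to a reduced word in $\setstuff{F}_{g}$ is unique. It remains to compare the scalar factors. The key point is that a closed component of the triple-stacked diagram is an intrinsic feature of that diagram: whether a given circle is removed while forming $xy$ or only after stacking with $z$ depends on the bracketing, but the \emph{multiset} of closed components — and the word read clockwise around each, well-defined up to cyclic rotation and hence up to conjugacy in $\setstuff{F}_{g}$ — does not. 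Because the parameters $\cvar_{\gamma}$ are constant on conjugacy classes and $\KK$ is commutative, the product $\prod_{\gamma}\cvar_{\gamma}$ is independent of the order and grouping in which the circles are evaluated, so $(xy)z=x(yz)$.

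The main obstacle, then, is making precise the claim that the circle multiset together with its conjugacy-class data is bracketing-independent; the potentially worrying case is a circle that, in the staged computation, closes up only at the second stacking and whose supporting arcs live in all three of $x$, $y$, $z$. I would dispel this by reading the color of such a circle directly in the total diagram: a through-strand of $xy$ carries precisely the concatenated word of its constituent arcs, so the word obtained by traversing the eventual circle is the same whether one reads it in the staged diagrams or in the fully stacked one. Everything else — associativity of the uncolored matchings and of word concatenation — is routine, and I expect the whole verification to be short once these reading conventions are pinned down.
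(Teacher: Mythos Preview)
Your argument is correct and matches the paper's in spirit: the paper, too, says the basis and unit are immediate and that only associativity needs comment. The one presentational difference is that the paper short-circuits your circle bookkeeping with the remark that associativity ``follows by identifying the colors by dots on the strings'' --- i.e.\ replace each $\setstuff{F}_{g}$-label by an actual sequence of colored dots on the strand, so that stacking three diagrams is literally stacking pictures with dots, and the staged removal of closed components manifestly yields the same multiset of decorated circles regardless of bracketing; this is precisely the content of your last paragraph, just packaged so that the ``worrying case'' never arises.
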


\begin{proof}
Note that the $\setstuff{F}_{g}$-colored 
concatenation of closed components ensures that the 
circle evaluation only depends on the associated word in $\setstuff{F}_{g}$ 
modulo conjugation.
Thus, the only claim which is not immediate is 
that the $\setstuff{F}_{g}$-colored 
concatenation is associative. This however follows 
by identifying the colors by dots on the strings.
\end{proof}

Note that the algebra $\setstuff{TL}_{g,n}(\cpar)$ is 
infinite-dimensional unless $g=0$, since the group-like elements $\tau_{u}$ 
are of infinite order and span $\setstuff{F}_{g}\cong
\setstuff{TL}_{g,1}(\cpar)\hookrightarrow\setstuff{TL}_{g,n}(\cpar)$.

\begin{remark}\label{remark:tl}
For low genus $\setstuff{TL}_{g,n}(\cpar)$ is well-studied 
(although for $g>0$ the precise definitions and the 
nomenclature vary 
throughout the literature):
\begin{enumerate}

\setlength\itemsep{0.15cm}

\item For $g=0$ the algebra 
$\setstuff{TL}_{0,n}(\cpar)$ is the 
Temperley--Lieb algebra in its crossingless matching definition.

\item In case $g=1$ there are the so-called 
affine Temperley--Lieb algebra or the
type C$=$B Temperley--Lieb algebra 
(the name comes from the relation of this algebra 
to the braid group of type C$=$B
as recalled in \fullref{remark:braidcox}), 
which were studied in many works such as {\eg} \cite{GrLe-affine-tl}.
The algebra $\setstuff{TL}_{1,n}(\cpar)$ is a version of these.

\item Similarly, for 
$g=2$ there is a so-called two-boundary 
Temperley--Lieb algebra 
and an affine type C Temperley--Lieb 
algebra, again independently 
introduced in many works.
The algebra $\setstuff{TL}_{2,n}(\cpar)$ 
is a version of these.

\end{enumerate}
Note that Temperley--Lieb algebras for $g<2$
are sometimes assumed to satisfy a quadratic relation for the coil 
elements, but we do not assume that.
These algebras are studied in 
the sections below.
\end{remark}

Similarly as in \fullref{subsection:brauer}, 
we let $\Lambda=\big(\{n,n-2,n-4,\dots\},\leq_{\N}\big)\subset\N$ 
with the usual partial order.
The $\lambda\in\Lambda$ are again the through strands of our diagrams, 
and we let $\sand=\KK\setstuff{F}_{g}$, 
the group ring of the free group in $g$ 
generators $\setstuff{F}_{g}\cong\setstuff{Br}_{g,1}\hookrightarrow\setstuff{Br}_{g,\lambda}$. We take the group element basis of $\setstuff{F}_{g}$ as the sandwich basis $\sandbasis$.

\begin{remark}\label{remark:fgtrivial}
We assume that $\setstuff{Br}_{g,\lambda}$ 
is trivial if $\lambda=0$.
In particular, $\KK\setstuff{F}_{g}\cong\KK$ for $\lambda=0$, since 
we identify $\setstuff{F}_{g}$ with a subgroup of $\setstuff{Br}_{g,\lambda}$, 
and the latter is trivial for $\lambda=0$.
\end{remark}

The construction of the basis 
\begin{gather}\label{eq:tl-basis}
\{c_{D,b,U}^{\lambda}\mid\lambda\in\Lambda,D,U\in M_{\lambda},
b\in\sand\}
\end{gather} 
works {\muta} as for the 
classical Temperley--Lieb algebra, having a concatenation 
of a cup diagram $D$, a cup diagram $U$ and through strands $b$, but now 
the caps and cups are allowed to wind around the cores 
(but not the through strands), 
and we have coils of through strands in the middle.
The following picture clarifies the construction:
\begin{gather}\label{eq:tl-basis-illustration}
\begin{tikzpicture}[anchorbase,scale=1]
\draw[usual,crossline] (0.25,1.2) to[out=90,in=180] (0.75,1.45) 
to[out=0,in=270] (4,2.5) to[out=90,in=270] (4,3);
\draw[pole,crosspole] (-0.5,3) to[out=270,in=90] (-0.5,1.5);
\draw[usual,crossline] (2,3) to[out=270,in=0] (0,2.8) 
to[out=180,in=90] (-0.25,2.6);
\draw[usual,crossline] (-0.25,1.8) to[out=270,in=180] (0,1.6) 
to[out=0,in=270] (3.5,3);
\draw[pole,crosspole] (0,3) to[out=270,in=90] (0,1.5);
\draw[pole,crosspole] (0.5,3) to[out=270,in=90] (0.5,1.5);
\draw[usual,crossline] (-0.25,2.6) to[out=270,in=180] (0,2.4) 
to[out=0,in=270] (2.5,3);
\draw[usual,crossline] (3,3) to[out=270,in=0] (0,2) 
to[out=180,in=90] (-0.25,1.8);
\draw[usual,crossline] (1,3) to[out=270,in=180] (1.25,2.75) 
to[out=0,in=270] (1.5,3);
\draw[pole,crosspole] (-0.5,0) to[out=90,in=270] (-0.5,1.5);
\draw[usual,crossline] (-0.25,0.5) to[out=90,in=180] (0,0.75) 
to[out=0,in=90] (2.5,0);
\draw[pole,crosspole] (0,0) to[out=90,in=270] (0,1.5);
\draw[pole,crosspole] (0.5,0) to[out=90,in=270] (0.5,1.5);
\draw[usual,crossline] (1,0) to[out=90,in=270] (-0.25,0.5);
\draw[usual,crossline] (1.5,0) to[out=90,in=180] (1.75,0.25) 
to[out=0,in=90] (2,0);
\draw[usual,crossline] (3,0) to[out=90,in=270] (3,0.5) 
to[out=90,in=0] (0.75,0.95) to[out=180,in=270] (0.25,1.2);
\draw[usual,crossline] (3.5,0) to[out=90,in=180] (3.75,0.25) 
to[out=0,in=90] (4,0);
\end{tikzpicture}
,\quad
\begin{aligned}
\begin{tikzpicture}[anchorbase,scale=1]
\draw[mor] (0,1) to (0.25,0.5) to (0.75,0.5) to (1,1) to (0,1);
\node at (0.5,0.75){$U$};
\end{tikzpicture}
&=
\begin{tikzpicture}[anchorbase,scale=0.7,tinynodes]
\draw[pole,crosspole] (-0.5,0) to[out=270,in=90] (-0.5,-1.5);
\draw[usual,crossline] (2,0) to[out=270,in=0] (0,-0.2) 
to[out=180,in=90] (-0.25,-0.4);
\draw[usual,crossline] (-0.25,-1.2) to[out=270,in=180] (0,-1.4) 
to[out=0,in=270] (3.5,0);
\draw[pole,crosspole] (0,0) to[out=270,in=90] (0,-1.5);
\draw[pole,crosspole] (0.5,0) to[out=270,in=90] (0.5,-1.5);
\draw[usual,crossline] (-0.25,-0.4) to[out=270,in=180] (0,-0.6) 
to[out=0,in=270] (2.5,0);
\draw[usual,crossline] (3,0) to[out=270,in=0] (0,-1) 
to[out=180,in=90] (-0.25,-1.2);
\draw[usual,crossline] (1,0) to[out=270,in=180] (1.25,-0.25) 
to[out=0,in=270] (1.5,0);
\draw[usual,crossline] (4,0) to[out=270,in=90] (4,-1.5);
\end{tikzpicture}
,\\
\begin{tikzpicture}[anchorbase,scale=1]
\draw[mor] (0.25,0) to (0.25,0.5) to (0.75,0.5) to (0.75,0) to (0.25,0);
\node at (0.5,0.25){$b$};
\node at (0.8,0.25){\phantom{$b$}};
\end{tikzpicture}
&=
\begin{tikzpicture}[anchorbase,scale=0.7,tinynodes]
\draw[pole,crosspole] (-0.5,0) to[out=90,in=270] (-0.5,1.5);
\draw[pole,crosspole] (0,0) to[out=90,in=270] (0,1.5);
\draw[usual,crossline] (0.25,0.75) to[out=90,in=270] (1,1.5);
\draw[pole,crosspole] (0.5,0) to[out=90,in=270] (0.5,1.5);
\draw[usual,crossline] (1,0) to[out=90,in=270] (0.25,0.75);
\end{tikzpicture}
,
\\
\begin{tikzpicture}[anchorbase,scale=1]
\draw[mor] (0,-0.5) to (0.25,0) to (0.75,0) to (1,-0.5) to (0,-0.5);
\node at (0.5,-0.25){$D$};
\end{tikzpicture}
&=
\begin{tikzpicture}[anchorbase,scale=0.7,tinynodes]
\draw[pole,crosspole] (-0.5,0) to[out=90,in=270] (-0.5,1.5);
\draw[usual,crossline] (-0.25,0.5) to[out=90,in=180] (0,0.75) 
to[out=0,in=90] (2.5,0);
\draw[pole,crosspole] (0,0) to[out=90,in=270] (0,1.5);
\draw[pole,crosspole] (0.5,0) to[out=90,in=270] (0.5,1.5);
\draw[usual,crossline] (1,0) to[out=90,in=270] (-0.25,0.5);
\draw[usual,crossline] (1.5,0) to[out=90,in=180] (1.75,0.25) 
to[out=0,in=90] (2,0);
\draw[usual,crossline] (3,0) to[out=90,in=270] (3,1.5);
\draw[usual,crossline] (3.5,0) to[out=90,in=180] (3.75,0.25) 
to[out=0,in=90] (4,0);
\end{tikzpicture}
.
\end{aligned}
\end{gather}
We also have the antiinvolution 
$(\placeholder)^{\star}\colon\setstuff{TL}_{g,n}(\cpar)
\to\setstuff{TL}_{g,n}(\cpar)$ given by 
flipping pictures upside down but keeping positive coils positive and negative coils negative (that is, 
one only reverses the words in $\setstuff{F}_{g}$ but does not 
invert them), {\eg} 
\begin{gather*}
\left(
\begin{tikzpicture}[anchorbase,scale=0.7,tinynodes]
\draw[pole,crosspole] (-0.5,0) to[out=90,in=270] (-0.5,1.5);
\draw[usual,crossline] (1,0) to[out=90,in=270] (-0.25,0.5);
\draw[pole,crosspole] (0,0) to[out=90,in=270] (0,1.5);
\draw[pole,crosspole] (0.5,0) to[out=90,in=270] (0.5,1.5);
\draw[usual,crossline] (-0.25,0.5) to[out=90,in=180] (0.25,0.8)
to[out=0,in=180] (0.75,0.8) to[out=0,in=90] (1.5,0);
\draw[usual,crossline] (1,1.5) to[out=270,in=180] (1.25,1.25) 
to[out=0,in=270] (1.5,1.5);
\end{tikzpicture}
\right)^{\star}
=
\begin{tikzpicture}[anchorbase,scale=0.7,tinynodes]
\draw[usual,crossline] (-0.25,-0.5) to[out=270,in=180] (0.25,-0.8) 
to[out=0,in=180] (0.75,-0.8) to[out=0,in=270] (1.5,0);
\draw[pole,crosspole] (0,0) to[out=270,in=90] (0,-1.5);
\draw[pole,crosspole] (0.5,0) to[out=270,in=90] (0.5,-1.5);
\draw[pole,crosspole] (-0.5,0) to[out=270,in=90] (-0.5,-1.5);
\draw[usual,crossline] (1,0) to[out=270,in=90] (-0.25,-0.5);
\draw[usual,crossline] (1,-1.5) to[out=90,in=180] (1.25,-1.25) to[out=0,in=90] (1.5,-1.5);
\end{tikzpicture}
.
\end{gather*}

\begin{proposition}\label{proposition:tl}
The above defines an 
involutive sandwich cell datum for $\setstuff{TL}_{g,n}(\cpar)$.
\end{proposition}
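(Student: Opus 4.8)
The plan is to check, one by one, the data and axioms of a sandwich cell datum from \fullref{definition:cellular} together with the involutivity of \fullref{definition:cellular-involution}, taking $\Lambda=\big(\{n,n-2,\dots\},\leq_{\N}\big)$, $M_{\lambda}=N_{\lambda}$ the cup diagrams with $\lambda$ through strands, and $\sand=\KK\setstuff{F}_{g}$ with its group-element basis $\sandbasis$; throughout I would follow the classical Temperley--Lieb pattern exactly as in the Brauer treatment of \fullref{subsection:brauer}, with the free group recording the windings. By \fullref{lemma:tl-basis} the genus-$g$ crossingless matchings already form a $\KK$-basis, so the first task is only to match them with the triples $(D,b,U)$: reading a matching from its two boundaries inward, the arcs attached to the bottom assemble into a unique bottom cup diagram $D\in M_{\lambda}$, those attached to the top into $U\in N_{\lambda}$, the $\lambda$ remaining arcs are the through strands, and sliding their windings into a horizontal band records a unique $b\in\setstuff{F}_{g}$, exactly as drawn in \eqref{eq:tl-basis-illustration}. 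The content of this step is that windings may be isotoped into the band and that crossinglessness forbids a through-strand winding from being reshuffled into $D$ or $U$, so the decomposition $c^{\lambda}_{D,b,U}$ is canonical.

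For axiom (a), i.e.\ \eqref{eq:cell-mult}, I would argue diagrammatically as in \eqref{eq:cell-action}. Stacking a diagram $x$ at the bottom of $c^{\lambda}_{D,b,U}$ can only preserve or lower the number of through strands, since cups and caps in the junction may join two through strands into one but never create new ones; if the number drops the product lies in $\setstuff{A}^{<_{\Lambda}\lambda}$. If it stays $\lambda$, then the top half $U$ is untouched, the bottom half is replaced by the bottom of $x$, the through-strand windings of $x$ and of $b$ concatenate into a new middle coil, and each closed component produced is deleted by circle evaluation. The resulting scalar is a product of parameters $\cvar_{\gamma}$ arising solely from how the top of $x$ meets the caps of $D$; because these circles never touch the through strands, the scalar is independent of $U$ and of $b$, which is precisely the independence required in \eqref{eq:cell-mult}. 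The statement for right multiplication is the mirror image.

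Axiom (b) is then immediate from the same trisection: $\Delta(\lambda)$ is the free right $\KK\setstuff{F}_{g}$-module on the cup diagrams $M_{\lambda}$ and $(\lambda)\Delta$ the free left module on $N_{\lambda}$, and $\setstuff{A}(\lambda)\cong\Delta(\lambda)\hcirc_{\KK\setstuff{F}_{g}}(\lambda)\Delta$ by gluing the middle coil, the tensor over $\KK\setstuff{F}_{g}$ encoding exactly the composition of through-strand windings. For involutivity I would take $(\placeholder)^{\star}$ to be the upside-down flip that reverses but does not invert the $\setstuff{F}_{g}$-words. Flipping reverses the order of concatenation, and word reversal $\tau_{i_{1}}^{\pm}\cdots\tau_{i_{k}}^{\pm}\mapsto\tau_{i_{k}}^{\pm}\cdots\tau_{i_{1}}^{\pm}$ is an involutive antiautomorphism of $\setstuff{F}_{g}$ that permutes $\sandbasis$; since the flip swaps the bottom and top halves and sends the middle coil $b$ to its reversal $b^{\star}$, it yields $(c^{\lambda}_{D,b,U})^{\star}=c^{\lambda}_{U,b^{\star},D}$, which is \eqref{eq:antiinvolution}.

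The main obstacle is the winding bookkeeping that underlies every step: unlike the planar classical case one must verify that every through-strand winding can be isotoped into the central band without crossing another strand, that the resulting normal form is unique, and that the $\setstuff{F}_{g}$-colored concatenation of two diagrams respects this normal form after straightening the windings created in the middle. A second delicate point, hidden in the involution, is that the flip must intertwine circle evaluation: flipping a closed component reverses its $\setstuff{F}_{g}$-word, so one has to check that the chosen parameters are compatible with this reversal on conjugacy classes (automatic for $g\leq 1$, where $\setstuff{F}_{g}$ is abelian). Once this genus-dependent bookkeeping is settled, all the remaining verifications collapse to the classical computation recalled in \fullref{subsection:brauer}.
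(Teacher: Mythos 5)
Your proposal is correct and follows essentially the same route as the paper's proof: the unique cut of a genus-$g$ crossingless matching into the pieces $D$, $b$, $U$, the observation that concatenation can only create (never remove) cups and caps so the number of through strands cannot increase, the independence of the circle-evaluation scalar from $U$ and $b$, and the upside-down flip as the antiinvolution. The one point where you go beyond the paper is the flagged worry that the flip sends a circle colored by $\gamma$ to one colored by the reversed word, which for $g\geq 2$ need not be conjugate to $\gamma$ in $\setstuff{F}_{g}$; this is a genuine subtlety that the paper's proof does not address (it only assumes the $\cvar_{\gamma}$ are constant on conjugacy classes), so your caveat is a warranted observation rather than a defect of your argument.
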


\begin{proof}
It is clear that one can cut any crossingless matching 
of $2n$ points of genus $g$ uniquely into pieces as 
illustrated in \eqref{eq:tl-basis-illustration}, hence that 
\eqref{eq:tl-basis} is a $\KK$-basis follows.
So it remains to verify \eqref{eq:cell-mult}. 
That the multiplication is ordered with respect to through 
strands follows (as for the classical Temperley--Lieb algebra) 
from the fact that one can not remove cups and caps, they 
can only be created. All the other requirements are clear by
construction; one basically calculates
\begin{gather*}
\scalebox{0.85}{$\begin{tikzpicture}[anchorbase,scale=1]
\draw[mor] (0,-0.5) to (0.25,0) to (0.75,0) to (1,-0.5) to (0,-0.5);
\node at (0.5,-0.25){$D$};
\draw[mor] (0,-0.5) to (0.25,-1) to (0.75,-1) to (1,-0.5) to (0,-0.5);
\node at (0.5,-0.75){$U$};
\end{tikzpicture}
=
\begin{tikzpicture}[anchorbase,scale=0.7,tinynodes]
\draw[pole,crosspole] (-0.5,0) to[out=90,in=270] (-0.5,1.5);
\draw[usual,crossline] (-0.25,0.5) to[out=90,in=180] (0,0.75) to[out=0,in=90] (2.5,0);
\draw[pole,crosspole] (0,0) to[out=90,in=270] (0,1.5);
\draw[pole,crosspole] (0.5,0) to[out=90,in=270] (0.5,1.5);
\draw[usual,crossline] (1,0) to[out=90,in=270] (-0.25,0.5);
\draw[usual,crossline] (1.5,0) to[out=90,in=180] (1.75,0.25)node[above]{$\gamma$} 
to[out=0,in=90] (2,0);
\draw[usual,crossline] (3,0) to[out=90,in=270] (3,1.5);
\draw[usual,crossline] (3.5,0) to[out=90,in=180] (3.75,0.25) to[out=0,in=90] (4,0);
\draw[pole,crosspole] (-0.5,0) to[out=270,in=90] (-0.5,-1.5);
\draw[usual,crossline] (2,0) to[out=270,in=0] (0,-0.2) to[out=180,in=90] (-0.25,-0.4);
\draw[usual,crossline] (-0.25,-1.2) to[out=270,in=180] (0,-1.4) to[out=0,in=270] (3.5,0);
\draw[pole,crosspole] (0,0) to[out=270,in=90] (0,-1.5);
\draw[pole,crosspole] (0.5,0) to[out=270,in=90] (0.5,-1.5);
\draw[usual,crossline] (-0.25,-0.4) to[out=270,in=180] (0,-0.6) to[out=0,in=270] (2.5,0);
\draw[usual,crossline] (3,0) to[out=270,in=0] (0,-1) to[out=180,in=90] (-0.25,-1.2);
\draw[usual,crossline] (1,0) to[out=270,in=180] (1.25,-0.25) to[out=0,in=270] (1.5,0);
\draw[usual,crossline] (4,0) to[out=270,in=90] (4,-1.5);
\end{tikzpicture}
=
\cvar_{\gamma}\cdot
\begin{tikzpicture}[anchorbase,scale=0.7,tinynodes]
\draw[pole,crosspole] (-0.5,0) to[out=90,in=270] (-0.5,1.5);
\draw[usual,crossline] (1,0) to[out=90,in=270] (-0.25,0.75);
\draw[pole,crosspole] (0,0) to[out=90,in=270] (0,1.5);
\draw[pole,crosspole] (0.5,0) to[out=90,in=270] (0.5,1.5);
\draw[usual,crossline] (-0.25,0.75) to[out=90,in=270] (1,1.5);
\end{tikzpicture}$}
,
\end{gather*}
where we used the elements in \eqref{eq:tl-basis-illustration}.
The property of being involutive also holds since any potential winding is allowed 
for the diagrams $D$ and $U$, so $(\placeholder)^{\star}$ is a $1$:$1$-correspondence between down $D$ and up $U$ diagrams.
\end{proof}

\begin{example}\label{example:cells-tl}
The cells of the algebra $\setstuff{TL}_{n,g}(\cpar)$ are 
of infinite size, so we can not display them here. 
However, let us illustrate strands of $\setstuff{TL}_{4,g}(\cpar)$ 
dashed (and colored) if they can wind around cores.
Using the same conventions 
as in \fullref{example:brauer}: then the cells $\mathcal{J}_{4}$ 
(bottom) to $\mathcal{J}_{0}$ (top), in order 
from smallest (bottom) to biggest (top) are given by
\begin{gather*}
\scalebox{0.85}{$\begin{tabular}{C!{\color{tomato}\vrule width 1mm}C}
\arrayrulecolor{tomato}
\begin{tikzpicture}[anchorbase,yscale=-1]
\draw[dusual] (0,0) to[out=90,in=180] (0.25,0.2) to[out=0,in=90] (0.5,0);
\draw[dusual] (0,0.5) to[out=270,in=180] (0.25,0.3) to[out=0,in=270] (0.5,0.5);
\draw[dusual] (1,0) to[out=90,in=180] (1.25,0.2) to[out=0,in=90] (1.5,0);
\draw[dusual] (1,0.5) to[out=270,in=180] (1.25,0.3) to[out=0,in=270] (1.5,0.5);
\end{tikzpicture} &
\begin{tikzpicture}[anchorbase,yscale=-1]
\draw[dusual] (0,0) to[out=45,in=180] (0.75,0.20) to[out=0,in=135] (1.5,0);
\draw[usual] (0.5,0) to[out=90,in=180] (0.75,0.1) to[out=0,in=90] (1,0);
\draw[dusual] (0,0.5) to[out=270,in=180] (0.25,0.3) to[out=0,in=270] (0.5,0.5);
\draw[dusual] (1,0.5) to[out=270,in=180] (1.25,0.3) to[out=0,in=270] (1.5,0.5);
\end{tikzpicture}
\\
\arrayrulecolor{tomato}\hline
\begin{tikzpicture}[anchorbase,yscale=-1]
\draw[dusual] (0,0) to[out=90,in=180] (0.25,0.2) to[out=0,in=90] (0.5,0);
\draw[dusual] (1,0) to[out=90,in=180] (1.25,0.2) to[out=0,in=90] (1.5,0);
\draw[dusual] (0,0.5) to[out=315,in=180] (0.75,0.3) to[out=0,in=225] (1.5,0.5);
\draw[usual] (0.5,0.5) to[out=270,in=180] (0.75,0.4) to[out=0,in=270] (1,0.5);
\end{tikzpicture} &
\begin{tikzpicture}[anchorbase,yscale=-1]
\draw[dusual] (0,0) to[out=45,in=180] (0.75,0.20) to[out=0,in=135] (1.5,0);
\draw[dusual] (0,0.5) to[out=315,in=180] (0.75,0.3) to[out=0,in=225] (1.5,0.5);
\draw[usual] (0.5,0) to[out=90,in=180] (0.75,0.1) to[out=0,in=90] (1,0);
\draw[usual] (0.5,0.5) to[out=270,in=180] (0.75,0.4) to[out=0,in=270] (1,0.5);
\end{tikzpicture}
\end{tabular}$}
\\[1pt]
\scalebox{0.85}{$\begin{tabular}{C!{\color{tomato}\vrule width 1mm}C!{\color{tomato}\vrule width 1mm}C}
\arrayrulecolor{tomato}
\begin{tikzpicture}[anchorbase,yscale=-1]
\draw[dusual] (0,0) to[out=90,in=180] (0.25,0.2) to[out=0,in=90] (0.5,0);
\draw[dusual] (0,0.5) to[out=270,in=180] (0.25,0.3) to[out=0,in=270] (0.5,0.5);
\draw[dusual] (1,0) to (1,0.5);
\draw[usual] (1.5,0) to (1.5,0.5);
\end{tikzpicture} & 
\begin{tikzpicture}[anchorbase,yscale=-1]
\draw[usual] (0.5,0) to[out=90,in=180] (0.75,0.2) to[out=0,in=90] (1,0);
\draw[dusual] (0,0.5) to[out=270,in=180] (0.25,0.3) to[out=0,in=270] (0.5,0.5);
\draw[dusual] (0,0) to (1,0.5);
\draw[usual] (1.5,0) to (1.5,0.5);
\end{tikzpicture} &
\begin{tikzpicture}[anchorbase,yscale=-1]
\draw[usual] (1,0) to[out=90,in=180] (1.25,0.2) to[out=0,in=90] (1.5,0);
\draw[dusual] (0,0.5) to[out=270,in=180] (0.25,0.3) to[out=0,in=270] (0.5,0.5);
\draw[dusual] (0,0) to (1,0.5);
\draw[usual] (0.5,0) to (1.5,0.5);
\end{tikzpicture}
\\
\arrayrulecolor{tomato}\hline
\begin{tikzpicture}[anchorbase,yscale=-1]
\draw[dusual] (0,0) to[out=90,in=180] (0.25,0.2) to[out=0,in=90] (0.5,0);
\draw[usual] (0.5,0.5) to[out=270,in=180] (0.75,0.3) to[out=0,in=270] (1,0.5);
\draw[dusual] (1,0) to (0,0.5);
\draw[usual] (1.5,0) to (1.5,0.5);
\end{tikzpicture} & 
\begin{tikzpicture}[anchorbase,yscale=-1]
\draw[dusual] (0,0) to (0,0.5);
\draw[usual] (0.5,0) to[out=90,in=180] (0.75,0.2) to[out=0,in=90] (1,0);
\draw[usual] (0.5,0.5) to[out=270,in=180] (0.75,0.3) to[out=0,in=270] (1,0.5);
\draw[usual] (1.5,0) to (1.5,0.5);
\end{tikzpicture} &
\begin{tikzpicture}[anchorbase,yscale=-1]
\draw[dusual] (0,0) to (0,0.5);
\draw[usual] (0.5,0) to (1.5,0.5);
\draw[usual] (1,0) to[out=90,in=180] (1.25,0.2) to[out=0,in=90] (1.5,0);
\draw[usual] (0.5,0.5) to[out=270,in=180] (0.75,0.3) to[out=0,in=270] (1,0.5);
\end{tikzpicture}
\\
\arrayrulecolor{tomato}\hline
\cellcolor{mydarkblue!25}
\begin{tikzpicture}[anchorbase,yscale=-1]
\draw[dusual] (0,0) to[out=90,in=180] (0.25,0.2) to[out=0,in=90] (0.5,0);
\draw[usual] (1,0.5) to[out=270,in=180] (1.25,0.3) to[out=0,in=270] (1.5,0.5);
\draw[dusual] (1,0) to (0,0.5);
\draw[usual] (1.5,0) to (0.5,0.5);
\end{tikzpicture} & 
\begin{tikzpicture}[anchorbase,yscale=-1]
\draw[dusual] (0,0) to (0,0.5);
\draw[usual] (1.5,0) to (0.5,0.5);
\draw[usual] (0.5,0) to[out=90,in=180] (0.75,0.2) to[out=0,in=90] (1,0);
\draw[usual] (1,0.5) to[out=270,in=180] (1.25,0.3) to[out=0,in=270] (1.5,0.5);
\end{tikzpicture} &
\begin{tikzpicture}[anchorbase,yscale=-1]
\draw[dusual] (0,0) to (0,0.5);
\draw[usual] (0.5,0) to (0.5,0.5);
\draw[usual] (1,0) to[out=90,in=180] (1.25,0.2) to[out=0,in=90] (1.5,0);
\draw[usual] (1,0.5) to[out=270,in=180] (1.25,0.3) to[out=0,in=270] (1.5,0.5);
\end{tikzpicture}
\end{tabular}$}
\\[1pt]
\scalebox{0.85}{$\begin{tabular}{C}
\cellcolor{mydarkblue!25}
\begin{tikzpicture}[anchorbase,yscale=-1]
\draw[dusual] (0,0) to (0,0.5);
\draw[usual] (0.5,0) to (0.5,0.5);
\draw[usual] (1,0) to (1,0.5);
\draw[usual] (1.5,0) to (1.5,0.5);
\end{tikzpicture}
\end{tabular}$}
.
\end{gather*}
Here we have indicated examples of $\mathcal{H}$-cells that are 
isomorphic to $\KK\setstuff{F}_{g}$, with $\setstuff{F}_{g}$ corresponding 
to the leftmost through strand, regardless 
of the parameters $\cpar$. For the top cell 
we can only pick up a copy of $\KK$ if at least 
one of the corresponding parameters is invertible in $\KK$. 
\end{example}

For the following recall that $\setstuff{F}_{g}$ is trivial 
by definition for $\lambda=0$, see \autoref{remark:fgtrivial}.

\begin{theorem}
Let $\KK$ be a field.
\begin{enumerate}

\item If $\cpar\neq 0$, or $\cpar=0$ and $\lambda\neq 0$ is odd, 
then all $\lambda\in\Lambda$ are apexes. In the remaining case, 
$\cpar=0$ and $\lambda=0$ (this only happens if $n$ is even), 
all $\lambda\in\Lambda-\{0\}$ are apexes, but $\lambda=0$ is not an apex.

\item The simple $\setstuff{TL}_{n,g}(\cpar)$-modules of 
apex $\lambda\in\Lambda$ 
are parameterized by simple modules of $\KK\setstuff{F}_{g}$.

\item The simple $\setstuff{TL}_{n,g}(\cpar)$-modules of 
apex $\lambda\in\Lambda$ can be constructed as 
the simple heads of
$\mathrm{Ind}_{\KK\setstuff{F}_{g}}^{\setstuff{TL}_{n,g}(\cpar)}(K)$, 
where $K$ runs over (equivalence classes of) 
simple $\KK\setstuff{F}_{g}$-modules.

\end{enumerate}
\end{theorem}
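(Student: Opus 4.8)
The plan is to apply \fullref{theorem:classification} in essentially the same way as in the proof of \fullref{theorem:brauer}, but with the sandwiched group algebra $\KK\setstuff{S}_{\lambda}$ replaced throughout by $\KK\setstuff{F}_{g}$. The structural input is already in hand: \fullref{proposition:tl} supplies the involutive sandwich cell datum, so all that remains is to analyze the $\mathcal{H}$-cells and the pairing scalars $r(U,D)$. Since $\KK\setstuff{F}_{g}$ is infinite-dimensional and not Artinian, the route through part (b) of \fullref{theorem:classification} is unavailable, and I would instead use part (c).

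First I would check that we are in the setting of \fullref{lemma:h-subalgebra}. Stacking two basis elements of a fixed $\mathcal{H}$-cell $\mathcal{H}_{\lambda,D,U}$ concatenates the middle through-strand words in $\setstuff{F}_{g}$ and closes the caps of $U$ against the cups of $D$; by \fullref{definition:handlebody-tl} the resulting closed components evaluate to a product $r(U,D)$ of circle parameters $\cvar_{\gamma}$, while the surviving through strands carry the product word $b^{\prime}Fb$ for a fixed $F\in\setstuff{F}_{g}$, exactly the shape required by \fullref{lemma:h-subalgebra}. Choosing $D$ and $U$ to be straightened half-diagrams whose cap/cup pattern closes up with no circles (as indicated in \fullref{example:cells-tl}) gives $r(U,D)=1$ and $F=1$, whence $\mathcal{H}_{\lambda,D,U}\cong\KK\setstuff{F}_{g}$ as algebras. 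It is essential here that the construction is non-topological (\fullref{remark:non-topological}): no Kauffman relation is imposed, the coils retain infinite order, and the middle strand therefore reproduces the free group $\setstuff{F}_{g}$ on the nose rather than some quotient of it.

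Next, part (a) follows from \fullref{theorem:classification}.(a), which under the \fullref{lemma:h-subalgebra} hypotheses states that $\lambda$ is an apex if and only if $r(U,D)\neq 0$ for some $D,U\in M_{\lambda}$. For $\lambda\neq 0$ at least one through strand survives any closure, so the straightening above produces a pairing with $r(U,D)=1\neq 0$, making $\lambda$ an apex for every choice of $\cpar$. For $\lambda=0$ the half-diagrams are pure cap/cup diagrams and every product closes into circles, so $r(U,D)$ is a nonempty product of parameters $\cvar_{\gamma}$; this is nonzero for some $D,U$ precisely when $\cpar\neq 0$, and is forced to vanish when $\cpar=0$ (recall from \fullref{remark:fgtrivial} that $\KK\setstuff{F}_{g}\cong\KK$ for $\lambda=0$). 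This reproduces the stated case analysis.

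Finally, parts (b) and (c) are read off from \fullref{theorem:classification}.(c) together with the identification $\mathcal{H}_{\lambda,D,U}\cong\KK\setstuff{F}_{g}$: the simple $\setstuff{TL}_{n,g}(\cpar)$-modules of apex $\lambda$ correspond bijectively to simple $\KK\setstuff{F}_{g}$-modules, realized as the heads of the induced modules $\mathrm{Ind}_{\KK\setstuff{F}_{g}}^{\setstuff{TL}_{n,g}(\cpar)}(K)$. The main obstacle, and the reason part (c) rather than part (b) must be invoked, is precisely that $\KK\setstuff{F}_{g}$ is non-Artinian, so the delicate point to nail down is that the middle concatenation is the undistorted group multiplication of $\setstuff{F}_{g}$ (no hidden circles or order-two collapse), so that $\mathcal{H}_{\lambda,D,U}\cong\KK\setstuff{F}_{g}$ genuinely holds and the hypotheses of \fullref{lemma:h-subalgebra} are met.
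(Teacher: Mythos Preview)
Your proposal is correct and follows essentially the same route as the paper: the paper's proof is literally ``word-by-word as for the Brauer algebra,'' i.e.\ apply \fullref{theorem:classification}.(c) after checking the \fullref{lemma:h-subalgebra} hypotheses via the straightening argument from \fullref{example:cells-tl}. Your additional remarks (that part (b) is unavailable because $\KK\setstuff{F}_{g}$ is non-Artinian, and that the non-topological setup of \fullref{remark:non-topological} is what makes $\mathcal{H}_{\lambda,D,U}\cong\KK\setstuff{F}_{g}$ hold on the nose) are accurate elaborations the paper leaves implicit.
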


\begin{proof}
Word-by-word as for the Brauer algebra, 
see the proof of \fullref{theorem:brauer}. In particular, 
this is a direct application of \fullref{theorem:classification}.
\end{proof}

\subsection{Handlebody blob algebras}\label{subsection:handlebody-blob}

An often applied strategy to turn an infinite-dimensional algebra 
into a finite-dimensional algebra is to impose 
a cyclotomic condition on generators of infinite order.
In our case we will impose relations on the coil 
generators $\tau_{u}$.

Following history, we will use a slightly 
different diagrammatic presentation for these algebras, 
namely using \emph{blob diagrams of $2n$ points of genus $g$}. First, 
for everything not involving any core strands 
the diagrams stay the same. Moreover, we will use
\begin{gather}\label{eq:blobs}
\tau_{u}=
\begin{tikzpicture}[anchorbase,scale=0.7,tinynodes]
\draw[pole,crosspole] (-0.5,0) to[out=90,in=270] (-0.5,1.5);
\draw[usual,crossline] (1,0)node[below,black]{$1$} 
to[out=90,in=270] (-0.25,0.75);
\draw[pole,crosspole] (0,0)node[below,black]{$u$} 
to[out=90,in=270] (0,1.5)node[above,black,yshift=-3pt]{$u$};
\draw[pole,crosspole] (0.5,0)node[below,black]{$v$} 
to[out=90,in=270] (0.5,1.5)node[above,black,yshift=-3pt]{$v$};
\draw[usual,crossline] (-0.25,0.75) to[out=90,in=270] 
(1,1.5)node[above,black,yshift=-3pt]{$1$};
\end{tikzpicture}
\rightsquigarrow
b_{u}=
\begin{tikzpicture}[anchorbase,scale=0.7,tinynodes]
\draw[usual,blobbed={0.5}{u}{spinach}] 
(1,0)node[below,black]{$1$} to[out=90,in=270]
(1,1.5)node[above,black,yshift=-3pt]{$1$};
\end{tikzpicture}
,\quad
\tau_{v}=
\begin{tikzpicture}[anchorbase,scale=0.7,tinynodes]
\draw[pole,crosspole] (-0.5,0) to[out=90,in=270] (-0.5,1.5);
\draw[usual,crossline] (1,0)node[below,black]{$1$} 
to[out=90,in=270] (0.25,0.75);
\draw[pole,crosspole] (0,0)node[below,black]{$u$} 
to[out=90,in=270] (0,1.5)node[above,black,yshift=-3pt]{$u$};
\draw[pole,crosspole] (0.5,0)node[below,black]{$v$} 
to[out=90,in=270] (0.5,1.5)node[above,black,yshift=-3pt]{$v$};
\draw[usual,crossline] (0.25,0.75) to[out=90,in=270] 
(1,1.5)node[above,black,yshift=-3pt]{$1$};
\end{tikzpicture}
\rightsquigarrow
b_{v}=
\begin{tikzpicture}[anchorbase,scale=0.7,tinynodes]
\draw[usual,blobbed={0.5}{v}{tomato}] (1,0)node[below,black]{$1$} to[out=90,in=270]
(1,1.5)node[above,black,yshift=-3pt]{$1$};
\end{tikzpicture}
.
\end{gather}
That is, we use colored \emph{blobs} 
instead of coils, which clarifies the nomenclature. 
We denote the elements corresponding to 
coils by $b_{u}$. We also say that a blob labeled $u$ has 
\emph{type $u$}.
Note that blobs are always reachable from the left by a straight line 
coming from $-\infty$, and move 
freely along strands in a vertical direction, but can not pass one another:
\begin{gather}\label{eq:not-defined}
\text{Ok:}
\begin{tikzpicture}[anchorbase,scale=0.7,tinynodes]
\draw[usual,blobbed={0.5}{u}{spinach}] (0.5,0)to[out=90,in=270](0.5,1.5);
\draw[usual] (1,0)to[out=90,in=270](1,1.5);
\end{tikzpicture}
,
\begin{tikzpicture}[anchorbase,scale=0.7,tinynodes]
\draw[usual,crossline,blobbed={0.2}{u}{spinach}] 
(1,0) to[out=270,in=180] (1.25,-0.5) to[out=0,in=270] (1.5,0);
\end{tikzpicture}
,\quad
\text{not defined}:
\begin{tikzpicture}[anchorbase,scale=0.7,tinynodes]
\draw[usual] (0.5,0)to[out=90,in=270](0.5,1.5);
\draw[usual,rblobbed={0.5}{u}{spinach}] (1,0)to[out=90,in=270](1,1.5);
\end{tikzpicture}
,
\begin{tikzpicture}[anchorbase,scale=0.7,tinynodes]
\draw[usual,crossline,rblobbed={0.8}{u}{spinach}] 
(1,0) to[out=270,in=180] (1.25,-0.5) to[out=0,in=270] (1.5,0);
\end{tikzpicture}
,\quad
\text{not equal}:
\begin{tikzpicture}[anchorbase,scale=0.7,tinynodes]
\draw[usual,blobbed={0.33}{u}{spinach},blobbed={0.66}{v}{tomato}] 
(1,0)to[out=90,in=270](1,1.5);
\end{tikzpicture}
\neq
\begin{tikzpicture}[anchorbase,scale=0.7,tinynodes]
\draw[usual,blobbed={0.33}{v}{tomato},blobbed={0.66}{u}{spinach}] 
(1,0)to[out=90,in=270](1,1.5);
\end{tikzpicture}.
\end{gather} 
An example of such a diagram is:
\begin{gather*}
\begin{tikzpicture}[anchorbase,scale=0.7,tinynodes]
\draw[usual] (1,1.5) to[out=270,in=180] (1.25,1.25) to[out=0,in=270] (1.5,1.5);
\draw[usual,blobbed={0.25}{u}{spinach}] (2,1.5) 
to[out=270,in=180] (2.25,1.25) to[out=0,in=270] (2.5,1.5);
\draw[usual,blobbed={0.25}{u}{spinach}] (3,1.5) 
to[out=270,in=180] (3.25,1.25) to[out=0,in=270] (3.5,1.5);
\draw[usual,rblobbed={0.33}{w}{orchid},rblobbed={0.66}{v}{tomato}] 
(3,0) to[out=90,in=270] (4,1.5);
\draw[usual,blobbed={0.1}{v}{tomato},blobbed={0.3}{u}{spinach}] (1,0) 
to[out=90,in=180] (1.75,0.75) to[out=0,in=90] (2.5,0);
\draw[usual] (1.5,0) to[out=90,in=180] (1.75,0.25) to[out=0,in=90] (2,0);
\draw[usual] (3.5,0) to[out=90,in=180] (3.75,0.25) to[out=0,in=90] (4,0);
\end{tikzpicture}
.
\end{gather*}
(Some of the blobs in this illustration
are strictly speaking not reachable 
from the left as they are behind cups and caps when drawing 
a straight line.
But here and throughout, to simplify illustrations, 
we will suppress the relevant height moves since they do not play any role.)

\begin{remark}
The reader might wonder whether 
(for appropriate parameters $\qvar^{\pm 1/2}$) 
one can not use \eqref{eq:kauffman} to define
\begin{gather*}
\begin{tikzpicture}[anchorbase,scale=0.7,tinynodes]
\draw[pole,crosspole,white] (2,0)node[below,white,yshift=-1pt]{$u$} to[out=90,in=270] (2,1.5)node[above,white,yshift=-3pt]{$u$};
\draw[usual] (2.5,0)node[below,black,yshift=-1pt]{$i$} to[out=90,in=270] (1.25,0.75);
\draw[usual,crossline] (1.5,0) to[out=90,in=270] (1.5,1.5);
\draw[usual,crossline] (2,0) to[out=90,in=270] (2,1.5);
\draw[usual,crossline] (1.25,0.75) to[out=90,in=270] (2.5,1.5)node[above,black,yshift=-3pt]{$i$};
\draw[usual,blobbed={1}{u}{spinach}] (1.25,0.65) to (1.25,0.85);
\end{tikzpicture}
=
\begin{tikzpicture}[anchorbase,scale=0.7,tinynodes]
\draw[pole,crosspole,white] (2,0)node[below,white,yshift=-1pt]{$u$} to[out=90,in=270] (2,1.5)node[above,white,yshift=-3pt]{$u$};
\draw[usual,rblobbed={0.5}{u}{spinach}] (2.5,0)node[below,black,yshift=-1pt]{$i$} to[out=90,in=270] (2.5,1.5)node[above,black,yshift=-3pt]{$i$};
\draw[usual,crossline] (1.5,0) to[out=90,in=270] (1.5,1.5);
\draw[usual,crossline] (2,0) to[out=90,in=270] (2,1.5);
\end{tikzpicture}
?
\end{gather*}
Indeed, in a topological model that is possible. However, as we will 
explore more carefully in \fullref{subsection:handlebody-blob-topology},
these diagrams will not be topological in nature, but 
have some error terms. So we decided to 
keep blobs to the left from the start.
\end{remark}

Using the same reading conventions as for $\setstuff{TL}_{g,n}(\cpar)$ but 
counting types of blobs instead of coils, we can associated a word in $\setstuff{F}_{g}$ 
to usual strands and circles. As before, all parameters 
below will be assumed to be constant on conjugacy classes.

Fix \emph{cyclotomic parameters} 
$\bpar=(\bvar_{u,i})\in\KK^{d_{1}+\dots+d_{g}}$.
We also fix $\dpar=(\dvar_{u})\in\N^{g}$ called the \emph{degree vector}.
Note that concatenation can create circles with blobs.
For each such circle $\gamma$ with 
at most $\dvar_{u}-1$ blobs of the corresponding kind
we choose a parameter $\cvar_{\gamma}\in\KK$, whose collection 
is denoted by $\cpar=(\cvar_{\gamma})_{\gamma}$.

\begin{definition}\label{definition:circle}
The evaluation of 
a closed circle $\gamma$ is defined to be the removal 
of a closed component, contributing a factor 
$\cvar_{\gamma}$. We call this \emph{blob circle evaluation}.
\end{definition}

\begin{example}
The circle evaluation of the handlebody Temperley--Lieb 
algebra becomes 
blob circle evaluation, {\eg}
\begin{gather*}
\begin{tikzpicture}[anchorbase,scale=0.7,tinynodes]
\draw[pole,crosspole] (0.5,0) to[out=90,in=270] (0.5,1.5);
\draw[pole,crosspole] (-0.5,0) to[out=90,in=270] (-0.5,1.5);
\draw[usual,crossline] (1.25,0.75) to[out=270,in=270] (-0.25,0.55);
\draw[usual,crossline] (0.25,0.75) to[out=90,in=270] (-0.25,0.95);
\draw[pole,crosspole] (0,0)node[below,black]{$u$} 
to[out=90,in=270] (0,1.5)node[above,black,yshift=-3pt]{$u$};
\draw[pole,crosspole] (1,0)node[below,black]{$v$} 
to[out=90,in=270] (1,1.5)node[above,black,yshift=-3pt]{$v$};
\draw[usual,crossline] (-0.25,0.9) to[out=90,in=90] (1.25,0.75);
\draw[usual,crossline] (-0.25,0.55) to[out=90,in=270] (0.25,0.75);
\end{tikzpicture}
=\cpar_{\gamma^{\prime}=u^{2}v}
\leftrightsquigarrow
\begin{tikzpicture}[anchorbase,scale=0.7,tinynodes]
\draw[usual] (1,1.2) to[out=90,in=180] (1.25,1.35) to[out=0,in=90] (1.5,1.2) 
to (1.5,0.3) to[out=270,in=0] (1.25,0.15) to[out=180,in=270] (1,0.3);
\draw[usual,blobbed={0}{u}{spinach},blobbed={0.5}{u}{spinach},blobbed={1}{v}{tomato}] (1,0.3) 
to[out=90,in=270] (1,1.2);
\end{tikzpicture}
=
\begin{tikzpicture}[anchorbase,scale=0.7,tinynodes]
\draw[usual] (1,1.2) to[out=90,in=180] (1.25,1.35) to[out=0,in=90] (1.5,1.2) 
to (1.5,0.3) to[out=270,in=0] (1.25,0.15) to[out=180,in=270] (1,0.3);
\draw[usual,blobbed={0}{v}{tomato},blobbed={0.5}{u}{spinach},blobbed={1}{u}{spinach}] (1,0.3) 
to[out=90,in=270] (1,1.2);
\end{tikzpicture}
=
\begin{tikzpicture}[anchorbase,scale=0.7,tinynodes]
\draw[usual] (1,1.2) to[out=90,in=180] (1.25,1.35) to[out=0,in=90] (1.5,1.2) 
to (1.5,0.3) to[out=270,in=0] (1.25,0.15) to[out=180,in=270] (1,0.3);
\draw[usual,blobbed={0}{u}{spinach},blobbed={0.5}{v}{tomato},blobbed={1}{u}{spinach}] (1,0.3) 
to[out=90,in=270] (1,1.2);
\end{tikzpicture}
=\cpar_{\gamma^{\prime}=u^{2}v}
.
\end{gather*}
\end{example}

Note that circles with more than $\dvar_{u}-1$ 
blobs of the corresponding kind 
can be evaluated by using \eqref{eq:cyclotomic-blob} and the 
above circle evaluation, so we do not need to define their evaluation. 

\begin{definition}\label{definition:blob}
We let the (cyclotomic) \emph{handlebody blob algebra} 
(in $n$ strands and of genus $g$)
$\setstuff{Bl}_{g,n}^{\dpar,\bpar}(\cpar)$ be 
the algebra whose underlying $\KK$-vector space is the
$\KK$-linear span of all 
blob diagrams on $2n$ points of genus $g$,
with multiplication given by concatenation of diagrams modulo 
blob circle evaluation, and the two-sided ideal 
generated by the \emph{cyclotomic relations}
\begin{gather}\label{eq:cyclotomic-blob}
(b_{u}-\bvar_{u,1})
\varpi_{1}
(b_{u}-\bvar_{u,2})
\varpi_{2}
\dots
(b_{u}-\bvar_{u,\dvar_{u}-1})
\varpi_{d_{u}-1}
(b_{u}-\bvar_{u,\dvar_{u}})
=0,
\end{gather}
where $\varpi_{j}$ is any finite (potentially empty) 
word in $b_{v}$ for $v\neq u$. 
\end{definition}

In words, any occurrence of $\dvar_{u}$ blobs colored $g$ 
on a strand can be replaced by the corresponding expression 
in the expansion of \eqref{eq:cyclotomic-blob}.

\begin{example}\label{example:blob}
For $g=2$, let us choose $\dvar_{1}=1$ and $\dvar_{2}=2$. We get
\begin{gather*}
\begin{gathered}
\bvar_{1,1}=0
\\[-5pt]
\bvar_{2,1}=0
\\[-5pt]
\bvar_{2,2}=0
\end{gathered}
\quad\text{gives}\quad
\begin{tikzpicture}[anchorbase,scale=0.7,tinynodes]
\draw[usual,blobbed={0.5}{1}{spinach}] (1,0) to[out=90,in=270](1,1.5);
\end{tikzpicture}
=
\begin{tikzpicture}[anchorbase,scale=0.7,tinynodes]
\draw[usual,blobbed={0.33}{2}{tomato},blobbed={0.66}{2}{tomato}] 
(1,0) to[out=90,in=270](1,1.5);
\end{tikzpicture}
=
\begin{tikzpicture}[anchorbase,scale=0.7,tinynodes]
\draw[usual,blobbed={0.25}{2}{tomato},blobbed={0.5}{1}{spinach},blobbed={0.75}{2}{tomato}] 
(1,0) to[out=90,in=270](1,1.5);
\end{tikzpicture}
=0
,\quad
\begin{gathered}
\bvar_{1,1}=1
\\[-5pt]
\bvar_{2,1}=0
\\[-5pt]
\bvar_{2,2}=0
\end{gathered}
\quad\text{gives}\quad
\begin{tikzpicture}[anchorbase,scale=0.7,tinynodes]
\draw[usual,blobbed={0.5}{1}{spinach}] (1,0) to[out=90,in=270](1,1.5);
\end{tikzpicture}
=
\begin{tikzpicture}[anchorbase,scale=0.7,tinynodes]
\draw[usual] (1,0) to[out=90,in=270](1,1.5);
\end{tikzpicture}
,
\begin{tikzpicture}[anchorbase,scale=0.7,tinynodes]
\draw[usual,blobbed={0.33}{2}{tomato},blobbed={0.66}{2}{tomato}] 
(1,0) to[out=90,in=270](1,1.5);
\end{tikzpicture}
=
\begin{tikzpicture}[anchorbase,scale=0.7,tinynodes]
\draw[usual,blobbed={0.25}{2}{tomato},blobbed={0.5}{1}{spinach},blobbed={0.75}{2}{tomato}] 
(1,0) to[out=90,in=270](1,1.5);
\end{tikzpicture}
=0
,
\\
\begin{gathered}
\bvar_{1,1}=1
\\[-5pt]
\bvar_{2,1}=1
\\[-5pt]
\bvar_{2,2}=0
\end{gathered}
\quad\text{gives}\quad
\begin{tikzpicture}[anchorbase,scale=0.7,tinynodes]
\draw[usual,blobbed={0.5}{1}{spinach}] (1,0) to[out=90,in=270](1,1.5);
\end{tikzpicture}
=
\begin{tikzpicture}[anchorbase,scale=0.7,tinynodes]
\draw[usual] (1,0) to[out=90,in=270](1,1.5);
\end{tikzpicture}
,
\begin{tikzpicture}[anchorbase,scale=0.7,tinynodes]
\draw[usual,blobbed={0.33}{2}{tomato},blobbed={0.66}{2}{tomato}] 
(1,0) to[out=90,in=270](1,1.5);
\end{tikzpicture}
=
\begin{tikzpicture}[anchorbase,scale=0.7,tinynodes]
\draw[usual,blobbed={0.5}{2}{tomato}] (1,0) to[out=90,in=270](1,1.5);
\end{tikzpicture}
\quad
\begin{tikzpicture}[anchorbase,scale=0.7,tinynodes]
\draw[usual,blobbed={0.25}{2}{tomato},blobbed={0.5}{1}{spinach},blobbed={0.75}{2}{tomato}] 
(1,0) to[out=90,in=270](1,1.5);
\end{tikzpicture}
=
\begin{tikzpicture}[anchorbase,scale=0.7,tinynodes]
\draw[usual,blobbed={0.33}{1}{spinach},blobbed={0.66}{2}{tomato}] 
(1,0) to[out=90,in=270](1,1.5);
\end{tikzpicture}
=
\begin{tikzpicture}[anchorbase,scale=0.7,tinynodes]
\draw[usual,blobbed={0.5}{2}{tomato}] (1,0) to[out=90,in=270](1,1.5);
\end{tikzpicture}
.
\end{gather*}
Note that the final expression can be resolved in two ways: First 
by removing the blob colored 1, and then by replacing the two blobs colored 2 
by one blob colored 2. Second, by applying \eqref{eq:cyclotomic-blob} 
directly, as we did above. 
Both give the same result.
\end{example}

\begin{remark}\label{remark:no-inverses}
We have not defined 
$\setstuff{Bl}_{g,n}^{\dpar,\bpar}(\cpar)$
as a quotient of $\setstuff{TL}_{g,n}(\cpar)$ 
because blobs would be invertible 
if we define them as the image of $\tau_{u}$ in the quotient
and we want to include the possibility of blobs being nilpotent, 
{\cf} \fullref{example:blob}.

Note that however that for certain choices of parameters
the blobs are invertible, and $\setstuff{Bl}_{g,n}^{\dpar,\bpar}(\cpar)$
is often a quotient of $\setstuff{TL}_{g,n}(\cpar)$ for these parameters.
\end{remark}

\begin{remark}\label{remark:blob}
We again discuss a few instances of \fullref{definition:blob}:
\begin{enumerate}

\setlength\itemsep{0.15cm}

\item For $g=0$ the blob and the Temperley--Lieb algebra 
coincide.

\item The algebra $\setstuff{Bl}_{1,n}^{\dpar,\bpar}(\cpar)$ 
is sometimes called the (cyclotomic) blob algebra, see {\eg} \cite{MaSa-blob}.

\item In genus $g=2$ there is a two-boundary 
blob algebra, see {\eg} 
\cite{deGiNi-two-boundary-tl} (beware that the version of $\setstuff{Bl}_{2,n}^{\dpar,\bpar}(\cpar)$ 
in that paper is called a two-boundary 
Temperley--Lieb algebra).

\end{enumerate}
For $g>0$
the terminology in the literature is not consistent, and 
Temperley--Lieb algebras and blob algebras might 
be the same or not. For us \fullref{definition:blob} 
generalizes \cite{MaSa-blob}.
\end{remark}

We need the 
analog of \fullref{lemma:tl-basis} which reads as follows.

\begin{lemma}\label{lemma:blob-basis}
The algebra 
$\setstuff{Bl}_{g,n}^{\dpar,\bpar}(\cpar)$ is an associative, unital 
algebra with a $\KK$-basis given by all handlebody blob diagrams 
on $2n$ points of genus 
$g$ where all strands have at most $\dvar_{u}$ 
blobs of the corresponding type.
\end{lemma}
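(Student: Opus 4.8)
The plan is to deduce everything from the corresponding statement for $\setstuff{TL}_{g,n}(\cpar)$ in \fullref{lemma:tl-basis} together with a straightening procedure for the cyclotomic relations \eqref{eq:cyclotomic-blob}. Write $\widetilde{\setstuff{Bl}}$ for the algebra spanned by all blob diagrams on $2n$ points of genus $g$, with multiplication given by concatenation modulo blob circle evaluation but \emph{without} imposing \eqref{eq:cyclotomic-blob}. Exactly as in \fullref{lemma:tl-basis} (identifying the blob colorings by dots on the strands, which makes the concatenation associative) this $\widetilde{\setstuff{Bl}}$ is associative and unital with all blob diagrams as a $\KK$-basis. By \fullref{definition:blob} we have $\setstuff{Bl}_{g,n}^{\dpar,\bpar}(\cpar)=\widetilde{\setstuff{Bl}}/I$, where $I$ is the two-sided ideal generated by the cyclotomic relations. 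A quotient of an associative unital algebra by a two-sided ideal is again associative and unital, and since the leading monomial of every relation carries at least one blob we have $1\notin I$; so the algebra structure is immediate, and the only real content is the basis claim.

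For spanning I would read \eqref{eq:cyclotomic-blob} as a rewriting rule: whenever a single strand carries the critical number of blobs of type $u$ (with arbitrary blobs of the other types interspersed, matching the words $\varpi_j$), its top term lets me rewrite that configuration as a $\KK$-combination of diagrams in which the strand carries strictly fewer blobs of type $u$. Equipping diagrams with the obvious complexity measure --- for instance the multiset of per-strand, per-type blob counts, ordered degree-lexicographically --- each application strictly decreases the complexity, so the process terminates and every diagram is congruent modulo $I$ to a $\KK$-combination of \emph{reduced} diagrams, namely those respecting the blob bound of the statement. Closed components produced by concatenation are dealt with first by blob circle evaluation and, if over-blobbed, by \eqref{eq:cyclotomic-blob}, exactly as noted just before the lemma.

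The main obstacle is linear independence of the reduced diagrams, i.e.\ ruling out any collapse caused by the interaction of the \emph{family} \eqref{eq:cyclotomic-blob} (one relation per choice of the interspersed $\varpi_j$) with concatenation and circle evaluation. I would establish this by a confluence (Bergman diamond lemma) argument, exploiting that the cyclotomic relations are \emph{local}: each rewrites the blob word on a single strand and leaves the underlying crossingless matching untouched. Consequently every overlap ambiguity is supported on one strand, and resolving it reduces to the purely algebraic fact that the one-strand cyclotomic quotient of the free associative algebra $\KK\langle b_1,\dots,b_g\rangle$ --- the monoid algebra of blob words, since blobs cannot pass one another by \eqref{eq:not-defined} and need not be invertible by \fullref{remark:no-inverses} --- has the reduced blob words as a $\KK$-basis. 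With a term order in which the leading monomial of \eqref{eq:cyclotomic-blob} is the all-$b_u$ word, the single-strand system has only resolvable overlaps, the diamond lemma applies, and independence follows (incidentally showing each such quotient, and hence the whole algebra, is finite-dimensional).

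Finally I would assemble the global basis from this single-strand result. Cutting a reduced blob diagram uniquely into a cup part $D$, a cap part $U$, and a through-strand part decorated by reduced blob words --- precisely the decomposition of \eqref{eq:tl-basis-illustration}, but with the sandwiched algebra now the one-strand cyclotomic quotient in place of $\KK\setstuff{F}_g$ --- identifies the reduced diagrams with the evident basis of a sandwiched algebra in the sense of \fullref{section:cells}. Independence of that basis then follows from independence of the sandwiched-algebra basis together with the same cut-into-pieces argument used for $\setstuff{TL}_{g,n}(\cpar)$ in \fullref{proposition:tl}, which completes the proof.
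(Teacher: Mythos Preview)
Your approach is correct and considerably more careful than the paper's. The paper's own proof is a single sentence: it simply observes that the cyclotomic relations \eqref{eq:cyclotomic-blob} let one reduce any over-blobbed circle to one whose evaluation has already been prescribed, and treats the rest of the basis claim as self-evident from the definition. In effect, the paper reads \fullref{definition:blob} as directly specifying the free $\KK$-module on reduced diagrams together with a concatenation rule, so that the basis assertion is close to tautological and the only content is well-definedness of circle evaluation.

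You instead take the quotient description at face value and supply the argument the paper omits: spanning via a terminating rewriting procedure, and linear independence via a Bergman diamond-lemma confluence check, reduced to the single-strand quotient of $\KK\langle b_{1},\dots,b_{g}\rangle$ and then reassembled globally via the cut-into-pieces decomposition. This is a genuinely different route. Its cost is that confluence of the one-strand system has to be verified, and you gesture at this rather than carry it out; there are nontrivial overlaps both among relations for the same $u$ (the $\varpi_{j}$ range over infinitely many words) and between relations for different colors, so ``only resolvable overlaps'' is not entirely automatic. Its benefit is that it makes linear independence honest and, incidentally, yields \fullref{lemma:blob-onestrand} in passing, which the paper proves separately afterwards by the same informal ``blobs satisfy no other relation'' assertion.

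One small caution about your final paragraph: you invoke the sandwich decomposition to promote one-strand independence to global independence, but the blob version of that cellular structure is established only \emph{after} the present lemma. What you actually need is the purely combinatorial fact that a reduced blob diagram cuts uniquely into cap part, cup part, and middle blob word --- available directly from the diagrammatics without appealing to the later cellularity statements --- and it would be cleaner to phrase it that way.
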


\begin{proof}
The only fact to observe is that 
the cyclotomic condition \eqref{eq:cyclotomic-blob} 
ensures that it suffices to 
fix an evaluation for any circle whose number of blobs are bounded 
by the degree vector.
\end{proof}

For our fixed genus $g$ and degree vector $\dpar$ we define 
the (corresponding) \emph{blob numbers} using multinomial
coefficients:
\begin{gather}\label{eq:blob-numbers}
\bbvar_{g,\dpar}=
\sum_{k\in\N}\,
\sum_{\substack{0\leq k_{u}\leq\min(k,\dvar_{u}-1)\\k_{1}+\dots+k_{g}=k}}
\binom{k}{k_{1},\dots,k_{g}},
\end{gather}
where the sums run over all $k\in\N$ and all 
$0\leq k_{u}\leq\min(k,\dvar_{u}-1)$,
for $u\in\{1,\dots,g\}$, that sum up to $k$.
Note that this sum is finite.

\begin{example}\label{example:blob-numbers}
We have $\bbvar_{0,\dpar}=1$ (by definition), 
$\bbvar_{1,\dpar}=\dvar_{1}$ and, for $\dvar_{1}=2$ and $\dvar_{2}=3$, 
we get $\bbvar_{2,\dpar}=
\binom{0}{0,0}+\binom{1}{1,0}+\binom{1}{0,1}+\binom{2}{1,1}
+\binom{2}{0,2}+\binom{3}{1,2}=9$. 
Note that there are nine blob diagrams 
with one strand and at most one $1$ blob and two $2$ blobs:
\begin{gather*}
\begin{tikzpicture}[anchorbase,scale=0.7,tinynodes]
\draw[usual] (1,0) to[out=90,in=270](1,1.5);
\end{tikzpicture}
,\quad
\begin{tikzpicture}[anchorbase,scale=0.7,tinynodes]
\draw[usual,blobbed={0.5}{1}{spinach}] (1,0) to[out=90,in=270](1,1.5);
\end{tikzpicture}
,\quad
\begin{tikzpicture}[anchorbase,scale=0.7,tinynodes]
\draw[usual,blobbed={0.5}{2}{tomato}] (1,0) to[out=90,in=270](1,1.5);
\end{tikzpicture}
,\quad
\begin{tikzpicture}[anchorbase,scale=0.7,tinynodes]
\draw[usual,blobbed={0.33}{1}{spinach},blobbed={0.66}{2}{tomato}] (1,0) to[out=90,in=270](1,1.5);
\end{tikzpicture}
,\quad
\begin{tikzpicture}[anchorbase,scale=0.7,tinynodes]
\draw[usual,blobbed={0.33}{2}{tomato},blobbed={0.66}{1}{spinach}] (1,0) to[out=90,in=270](1,1.5);
\end{tikzpicture}
,\quad
\begin{tikzpicture}[anchorbase,scale=0.7,tinynodes]
\draw[usual,blobbed={0.33}{2}{tomato},blobbed={0.66}{2}{tomato}] (1,0) to[out=90,in=270](1,1.5);
\end{tikzpicture}
,\quad
\begin{tikzpicture}[anchorbase,scale=0.7,tinynodes]
\draw[usual,blobbed={0.25}{1}{spinach},blobbed={0.5}{2}{tomato},blobbed={0.75}{2}{tomato}] 
(1,0) to[out=90,in=270](1,1.5);
\end{tikzpicture}
,\quad
\begin{tikzpicture}[anchorbase,scale=0.7,tinynodes]
\draw[usual,blobbed={0.25}{2}{tomato},blobbed={0.5}{1}{spinach},blobbed={0.75}{2}{tomato}] 
(1,0) to[out=90,in=270](1,1.5);
\end{tikzpicture}
,\quad
\begin{tikzpicture}[anchorbase,scale=0.7,tinynodes]
\draw[usual,blobbed={0.25}{2}{tomato},blobbed={0.5}{2}{tomato},blobbed={0.75}{1}{spinach}] 
(1,0) to[out=90,in=270](1,1.5);
\end{tikzpicture}
.
\end{gather*}
In fact, we have
$\bbvar_{2,\dpar=(\dvar_{1},\dvar_{2})}=\binom{\dvar_{1}+\dvar_{2}}{\dvar_{1}}-1$.
\end{example}

\begin{lemma}\label{lemma:blob-onestrand}
For any $\cpar$, we have an isomorphism of algebras
\begin{gather*}
\setstuff{Bl}_{g,1}^{\dpar,\bpar}(\cpar)
\cong
\KK\langle b_{u}\mid u\in\{1,\dots,g\}\rangle
/\eqref{eq:cyclotomic-blob}
.
\end{gather*}
In particular, 
$\dim_{\KK}\setstuff{Bl}_{g,1}^{\dpar,\bpar}(\cpar)=\bbvar_{g,\dpar}$.
\end{lemma}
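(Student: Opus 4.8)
The plan is to unwind both sides of the claimed isomorphism as monoid-type algebras and then match their defining relations. First I would observe that for $n=1$ there is a single through strand and no room for cups, caps, or circles: stacking two one-strand blob diagrams produces another one-strand blob diagram obtained by concatenating the two columns of blobs, and no closed component is ever created. In particular blob circle evaluation never occurs, which is exactly why the statement holds \emph{for any} $\cpar$. Reading a one-strand diagram from bottom to top thus records precisely a word in the letters $b_{1},\dots,b_{g}$, and two words give the same diagram if and only if they are equal as words (recall from \eqref{eq:not-defined} that blobs of different type cannot pass one another). Hence, before imposing any cyclotomic relation, sending the generator $b_{u}$ of the free algebra to the one-blob diagram $b_{u}$ identifies the span of one-strand diagrams under stacking with the free associative unital algebra $\KK\langle b_{u}\mid u\in\{1,\dots,g\}\rangle$, the monoid algebra of the free monoid on $g$ letters.

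Next I would push this identification through the quotient. By \fullref{definition:blob} the algebra $\setstuff{Bl}_{g,1}^{\dpar,\bpar}(\cpar)$ is this diagram algebra modulo the two-sided ideal generated by the cyclotomic relations \eqref{eq:cyclotomic-blob}, and on the right-hand side we divide $\KK\langle b_{u}\rangle$ by the ideal generated by the \emph{same} expressions \eqref{eq:cyclotomic-blob}. Since the isomorphism of the previous step carries one ideal onto the other on the nose, it descends to an isomorphism of the quotients, giving the first assertion.

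It then remains to compute the dimension. The leading term (in the number of type-$u$ blobs) of \eqref{eq:cyclotomic-blob} is $b_{u}\varpi_{1}b_{u}\varpi_{2}\cdots b_{u}$ with $\dvar_{u}$ occurrences of $b_{u}$ and arbitrary words $\varpi_{j}$ in the other letters, so the relation rewrites any word containing $\dvar_{u}$ letters $b_{u}$ as a $\KK$-combination of words with strictly fewer. Iterating over all $u$ reduces every word to ones using at most $\dvar_{u}-1$ copies of each $b_{u}$, so these reduced words span; by \fullref{lemma:blob-basis} they are moreover a basis. Counting them by total length $k$ and by the numbers $k_{u}$ of type-$u$ letters --- there being $\binom{k}{k_{1},\dots,k_{g}}$ words with prescribed $(k_{1},\dots,k_{g})$ summing to $k$, subject to $0\leq k_{u}\leq\dvar_{u}-1$ --- reproduces exactly \eqref{eq:blob-numbers}, so $\dim_{\KK}\setstuff{Bl}_{g,1}^{\dpar,\bpar}(\cpar)=\bbvar_{g,\dpar}$ (compare \fullref{example:blob-numbers}). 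The one point requiring genuine care is the linear independence of the reduced words, that is, that the rewriting by \eqref{eq:cyclotomic-blob} is confluent and introduces no further collapses; this is precisely the content invoked from \fullref{lemma:blob-basis}, and I expect it to be the main obstacle, whereas the spanning and the bookkeeping are routine.
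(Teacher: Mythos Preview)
Your proof is correct and follows essentially the same approach as the paper: both identify the one-strand blob algebra with the free algebra on $g$ generators modulo \eqref{eq:cyclotomic-blob} (the paper summarizes this as ``blobs do not satisfy any other relation than \eqref{eq:cyclotomic-blob}'') and then count the reduced words via multinomial coefficients. Your version is more explicit, in particular in flagging linear independence as the point needing justification via \fullref{lemma:blob-basis}, which the paper's terse proof leaves implicit.
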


\begin{proof}
Since blobs do not satisfy any other relation than \eqref{eq:cyclotomic-blob},
the only claim that is not immediate is the dimension count. 
To see that that works, we recall that the multinomial coefficient 
$\binom{k}{a_{1},\dots,a_{g}}$
counts the appearance of $x^{a_{1}}_{1}\dots x^{a_{g}}_{g}$ in the expansion 
of $(x_{1}+\dots+x_{g})^{k}$, which is the counting problem we need to solve.
Finally, note that $b_{u}^{d_{u}}$ can be expressed in terms of
lower order expressions, which explains our condition on the summation.
\end{proof}

We also calculate the dimension of 
$\setstuff{Bl}_{g,n}^{\dpar,\bpar}(\cpar)$:

\begin{proposition}\label{proposition:blob-dim}
We have $\dim_{\KK}\setstuff{Bl}_{g,0}^{\dpar,\bpar}(\cpar)=1$, 
$\dim_{\KK}\setstuff{Bl}_{g,1}^{\dpar,\bpar}(\cpar)=\bbvar_{g,\dpar}$ and
\begin{gather}\label{eq:blob-dim}
\dim_{\KK}\setstuff{Bl}_{g,n}^{\dpar,\bpar}(\cpar)
=
\bbvar_{g,\dpar}
{\textstyle\sum_{k\in\{2,4,6,\dots,2n\}}}\,C_{k-1}\dim_{\KK}
\setstuff{Bl}_{g,n-k}^{\dpar,\bpar}(\cpar),
\end{gather}
where $C_{k-1}$ is the $(k-1)$th Catalan number.
\end{proposition}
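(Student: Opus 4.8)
The plan is to compute the dimension by counting the diagram basis supplied by \fullref{lemma:blob-basis}, that is, handlebody blob diagrams on $2n$ points of genus $g$ with a bounded number of blobs per strand, and to organise the count as a recursion obtained by peeling off the leftmost strand. First I would separate each basis element into its underlying crossingless matching of $2n$ points and its blob decoration, the latter being a choice of blob pattern only on those strands that are \emph{visible from the left}, i.e. reachable by a horizontal ray to $-\infty$ without crossing another strand. The geometric input I would establish before anything else is the accessibility dichotomy recorded informally after \eqref{eq:blobs}: a strand can carry blobs precisely when it lies on the left boundary of the diagram, whereas any strand enclosed by an arc cannot reach the cores and is therefore forced to be blob-free.

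Next I would run the classical first-arc decomposition on the underlying crossingless matching, based at the bottom-left endpoint. Its strand is the outermost left arc; being always accessible, by \fullref{lemma:blob-onestrand} it carries exactly $\bbvar_{g,\dpar}$ distinct blob patterns (this is the one-strand count of \eqref{eq:blob-numbers}), and this factor splits off uniformly, which is the origin of the global coefficient $\bbvar_{g,\dpar}$ in \eqref{eq:blob-dim}. Cutting along this arc partitions the remaining points into an enclosed region and an exterior region. By the accessibility dichotomy the enclosed strands support no blobs, so they range over ordinary genus-$0$ crossingless matchings and contribute a Catalan factor $C_{k-1}$; the exterior region still sees the cores and hence contributes a smaller handlebody blob algebra $\setstuff{Bl}_{g,n-k}^{\dpar,\bpar}(\cpar)$. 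Summing over the admissible positions $k$ of the arc's second endpoint, with parity forcing the enclosed block to have even size, assembles exactly \eqref{eq:blob-dim}. The base cases $\dim_{\KK}\setstuff{Bl}_{g,0}^{\dpar,\bpar}(\cpar)=1$ (the empty diagram) and $\dim_{\KK}\setstuff{Bl}_{g,1}^{\dpar,\bpar}(\cpar)=\bbvar_{g,\dpar}$ (from \fullref{lemma:blob-onestrand}) anchor the induction.

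The hard part will be the accessibility dichotomy together with the independence of the blob counts across the cut. Concretely, I must argue that decorating the outer arc with one of its $\bbvar_{g,\dpar}$ patterns, the enclosed region with a blob-free matching, and the exterior with an arbitrary basis element of the smaller algebra produces every basis element of \fullref{lemma:blob-basis} exactly once. The delicate points are that blobs cannot pass one another yet live at independent heights on distinct left-boundary strands, so no interaction occurs across the cut, and that the enclosed strands genuinely collapse to the $g=0$ (Catalan) count rather than retaining any winding. Both are consequences of the reachable-from-the-left convention, but they require checking that the cut respects the normal form underlying \fullref{lemma:blob-basis} and that the cyclotomic bound on blobs per strand is preserved region by region.
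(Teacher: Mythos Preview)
Your strategy coincides with the paper's: count the basis of \fullref{lemma:blob-basis} via a first-arc recursion, with the leftmost strand contributing $\bbvar_{g,\dpar}$, the enclosed region a Catalan number, and the exterior a smaller blob algebra. One technical device you are missing is that the paper first \emph{claps} the diagram, folding the top row down so that all $2n$ boundary points lie on a single horizontal line, before running the first-arc recursion. This matters because in the unclapped picture (with $n$ bottom and $n$ top points) the exterior of the strand through the bottom-left point is not in general a square diagram: if that strand is a cap from bottom point $1$ to bottom point $j$, the exterior has $n-j$ bottom and $n$ top points, and if it is a through strand to top point $i$, the exterior has $n-1$ bottom and $n-i$ top points. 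Either way it does not directly sit inside any $\setstuff{Bl}_{g,m}^{\dpar,\bpar}(\cpar)$, so your claim that the exterior ``contributes a smaller handlebody blob algebra $\setstuff{Bl}_{g,n-k}^{\dpar,\bpar}(\cpar)$'' is not yet justified. After clapping, the exterior is simply the configuration on the rightmost $2(n-k)$ points of the line, which unclaps to an honest basis element of $\setstuff{Bl}_{g,n-k}^{\dpar,\bpar}(\cpar)$, and then your argument goes through verbatim.
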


The proof of \fullref{proposition:blob-dim} 
is an inductive argument which works in quite some 
generality and that we learned from \cite{tDi-sym-bridges}.

\begin{proof}
By \fullref{lemma:blob-basis}, it suffices to count 
handlebody blob diagrams of genus $g$.
It actually suffices to study the clapped situation:
\begin{gather*}
\begin{tikzpicture}[anchorbase,scale=0.7,tinynodes]
\draw[usual] (1,1.5) to[out=270,in=180] (1.25,1.25) to[out=0,in=270] (1.5,1.5);
\draw[usual,blobbed={0.15}{u}{spinach}] (2,1.5) 
to[out=270,in=180] (2.25,1.25) to[out=0,in=270] (2.5,1.5);
\draw[usual,blobbed={0.15}{u}{spinach}] (3,1.5) 
to[out=270,in=180] (3.25,1.25) to[out=0,in=270] (3.5,1.5);
\draw[usual,blobbed={0.33}{w}{orchid},blobbed={0.66}{v}{tomato}] (3,0) to[out=90,in=270] (4,1.5);
\draw[usual,blobbed={0.1}{v}{tomato},blobbed={0.3}{u}{spinach}] (1,0) 
to[out=90,in=180] (1.75,0.75) to[out=0,in=90] (2.5,0);
\draw[usual] (1.5,0) to[out=90,in=180] (1.75,0.25) to[out=0,in=90] (2,0);
\draw[usual] (3.5,0) to[out=90,in=180] (3.75,0.25) to[out=0,in=90] (4,0);
\draw[very thick,mygray,densely dashed] (0.75,0) to (4.25,0);
\draw[very thick,mygray,->] (2.5,0) to[out=270,in=180] (3.75,-0.5) to[out=0,in=270] (5,0) to (5,1.5);
\end{tikzpicture}
\leftrightsquigarrow
\begin{tikzpicture}[anchorbase,scale=0.7,tinynodes]
\draw[usual] (1,1.5) to[out=270,in=180] (1.25,1.25) to[out=0,in=270] (1.5,1.5);
\draw[usual,blobbed={0.25}{u}{spinach}] (2,1.5) 
to[out=270,in=180] (2.25,1.25) to[out=0,in=270] (2.5,1.5);
\draw[usual,blobbed={0.25}{u}{spinach}] (3,1.5) 
to[out=270,in=180] (3.25,1.25) to[out=0,in=270] (3.5,1.5);
\draw[usual,blobbed={0.1}{v}{tomato},blobbed={0.3}{w}{orchid}] (4,1.5) 
to[out=270,in=180] (4.75,0.75) to[out=0,in=270] (5.5,1.5);
\draw[usual] (4.5,1.5) to[out=270,in=180] (4.75,1.25) to[out=0,in=270] (5,1.5);
\draw[usual,blobbed={0.1}{u}{spinach},blobbed={0.3}{v}{tomato}] (6,1.5) 
to[out=270,in=180] (6.75,0.75) to[out=0,in=270] (7.5,1.5);
\draw[usual] (6.5,1.5) to[out=270,in=180] (6.75,1.25) to[out=0,in=270] (7,1.5);
\draw[very thick,mygray,densely dashed] (3.75,1.5) to (7.75,1.5);
\end{tikzpicture}
.
\end{gather*}
We then argue by induction on $n$. 
The induction start is clear, so let $n>1$.
First, the 
strand with the leftmost point can end at position $2k$ for $k\in\N$, 
and one can divide the diagram into two parts: a part underneath it 
$TL_{k-1}$ and a part to the right of it $BL_{n-k}$. For example,
\begin{gather*}
n=9,k=4\colon
\begin{tikzpicture}[anchorbase,scale=0.7,tinynodes]
\draw[usual,orchid,blobbed={0.1}{w}{orchid},blobbed={0.3}{v}{tomato}] (0.5,1.5) 
to[out=270,in=180] (2.25,0.25) to[out=0,in=270] (4,1.5)node[above]{$2k$};
\draw[usual] (1,1.5) to[out=270,in=180] (1.25,1.25) to[out=0,in=270] (1.5,1.5);
\draw[usual] (2,1.5) to[out=270,in=180] (2.75,0.75) to[out=0,in=270] (3.5,1.5);
\draw[usual] (2.5,1.5) to[out=270,in=180] (2.75,1.25) to[out=0,in=270] (3,1.5);
\draw[usual,rblobbed={0.1}{u}{spinach},blobbed={0.3}{v}{tomato}] (4.5,1.5) 
to[out=270,in=180] (5.25,0.75) to[out=0,in=270] (6,1.5);
\draw[usual] (5,1.5) to[out=270,in=180] (5.25,1.25) to[out=0,in=270] (5.5,1.5);
\draw[usual,blobbed={0.25}{w}{orchid}] (6.5,1.5) 
to[out=270,in=180] (6.75,1.25) to[out=0,in=270] (7,1.5);
\draw[usual,blobbed={0.25}{w}{orchid}] (7.5,1.5) 
to[out=270,in=180] (7.75,1.25) to[out=0,in=270] (8,1.5);
\draw[usual] (8.5,1.5) to[out=270,in=180] (8.75,1.25) 
to[out=0,in=270] (9,1.5)node[above]{$2n$};
\draw[very thick,mygray,densely dashed] (4.25,2) to (4.25,0);
\node at (2.25,1.75) {$TL_{k-1}$};
\node at (6.25,1.75) {$BL_{n-k}$};
\end{tikzpicture}
.
\end{gather*}
The part underneath it, denoted $TL_{k-1}$ above, 
can not carry any blobs, so the number of possible diagrams is the same 
as for the corresponding Temperley--Lieb algebra, which is the Catalan 
number appearing in \eqref{eq:blob-dim}. The number of possible diagrams 
on the right, denoted $BL_{n-k}$ above, is the dimension of 
$\setstuff{Bl}_{g,n-k}^{\dpar,\bpar}(\cpar)$, 
and we get the corresponding 
number in \eqref{eq:blob-dim}. The remaining number is 
the number of possible blob placements on the 
strand with the leftmost point, see \fullref{lemma:blob-onestrand}.
\end{proof}

\begin{example}
For $g=0$ one obtains that 
$\dim_{\KK}\setstuff{Bl}_{0,n}^{\dpar,\bpar}(\cpar)=C_{n}$, which is 
of course expec\-ted. For $g=1$ one can solve 
the recursion in \eqref{eq:blob-dim} and obtains 
$\dim_{\KK}\setstuff{Bl}_{1,n}^{\dpar,\bpar}(\cpar)=\binom{2n}{n}$, 
the dimension of the blob algebra, {\cf} \cite[Lemma 5.7]{Gr-gen-tl-algebra}.
\end{example}

Regarding cellular structures, the same strategy as for 
the handlebody Temperley--Lieb algebra from 
\fullref{subsection:handlebody-tl}
works. Precisely, the $D$ part is allowed to have caps 
with blobs and through strands without blobs, 
the $U$ part is allowed to have cups 
with blobs and through strands without blobs, and the middle 
has blobs on through strands.
The picture to keep in mind is
\begin{gather*}
\begin{tikzpicture}[anchorbase,scale=0.7,tinynodes]
\draw[usual] (1,1.5) to[out=270,in=180] (1.25,1.25) to[out=0,in=270] (1.5,1.5);
\draw[usual,blobbed={0.25}{u}{spinach}] (2,1.5) to[out=270,in=180] 
(2.25,1.25) to[out=0,in=270] (2.5,1.5);
\draw[usual,blobbed={0.25}{u}{spinach}] (3,1.5) to[out=270,in=180] 
(3.25,1.25) to[out=0,in=270] (3.5,1.5);
\draw[usual,rblobbed={0.33}{w}{orchid},rblobbed={0.66}{v}{tomato}] 
(3,0) to[out=90,in=270] (4,1.5);
\draw[usual,blobbed={0.1}{v}{tomato},blobbed={0.3}{u}{spinach}] (1,0) 
to[out=90,in=180] (1.75,0.75) to[out=0,in=90] (2.5,0);
\draw[usual] (1.5,0) to[out=90,in=180] (1.75,0.25) to[out=0,in=90] (2,0);
\draw[usual] (3.5,0) to[out=90,in=180] (3.75,0.25) to[out=0,in=90] (4,0);
\end{tikzpicture}
,\quad
\begin{aligned}
\begin{tikzpicture}[anchorbase,scale=1]
\draw[mor] (0,1) to (0.25,0.5) to (0.75,0.5) to (1,1) to (0,1);
\node at (0.5,0.75){$U$};
\end{tikzpicture}
&=
\begin{tikzpicture}[anchorbase,scale=0.7,tinynodes]
\draw[usual] (1,0) to[out=270,in=180] (1.25,-0.25) to[out=0,in=270] (1.5,0);
\draw[usual,blobbed={0.25}{u}{spinach}] (2,0) 
to[out=270,in=180] (2.25,-0.25) to[out=0,in=270] (2.5,0);
\draw[usual,blobbed={0.25}{u}{spinach}] (3,0) 
to[out=270,in=180] (3.25,-0.25) to[out=0,in=270] (3.5,0);
\draw[usual] (4,0) to[out=270,in=90] (4,-1);
\end{tikzpicture}
,\\
\begin{tikzpicture}[anchorbase,scale=1]
\draw[mor] (0.25,0) to (0.25,0.5) to (0.75,0.5) to (0.75,0) to (0.25,0);
\node at (0.5,0.25){$b$};
\node at (0.8,0.25){\phantom{$b$}};
\end{tikzpicture}
&=
\begin{tikzpicture}[anchorbase,scale=0.7,tinynodes]
\draw[usual,rblobbed={0.3}{w}{orchid},rblobbed={0.7}{v}{tomato}] (1,0) to[out=90,in=270] (1,1);
\end{tikzpicture}
,
\\
\begin{tikzpicture}[anchorbase,scale=1]
\draw[mor] (0,-0.5) to (0.25,0) to (0.75,0) to (1,-0.5) to (0,-0.5);
\node at (0.5,-0.25){$D$};
\end{tikzpicture}
&=
\begin{tikzpicture}[anchorbase,scale=0.7,tinynodes]
\draw[usual,blobbed={0.1}{v}{tomato},blobbed={0.3}{u}{spinach}] (1,0) 
to[out=90,in=180] (1.75,0.75) to[out=0,in=90] (2.5,0);
\draw[usual] (1.5,0) to[out=90,in=180] (1.75,0.25) to[out=0,in=90] (2,0);
\draw[usual] (3,0) to[out=90,in=270] (3,1);
\draw[usual] (3.5,0) to[out=90,in=180] (3.75,0.25) to[out=0,in=90] (4,0);
\end{tikzpicture}
.
\end{aligned}
\end{gather*}
Note that the middle part is $\sand[0]=\KK$ and
\begin{gather*}
\sand=\KK\setstuff{B}_{g,1}^{\dpar,\bpar}=
\KK\langle b_{u}\mid u\in\{1,\dots,g\}\rangle
/\eqref{eq:cyclotomic-blob}.
\end{gather*}
We choose the monomial basis in the $b_{u}$ as our 
sandwich basis.
As the antiinvolution we choose the map
$(\placeholder)^{\star}\colon\setstuff{Bl}_{g,n}^{\dpar,\bpar}(\cpar)
\to\setstuff{Bl}_{g,n}^{\dpar,\bpar}(\cpar)$ that mirrors 
diagrams and fixes all blobs.

\begin{proposition}
The above defines an 
involutive sandwich cell datum for $\setstuff{Bl}_{g,n}^{\dpar,\bpar}(\cpar)$.
\end{proposition}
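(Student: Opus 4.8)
The plan is to run the proof of \fullref{proposition:tl} essentially verbatim, with the single new ingredient that the cyclotomic relation \eqref{eq:cyclotomic-blob} is treated as an identity taking place inside the sandwiched algebra. First I would record that all four pieces of a datum are present: the poset $\Lambda=(\{n,n-2,\dots\},\leq_{\N})$, the sets $M_{\lambda}=N_{\lambda}$ of blobbed half-diagrams (cups/caps may carry blobs, through strands do not), the algebra $\sand=\KK\setstuff{B}_{g,1}^{\dpar,\bpar}$ for $\lambda\neq 0$ and $\sand[0]=\KK$ with its monomial basis $\sandbasis$, and the proposed family $\{c_{D,b,U}^{\lambda}\}$. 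That this family is a $\KK$-basis is exactly where \fullref{lemma:blob-basis} enters: every blob diagram with at most $\dvar_{u}$ blobs of type $u$ on each strand cuts uniquely into a bottom half-diagram $D$, a middle piece $b$ made of blobs stacked on the leftmost through strand, and a top half-diagram $U$; uniqueness of this cut together with \fullref{lemma:blob-basis} yields the basis. (As in \fullref{example:cells-tl}, reachability-from-the-left is what forces the middle blobs onto the single leftmost through strand, which is why the sandwiched algebra is the one-strand blob algebra.)

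Second I would check the multiplication axiom \eqref{eq:cell-mult}. As for Temperley--Lieb, $\setstuff{F}_{g}$-colored concatenation can only create cups and caps and never increases the number of through strands, so multiplication is ordered with respect to $\leq_{\Lambda}$ and any term with strictly fewer through strands lies in $\setstuff{Bl}_{g,n}^{\dpar,\bpar}(\cpar)^{<_{\Lambda}\lambda}$. For the leading term one computes as in \eqref{eq:cell-action}: stacking the $U'$-half of $x=c_{D',b',U'}^{\lambda'}$ against $D$ closes some strands into circles, each evaluated by blob circle evaluation (\fullref{definition:circle}) into a scalar $\cvar_{\gamma}$, so the coefficient $r(S,D)$ depends only on the bottom data and not on $b$ or $U$. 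The surviving blobs pile up on the leftmost through strand and are multiplied there; the only feature absent from \fullref{proposition:tl} is that a strand may collect more than $\dvar_{u}$ blobs of some type, whereupon \eqref{eq:cyclotomic-blob} rewrites the product back into $\sandbasis$. Since that relation only ever involves blobs on a single strand it is an identity in $\sand$, so the middle product is genuinely the product in $\sand$ and expands in $\sandbasis$, as required; the same bookkeeping, performed from the top, gives the right-handed version.

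Finally I would verify involutivity in the sense of \fullref{definition:cellular-involution}. The map $(\placeholder)^{\star}$ mirrors a diagram top-to-bottom and fixes every blob; mirroring interchanges cup- and cap-diagrams, hence gives a bijection between the $D$- and $U$-halves, and reverses the order of the blobs read along the leftmost through strand, so that $(c_{D,b,U}^{\lambda})^{\star}=c_{U,b^{\star},D}^{\lambda}$ as in \eqref{eq:antiinvolution} with $b^{\star}$ the reversed blob word. The compatibility of $\star$ with the blob circle evaluation is inherited from \fullref{proposition:tl}. What remains, and what I regard as the main obstacle, is to show that $\star$ descends to the cyclotomic quotient, i.e.\ that the mirror image of a generator of the ideal \eqref{eq:cyclotomic-blob} again lies in that ideal; concretely this asks that reversing the order of the factors $(b_{u}-\bvar_{u,j})$ and the interleaved words $\varpi_{j}$ produces an element of the same two-sided ideal. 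Because this is a statement supported entirely on one strand, it is a purely algebraic check inside $\sand=\KK\setstuff{B}_{g,1}^{\dpar,\bpar}$, independent of $n$ and of the circle data; once it is confirmed, $(\placeholder)^{\star}$ restricts to an antiinvolution of $\sand$ permuting the monomial basis $\sandbasis$ (reversal preserves the number of blobs of each type, hence the reduced form of \fullref{lemma:blob-onestrand}), and everything else is a routine transcription of the Temperley--Lieb argument.
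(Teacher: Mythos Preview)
Your reconstruction coincides with the paper's argument: the paper's entire proof reads ``The proof is, \emph{mutatis mutandis}, as in \fullref{proposition:tl} and omitted.''  You have written out more than the authors do, and your verification of the basis property via \fullref{lemma:blob-basis} and of the multiplication axiom \eqref{eq:cell-mult} is sound.

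The obstacle you isolate at the end is genuine, and the paper glosses over it.  Take $g=2$, $\dvar_{1}=2$, $\dvar_{2}\geq 2$, and write $a=\bvar_{1,1}$, $b=\bvar_{1,2}$.  One generator of the cyclotomic ideal is $(b_{1}-a)\,b_{2}\,(b_{1}-b)$; its image under the proposed $(\placeholder)^{\star}$ (reverse the word, fix each blob) is $(b_{1}-b)\,b_{2}\,(b_{1}-a)$.  A direct subtraction gives
\[
(b_{1}-b)\,b_{2}\,(b_{1}-a)\;-\;(b_{1}-a)\,b_{2}\,(b_{1}-b)\;=\;(a-b)\,(b_{2}b_{1}-b_{1}b_{2}).
\]
But $b_{1}b_{2}$ and $b_{2}b_{1}$ are distinct elements of the monomial basis of $\KK\setstuff{B}_{g,1}^{\dpar,\bpar}$ described in \fullref{lemma:blob-onestrand} (each carries one blob of each type, below the cyclotomic bound), so their commutator is nonzero in the quotient.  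Hence for $a\neq b$ the mirrored generator is \emph{not} in the ideal, and $(\placeholder)^{\star}$ as described does not descend to $\setstuff{Bl}_{g,n}^{\dpar,\bpar}(\cpar)$.  The sandwich cell datum itself is fine; it is the word \emph{involutive} that fails without further hypotheses.  Possible repairs are to enlarge the ideal in \eqref{eq:cyclotomic-blob} so that all orderings of the $\bvar_{u,j}$ are imposed, or to restrict to parameter regimes (e.g.\ $g\leq 1$, or all $\bvar_{u,j}$ equal for fixed $u$) where the issue evaporates; either way, your instinct that this step is the crux was correct.
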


\begin{proof}
The proof is, {\muta}, as in \fullref{proposition:tl} and omitted.
\end{proof}

The cells look similar as in \fullref{example:cells-tl}.

\begin{theorem}\label{theorem:blob}
Let $\KK$ be a field.
\begin{enumerate}

\item If $\cpar\neq 0$, or $\cpar=0$ and $\lambda\neq 0$ is odd, 
then all $\lambda\in\Lambda$ are apexes. In the remaining case, 
$\cpar=0$ and $\lambda=0$ (this only happens if $n$ is even), all $\lambda\in\Lambda-\{0\}$ are apexes, but $\lambda=0$ is not an apex.

\item The simple $\setstuff{Bl}_{g,n}^{\dpar,\bpar}(\cpar)$-modules of 
apex $\lambda\in\Lambda$ 
are parameterized by simple modules of $\KK\setstuff{B}_{g,1}^{\dpar,\bpar}$.

\item The simple $\setstuff{Bl}_{g,n}^{\dpar,\bpar}(\cpar)$-modules of 
apex $\lambda\in\Lambda$ can be constructed as 
the simple heads of
$\mathrm{Ind}_{\KK\setstuff{B}_{g}^{\dpar,\bpar}}^{\setstuff{Bl}_{g,n}^{\dpar,\bpar}(\cpar)}(K)$, 
where $K$ runs over (the equivalence classes of) 
simple $\KK\setstuff{B}_{g,1}^{\dpar,\bpar}$-modules.

\end{enumerate}
\end{theorem}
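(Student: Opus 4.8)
The plan is to apply \fullref{theorem:classification} in exactly the manner of the proof of \fullref{theorem:brauer}, since the sandwich cell datum just constructed places us in the diagrammatic setting of \eqref{eq:cell-action}. First I would record that we are in the situation of \fullref{lemma:h-subalgebra}: composing a cup half-diagram $U$ on top of a cap half-diagram $D$ of the same apex $\lambda$ either decreases the number of through strands, in which case the product lies in $\setstuff{A}^{<_{\Lambda}\lambda}$, or reconnects all $\lambda$ through strands. In the latter case one obtains a scalar $r(U,D)$, namely the product of the blob circle evaluations $\cvar_{\gamma}$ of the closed components that are created, times an $\mathcal{H}$-cell element into whose middle the blobs sitting on the cups and caps have migrated. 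Since blobs lie to the left and slide freely along strands, this migration realizes the factor $F\in\sand$ of \fullref{lemma:h-subalgebra}, and whenever $r(U,D)$ is invertible we get $\mathcal{H}_{\lambda,D,U}\cong\sand=\KK\setstuff{B}_{g,1}^{\dpar,\bpar}$.

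Next I would settle (a) via \fullref{theorem:classification}.(a), which reduces the apex question to whether $r(U,D)\neq 0$ for some $D,U\in M_{\lambda}$. For $\lambda\neq 0$ one chooses blobless $U$ and $D$ and straightens the cups against the caps exactly as in the Brauer computation, producing no closed component and hence $r(U,D)=1$; thus every $\lambda\neq 0$ is an apex irrespective of $\cpar$. For $\lambda=0$, which occurs only when $n$ is even, every half-diagram consists solely of cups, so closing $U$ against $D$ necessarily creates closed circles and $r(U,D)$ is a product of the $\cvar_{\gamma}$. If $\cpar\neq 0$ some such product is nonzero and $\lambda=0$ is an apex, while if $\cpar=0$ all of them vanish, so $\phi^{0}$ is constant zero and $\lambda=0$ is not an apex. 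This yields the case distinction in (a).

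For (b) I would invoke \fullref{theorem:classification}.(b): by \fullref{lemma:blob-onestrand} the sandwich algebra $\KK\setstuff{B}_{g,1}^{\dpar,\bpar}$ is finite-dimensional, hence unital and Artinian, so the simple $\setstuff{Bl}_{g,n}^{\dpar,\bpar}(\cpar)$-modules with a fixed apex $\lambda$ are in bijection with the simple $\KK\setstuff{B}_{g,1}^{\dpar,\bpar}$-modules, and $L(\lambda,K)$ is the head of $\Delta(\lambda,K)$. For (c) I would use the invertibility of $r(U,D)$ established in the apex analysis to apply \fullref{theorem:classification}.(c), identifying $\mathcal{H}_{\lambda,D,U}$ with $\KK\setstuff{B}_{g,1}^{\dpar,\bpar}$ and realizing each simple as the head of $\mathrm{Ind}_{\KK\setstuff{B}_{g,1}^{\dpar,\bpar}}^{\setstuff{Bl}_{g,n}^{\dpar,\bpar}(\cpar)}(K)$.

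The main obstacle, and the only place where the argument genuinely differs from the group-algebra cases of \fullref{theorem:brauer} and its Temperley--Lieb analogue, is checking that the $\mathcal{H}$-cell is identified with the cyclotomic quotient $\KK\setstuff{B}_{g,1}^{\dpar,\bpar}$ on the nose rather than with a free object: one must confirm that migrating the cup/cap blobs into the middle and then concatenating is compatible with the cyclotomic relation \eqref{eq:cyclotomic-blob} and with blob circle evaluation, so that the multiplication on $\mathcal{H}_{\lambda,D,U}$ matches that of $\KK\setstuff{B}_{g,1}^{\dpar,\bpar}$ after rescaling by $1/r(U,D)$. Once this compatibility is in place the remainder is formal, and the proof can be summarized, as for the Temperley--Lieb case, as being word-by-word as for the Brauer algebra.
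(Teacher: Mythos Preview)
Your proposal is correct and follows exactly the approach the paper takes: the paper's proof is the single line ``This can be proven \textit{verbatim} as in the previous cases, see e.g.\ \fullref{theorem:brauer},'' and you have accurately unpacked what that means by invoking \fullref{theorem:classification} together with the straightening trick for $\lambda\neq 0$ and the circle-evaluation analysis for $\lambda=0$. Your additional remarks about verifying \fullref{lemma:h-subalgebra} and the finite-dimensionality of $\KK\setstuff{B}_{g,1}^{\dpar,\bpar}$ via \fullref{lemma:blob-onestrand} are the right checks, though the paper leaves them implicit.
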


\begin{proof}
This can be proven {\ver} as in the previous cases, see {\eg} 
\fullref{theorem:brauer}.
\end{proof}

The sandwiched algebras are identified in 
\autoref{lemma:blob-onestrand}. Thus, \autoref{theorem:blob} 
explicitly gives the desired 
classification of simple $\setstuff{Bl}_{g,n}^{\dpar,\bpar}(\cpar)$-modules.

\begin{example}
The low genus cases of \fullref{theorem:blob} are known 
(for simplicity we ignore the potential exception 
for $\lambda=0$ in the text below):	

\begin{enumerate}

\item For $g=0$ we have $\KK\setstuff{B}_{0,1}^{\dpar,\bpar}=\KK$, so we obtains 
the classical parametrization of the simple modules of 
the Temperley--Lieb algebra by through strands.

\item For $g=1$ we have that $\KK\setstuff{B}_{1,1}^{\dpar,\bpar}$ is 
a finite-dimensional quotient of a polynomial ring in one variable $b$
by an ideal of the form
\begin{gather*}
(b-\bvar_{1})
\dots
(b-\bvar_{\dvar_{1}})=0.
\end{gather*} 
In particular, the number of simple modules associated to 
$\KK\setstuff{B}_{1,1}^{\dpar,\bpar}$ equals the number of distinct 
parameters in $\bpar$. For example, if $\dvar_{1}=2$, then
\begin{gather*}
\bvar_{1}=\bvar_{2}=0\Rightarrow
\KK\setstuff{B}_{1,1}^{\dpar,\bpar}\cong\KK[b]/(b^{2})
\text{ has one simple module},
\\
\bvar_{1}=1,\bvar_{2}=0\Rightarrow
\KK\setstuff{B}_{1,1}^{\dpar,\bpar}\cong\KK[b]/(b-1)b
\text{ has two simple modules}.
\end{gather*}
In the latter case $b^{2}=b$, 
which is the situation studied in \cite{MaSa-blob}.
Thus, we recover the classification of simples of the blob 
algebra from \cite{MaSa-blob}.
\end{enumerate}

\end{example}

\subsection{Topology of handlebody Temperley--Lieb algebras}\label{subsection:handlebody-tl-topology}

Another way of defining the classical Temperley--Lieb algebra
would be as the quotient of the algebra of tangles by circle 
evaluation and the Kauffman skein relation. To discuss 
a handlebody analog let us define \emph{handlebody 
(framed) tangle diagrams of $2n$ 
points of genus $g$}, using handlebody braid 
diagrams as in \fullref{section:braids} and inspired 
by \fullref{proposition:embedding}, as the (isotopy classes of) tangle 
diagrams in $g+n$ strings, which are pure on the first $g$ strings, modulo 
the usual tangle relations. 
The framing is given by a vector field pointing to the $g$th core strand 
from the left. 
The examples to keep in mind are
\begin{gather*}
\begin{tikzpicture}[anchorbase,scale=0.7,tinynodes]
\draw[pole,crosspole] (-0.5,0) to[out=270,in=90] (-0.5,-1.5);
\draw[usual,crossline] (-0.25,-0.5) to[out=270,in=180] (0.25,-0.8) 
to[out=0,in=180] (0.75,-0.8) to[out=0,in=270] (1.5,0);
\draw[pole,crosspole] (0,0) to[out=270,in=90] (0,-1.5);
\draw[pole,crosspole] (0.5,0) to[out=270,in=90] (0.5,-1.5);
\draw[usual,crossline] (1,0) to[out=270,in=90] (-0.25,-0.5);
\draw[usual,crossline] (1,-1.5) to[out=90,in=180] (1.25,-1.25) 
to[out=0,in=90] (1.5,-1.5);
\end{tikzpicture}
,\quad
\jm_{u,i}
=
\begin{tikzpicture}[anchorbase,scale=0.7,tinynodes]
\draw[pole,crosspole] (-0.5,0) to[out=90,in=270] (-0.5,1.5);
\draw[usual,crossline] (2.5,0)node[below,black,yshift=-1pt]{$i$} 
to[out=90,in=270] (-0.25,0.75);
\draw[usual,crossline] (1.5,0) to[out=90,in=270] (1.5,1.5);
\draw[usual,crossline] (2,0) to[out=90,in=270] (2,1.5);
\draw[pole,crosspole] (0.5,0) to[out=90,in=270] (0.5,1.5);
\draw[pole,crosspole] (1,0) to[out=90,in=270] (1,1.5);
\draw[pole,crosspole] (0,0)node[below,black,yshift=-1pt]{$u$} 
to[out=90,in=270] (0,1.5)node[above,black,yshift=-3pt]{$u$};
\draw[usual,crossline] (-0.25,0.75) to[out=90,in=270] 
(2.5,1.5)node[above,black,yshift=-3pt]{$i$};
\end{tikzpicture}
,\quad
\tau_{u}=
\begin{tikzpicture}[anchorbase,scale=0.7,tinynodes]
\draw[pole,crosspole] (-0.5,0) to[out=90,in=270] (-0.5,1.5);
\draw[usual,crossline] (1,0) to[out=90,in=270] (-0.25,0.75);
\draw[pole,crosspole] (0,0)node[below,black,yshift=-1pt]{$u$} 
to[out=90,in=270] (0,1.5)node[above,black,yshift=-3pt]{$u$};
\draw[pole,crosspole] (0.5,0) to[out=90,in=270] (0.5,1.5);
\draw[usual,crossline] (-0.25,0.75) to[out=90,in=270] (1,1.5);
\end{tikzpicture}
.
\end{gather*}
Recall that we call the second diagram a Jucys--Murphy element.

\begin{remark}
A subtle behavior due to our choice of framing occurs under unknotting 
around a core, as one can see 
from the example below, see also
\cite[Figure 3]{HaOl-actions-tensor-categories}.
\begin{gather}\label{eq:framingblobs}
\scalebox{0.85}{$\begin{tikzpicture}[anchorbase,scale=0.7,tinynodes]
\draw[usual,crossline] (1.5,0) to[out=90,in=270] (1,1.5);
\draw[usual,crossline] (1,0) to[out=90,in=270] (-0.25,0.75);
\draw[pole,crosspole] (0,0) to[out=90,in=270] (0,1.5);
\draw[pole,crosspole] (0.5,0) to[out=90,in=270] (0.5,1.5);
\draw[usual,crossline] (-0.25,0.75) to[out=90,in=270] (1.5,1.5);
\draw[usual,crossline] (1,1.5) to[out=90,in=270] (-0.25,2.25);
\draw[pole,crosspole] (0,1.5) to[out=90,in=270] (0,3);
\draw[pole,crosspole] (0.5,1.5) to[out=90,in=270] (0.5,3);
\draw[usual,crossline] (-0.25,2.25) to[out=90,in=90] (1.5,1.5);
\end{tikzpicture}
=
\begin{tikzpicture}[anchorbase,scale=0.7,tinynodes]
\draw[pole,crosspole] (0.5,0) to[out=90,in=270] (0.5,3);
\draw[pole,crosspole] (0,0) to[out=90,in=270] (0,3);
\draw[pole,crosspole] (0.5,1.5) to[out=90,in=270] (0.5,3);
\draw[usual,crossline] (1,0) to[out=90,in=270] (1.5,0.75);
\draw[usual,crossline] (1.5,0) to[out=90,in=270] (1,0.75);
\draw[usual,crossline] (1,0.75) to[out=90,in=180] (1.25,1) to[out=0,in=90] (1.5,0.75);
\end{tikzpicture}$}
.
\end{gather}
Let us also mention that these diagrams satisfy the classical 
Reidemeister relations and other types of 
isotopy relations, {\eg}
\begin{gather*}
\scalebox{0.85}{$\begin{tikzpicture}[anchorbase,scale=0.7,tinynodes]
\draw[usual,crossline] (0,0) to[out=270,in=180] (0.25,-0.25) 
to[out=0,in=270] (0.5,0) to[out=90,in=270] (1,0.75) to (1,2);
\draw[usual,crossline] (1,-0.75) to[out=90,in=270] (1,0) 
to[out=90,in=270] (0.5,0.75) to (0.5,1.5) to[out=90,in=0] (0.25,1.75) to[out=180,in=90] (0,1.5);
\draw[usual,crossline] (0,0) to[out=90,in=270] (-1.75,0.75);
\draw[pole,crosspole] (-2,-0.75) to[out=90,in=270] (-2,2);
\draw[pole,crosspole] (-1.5,-0.75) to[out=90,in=270] (-1.5,2);
\draw[pole,crosspole] (-1,-0.75) to[out=90,in=270] (-1,2);
\draw[pole,crosspole] (-0.5,-0.75) to[out=90,in=270] (-0.5,2);
\draw[usual,crossline] (-1.75,0.75) to[out=90,in=270] (0,1.5);
\end{tikzpicture}
=
\begin{tikzpicture}[anchorbase,scale=0.7,tinynodes]
\draw[usual,crossline] (-1.75,0.75) to[out=90,in=270] (0,2);
\draw[usual,crossline] (0,-0.75) to[out=90,in=270] (-1.75,0.75);
\draw[pole,crosspole] (-2,-0.75) to[out=90,in=270] (-2,2);
\draw[pole,crosspole] (-1.5,-0.75) to[out=90,in=270] (-1.5,2);
\draw[pole,crosspole] (-1,-0.75) to[out=90,in=270] (-1,2);
\draw[pole,crosspole] (-0.5,-0.75) to[out=90,in=270] (-0.5,2);
\draw[usual,crossline] (0,-0.75) to[out=90,in=270] (-1.75,0.75);
\end{tikzpicture}$}
.
\end{gather*}
\end{remark}

As before we fix circle 
evaluations $\cpar=(\cvar_{\gamma})_{\gamma}$, one for any isotopy 
class of circles $\gamma$. 
We also fix any invertible 
element $\qvar\in\KK$ that has a square root in $\KK$.

\begin{definition}
The evaluation of a circle $\gamma$ is defined to be the removal 
of a closed component, contributing a factor 
$\cvar_{\gamma}$. We call this \emph{circle evaluation}.
\end{definition}

\begin{definition}\label{definition:handlebody-tl-second}
We let the \emph{topological handlebody Temperley--Lieb algebra} 
(in $n$ strands and of genus $g$) 
$\setstuff{TL}_{g,n}^{\mathrm{top}}(\cpar)$ be the
algebra whose underlying free $\KK$-module is the
$\KK$-linear span of all 
handlebody tangle diagrams of $2n$ points of genus $g$, and
with multiplication given by concatenation of diagrams modulo 
circle evaluation and \eqref{eq:kauffman}.
\end{definition}

Note that $\setstuff{TL}_{g,n}(\cpar)$ and 
$\setstuff{TL}_{g,n}^{\mathrm{top}}(\cpar)$ are very different algebras. 
For example, as we have seen in \eqref{eq:order-two}, 
the coils are of finite order 
in $\setstuff{TL}_{g,n}^{\mathrm{top}}(\cpar)$ but of infinite 
order in $\setstuff{TL}_{g,n}(\cpar)$.

\begin{remark}
\leavevmode

\begin{enumerate}

\item For $g=0$ \fullref{definition:handlebody-tl-second} 
is a classical definition, while the 
case $g=1$ is related by Schur--Weyl duality 
to Verma modules, see \cite{IoLeZh-verma-schur-weyl}.

\item Without evaluation of circles the case 
$g>2$ can also be found in 
\cite{IoLeZh-verma-schur-weyl} under the name multi-polar, 
but not much appears to be known.

\end{enumerate}
\end{remark}

\begin{lemma}\label{lemma:tl-span-topological}
The algebra 
$\setstuff{TL}_{g,n}^{\mathrm{top}}(\cpar)$ is an associative, unital 
algebra with a $\KK$-spanning set given by all crossingless 
matchings of $2n$ points of genus $g$ where each coil is positive and appears at most of order $1$.
\end{lemma}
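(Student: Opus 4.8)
The plan is to prove the two assertions separately: that $\setstuff{TL}_{g,n}^{\mathrm{top}}(\cpar)$ is an associative unital algebra, and that the indicated diagrams span it. For the first assertion I would argue exactly as in the proof of \fullref{lemma:tl-basis}. Concatenation of handlebody tangle diagrams is associative on the nose, the straight-strand diagram (no crossings, no coils) is a two-sided unit, and the only point to check is that passing to the quotient by circle evaluation and by the Kauffman relation \eqref{eq:kauffman} is compatible with stacking. Both relations are local — circle evaluation removes a closed component with a scalar depending only on its isotopy class, and \eqref{eq:kauffman} is supported in a disc meeting the diagram in a single crossing — so each can be applied inside a larger diagram independently of the strands above and below it, and the induced multiplication is therefore well defined and associative.

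For the spanning assertion I would reduce an arbitrary handlebody tangle diagram $T$ in two stages. Stage one removes all crossings between two usual strands: such a crossing is resolved by \eqref{eq:kauffman} into a $\qvar^{\pm 1/2}$-weighted sum of two diagrams, each with one fewer usual--usual crossing, so an induction on the number of usual--usual crossings terminates with a $\KK$-linear combination of diagrams in which the usual strands pairwise do not cross. Each such diagram is a genus-$g$ crossingless matching decorated with coils, i.e.\ each usual strand joins two boundary points while winding around the cores (its winding recorded by a word in $\setstuff{F}_{g}$), and any closed components produced are discarded by circle evaluation.

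Stage two reduces the winding. Following the computation displayed in \eqref{eq:order-two}, the Kauffman relation forces an order-two identity for coils,
\begin{gather*}
\tau_{u}^{2}=\qvar^{1/2}\cvar_{\gamma}\cdot\tau_{u}+\qvar^{-1/2}\cdot\idmor,
\end{gather*}
and solving for the inverse gives $\tau_{u}^{-1}=\qvar^{1/2}\tau_{u}-\qvar\cvar_{\gamma}\cdot\idmor$. Neither right-hand side introduces a usual--usual crossing, so reading these as rewriting rules inside a crossingless matching keeps it crossingless while (i) replacing every negative coil by positive coils and identities and (ii) lowering the exponent whenever a strand winds twice in the same direction about the same core. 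Together with circle evaluation this rewrites every strand as a positive winding word of coils of order at most $1$, which is exactly the claimed spanning set.

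The hard part will be making stage two terminate and apply correctly. The order-two rule is local and only directly cancels two \emph{consecutive} windings about the same core $u$; when windings about distinct cores are interleaved (as in $\tau_{u}\tau_{v}\tau_{u}$) one must first apply the handlebody isotopy relations, namely the images of \eqref{eq:handlebody-summary2} under the tangle interpretation, to bring like coils adjacent before reducing. Since these isotopies need not decrease any naive winding count, the clean way to guarantee termination is to induct on a lexicographic monovariant — for instance the triple (number of usual--usual crossings, number of negative coils, total coil exponent) — and to verify that each rewriting step strictly decreases it. I would not expect confluence to be an issue, as only a spanning set rather than a basis is claimed, so any terminating reduction order suffices.
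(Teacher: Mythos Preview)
Your two-stage reduction is correct and is exactly the argument the paper has in mind (its proof is the single sentence ``clear by circle evaluation and \eqref{eq:order-two}''). The final paragraph, however, rests on a misreading of the spanning set. The phrase ``each coil is positive and appears at most of order $1$'' constrains each individual winding, not the total multiplicity with which a given core is encircled: a word such as $\tau_{u}\tau_{v}\tau_{u}$ already lies in the claimed spanning set, since every coil occurring in it is a single positive loop. No further reduction of such interleaved words is required, and the relations in \eqref{eq:handlebody-summary2} would not provide one anyway --- they involve the crossing generator $\beta_{1}$ and do not allow $\tau_{u}$ to commute past $\tau_{v}$ on a single strand.

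Once that spurious step is dropped, termination is immediate from the lexicographic monovariant you propose, applied only to the three genuine rewriting moves: the Kauffman resolution strictly lowers the usual--usual crossing count; replacing $\tau_{u}^{-1}$ by $\qvar^{1/2}\tau_{u}-\qvar\cvar_{\gamma}$ lowers the negative-coil count without raising the crossing count; and replacing $\tau_{u}^{2}$ by $\qvar^{1/2}\cvar_{\gamma}\tau_{u}+\qvar^{-1/2}$ lowers the total coil exponent without raising either of the previous two. Note that under this reading the spanning set is infinite for $g\geq 2$, which is consistent with the remark after \fullref{lemma:handlebody-tl-second} that a generic basis of $\setstuff{TL}_{g,n}^{\mathrm{top}}(\cpar)$ is not known.
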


\begin{proof}
That the claimed set is spanning is clear by circle evaluation and 
\eqref{eq:order-two}.
\end{proof}

Recall that $C_{n}$ denotes the $n$th Catalan 
number, which is also the dimension of the 
genus $g=0$ Temperley--Lieb algebra.

\begin{definition}
We call parameters $\cpar=(\cvar_{\gamma})_{\gamma}$ such that 
$\dim_{\KK}\setstuff{TL}_{g,n}^{\mathrm{top}}(\cpar)\geq C_{n}$
\emph{weakly admissible}.
\end{definition}

With \fullref{remark:non-topological} in mind, the 
following is in contrast to \fullref{lemma:tl-basis} 
a non-trivial result. To state it let 
$[k]_{\qvar}=\qvar^{k-1}+\dots+\qvar^{-k+1}$ denote 
the usual quantum number for $k\in\N$, and
note that
for the choice $\cpar=(-[2]_{\qvar})$
the algebra $\setstuff{TL}_{G}(-[2]_{\qvar})
=\setstuff{TL}_{0,G}^{\mathrm{top}}(\cpar)$ 
is the classical Temperley--Lieb algebra on $G$ strands 
whose circle evaluates to $-[2]_{\qvar}$.

\begin{lemma}\label{lemma:handlebody-tl-second}
Let $\KK$ be a field.
For any $\qvar\in\KK$ and $G\geq g$ such that 
$[k]_{\qvar}\neq 0$ 
for all $k<G+1$ there exists 
a set $\cpar^{G}$ and an algebra homomorphisms 
(explicitly given in the proof below)
\begin{gather*}
\iota^{G}\colon\setstuff{TL}_{g,n}^{\mathrm{top}}(\cpar^{G})
\to\setstuff{TL}_{G+n}\big(-[2]_{\qvar}\big).
\end{gather*}
Moreover, the parameters $\cpar^{G}$ are weakly admissible. 
\end{lemma}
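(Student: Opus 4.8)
The plan is to realize the $g$ cores inside $\setstuff{TL}_{G+n}\big(-[2]_{\qvar}\big)$ as a single ``thick'' strand: a bundle of $G$ parallel ordinary strands carrying the Jones--Wenzl projector $\mathrm{JW}_{G}$, placed on the leftmost $G$ positions. The hypothesis that $[k]_{\qvar}\neq 0$ for all $k<G+1$ is exactly what guarantees, via the Wenzl recursion, that $\mathrm{JW}_{G}$ exists and is a nonzero idempotent in $\setstuff{TL}_{G}\big(-[2]_{\qvar}\big)$. First I would define $\iota^{G}$ on diagrams: a usual strand, cup, or cap of a handlebody tangle diagram is sent to the same usual strand, cup, or cap on the rightmost $n$ positions, the $g$ core poles are replaced by the $\mathrm{JW}_{G}$-decorated bundle, and a coil $\tau_{u}$ is sent to the tangle in which the relevant usual strand passes once around the bundle (through the projector), the genus label $u$ being recorded by where the strand enters the bundle. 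Reading a word in $\setstuff{F}_{g}$ along a usual strand or a circle then produces an honest wrapping tangle, so that $\iota^{G}$ is determined on all of $\setstuff{TL}_{g,n}^{\mathrm{top}}(\cpar^{G})$ once $\cpar^{G}$ is fixed as below.

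To fix $\cpar^{G}$, I would observe that each isotopy class of circle $\gamma$ carrying a word $w\in\setstuff{F}_{g}$ is sent by $\iota^{G}$ to a closed diagram in $\setstuff{TL}_{G+n}\big(-[2]_{\qvar}\big)$, namely a single loop encircling the $\mathrm{JW}_{G}$-bundle in the pattern prescribed by $w$. Evaluating this closed diagram --- a Markov-trace-type computation for $\mathrm{JW}_{G}$, yielding products of quantum numbers $[k]_{\qvar}$ --- produces a scalar in $\KK$, and I define $\cvar^{G}_{\gamma}$ to be precisely this scalar. With this choice $\iota^{G}$ respects circle evaluation by construction.

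It then remains to check that $\iota^{G}$ is a well-defined algebra homomorphism, and this is the main obstacle. Because the target already satisfies the Kauffman skein relation \eqref{eq:kauffman} and the ordinary isotopy (Reidemeister) relations, and because $\iota^{G}$ is defined by literally drawing its images as tangles in $\setstuff{TL}_{G+n}\big(-[2]_{\qvar}\big)$, the skein and isotopy relations of $\setstuff{TL}_{g,n}^{\mathrm{top}}(\cpar^{G})$ are respected automatically. The genuine points are: the defining property of $\mathrm{JW}_{G}$, that it annihilates every turnback, forces the bundle to behave like a rigid pole (no cup or cap may connect two bundle strands), which is what distinguishes cores from usual strands; the framing convention of \fullref{definition:handlebody-tl-second} must be matched by the framing of the wrapping tangles as in \eqref{eq:framingblobs}; and one must confirm that wrapping around the $\mathrm{JW}_{G}$-bundle reproduces exactly the reduction \eqref{eq:order-two} that forces coils to have order at most one, as well as that the scalars $\cvar^{G}_{\gamma}$ are consistent across all isotopic representatives of $\gamma$.

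Finally, weak admissibility follows from a separation argument. The winding-free diagrams span a subalgebra of $\setstuff{TL}_{g,n}^{\mathrm{top}}(\cpar^{G})$ which is a quotient of the classical $\setstuff{TL}_{n}\big(-[2]_{\qvar}\big)$, and the composite $\setstuff{TL}_{n}\big(-[2]_{\qvar}\big)\to\setstuff{TL}_{g,n}^{\mathrm{top}}(\cpar^{G})\xrightarrow{\iota^{G}}\setstuff{TL}_{G+n}\big(-[2]_{\qvar}\big)$ sends a crossingless matching $m$ to $\mathrm{JW}_{G}\hcirc m$. Since $\mathrm{JW}_{G}\neq 0$ and the $C_{n}$ crossingless matchings on $n$ strands remain linearly independent after tensoring with the nonzero idempotent $\mathrm{JW}_{G}$ --- their images have disjoint diagrammatic support in $\setstuff{TL}_{G+n}\big(-[2]_{\qvar}\big)$ --- this composite is injective, whence $\dim_{\KK}\setstuff{TL}_{g,n}^{\mathrm{top}}(\cpar^{G})\geq C_{n}$, as required.
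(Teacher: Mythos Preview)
Your construction of $\iota^{G}$ is essentially the paper's: replace the cores by $G$ ordinary strands flanked top and bottom by the Jones--Wenzl idempotent $\obstuff{e}_{G}$, and define $\cvar^{G}_{\gamma}$ as the resulting closed-diagram evaluation. One point you leave vague is how the different $\tau_{u}$ are distinguished; the paper does this by fixing a partition $G=M_{1}+\dots+M_{g}$ with $M_{i}>0$ (hence the hypothesis $G\geq g$) and replacing the $i$th core by $M_{i}$ parallel strands, so that $\tau_{u}$ wraps around the rightmost $M_{u}+\dots+M_{g}$ of the $G$ strands. Your phrase ``where the strand enters the bundle'' is presumably this, but it should be spelled out.

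Your argument for weak admissibility is genuinely different from the paper's, and more elementary. The paper invokes quantum symmetric Howe duality \cite{RoTu-symmetric-howe} to build a representation of $\setstuff{TL}_{g,n}^{\mathrm{top}}(\cpar^{G})$ on $\mathrm{Sym}^{G}_{\qvar}(\KK^{2})\hcirc(\KK^{2})^{\hcirc n}$ and then appeals to classical Schur--Weyl duality to see that $\setstuff{TL}_{0,n}^{\mathrm{top}}(\cpar^{G})$ acts faithfully. You instead observe directly that the composite $\setstuff{TL}_{n}(-[2]_{\qvar})\to\setstuff{TL}_{g,n}^{\mathrm{top}}(\cpar^{G})\xrightarrow{\iota^{G}}\setstuff{TL}_{G+n}(-[2]_{\qvar})$ sends a crossingless matching $m$ to $\obstuff{e}_{G}\hcirc m$, and these are linearly independent because for distinct $m,m'$ the diagram supports of $\obstuff{e}_{G}\hcirc m$ and $\obstuff{e}_{G}\hcirc m'$ are disjoint (the right-hand $n$ strands already separate them). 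This is correct and avoids the representation-theoretic machinery. The trade-off is that the paper's Schur--Weyl route is what generalizes to the BMW setting in \autoref{lemma:handlebody-bmw-second}, where there is no obvious analogue of your disjoint-support argument.
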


Note that the assumptions in \fullref{lemma:handlebody-tl-second} 
are always satisfied for {\eg} $\KK=\mathbb{C}(\qvar^{1/2})$ 
and $\qvar$ being the generic variable.

\begin{proof}
For the algebra $\setstuff{TL}_{G+n}\big(-[2]_{\qvar}\big)$ 
there exists a Jones--Wenzl idempotent $\obstuff{e}_{G}$ 
on $G$ strands, see {\eg} 
\cite[Chapter 3]{KaLi-TL-recoupling} 
(strictly speaking \cite[Chapter 3]{KaLi-TL-recoupling} uses 
$\KK=\mathbb{C}(\qvar)$ but under the assumption that $[k]\neq 0$ 
for all $k<G+1$ the whole discussion therein goes though as long as 
the number of strands is less than $G+1$), 
and we can consider the idempotent truncation 
$\obstuff{e}_{G}\setstuff{TL}_{G}\big(-[2]_{\qvar}\big)\obstuff{e}_{G}$. 
By the properties of $\obstuff{e}_{G}$ we have 
$\obstuff{e}_{G}\setstuff{TL}_{G}\big(-[2]_{\qvar}\big)\obstuff{e}_{G}\cong
\KK\{\obstuff{e}_{G}\}$, 
so for any circle $\gamma$ in $\obstuff{e}_{G}\setstuff{TL}_{G}(-[2]_{\qvar})\obstuff{e}_{G}$
we get a scalar $\cvar_{\gamma}^{G}$. 
Fix some partitioning of $G=M_{1}+\dots+M_{g}$ for $M_{i}\in\N_{>0}$.
With the choices $\cvar_{\gamma}^{G}$ it is easy to see that 
blowing the $i$th core strand into $M_{i}$ parallel strands 
and flanking them then with $\obstuff{e}_{G}$
defines an algebra homomorphism
\begin{gather*}
\iota\colon\setstuff{TL}_{g,n}^{\mathrm{top}}(\cpar^{G})
\to(\obstuff{e}_{G}\hcirc\idmor_{n})
\setstuff{TL}_{G+n}\big(-[2]_{\qvar}\big)(\obstuff{e}_{G}\hcirc\idmor_{n})
\hookrightarrow\setstuff{TL}_{G+n}\big(-[2]_{\qvar}\big).
\end{gather*}
The pictures illustrating the above constructions are
\begin{gather}\label{eq:flanking}
\begin{tikzpicture}[anchorbase,scale=0.7,tinynodes]
\draw[pole,crosspole] (-0.5,0) to[out=90,in=270] (-0.5,1.5);
\draw[usual,crossline] (1,0) to[out=90,in=270] (-0.25,0.75);
\draw[pole,crosspole] (0,0) to[out=90,in=270] (0,1.5);
\draw[pole,crosspole] (0.5,0) to[out=90,in=270] (0.5,1.5);
\draw[usual,crossline] (-0.25,0.75) to[out=90,in=270] 
(1,1.5);
\end{tikzpicture}
\mapsto
\begin{tikzpicture}[anchorbase,scale=0.7,tinynodes]
\draw[usual,crossline] (-0.43,0) to[out=90,in=270] (-0.43,1.5);
\draw[usual,crossline] (-0.57,0) to[out=90,in=270] (-0.57,1.5);
\draw[usual,crossline] (1,0) to[out=90,in=270] (-0.25,0.75);
\draw[usual,crossline] (-0.07,0) to[out=90,in=270] (-0.07,1.5);
\draw[usual,crossline] (0.07,0) to[out=90,in=270] (0.07,1.5);
\draw[usual,crossline] (0.43,0) to[out=90,in=270] (0.43,1.5);
\draw[usual,crossline] (0.57,0) to[out=90,in=270] (0.57,1.5);
\draw[usual,crossline] (-0.25,0.75) to[out=90,in=270] 
(1,1.5);
\end{tikzpicture}
\mapsto
\begin{tikzpicture}[anchorbase,scale=0.7,tinynodes]
\draw[usual,crossline] (-0.43,0) to[out=90,in=270] (-0.43,1.5);
\draw[usual,crossline] (-0.57,0) to[out=90,in=270] (-0.57,1.5);
\draw[usual,crossline] (1,0) to[out=90,in=270] (-0.25,0.75);
\draw[usual,crossline] (-0.07,0) to[out=90,in=270] (-0.07,1.5);
\draw[usual,crossline] (0.07,0) to[out=90,in=270] (0.07,1.5);
\draw[usual,crossline] (0.43,0) to[out=90,in=270] (0.43,1.5);
\draw[usual,crossline] (0.57,0) to[out=90,in=270] (0.57,1.5);
\draw[usual,crossline] (-0.25,0.75) to[out=90,in=270] 
(1,1.5);
\draw[mor] (-0.65,0) rectangle (0.65,-0.25);
\draw[mor] (-0.65,1.5) rectangle (0.65,1.75);
\end{tikzpicture}
,\quad
\begin{tikzpicture}[anchorbase,scale=0.7,tinynodes]
\draw[pole,crosspole] (0.5,0) to[out=90,in=270] (0.5,1.5);
\draw[usual,crossline] (1.25,0.75) to[out=270,in=270] (-0.25,0.75);
\draw[pole,crosspole] (0,0) to[out=90,in=270] (0,1.5);
\draw[pole,crosspole] (1,0) to[out=90,in=270] (1,1.5);
\draw[usual,crossline] (-0.25,0.75) to[out=90,in=90] (1.25,0.75);
\end{tikzpicture}
\mapsto
\begin{tikzpicture}[anchorbase,scale=0.7,tinynodes]
\draw[usual,crossline] (0.43,0) to[out=90,in=270] (0.43,1.5);
\draw[usual,crossline] (0.57,0) to[out=90,in=270] (0.57,1.5);
\draw[usual,crossline] (1.25,0.75) to[out=270,in=270] (-0.25,0.75);
\draw[usual,crossline] (-0.07,0) to[out=90,in=270] (-0.07,1.5);
\draw[usual,crossline] (0.07,0) to[out=90,in=270] (0.07,1.5);
\draw[usual,crossline] (0.93,0) to[out=90,in=270] (0.93,1.5);
\draw[usual,crossline] (1.07,0) to[out=90,in=270] (1.07,1.5);
\draw[usual,crossline] (-0.25,0.75) to[out=90,in=90] (1.25,0.75);
\end{tikzpicture}
\mapsto
\begin{tikzpicture}[anchorbase,scale=0.7,tinynodes]
\draw[usual,crossline] (0.43,0) to[out=90,in=270] (0.43,1.5);
\draw[usual,crossline] (0.57,0) to[out=90,in=270] (0.57,1.5);
\draw[usual,crossline] (1.25,0.75) to[out=270,in=270] (-0.25,0.75);
\draw[usual,crossline] (-0.07,0) to[out=90,in=270] (-0.07,1.5);
\draw[usual,crossline] (0.07,0) to[out=90,in=270] (0.07,1.5);
\draw[usual,crossline] (0.93,0) to[out=90,in=270] (0.93,1.5);
\draw[usual,crossline] (1.07,0) to[out=90,in=270] (1.07,1.5);
\draw[usual,crossline] (-0.25,0.75) to[out=90,in=90] (1.25,0.75);
\draw[mor] (-0.15,0) rectangle (1.15,-0.25);
\draw[mor] (-0.15,1.5) rectangle (1.15,1.75);
\end{tikzpicture}
=
\cvar_{\gamma}
\cdot
\begin{tikzpicture}[anchorbase,scale=0.7,tinynodes]
\draw[usual,crossline] (0.43,0) to[out=90,in=270] (0.43,1.5);
\draw[usual,crossline] (0.57,0) to[out=90,in=270] (0.57,1.5);
\draw[usual,crossline] (-0.07,0) to[out=90,in=270] (-0.07,1.5);
\draw[usual,crossline] (0.07,0) to[out=90,in=270] (0.07,1.5);
\draw[usual,crossline] (0.93,0) to[out=90,in=270] (0.93,1.5);
\draw[usual,crossline] (1.07,0) to[out=90,in=270] (1.07,1.5);
\draw[mor] (-0.15,0) rectangle (1.15,-0.25);
\draw[mor] (-0.15,1.5) rectangle (1.15,1.75);
\end{tikzpicture}
,
\end{gather}
where the boxes represent the Jones--Wenzl idempotent $\obstuff{e}_{G}$.
To see the remaining statement let $\mathrm{Sym}^{G}_{\qvar}(\KK^{2})$ 
denote the $G$th quantum symmetric power of $\KK^{2}$.
We use $\iota$ 
and quantum symmetric Howe duality 
\cite[Theorem 2.6 (1) and (2)]{RoTu-symmetric-howe} 
(which work over $\Z[\qvar,\qvar^{-1}]$) in combination with 
\cite[Proposition 2.14]{RoTu-symmetric-howe} (which works 
under the assumptions on $[k]_{\qvar}$ 
to the point needed) to define 
a representation of $\setstuff{TL}_{g,n}^{\mathrm{top}}(\cpar^{G})$ 
on $\mathrm{Sym}^{G}_{\qvar}(\KK^{2})\hcirc\KK^{2}\hcirc\dots\hcirc\KK^{2}$.
Using this representation one can check that 
$\setstuff{TL}_{g,n}^{\mathrm{top}}(\cpar^{G})$ can not be trivial since, 
for example, $\setstuff{TL}_{0,n}^{\mathrm{top}}(\cpar^{G})$ 
acts faithfully by classical Schur--Weyl duality.
\end{proof}

\begin{remark}
Our proof of \fullref{lemma:handlebody-tl-second} is directly 
inspired by \cite{RoTu-homflypt-typea}: As pointed out in that paper, 
the handlebody closing \eqref{eq:handlebody-closing} can
be interpreted, in the appropriate 
algebraic framework, by putting an idempotent on bottom and top.
Moreover and alternatively to the usage of 
(growing) symmetric powers, one might want to associate Verma modules 
to the core strands as in the $g=1$ case, see \cite{IoLeZh-verma-schur-weyl}.
This approach has however the disadvantage that we do not know analogs 
of Jones--Wenzl idempotents which one could use to get
coherent circle evaluations.
\end{remark}

\begin{remark}
For $\qvar\in\KK$ being a root of unity and $\KK$ being a field
there are still Jones--Wenzl 
type idempotents, see {\eg} \cite{MaSp-lp-jw} or \cite{SuTuWeZh-mixedsl2} 
for general constructions of such idempotents. However their 
endomorphism spaces are no longer trivial, so we can not use the 
argument in the proof of \fullref{lemma:handlebody-tl-second}.
\end{remark}

We do not know any explicit condition to check 
whether parameter choices are \emph{admissible}, meaning 
that we do not know a 
``generic'' basis of $\setstuff{TL}_{g,n}^{\mathrm{top}}(\cpar)$.
We leave it to future work to find such a basis.

\subsection{Topology of handlebody blob algebras}\label{subsection:handlebody-blob-topology}

The purpose of this section is to explain a different 
presentation for $\setstuff{TL}_{g,n}^{\mathrm{top}}(\cpar)$.

\begin{remark}
We will also use a similar construction as 
presented in this section for 
cyclotomic handlebody Brauer and BMW algebras later on, 
so we decided to spell it out here despite 
\fullref{lemma:same}.
However, while the topological handlebody 
blob algebra is the same as the topological handlebody 
Temperley--Lieb algebra, this phenomena is no longer true for 
handlebody Brauer and BMW algebras.
\end{remark}

\begin{definition}\label{definition:handlebody-blob-second}
Retain the notation and conventions from \fullref{definition:handlebody-tl-second}.	
We let the \emph{topological handlebody blob algebra} 
(in $n$ strands and of genus $g$) 
$\setstuff{Bl}_{g,n}^{\mathrm{top}}(\cpar)$
be the subalgebra of 
$\setstuff{TL}_{g,n}^{\mathrm{top}}(\cpar)$ 
spanned by all elements containing only positive coils.
\end{definition}

\begin{lemma}\label{lemma:same}
We have $\setstuff{Bl}_{g,n}^{\mathrm{top}}(\cpar)
=\setstuff{TL}_{g,n}^{\mathrm{top}}(\cpar)$.
\end{lemma}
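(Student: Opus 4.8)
The plan is to reduce the statement to \fullref{lemma:tl-span-topological}, after which only a one-line module-theoretic argument remains. The inclusion $\setstuff{Bl}_{g,n}^{\mathrm{top}}(\cpar)\subseteq\setstuff{TL}_{g,n}^{\mathrm{top}}(\cpar)$ holds by construction, since the former was \emph{defined} as a subalgebra of the latter in \fullref{definition:handlebody-blob-second}. The work is therefore entirely in the reverse inclusion.

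For $\supseteq$, I would invoke \fullref{lemma:tl-span-topological}, which exhibits a $\KK$-spanning set of $\setstuff{TL}_{g,n}^{\mathrm{top}}(\cpar)$ consisting of crossingless matchings of $2n$ points of genus $g$ in which every coil is positive and occurs with order at most one. Each such diagram contains only positive coils and hence, by \fullref{definition:handlebody-blob-second}, belongs to $\setstuff{Bl}_{g,n}^{\mathrm{top}}(\cpar)$. Thus $\setstuff{Bl}_{g,n}^{\mathrm{top}}(\cpar)$ is a $\KK$-submodule of $\setstuff{TL}_{g,n}^{\mathrm{top}}(\cpar)$ that contains a spanning set of the ambient module, which forces equality, and I would conclude $\setstuff{Bl}_{g,n}^{\mathrm{top}}(\cpar)=\setstuff{TL}_{g,n}^{\mathrm{top}}(\cpar)$. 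The only point to verify is that ``positive of order at most one'' is subsumed by ``containing only positive coils'': a diagram in the spanning set has no negative coils at all, so it trivially satisfies the defining condition of $\setstuff{Bl}_{g,n}^{\mathrm{top}}(\cpar)$, and no matching of parameters or bases is required.

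The main obstacle is not located in this lemma but in the result it leans on, namely \fullref{lemma:tl-span-topological}. There the genuine input is the computation \eqref{eq:order-two} (an instance of the Kauffman relation \eqref{eq:kauffman}), which rewrites a negative coil as $\qvar^{-1/2}$ times a positive coil plus $\qvar^{1/2}\cvar_{\gamma}$ times a diagram with one fewer coil. Iterating this trade eliminates all negative coils and reduces the order of each coil to at most one, at the cost of lower-order terms, which is exactly what produces the positive-only spanning set. Since that reduction is already available, the present lemma is immediate. The conceptual subtlety I would flag is that this ``straightening'' of coils is precisely what distinguishes $\setstuff{TL}_{g,n}^{\mathrm{top}}(\cpar)$ from the non-topological $\setstuff{TL}_{g,n}(\cpar)$, where coils have infinite order and no such collapse occurs; it is this collapse that makes the positive-coil subalgebra already exhaust the whole algebra.
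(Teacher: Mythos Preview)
Your argument is correct and essentially the same as the paper's. The paper's proof is the one-liner ``Clear by \eqref{eq:order-two}'', which is exactly the reduction you identify as the real content; you simply route through \fullref{lemma:tl-span-topological} (itself an immediate consequence of \eqref{eq:order-two}) rather than citing \eqref{eq:order-two} directly.
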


\begin{proof}
Clear by \eqref{eq:order-two}.
\end{proof}

To give a different diagrammatic presentation  
we need the notion of \emph{blobbed presentations}, that will 
spell out now.
The construction of this works {\muta} 
as in \fullref{subsection:handlebody-blob} 
(so we will be brief)
with one main difference: 
we allow crossings between the usual strands.
This gives us the notion of 
\emph{framed tangled blob diagrams of $2n$ points of genus $g$}, 
where blobs move freely around its strand, 
but always keep being reachable from the left, and do not pass 
each other, {\cf} \eqref{eq:not-defined}. 

Examples of such tangled blob diagrams and how they relate 
to the diagrammatic used in 
\fullref{subsection:handlebody-tl-topology} are
\begin{gather*}
\begin{tikzpicture}[anchorbase,scale=0.7,tinynodes]
\draw[pole,crosspole] (-0.5,0) to[out=270,in=90] (-0.5,-1.5);
\draw[usual,crossline] (-0.25,-0.5) to[out=270,in=180] (0.25,-0.8) 
to[out=0,in=180] (0.75,-0.8) to[out=0,in=270] (1.5,0);
\draw[pole,crosspole] (0,0)node[above,black,yshift=-3pt]{$u$} 
to[out=270,in=90] (0,-1.5)node[below,black,yshift=-1pt]{$u$};
\draw[pole,crosspole] (0.5,0) to[out=270,in=90] (0.5,-1.5);
\draw[usual,crossline] (1,0) to[out=270,in=90] (-0.25,-0.5);
\draw[usual,crossline] (1,-1.5) to[out=90,in=180] (1.25,-1.25) 
to[out=0,in=90] (1.5,-1.5);
\end{tikzpicture}
\rightsquigarrow
\begin{tikzpicture}[anchorbase,scale=0.7,tinynodes]
\draw[usual,crossline,blobbed={0.25}{u}{spinach}] (1,0) to[out=270,in=180] (1.25,-0.25) to[out=0,in=270] (1.5,0);
\draw[usual,crossline] (1,-1.5) to[out=90,in=180] (1.25,-1.25) to[out=0,in=90] (1.5,-1.5);
\end{tikzpicture}
,\quad
\jm_{u,i}
=
\begin{tikzpicture}[anchorbase,scale=0.7,tinynodes]
\draw[pole,crosspole] (-0.5,0) to[out=90,in=270] (-0.5,1.5);
\draw[usual,crossline] (2.5,0)node[below,black,yshift=-1pt]{$i$} to[out=90,in=270] (-0.25,0.75);
\draw[usual,crossline] (1.5,0) to[out=90,in=270] (1.5,1.5);
\draw[usual,crossline] (2,0) to[out=90,in=270] (2,1.5);
\draw[pole,crosspole] (0.5,0) to[out=90,in=270] (0.5,1.5);
\draw[pole,crosspole] (1,0) to[out=90,in=270] (1,1.5);
\draw[pole,crosspole] (0,0)node[below,black,yshift=-1pt]{$u$} 
to[out=90,in=270] (0,1.5)node[above,black,yshift=-3pt]{$u$};
\draw[usual,crossline] (-0.25,0.75) to[out=90,in=270] (2.5,1.5)node[above,black,yshift=-3pt]{$i$};
\end{tikzpicture}
\rightsquigarrow
b_{u,i}
=
\begin{tikzpicture}[anchorbase,scale=0.7,tinynodes]
\draw[pole,crosspole,white] (2,0)node[below,white,yshift=-1pt]{$u$} 
to[out=90,in=270] (2,1.5)node[above,white,yshift=-3pt]{$u$};
\draw[usual] (2.5,0)node[below,black,yshift=-1pt]{$i$} to[out=90,in=270] (1.25,0.75);
\draw[usual,crossline] (1.5,0) to[out=90,in=270] (1.5,1.5);
\draw[usual,crossline] (2,0) to[out=90,in=270] (2,1.5);
\draw[usual,crossline] (1.25,0.75) to[out=90,in=270] (2.5,1.5)node[above,black,yshift=-3pt]{$i$};
\draw[usual,blobbed={1}{u}{spinach}] (1.25,0.65) to (1.25,0.85);
\end{tikzpicture}
,
\end{gather*}
and
\begin{gather*}
\tau_{u}=
\begin{tikzpicture}[anchorbase,scale=0.7,tinynodes]
\draw[pole,crosspole] (-0.5,0) to[out=90,in=270] (-0.5,1.5);
\draw[usual,crossline] (1,0) to[out=90,in=270] (-0.25,0.75);
\draw[pole,crosspole] (0,0)node[below,black,yshift=-1pt]{$u$} 
to[out=90,in=270] (0,1.5)node[above,black,yshift=-3pt]{$u$};
\draw[pole,crosspole] (0.5,0) to[out=90,in=270] (0.5,1.5);
\draw[usual,crossline] (-0.25,0.75) to[out=90,in=270] (1,1.5);
\end{tikzpicture}
\rightsquigarrow
b_{u}=
\begin{tikzpicture}[anchorbase,scale=0.7,tinynodes]
\draw[pole,crosspole,white] (0,0)node[below,white,yshift=-1pt]{$u$} 
to[out=90,in=270] (0,1.5)node[above,white,yshift=-3pt]{$u$};
\draw[usual,crossline,blobbed={0.5}{u}{spinach}] (0,0) to[out=90,in=270] (0,1.5);
\end{tikzpicture}
.
\end{gather*}
where we use the same notation for the 
blob versions of the coils as in 
\fullref{subsection:handlebody-blob}.

Note that we can introduce blobs on any possible strand by
\begin{gather}\label{eq:blob-right}
\jm_{u,i}
=
\begin{tikzpicture}[anchorbase,scale=0.7,tinynodes]
\draw[pole,crosspole] (-0.5,0) to[out=90,in=270] (-0.5,1.5);
\draw[usual,crossline] (2.5,0)node[below,black,yshift=-1pt]{$i$} to[out=90,in=270] (-0.25,0.75);
\draw[usual,crossline] (1.5,0) to[out=90,in=270] (1.5,1.5);
\draw[usual,crossline] (2,0) to[out=90,in=270] (2,1.5);
\draw[pole,crosspole] (0.5,0) to[out=90,in=270] (0.5,1.5);
\draw[pole,crosspole] (1,0) to[out=90,in=270] (1,1.5);
\draw[pole,crosspole] (0,0)node[below,black,yshift=-1pt]{$u$} 
to[out=90,in=270] (0,1.5)node[above,black,yshift=-3pt]{$u$};
\draw[usual,crossline] (-0.25,0.75) to[out=90,in=270] (2.5,1.5)node[above,black,yshift=-3pt]{$i$};
\end{tikzpicture}
\rightsquigarrow
b_{u,i}
=
\begin{tikzpicture}[anchorbase,scale=0.7,tinynodes]
\draw[pole,crosspole,white] (2,0)node[below,white,yshift=-1pt]{$u$} 
to[out=90,in=270] (2,1.5)node[above,white,yshift=-3pt]{$u$};
\draw[usual] (2.5,0)node[below,black,yshift=-1pt]{$i$} to[out=90,in=270] (1.25,0.75);
\draw[usual,crossline] (1.5,0) to[out=90,in=270] (1.5,1.5);
\draw[usual,crossline] (2,0) to[out=90,in=270] (2,1.5);
\draw[usual,crossline] (1.25,0.75) to[out=90,in=270] (2.5,1.5)node[above,black,yshift=-3pt]{$i$};
\draw[usual,blobbed={1}{u}{spinach}] (1.25,0.65) to (1.25,0.85);
\end{tikzpicture}
=
\begin{tikzpicture}[anchorbase,scale=0.7,tinynodes]
\draw[pole,crosspole,white] (2,0)node[below,white,yshift=-1pt]{$u$} 
to[out=90,in=270] (2,1.5)node[above,white,yshift=-3pt]{$u$};
\draw[usual,rblobbed={0.5}{u}{spinach}] (2.5,0)node[below,black,yshift=-1pt]{$i$} 
to[out=90,in=270] (2.5,1.5)node[above,black,yshift=-3pt]{$i$};
\draw[usual,crossline] (1.5,0) to[out=90,in=270] (1.5,1.5);
\draw[usual,crossline] (2,0) to[out=90,in=270] (2,1.5);
\end{tikzpicture}
.
\end{gather}	
The latter is however just a shorthand notation
which is not quite topological in nature.
The point to is that the relations among 
Jucys--Murphy elements in \eqref{eq:jm-relations}
give relations among blobs, but not all
of them are topological manipulations of blobs.
Precisely, we have:

\begin{lemma}\label{lemma:blob-rels}
Blobs satisfy the following relations.
\begin{align}\label{eq:blobslides}
\begin{tikzpicture}[anchorbase,scale=0.7,tinynodes]
\draw[usual,crossline] (0,0) to[out=90,in=270] (0.5,0.75);
\draw[usual,crossline,rblobbed={0.15}{u}{spinach}] (0.5,0) to[out=90,in=270] (0,0.75);
\end{tikzpicture}
\hspace{-0.2cm}&=\hspace{-0.2cm}
\begin{tikzpicture}[anchorbase,scale=0.7,tinynodes]
\draw[usual,crossline,blobbed={0.85}{u}{spinach}] (0.5,0) to[out=90,in=270] (0,0.75);
\draw[usual,crossline] (0,0) to[out=90,in=270] (0.5,0.75);
\end{tikzpicture}
,\quad
\begin{tikzpicture}[anchorbase,scale=0.7,tinynodes]
\draw[usual,crossline] (0.5,0) to[out=90,in=270] (0,0.75);
\draw[usual,crossline,blobbed={0.15}{u}{spinach}] (0,0) to[out=90,in=270] (0.5,0.75);
\end{tikzpicture}
=
\begin{tikzpicture}[anchorbase,scale=0.7,tinynodes]
\draw[usual,crossline,rblobbed={0.85}{u}{spinach}] (0,0) to[out=90,in=270] (0.5,0.75);
\draw[usual,crossline] (0.5,0) to[out=90,in=270] (0,0.75);
\end{tikzpicture}
.
\\
\label{eq:dots-capscups}
\begin{tikzpicture}[anchorbase,scale=0.7,tinynodes]
\draw[usual,crossline,blobbed={0.2}{u}{spinach},rblobbed={0.8}{u}{spinach}] 
(1,0) to[out=90,in=180] (1.25,0.5) to[out=0,in=90] (1.5,0);
\end{tikzpicture}
\hspace{-0.2cm}&=
\begin{tikzpicture}[anchorbase,scale=0.7,tinynodes]
\draw[usual,crossline] (1,0) to[out=90,in=180] (1.25,0.5) to[out=0,in=90] (1.5,0);
\end{tikzpicture}
,\quad
\begin{tikzpicture}[anchorbase,scale=0.7,tinynodes,yscale=-1]
\draw[usual,crossline,blobbed={0.2}{u}{spinach},rblobbed={0.8}{u}{spinach}] 
(1,0) to[out=90,in=180] (1.25,0.5) to[out=0,in=90] (1.5,0);
\end{tikzpicture}
\hspace{-0.2cm}=
\begin{tikzpicture}[anchorbase,scale=0.7,tinynodes,yscale=-1]
\draw[usual,crossline] (1,0) to[out=90,in=180] (1.25,0.5) to[out=0,in=90] (1.5,0);
\end{tikzpicture}
.
\end{align}
\begin{gather}\label{eq:height-blob-switch}
\begin{tikzpicture}[anchorbase,scale=0.7,tinynodes]
\draw[usual,rblobbed={0.66}{u}{spinach}] (2.5,0) to[out=90,in=270] (2.5,1.5);
\draw[usual,blobbed={0.33}{v}{tomato}] (2,0) to[out=90,in=270] (2,1.5);
\end{tikzpicture}
=
\begin{cases}
\begin{tikzpicture}[anchorbase,scale=0.7,tinynodes]
\draw[usual,rblobbed={0.33}{u}{spinach}] (2.5,0) to[out=90,in=270] (2.5,1.5);
\draw[usual,blobbed={0.66}{v}{tomato}] (2,0) to[out=90,in=270] (2,1.5);
\end{tikzpicture}
&\text{if }u\leq v,
\\[2ex]
\begin{tikzpicture}[anchorbase,scale=0.7,tinynodes]
\draw[usual,crossline] (2.5,1.0) to[out=90,in=270] (2.0,1.5);
\draw[usual,crossline] (2.0,1.0) to[out=90,in=270] (2.5,1.5);
\draw[usual,crossline] (2.0,0) to[out=90,in=270] (2.5,0.5);
\draw[usual,crossline] (2.5,0) to[out=90,in=270] (2.0,0.5);
\draw[usual,blobbed={0}{u}{spinach}] (2,0.5) to[out=90,in=270] (2,1.0);
\draw[usual,rblobbed={1}{v}{tomato}] (2.5,0.5) to[out=90,in=270] (2.5,1.0);
\end{tikzpicture}
&\text{if }u\geq v.
\end{cases}
\end{gather}
\begin{gather}\label{eq:more-slides}
\begin{aligned}
\begin{tikzpicture}[anchorbase,scale=0.7,tinynodes]
\draw[usual,crossline] (0.5,0) to[out=90,in=270] (0,0.75);
\draw[usual,crossline,rblobbed={0.85}{u}{spinach}] (0,0) to[out=90,in=270] (0.5,0.75);
\end{tikzpicture}
\hspace{-0.2cm}&=\hspace{-0.2cm}
\begin{tikzpicture}[anchorbase,scale=0.7,tinynodes]
\draw[usual,crossline] (0.5,0) to[out=90,in=270] (0,0.75);
\draw[usual,crossline,blobbed={0.15}{u}{spinach}] (0,0) to[out=90,in=270] (0.5,0.75);
\end{tikzpicture}
+(\qvar^{1/2}-\qvar^{-1/2})\cdot
\left(
\begin{tikzpicture}[anchorbase,scale=0.7,tinynodes]
\draw[usual,crossline] (0,0) to (0,0.75);
\draw[usual,crossline,rblobbed={0.5}{u}{spinach}] (0.5,0) to (0.5,0.75);
\end{tikzpicture}
-
\begin{tikzpicture}[anchorbase,scale=0.7,tinynodes]
\draw[usual,crossline] (0,0) to[out=90,in=180] (0.25,0.25) 
to[out=0,in=90] (0.5,0);
\draw[usual,crossline,rblobbed={0.75}{u}{spinach}] (0,0.75) 
to[out=270,in=180] (0.25,0.5) to[out=0,in=270] (0.5,0.75);
\end{tikzpicture}
\hspace{-0.2cm}\right)
,\\
\begin{tikzpicture}[anchorbase,scale=0.7,tinynodes]
\draw[usual,crossline,rblobbed={0.12}{u}{spinach}] (0.5,0) to[out=90,in=270] (0,0.75);
\draw[usual,crossline] (0,0) to[out=90,in=270] (0.5,0.75);
\end{tikzpicture}
\hspace{-0.2cm}&=\hspace{-0.2cm}
\begin{tikzpicture}[anchorbase,scale=0.7,tinynodes]
\draw[usual,crossline,blobbed={0.87}{u}{spinach}] (0.5,0) to[out=90,in=270] (0,0.75);
\draw[usual,crossline] (0,0) to[out=90,in=270] (0.5,0.75);
\end{tikzpicture}
+(\qvar^{1/2}-\qvar^{-1/2})\cdot
\left(
\begin{tikzpicture}[anchorbase,scale=0.7,tinynodes]
\draw[usual,crossline] (0,0) to (0,0.75);
\draw[usual,crossline,rblobbed={0.5}{u}{spinach}] (0.5,0) to (0.5,0.75);
\end{tikzpicture}
-
\begin{tikzpicture}[anchorbase,scale=0.7,tinynodes]
\draw[usual,crossline,rblobbed={0.75}{u}{spinach}] (0,0) 
to[out=90,in=180] (0.25,0.25) to[out=0,in=90] (0.5,0);
\draw[usual,crossline] (0,0.75) to[out=270,in=180] (0.25,0.5) 
to[out=0,in=270] (0.5,0.75);
\end{tikzpicture}
\hspace{-0.2cm}\right)
,
\\
\begin{tikzpicture}[anchorbase,scale=0.7,tinynodes]
\draw[usual,crossline,blobbed={0.15}{u}{spinach}] (0,0) to[out=90,in=270] (0.5,0.75);
\draw[usual,crossline] (0.5,0) to[out=90,in=270] (0,0.75);
\end{tikzpicture}
&=
\begin{tikzpicture}[anchorbase,scale=0.7,tinynodes]
\draw[usual,crossline,rblobbed={0.85}{u}{spinach}] (0,0) to[out=90,in=270] (0.5,0.75);
\draw[usual,crossline] (0.5,0) to[out=90,in=270] (0,0.75);
\end{tikzpicture}
\hspace{-0.2cm}+(\qvar^{1/2}-\qvar^{-1/2})\cdot
\left(\hspace{-0.2cm}
\begin{tikzpicture}[anchorbase,scale=0.7,tinynodes]
\draw[usual,crossline,blobbed={0.5}{u}{spinach}] (0,0) to (0,0.75);
\draw[usual,crossline] (0.5,0) to (0.5,0.75);
\end{tikzpicture}
-
\begin{tikzpicture}[anchorbase,scale=0.7,tinynodes]
\draw[usual,crossline,blobbed={0.25}{u}{spinach}] (0,0) 
to[out=90,in=180] (0.25,0.25) to[out=0,in=90] (0.5,0);
\draw[usual,crossline] (0,0.75) to[out=270,in=180] (0.25,0.5) 
to[out=0,in=270] (0.5,0.75);
\end{tikzpicture}
\right)
,\\
\begin{tikzpicture}[anchorbase,scale=0.7,tinynodes]
\draw[usual,crossline] (0,0) to[out=90,in=270] (0.5,0.75);
\draw[usual,crossline,blobbed={0.85}{u}{spinach}] (0.5,0) to[out=90,in=270] (0,0.75);
\end{tikzpicture}
&=
\begin{tikzpicture}[anchorbase,scale=0.7,tinynodes]
\draw[usual,crossline] (0,0) to[out=90,in=270] (0.5,0.75);
\draw[usual,crossline,rblobbed={0.15}{u}{spinach}] (0.5,0) to[out=90,in=270] (0,0.75);
\end{tikzpicture}
\hspace{-0.2cm}+(\qvar^{1/2}-\qvar^{-1/2})\cdot
\left(\hspace{-0.2cm}
\begin{tikzpicture}[anchorbase,scale=0.7,tinynodes]
\draw[usual,crossline,blobbed={0.5}{u}{spinach}] (0,0) to (0,0.75);
\draw[usual,crossline] (0.5,0) to (0.5,0.75);
\end{tikzpicture}
-
\begin{tikzpicture}[anchorbase,scale=0.7,tinynodes]
\draw[usual,crossline,] (0,0) to[out=90,in=180] (0.25,0.25) 
to[out=0,in=90] (0.5,0);
\draw[usual,crossline,blobbed={0.25}{u}{spinach}] (0,0.75) 
to[out=270,in=180] (0.25,0.5) to[out=0,in=270] (0.5,0.75);
\end{tikzpicture}
\right)
.                     
\end{aligned}
\end{gather}
\end{lemma}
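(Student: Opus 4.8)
The plan is to read every blob as a Jucys--Murphy coil through the blobbed presentation set up above, so that a blob of type $u$ on strand $i$ is the element $\jm_{u,i}$ of \fullref{definition:jm-elements} drawn as a decoration, and then to deduce each listed relation from the defining tangle relations of $\setstuff{TL}_{g,n}^{\mathrm{top}}(\cpar)$ (recall $\setstuff{Bl}_{g,n}^{\mathrm{top}}(\cpar)=\setstuff{TL}_{g,n}^{\mathrm{top}}(\cpar)$ by \fullref{lemma:same}). First I would sort the relations into those provable by isotopy of handlebody tangle diagrams together with the coil relations \eqref{eq:jm-relations} alone, and those that in addition require the Kauffman skein relation \eqref{eq:kauffman}. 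The former are \eqref{eq:blobslides}, \eqref{eq:dots-capscups} and \eqref{eq:height-blob-switch}; the latter are the four identities in \eqref{eq:more-slides}.

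For the topological relations I would argue diagrammatically. Relation \eqref{eq:blobslides} is the statement that a coil may be pushed along its own strand through a crossing whose other strand its reaching-loop does not meet; this is an isotopy and is precisely the slide relations $\beta_i\jm_{u,i}=\jm_{u,i+1}\beta_i^{-1}$ and their variants from \eqref{eq:jm-relations}. Relation \eqref{eq:dots-capscups} says that a coil at the left foot of a turnback and a coil at the right foot are opposite windings around the $u$th core, because the strand reverses direction over the turnback, so they cancel by $\jm_{u,i}\jm_{u,i}^{-1}=1$; this is a framing-compatible unknotting (\cf \eqref{eq:framingblobs}). Finally, \eqref{eq:height-blob-switch} records the commutation behaviour of coils: since blobs reach out to the left and may not pass one another (\cf \eqref{eq:not-defined}), their height order is meaningful, and swapping the heights of a type $v$ blob below a type $u$ blob is governed by the last relation in \eqref{eq:jm-relations} and by \eqref{eq:coxhandlebody-summary2}. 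For $u\le v$ the two coils commute outright, giving the first case, whereas for $u\ge v$ they commute only after conjugating by a crossing, which is exactly the braided picture in the second case. No crossing is resolved in any of these, so no $\qvar$-factor appears.

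The identities in \eqref{eq:more-slides} are where the skein relation enters, precisely because there the reaching-loop of the moving blob is forced across the neighbouring strand, so passing from the right-approaching to the left-approaching position is genuinely not an isotopy. For each of the four identities I would use \eqref{eq:blob-right} to trade the change of side for a pair of crossings with the neighbouring strand, and then resolve one of these crossings against the crossing already present by \eqref{eq:kauffman}, writing it as $\qvar^{1/2}$ times the identity tangle plus $\qvar^{-1/2}$ times the turnback (or the mirror, according to the sign of the crossing). On each resulting term the blob now sits on a crossing-free strand and can be repositioned by the already-established moves \eqref{eq:blobslides}--\eqref{eq:dots-capscups}; collecting the two contributions yields the coefficient $\qvar^{1/2}-\qvar^{-1/2}$ times the difference of the straight-strand term and the turnback term, which is the stated correction. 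I would carry this out in full once, for the first identity, and note that the remaining three then follow by symmetry (\muta), mirroring top-to-bottom and exchanging over- and under-crossings.

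The main obstacle I anticipate is bookkeeping rather than conceptual: one must fix, once and for all, the over/under convention for every depicted crossing and the left-versus-right placement of every blob, and then check that the skein expansion attaches the coefficients $\qvar^{\pm 1/2}$ to the correct summands so that the signs in \eqref{eq:more-slides} come out as displayed. A secondary point to handle with care is the order of operations: since moving a blob past the crossing region is not an isotopy, I would always resolve the crossing \emph{first} and move blobs only afterwards, never the other way around, as this is exactly the source of the error terms.
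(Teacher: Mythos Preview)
Your outline matches the paper's approach: \eqref{eq:blobslides} from the Jucys--Murphy slide identities in \eqref{eq:jm-relations}, \eqref{eq:dots-capscups} from the framing unknotting \eqref{eq:framingblobs}, and \eqref{eq:more-slides} by combining \eqref{eq:blobslides} with the Kauffman skein relation \eqref{eq:kauffman} and collecting terms.

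One point needs tightening. For \eqref{eq:height-blob-switch} in the case $u\ge v$ you invoke \eqref{eq:coxhandlebody-summary2}, but that is the \emph{Coxeter} relation, symmetric because $s_i^{2}=1$; in the tangle setting the relevant relation is the asymmetric braid relation \eqref{eq:handlebody-summary2} (equivalently the last line of \eqref{eq:jm-relations}), and that only delivers the $u\le v$ direction. Saying the coils ``commute only after conjugating by a crossing'' is the correct picture, but it is not itself one of the defining relations and needs to be derived. The paper does this by reducing to the $u\le v$ case already in hand: rewrite the $u$-blob on the right strand via \eqref{eq:blob-right} as a conjugation by crossings of a $u$-blob on the left, then apply the $u\le v$ case (with the roles of the strands interchanged, so the inequality flips) together with the slides \eqref{eq:blobslides}. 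Once you insert this short reduction, your argument is complete and coincides with the paper's.
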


The relations in \eqref{eq:blobslides} and \eqref{eq:more-slides} 
are called \emph{blob slides}. 
Also note the distorted topology in \fullref{lemma:blob-rels}, 
which is however gets resolved for $\qvar=1$.

\begin{proof}
Relations \eqref{eq:blobslides} are 
a blob version of the equalities
$\beta_{i-1}^{-1}\jm_{u,i}=\jm_{u,i-1}\beta_{i-1}$ and
$\beta_{i}\jm_{u,i}=\jm_{u,i+1}\beta_{i}^{-1}$ 
between Jucys--Murphy elements, {\cf} \fullref{lemma:jm-elements}.

To prove the first relation 
in \eqref{eq:dots-capscups} slide down the blob on the 
right and use \eqref{eq:blob-right} to 
write it as a blob on the strand to the left. 
The rest follows from \eqref{eq:framingblobs}. 
The second relation in \eqref{eq:dots-capscups} 
is proved analogously.

Relation \eqref{eq:height-blob-switch} 
in the case $v\geq u$ is the last 
relation in \fullref{lemma:jm-elements}. 
In the case $v\leq u$ we calculate
\begin{gather*}
\begin{tikzpicture}[anchorbase,scale=0.7,tinynodes]
\draw[usual,rblobbed={0.66}{u}{spinach}] (2.5,0) to[out=90,in=270] (2.5,1.5);
\draw[usual,blobbed={0.33}{v}{tomato}] (2,0) to[out=90,in=270] (2,1.5);
\end{tikzpicture}
=
\begin{tikzpicture}[anchorbase,scale=0.7,tinynodes]
\draw[usual] (2,1) to[out=90,in=270] (2,1.5);
\draw[usual,blobbed={0.58}{u}{spinach},crossline] (2.5,0) 
to (2.5,0.5) to[out=90,in=270] (1.5,1) to[out=90,in=270] (2.5,1.5);
\draw[usual,blobbed={0.4}{v}{tomato},crossline] (2,0) to[out=90,in=270] (2,1);
\end{tikzpicture}
=
\begin{tikzpicture}[anchorbase,scale=0.7,tinynodes]
\draw[usual,rblobbed={0.42}{v}{tomato},crossline] (2,0) 
to[out=90,in=270] (2,1.5);
\draw[usual,blobbed={0.58}{u}{spinach},crossline] (2.5,0) 
to[out=90,in=270] (1.5,0.5) to[out=90,in=270] (1.5,1) to[out=90,in=270] (2.5,1.5);
\end{tikzpicture}
=
\begin{tikzpicture}[anchorbase,scale=0.7,tinynodes]
\draw[usual,rblobbed={0.6}{v}{tomato},crossline] (2,0) 
to[out=90,in=270] (2,1.5);
\draw[usual,blobbed={0.425}{u}{spinach},crossline] (2.5,0) 
to[out=90,in=270] (1.5,0.5) to[out=90,in=270] (1.5,1) to[out=90,in=270] (2.5,1.5);
\end{tikzpicture}
,
\end{gather*}
where we use the case $v\geq u$ and \eqref{eq:blobslides}.

For the final set of relations in \eqref{eq:more-slides}
we combine \eqref{eq:blobslides} with the 
Kauffman skein relation \eqref{eq:kauffman}. 
Precisely, \eqref{eq:blobslides} and \eqref{eq:kauffman}
imply {\eg}
\begin{gather*}
\begin{tikzpicture}[anchorbase,scale=0.7,tinynodes]
\draw[usual,crossline] (0.5,0) to[out=90,in=270] (0,0.75);
\draw[usual,crossline,rblobbed={0.85}{u}{spinach}] (0,0) to[out=90,in=270] (0.5,0.75);
\end{tikzpicture}
\hspace{-0.2cm}=
\qvar^{1/2}
\cdot
\begin{tikzpicture}[anchorbase,scale=0.7,tinynodes]
\draw[usual,crossline] (0,0) to (0,0.75);
\draw[usual,crossline,rblobbed={0.5}{u}{spinach}] (0.5,0) to (0.5,0.75);
\end{tikzpicture}
+\qvar^{-1/2}\cdot
\begin{tikzpicture}[anchorbase,scale=0.7,tinynodes]
\draw[usual,crossline] (0,0) to[out=90,in=180] (0.25,0.25) 
to[out=0,in=90] (0.5,0);
\draw[usual,crossline,rblobbed={0.75}{u}{spinach}] (0,0.75) 
to[out=270,in=180] (0.25,0.5) to[out=0,in=270] (0.5,0.75);
\end{tikzpicture}
,\quad
\begin{tikzpicture}[anchorbase,scale=0.7,tinynodes]
\draw[usual,crossline] (0.5,0) to[out=90,in=270] (0,0.75);
\draw[usual,crossline,blobbed={0.15}{u}{spinach}] (0,0) to[out=90,in=270] (0.5,0.75);
\end{tikzpicture}
=
\begin{tikzpicture}[anchorbase,scale=0.7,tinynodes]
\draw[usual,crossline,rblobbed={0.85}{u}{spinach}] (0,0) to[out=90,in=270] (0.5,0.75);
\draw[usual,crossline] (0.5,0) to[out=90,in=270] (0,0.75);
\end{tikzpicture}
\hspace{-0.2cm}=
\qvar^{-1/2}
\cdot
\begin{tikzpicture}[anchorbase,scale=0.7,tinynodes]
\draw[usual,crossline] (0,0) to (0,0.75);
\draw[usual,crossline,rblobbed={0.5}{u}{spinach}] (0.5,0) to (0.5,0.75);
\end{tikzpicture}
+\qvar^{1/2}\cdot
\begin{tikzpicture}[anchorbase,scale=0.7,tinynodes]
\draw[usual,crossline] (0,0) to[out=90,in=180] (0.25,0.25) 
to[out=0,in=90] (0.5,0);
\draw[usual,crossline,rblobbed={0.75}{u}{spinach}] (0,0.75) 
to[out=270,in=180] (0.25,0.5) to[out=0,in=270] (0.5,0.75);
\end{tikzpicture}
,
\end{gather*}
which in turn prove the claimed formulas.
\end{proof}

\section{Handlebody Brauer and BMW algebras}\label{section:brauer}

We will be brief in this section as it is very similar 
to the previous discussions. Recall that we have fixed the genus $g$ and 
the number of strands $n$.

\subsection{Handlebody Brauer and BMW algebras}\label{subsection:BMW}

We use handlebody 
(framed) tangle diagrams of $2n$ 
points of genus $g$ as in \fullref{subsection:handlebody-tl-topology}.
We fix invertible scalars 
$\qvar,\avar\in\KK$ with $\qvar\neq\qvar^{-1}$.
Circles that may appear after concatenation of these diagrams 
are evaluated as in \fullref{subsection:handlebody-tl-topology}, and we 
fix $\cvar_{\gamma}\in\KK$ as before, except for 
$\cvar_{1}$ which we define as
\begin{gather*}
\cvar_{1}=1+\frac{\avar-\avar^{-1}}{\qvar-\qvar^{-1}}.
\end{gather*}

\begin{definition}\label{definition:handlebody-bmw}
We let the \emph{handlebody BMW algebra} (in $n$ strands and 
of genus $g$) $\setstuff{BMW}_{g,n}(\cpar,\qvar,\avar)$ be 
the algebra whose underlying free $\KK$-module is the
$\KK$-linear span of all 
handlebody tangle diagrams of $2n$ points 
of genus $g$, and
with multiplication given by concatenation of diagrams modulo 
circle evaluation, and the \emph{skein 
relation} \eqref{eq:skein} and \eqref{eq:framing-rel}:
\begin{gather}\label{eq:skein}
\begin{tikzpicture}[anchorbase,scale=0.7,tinynodes]
\draw[usual,crossline] (0.5,0) to[out=90,in=270] (0,0.75);
\draw[usual,crossline] (0,0) to[out=90,in=270] (0.5,0.75);
\end{tikzpicture}
-
\begin{tikzpicture}[anchorbase,scale=0.7,tinynodes,xscale=-1]
\draw[usual,crossline] (0.5,0) to[out=90,in=270] (0,0.75);
\draw[usual,crossline] (0,0) to[out=90,in=270] (0.5,0.75);
\end{tikzpicture}
= (\qvar-\qvar^{-1})\cdot
\left(
\begin{tikzpicture}[anchorbase,scale=0.7,tinynodes]
\draw[usual,crossline] (0.5,0) to(0.5,0.75);
\draw[usual,crossline] (0,0) to (0,0.75);
\end{tikzpicture}
-
\begin{tikzpicture}[anchorbase,scale=0.7,tinynodes]
\draw[usual,crossline] (0,0) to[out=90,in=180] (0.25,0.25) to[out=0,in=90] (0.5,0);
\draw[usual,crossline] (0,0.75) to[out=270,in=180] (0.25,0.5) to[out=0,in=270] (0.5,0.75);
\end{tikzpicture}
\right)
,\\
\label{eq:framing-rel}
\begin{tikzpicture}[anchorbase,scale=0.7,tinynodes]
\draw[usual,crossline] (0.5,0) to[out=90,in=270] (0,0.75);
\draw[usual,crossline] (0,0) to[out=90,in=270] (0.5,0.75);
\draw[usual,crossline] (0,0.75) to[out=90,in=180] (0.25,1) to[out=0,in=90] (0.5,0.75);
\end{tikzpicture}
= 
\avar\cdot
\begin{tikzpicture}[anchorbase,scale=0.7,tinynodes]
\draw[usual,crossline] (0.5,0) to(0.5,0.75);
\draw[usual,crossline] (0,0) to (0,0.75);
\draw[usual,crossline] (0,0.75) to[out=90,in=180] (0.25,1) to[out=0,in=90] (0.5,0.75);
\end{tikzpicture}
,\quad 
\begin{tikzpicture}[anchorbase,scale=0.7,tinynodes,xscale=-1]
\draw[usual,crossline] (0.5,0) to[out=90,in=270] (0,0.75);
\draw[usual,crossline] (0,0) to[out=90,in=270] (0.5,0.75);
\draw[usual,crossline] (0,0.75) to[out=90,in=180] (0.25,1) to[out=0,in=90] (0.5,0.75);
\end{tikzpicture}
=
\avar^{-1}\cdot
\begin{tikzpicture}[anchorbase,scale=0.7,tinynodes]
\draw[usual,crossline] (0.5,0) to (0.5,0.75);
\draw[usual,crossline] (0,0) to (0,0.75);
\draw[usual,crossline] (0,0.75) to[out=90,in=180] (0.25,1) to[out=0,in=90] (0.5,0.75);
\end{tikzpicture}
.
\end{gather}
\end{definition}

For $\qvar=\qvar^{-1}$ we also let $\avar=\avar^{-1}$
and let $\cvar_{1}\in\KK$ be any parameter.

\begin{definition}
We call the specialization $\qvar=\avar=1$ the \emph{Brauer 
specialization} and the resulting algebra
the \emph{handlebody Brauer algebra} 
(in $n$ strands and of genus $g$) 
$\setstuff{Br}_{g,n}(\cpar)$. 
\end{definition}

\begin{remark}
Note that $\setstuff{Br}_{g,n}(\cpar)$ is 
not a specialization of $\setstuff{BMW}_{g,n}(\cpar,\qvar,\avar)$ 
as the $\qvar=\avar=1$ limit of $\setstuff{BMW}_{g,n}(\cpar,\qvar,\avar)$ 
has $\cvar_{1}=2$. This technicality will not play any role for us, so 
we strategically ignore it.
\end{remark}

In the specialization $\setstuff{Br}_{g,n}(\cpar)$ we can 
use a simplified notation for the crossings
\begin{gather*}
\begin{tikzpicture}[anchorbase,scale=0.7,tinynodes]
\draw[usual] (0,0) to[out=90,in=270] (0.5,0.75);
\draw[usual] (0.5,0) to[out=90,in=270] (0,0.75);
\end{tikzpicture}  
=
\begin{tikzpicture}[anchorbase,scale=0.7,tinynodes]
\draw[usual,crossline] (0,0) to[out=90,in=270] (0.5,0.75);
\draw[usual,crossline] (0.5,0) to[out=90,in=270] (0,0.75);
\end{tikzpicture}
=
\begin{tikzpicture}[anchorbase,scale=0.7,tinynodes]
\draw[usual,crossline] (0.5,0) to[out=90,in=270] (0,0.75);
\draw[usual,crossline] (0,0) to[out=90,in=270] (0.5,0.75);
\end{tikzpicture}
.
\end{gather*}

\begin{remark}
Special cases of \fullref{definition:handlebody-bmw} have 
appeared in the literature:
\begin{enumerate}

\item The case $g=0$ is the case of the 
Birman--Murakami--Wenzl algebra, respectively Brauer algebra, 
and is classical.

\item For $g=1$ these algebras appear under the name 
of affine BMW or Brauer algebras in many works, {\eg} in
\cite{HaOl-actions-tensor-categories} or \cite{OrRa-affine-braids}.

\item For $g=2$ we were not able to find a reference, but the 
definition is easily deduced from the affine type C 
braid group combinatorics. However, this would give 
a two-boundary version of the above, with one 
core strand to the left and one to the right, 
see {\eg} \cite{DaRa-two-boundary-hecke} for the 
corresponding pictures.

\end{enumerate}
\end{remark}

In order to define a spanning set for 
$\setstuff{BMW}_{g,n}(\cpar,\qvar,\avar)$ we 
need the notion of 
\emph{perfect matchings of $2n$ points of genus $g$}: these are 
perfect matchings of $2n$ points where strands can wind around the cores.
We keep the conventions of the 
previous sections and consider that all cores are at the left. 
For example, if $g=3$ and $n=2$, then
\begin{gather*}
\begin{tikzpicture}[anchorbase,scale=0.7,tinynodes]
\draw[pole,crosspole] (-0.5,0) to[out=270,in=90] (-0.5,-1.5);
\draw[usual,crossline] (-0.25,-0.5) to[out=270,in=180] (0.25,-0.8) 
to[out=0,in=180] (0.75,-0.8) to[out=0,in=270] (1.5,0);
\draw[pole,crosspole] (0,0) to[out=270,in=90] (0,-1.5);
\draw[pole,crosspole] (0.5,0) to[out=270,in=90] (0.5,-1.5);
\draw[usual,crossline] (1,0) to[out=270,in=90] (-0.25,-0.5);
\draw[usual,crossline] (1,-1.5) to[out=90,in=180] (1.25,-1.25) 
to[out=0,in=90] (1.5,-1.5);
\end{tikzpicture}
,\quad
\begin{tikzpicture}[anchorbase,scale=0.7,tinynodes]
\draw[pole,crosspole] (-0.5,0) to[out=270,in=90] (-0.5,-1.5);
\draw[usual,crossline] (-0.25,-0.5) to[out=270,in=180] (0.25,-0.8) 
to[out=0,in=180] (0.75,-0.8) to[out=0,in=100] (1.5,-1.5);
\draw[pole,crosspole] (0,0) to[out=270,in=90] (0,-1.5);
\draw[pole,crosspole] (0.5,0) to[out=270,in=90] (0.5,-1.5);
\draw[usual,crossline] (1,0) to[out=270,in=90] (-0.25,-0.5);
\draw[usual] (1,-1.5) to[out=85,in=-90] (1.5,0);
\end{tikzpicture}
,\quad
\begin{tikzpicture}[anchorbase,scale=0.7,tinynodes]
\draw[pole,crosspole] (-0.5,0) to[out=90,in=270] (-0.5,1.5);
\draw[usual,crossline] (1,0) to[out=90,in=270] (-0.25,0.75);
\draw[usual,crossline] (1.5,0) to[out=90,in=270] (1.5,1.5);
\draw[pole,crosspole] (0,0) to[out=90,in=270] (0,1.5);
\draw[pole,crosspole] (0.5,0) to[out=90,in=270] (0.5,1.5);
\draw[usual,crossline] (-0.25,0.75) to[out=90,in=270] (1,1.5);
\end{tikzpicture}
,
\end{gather*}
are examples of such perfect matchings.
These perfect matchings are equal if they describe the same perfect matching 
with the same winding around cores. 

\begin{remark}
With blobs the situation is trickier 
because we need to be careful 
with relation \eqref{eq:dots-capscups}.
Without blobs we do not have this problem and we can treat 
these perfect matchings as topological objects.
\end{remark}

Forgetting isotopy, each perfect matching as above defines an element of 
$\setstuff{BMW}_{g,n}(\cpar,\qvar,\avar)$ 
by lifting crossings.
Note however that we have an ambiguity coming from
\begin{gather*}
\left(
\begin{tikzpicture}[anchorbase,scale=0.7,tinynodes]
\draw[usual] (1,0) to[out=90,in=180] (1.75,0.75) 
to[out=0,in=90] (2.5,0);
\draw[usual] (1.75,0) to[out=90,in=270] (2.25,0.75) to[out=90,in=270] (1.75,1.5);
\end{tikzpicture}
=
\begin{tikzpicture}[anchorbase,scale=0.7,tinynodes]
\draw[usual] (1,0) to[out=90,in=180] (1.75,0.75) 
to[out=0,in=90] (2.5,0);
\draw[usual] (1.75,0) to[out=90,in=270] (1.25,0.75) to[out=90,in=270] (1.75,1.5);
\end{tikzpicture}
\right)
\mapsto
\left(
\begin{tikzpicture}[anchorbase,scale=0.7,tinynodes]
\draw[usual,crossline] (1,0) to[out=90,in=180] (1.75,0.75) 
to[out=0,in=90] (2.5,0);
\draw[usual,crossline] (1.75,0) to[out=90,in=270] (2.25,0.75) to[out=90,in=270] (1.75,1.5);
\end{tikzpicture}
=
\begin{tikzpicture}[anchorbase,scale=0.7,tinynodes]
\draw[usual,crossline] (1,0) to[out=90,in=180] (1.75,0.75) 
to[out=0,in=90] (2.5,0);
\draw[usual,crossline] (1.75,0) to[out=90,in=270] (1.25,0.75) to[out=90,in=270] (1.75,1.5);
\end{tikzpicture}
\right)
.
\end{gather*}
In order to avoid this we call a \emph{positive lift} 
a lift such that all through strands form a positive braid monoid, 
using 
\begin{gather}\label{eq:positive-braid}
\begin{tikzpicture}[anchorbase,scale=0.7,tinynodes]
\draw[usual] (0.5,0) to[out=90,in=270] (0,0.75);
\draw[usual] (0,0) to[out=90,in=270] (0.5,0.75);
\end{tikzpicture}
\mapsto
\begin{tikzpicture}[anchorbase,scale=0.7,tinynodes]
\draw[usual,crossline] (0.5,0) to[out=90,in=270] (0,0.75);
\draw[usual,crossline] (0,0) to[out=90,in=270] (0.5,0.75);
\end{tikzpicture}
,
\end{gather}
where the image is a positive crossing.
All caps and cups are assumed to be underneath any through strand, 
and any 
caps and cups are also underneath one another, going from left 
(lowest) to right (highest) along their left boundary points. 
For example,
\begin{gather*}
\text{Ok}\colon
\begin{tikzpicture}[anchorbase,scale=0.7,tinynodes]
\draw[usual,crossline] (0.5,0) to[out=90,in=270] (0,0.75);
\draw[usual,crossline] (0,0) to[out=90,in=270] (0.5,0.75);
\end{tikzpicture}
,\quad
\begin{tikzpicture}[anchorbase,scale=0.7,tinynodes]
\draw[usual,crossline] (1,0) to[out=90,in=180] (1.75,0.5) 
to[out=0,in=90] (2.5,0);
\draw[usual,crossline] (1.75,0) to[out=90,in=270] (1.75,0.75);
\end{tikzpicture}
,\quad
\begin{tikzpicture}[anchorbase,scale=0.7,tinynodes]
\draw[usual,white] (1.75,0) to[out=90,in=270] (1.75,0.75);
\draw[usual,crossline] (1,0) to[out=90,in=180] (1.75,0.5) 
to[out=0,in=90] (2.5,0);
\draw[usual,crossline] (1.5,0) to[out=90,in=180] (2.25,0.5) 
to[out=0,in=90] (3,0);
\end{tikzpicture}
,\quad
\text{not allowed}\colon
\begin{tikzpicture}[anchorbase,scale=0.7,tinynodes]
\draw[usual,crossline] (0,0) to[out=90,in=270] (0.5,0.75);
\draw[usual,crossline] (0.5,0) to[out=90,in=270] (0,0.75);
\end{tikzpicture}
,\quad
\begin{tikzpicture}[anchorbase,scale=0.7,tinynodes]
\draw[usual,crossline] (1.75,0) to[out=90,in=270] (1.75,0.75);
\draw[usual,crossline] (1,0) to[out=90,in=180] (1.75,0.5) 
to[out=0,in=90] (2.5,0);
\end{tikzpicture}
,\quad
\begin{tikzpicture}[anchorbase,scale=0.7,tinynodes]
\draw[usual,white] (1.75,0) to[out=90,in=270] (1.75,0.75);
\draw[usual,crossline] (1.5,0) to[out=90,in=180] (2.25,0.5) 
to[out=0,in=90] (3,0);
\draw[usual,crossline] (1,0) to[out=90,in=180] (1.75,0.5) 
to[out=0,in=90] (2.5,0);
\end{tikzpicture}
.
\end{gather*}

Now we can proceed as before:

\begin{lemma}\label{lemma:bmw-span}
The algebra 
$\setstuff{BMW}_{g,n}(\cvar,\qvar,\avar)$ is an associative, unital 
algebra with a $\KK$-spanning set given by positive lifts 
of perfect matchings of $2n$ points of genus $g$.
\end{lemma}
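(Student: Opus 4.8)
The plan is to treat the two assertions separately. Associativity and unitality are formal: vertical concatenation of handlebody tangle diagrams is associative up to isotopy with the trivial tangle as unit, and all three families of defining relations (circle evaluation, the skein relation \eqref{eq:skein}, and the framing relations \eqref{eq:framing-rel}) are local, hence compatible with concatenation and descend to a well-defined product on the quotient. This is entirely parallel to \fullref{lemma:tl-span-topological}, so I would only remark on it.

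The substance is the spanning claim, which I would establish by a reduction of an arbitrary handlebody tangle diagram $T$ to a $\KK$-linear combination of positive lifts of perfect matchings. First, using circle evaluation, I would remove all closed components at the cost of scalars $\cvar_{\gamma}$, so that $T$ may be assumed to have no circles. Next, forgetting the over/under information at the crossings between usual strands — but retaining the winding around the cores, which is intact because the core strands are pure — associates to $T$ an underlying perfect matching of $2n$ points of genus $g$. The goal is then to show that $T$ equals the positive lift of this matching modulo diagrams whose underlying matchings have strictly fewer through strands.

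The engine is an induction on the pair consisting of the number of crossings between usual strands and, secondarily, the number of those crossings that are negative, ordered lexicographically. Given a negative crossing, the skein relation \eqref{eq:skein} rewrites it as the corresponding positive crossing — same crossing number but one fewer negative crossing — together with an identity term and a cup--cap ($E$) term, both of which have strictly fewer crossings. Iterating makes every crossing positive, and the cup--cap terms that appear acquire additional cups and caps, hence underlying matchings with fewer through strands. Reidemeister-I curls are then absorbed into scalars $\avar^{\pm 1}$ via \eqref{eq:framing-rel}, while the remaining Reidemeister-II and III moves are genuine isotopies that place the through strands in positive-braid form and arrange the cups and caps in the canonical underneath, left-to-right order demanded by the definition of a positive lift. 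Since we need only a spanning set and not a basis, no uniqueness or linear independence statement is required.

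The main obstacle I anticipate is the interaction between crossing resolution and the winding of usual strands around the cores: because of the framing convention, unknotting a strand from around a core is not free but produces framing factors, as \eqref{eq:framingblobs} illustrates. I would need to verify that this subtlety is consistent — that the winding class recorded by the underlying genus-$g$ perfect matching is preserved by the skein and framing relations (which only involve crossings among usual strands), and that the positive-lift normal form can always be reached without altering that winding. Confirming termination of the reduction, and that every term it produces again lies among positive lifts of perfect matchings of genus $g$, is the remaining, essentially combinatorial, point.
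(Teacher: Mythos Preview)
Your approach --- reduce via the skein relation \eqref{eq:skein}, the framing relation \eqref{eq:framing-rel}, and isotopy --- is exactly the paper's, whose entire proof reads ``Clear by \eqref{eq:skein} and \eqref{eq:framing-rel}, and isotopies.'' Your worry about core-strand winding is not an obstacle: those two relations involve only usual strands and hence preserve the winding data throughout.

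There is, however, a gap in your explicit induction. After making every crossing positive you claim that isotopy alone reaches positive-lift form, but this is false. Take $\beta_{i}^{2}$: it has only positive crossings and its underlying matching is the identity, yet it is not isotopic to the identity tangle (nor to any other positive lift, since a positive lift carries exactly the crossings forced by the matching). One must apply the skein relation \emph{again} --- rewriting one factor as $\beta_{i}=\beta_{i}^{-1}+(\qvar-\qvar^{-1})(\text{identity}-\text{cup-cap})$, cancelling $\beta_{i}\beta_{i}^{-1}$, and then using \eqref{eq:framing-rel} on the $\beta_{i}\cdot(\text{cup-cap})$ term --- to land in the span of positive lifts. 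So the two-phase scheme ``first positivise, then isotope'' does not terminate in the right place; the skein and framing reductions must be interleaved with isotopy, and the complexity measure has to track crossings in excess of those required by the canonical positive lift of the current underlying matching, not merely the number of negative crossings.
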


\begin{proof}
Clear by \eqref{eq:skein} and \eqref{eq:framing-rel}, and isotopies.
\end{proof}

The dimension bound in the next definition 
comes from the classical Brauer/BMW algebra. 
That is, the number $(2n-1)!!$ 
in the definition of weakly admissible parameters up next
is the dimension of the classical BMW and Brauer algebras.

\begin{definition}\label{definition:handlebody-bmw-second}
We call parameters $(\cpar,\qvar,\avar)$ such that the $\KK$-spanning set 
in \fullref{lemma:bmw-span} is a $\KK$-basis
\emph{admissible}. We call parameters \emph{weakly admissible} 
if $\dim_{\KK}\setstuff{BMW}_{g,n}(\cvar,\qvar,\avar)\geq (2n-1)!!$.
\end{definition}

To state the analog of \fullref{lemma:handlebody-tl-second} denote by $\setstuff{BMW}_{G+n}(\cvar,\qvar,\avar)
=\setstuff{BMW}_{0,G+n}(\cvar,\qvar,\avar)$ 
the classical BMW algebra in $G+n$ strands. 

\begin{lemma}\label{lemma:handlebody-bmw-second}
Let $\KK$ be a field.
For any choice of $\qvar\in\KK$ and $G\geq g$ such that 
$[k]_{\qvar}\neq 0$ 
for all $1\ll k$ there exists 
a triple $(\cpar^{G},\qvar^{G},\avar^{G})$ and 
an algebra homomorphisms 
(explicitly given in the proof below)
\begin{gather*}
\iota^{G}\colon\setstuff{BMW}_{g,n}(\cpar^{G},\qvar^{G},\avar^{G})
\to\setstuff{BMW}_{G+n}(\tilde{\cvar},\tilde{\qvar},\tilde{\avar}).
\end{gather*}
Moreover, the parameters $(\cpar^{G},\qvar^{G},\avar^{G})$ are 
weakly admissible.
\end{lemma}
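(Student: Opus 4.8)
The plan is to follow the proof of \fullref{lemma:handlebody-tl-second} almost verbatim, with the Temperley--Lieb algebra and its Jones--Wenzl idempotent replaced by the classical BMW algebra and a BMW analog of that idempotent. First I would realize the target $\setstuff{BMW}_{G+n}(\tilde{\cvar},\tilde{\qvar},\tilde{\avar})$ through (orthogonal or symplectic) quantum Schur--Weyl duality as acting on $V^{\hcirc(G+n)}$, where $V$ is the vector representation of the relevant quantum group; here $\tilde{\qvar}=\qvar$, the framing scalar $\tilde{\avar}$ is the twist eigenvalue on $V$, and $\tilde{\cvar}=1+(\tilde{\avar}-\tilde{\avar}^{-1})/(\tilde{\qvar}-\tilde{\qvar}^{-1})$ is the induced loop value. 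The running hypothesis on the quantum integers $[k]_{\qvar}$ is exactly what is needed for the idempotent constructed next to exist.

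Next I would produce the BMW Jones--Wenzl idempotent $\obstuff{e}_{G}\in\setstuff{BMW}_{G}(\tilde{\cvar},\tilde{\qvar},\tilde{\avar})$ projecting $V^{\hcirc G}$ onto its multiplicity-one traceless symmetric summand $V_{G\omega_{1}}$. As $V_{G\omega_{1}}$ is irreducible, the corner $\obstuff{e}_{G}\setstuff{BMW}_{G}(\tilde{\cvar},\tilde{\qvar},\tilde{\avar})\obstuff{e}_{G}\cong\KK\{\obstuff{e}_{G}\}$ is one-dimensional, so every circle $\gamma$ occurring in the truncated algebra evaluates to a well-defined scalar $\cvar_{\gamma}^{G}$. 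As in \eqref{eq:flanking}, I would then fix a partition $G=M_{1}+\dots+M_{g}$ into positive parts, replace the $u$th core by the sub-block of $M_{u}$ strands it labels, and flank the entire $G$-strand cable on top and bottom with $\obstuff{e}_{G}$. Since usual strands are sent to usual strands I would set $\qvar^{G}=\qvar$ and $\avar^{G}=\tilde{\avar}$, while the idempotent forces the loop values giving $\cpar^{G}=(\cvar_{\gamma}^{G})_{\gamma}$; this assignment defines
\[
\iota^{G}\colon\setstuff{BMW}_{g,n}(\cpar^{G},\qvar^{G},\avar^{G})
\to(\obstuff{e}_{G}\hcirc\idmor_{n})
\setstuff{BMW}_{G+n}(\tilde{\cvar},\tilde{\qvar},\tilde{\avar})
(\obstuff{e}_{G}\hcirc\idmor_{n})
\hookrightarrow\setstuff{BMW}_{G+n}(\tilde{\cvar},\tilde{\qvar},\tilde{\avar}).
\]

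To check that $\iota^{G}$ is an algebra homomorphism I would verify that the defining relations \eqref{eq:skein} and \eqref{eq:framing-rel} are preserved. Among usual strands this is automatic since the target is a BMW algebra with the same $\tilde{\qvar}$ and $\tilde{\avar}$; for strands interacting with a cabled, idempotent-flanked core it holds because $\obstuff{e}_{G}$ absorbs the skein and framing manipulations and closed components reduce to the scalars $\cvar_{\gamma}^{G}$. Exactly as in the Temperley--Lieb case, the only genuinely non-formal point is coherence of circle evaluation, that is, that $\cvar_{\gamma}^{G}$ depends only on the isotopy class of $\gamma$ and not on its presentation; this follows from one-dimensionality of $\obstuff{e}_{G}\setstuff{BMW}_{G}\obstuff{e}_{G}$.

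Finally, for weak admissibility I would pull back the Schur--Weyl action along $\iota^{G}$ to a representation of $\setstuff{BMW}_{g,n}(\cpar^{G},\qvar^{G},\avar^{G})$ on $V_{M_{1}\omega_{1}}\hcirc\dots\hcirc V_{M_{g}\omega_{1}}\hcirc V^{\hcirc n}$. The classical subalgebra $\setstuff{BMW}_{0,n}=\setstuff{BMW}_{n}\hookrightarrow\setstuff{BMW}_{g,n}$ then acts through genus-$0$ BMW Schur--Weyl duality, which is faithful for the chosen quantum group; hence the $(2n-1)!!$ positive lifts of perfect matchings of genus $0$ remain linearly independent and $\dim_{\KK}\setstuff{BMW}_{g,n}(\cpar^{G},\qvar^{G},\avar^{G})\geq(2n-1)!!$. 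The main obstacle is the construction and control of $\obstuff{e}_{G}$: unlike the Temperley--Lieb Jones--Wenzl idempotent, one must produce the BMW projector onto $V_{G\omega_{1}}$ with a one-dimensional corner, check that the induced circle evaluations $\cvar_{\gamma}^{G}$ are coherent, and confirm that the resulting triple $(\cpar^{G},\qvar^{G},\avar^{G})$ is a legitimate parameter set.
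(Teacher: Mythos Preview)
Your proposal is correct and follows essentially the same route as the paper: replace the Temperley--Lieb/Jones--Wenzl setup of \fullref{lemma:handlebody-tl-second} by the classical BMW algebra and the pullback of the highest weight projector onto $V_{G\omega_{1}}$ in $V^{\hcirc G}$ via Schur--Weyl--Brauer duality for quantum groups of types $BCD$, then flank the cabled cores with this idempotent and read off the circle scalars. The paper's proof differs only in that it records the concrete parameter choices $(\tilde{\cvar},\tilde{\avar})$ and the explicit rank bounds on $m$ in each Dynkin type needed to guarantee existence of the projector, whereas you leave these as conditions to be verified.
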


If $\qvar$ is not a root of unity, then there is 
always a corresponding triple $(\cpar^{G},\qvar^{G},\avar^{G})$.

\begin{proof}
Denote by $(\tilde{\cpar},\tilde{\qvar},\tilde{\avar})$ 
a choice of parameters such that Schur--Weyl--Brauer duality 
(for this duality 
we refer the reader to, for example, \cite[Theorem 3.17]{AnStTu-semisimple-tilting} 
and \cite[Theorem 1.3]{Hu-bmw-typec})
gives a well-defined algebra homomorphism from 
$\setstuff{BMW}_{G+n}(\tilde{\cvar},\qvar,\tilde{\avar})$
into the endomorphism space of the ($G+n$)-fold 
tensor product of the vector representation of an associated quantum 
group of types $BCD$, called tensor space. 
The proof is then essentially the same as the one in 
\fullref{lemma:handlebody-tl-second}, using 
Schur--Weyl--Brauer duality instead of 
(a special case of) Schur--Weyl duality. Precisely, 
we define $\iota^{G}$ using the very same pictures as in \eqref{eq:flanking}, 
but the boxes represent 
the pullbacks of the highest weight projectors 
(splitting off the highest weight summands) in tensor space. 
Under the appropriate 
assumptions on the involved quantum numbers these 
projectors exist. Explicitly:
\begin{enumerate}[label=$\bullet$]

\item In types $B$ and $D$ one can take {\eg} $\tilde{\cvar}=[2m+1]_{\qvar}$
and $\tilde{\avar}=\qvar^{2m+1}$ and the quantum group 
for $\mathfrak{so}_{2m+1}$ to get a well-defined 
$\iota^{G}$. To ensure the existence of the idempotent 
one takes $m>\tfrac{G-1}{2}$.

\item In type $C$ to define $\iota^{G}$ one 
can take {\eg} $\tilde{\cvar}=-[2m]_{\qvar}$
and $\tilde{\avar}=-\qvar^{-2m}$ and the quantum group 
for $\mathfrak{sp}_{2m}$, and also $m>G$ to ensure the existence of the idempotent.

\item In type $D$ choices that work are {\eg} $\tilde{\cvar}=[2m]_{\qvar}$,
$\tilde{\avar}=\qvar^{2m}$ and the quantum group 
for $\mathfrak{so}_{2m}$, as well as $m>G+1$.

\end{enumerate}
To see this we can use the explicit bounds given in 
{\eg} \cite[Theorem 3.17]{AnStTu-semisimple-tilting}.
\end{proof}

\begin{remark}
The idempotents used in the proof of 
\fullref{lemma:handlebody-bmw-second} do not satisfy 
an easy recursion as the Jones--Wenzl projectors used in 
the proof of \fullref{lemma:handlebody-tl-second}.
See however \cite{IsMoOg-bmw-idempotents} or \cite{LeZh-brauer-invariant-theory} 
for some work on projectors in Brauer respectively BMW algebras.
\end{remark}

As before for the topological handlebody Temperley--Lieb algebra, we 
do not know any explicit way to construct admissible parameters.
However, under the assumption that these exists we 
conclude this section as follows.

Note that our definition realizes $\setstuff{Br}_{g,n}(\cpar)$ 
as a specialization of $\setstuff{BMW}_{g,n}(\cpar,\qvar,\avar)$, which 
is not equal to a definition using perfect matchings. So the following
needs admissible parameters but is then immediate from 
\autoref{definition:handlebody-bmw-second}:

\begin{proposition}\label{proposition:brauer-good}
For admissible parameters the handlebody 
Brauer algebra $\setstuff{Br}_{g,n}(\cpar)$ can be alternatively 
described as the free $\KK$-module spanned by 
perfect matchings of $2n$ points of genus $g$ with multiplication 
given by concatenation and circle evaluation.\qed
\end{proposition}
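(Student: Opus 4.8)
The plan is to read this off from admissibility by specializing the defining relations to the Brauer point $\qvar=\avar=1$. First I would recall from \fullref{lemma:bmw-span} that positive lifts of perfect matchings of $2n$ points of genus $g$ always span $\setstuff{BMW}_{g,n}(\cpar,\qvar,\avar)$, hence also span its Brauer specialization $\setstuff{Br}_{g,n}(\cpar)$; admissibility in the sense of \fullref{definition:handlebody-bmw-second} is precisely the statement that this spanning set is a $\KK$-basis. So the only content is to identify that basis, together with the induced multiplication, with the combinatorial description in the statement.

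The key observation is that at $\qvar=\avar=1$ both relations collapse. The right-hand side of the skein relation \eqref{eq:skein} carries the factor $\qvar-\qvar^{-1}=0$, so positive and negative crossings of two usual strands become equal, and the framing relations \eqref{eq:framing-rel} become trivial once $\avar=1$. Consequently the positive lift \eqref{eq:positive-braid} of a perfect matching no longer depends on the chosen lift, so the assignment sending a perfect matching to its (now unique) positive lift is a well-defined bijection onto the admissible basis. I would stress that the skein relation only identifies crossings of two usual strands, so winding around the cores is genuine topological data that survives the specialization; this is exactly why the indexing set is perfect matchings of genus $g$ rather than of genus $0$.

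It then remains to match the two multiplications. Concatenating two perfect matchings produces a handlebody tangle diagram that may contain closed components; these evaluate by circle evaluation, and the remaining usual-usual crossings are now irrelevant by the collapse above. Hence the product of two basis elements is the circle-evaluation scalar times the positive lift of the perfect matching obtained by composing the two matchings and recording the resulting winding, which is precisely the concatenation-and-circle-evaluation rule on the free module of perfect matchings. The only point to verify with any care is that this composition of winding data is the correct one and that no leftover framing or skein corrections appear; both are routine once $\qvar=\avar=1$ has trivialized the correction terms. Thus the main obstacle is not in any single calculation but in having set up admissibility so that linear independence is available --- with that in hand the identification is immediate.
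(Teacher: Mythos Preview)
Your proposal is correct and takes essentially the same approach as the paper: the paper simply notes that the claim is immediate from \fullref{definition:handlebody-bmw-second} (admissibility), and your argument spells out precisely why---namely that at $\qvar=\avar=1$ the skein and framing relations collapse, so positive lifts are well-defined independent of choices and the admissible basis is identified with perfect matchings. The paper gives no further proof beyond the \qedsymbol, so your elaboration is a faithful expansion of what the authors deemed immediate.
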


For the cellular structure we fix a sandwich cell datum as in 
\fullref{subsection:brauer}, and also with a very similar 
cellular basis. Let us for brevity just stress the differences.
First, we only consider positive lifts and we let 
$\setstuff{B}_{g,\lambda}^{+}$ denote the handlebody braid monoid, {\ie}
\fullref{definition:handlebody-braidgroup} 
using only positive coils and positive crossings \eqref{eq:positive-braid} 
for the definition.

Note that there is a map 
$\setstuff{B}_{g,\lambda}^{+}\to\setstuff{BMW}_{g,n}(\cpar,\qvar,\avar)$ 
that sends positive coils to 
positive coils and positive crossings to positive crossings.
We let $\sand=
\KK\setstuff{B}_{g,\lambda}^{+}$ with the monoid basis as the sandwiched basis.
We then get
\begin{gather}\label{eq:bmw-basis}
\{c_{D,b,U}^{\lambda}\mid\lambda\in\Lambda,D,U\in M_{\lambda},
b\in\sand\}
\end{gather}
as before, with the $D$ and $U$ part being as for the 
handlebody Temperley--Lieb algebra, {\cf} \eqref{eq:tl-basis-illustration}, 
namely only caps respectively caps are allowed to wind around the cores.
The picture is
\begin{gather*}
\scalebox{0.85}{$\begin{tikzpicture}[anchorbase,scale=1]
\draw[usual,crossline] (3,-4) to[out=90,in=270] (3,-1.8);
\draw[pole,crosspole] (-0.5,-0.5) to[out=270,in=90] (-0.5,-2);
\draw[usual,crossline] (-0.25,-1.5) to[out=270,in=180] (0,-1.75) 
to[out=0,in=270] (3.5,-0.5);
\draw[pole,crosspole] (0,-0.5) to[out=270,in=90] (0,-2);
\draw[pole,crosspole] (0.5,-0.5) to[out=270,in=90] (0.5,-2);
\draw[usual,crossline] (2,-0.5) to[out=270,in=0] (0,-1.25) 
to[out=180,in=90] (-0.25,-1.5);
\draw[usual,crossline] (1,-0.5) to[out=270,in=180] (1.25,-0.75) 
to[out=0,in=270] (1.5,-0.5);
\draw[usual,crossline] (1,-3.25) to[out=0,in=90] (2.5,-4);
\draw[usual,crossline] (2.5,-0.5) to[out=270,in=90] (2,-3.2);
\draw[usual,crossline] (2,-4) to[out=90,in=270] (1.5,-3.25) to[out=90,in=270] (0.25,-2.75);
\draw[pole,crosspole] (0.5,-3.1) to[out=90,in=270] (0.5,-2);
\draw[usual,crossline] (1,-4) to[out=90,in=270] (-0.25,-3.5);
\draw[usual,crossline] (0.25,-2.75) to[out=90,in=180] (1.5,-2.5) 
to[out=0,in=270] (4,-0.5);
\draw[pole,crosspole] (-0.5,-4) to[out=90,in=270] (-0.5,-2);
\draw[pole,crosspole] (0,-4) to[out=90,in=270] (0,-2);
\draw[pole,crosspole] (0.5,-4) to[out=90,in=270] (0.5,-3.1);
\draw[usual,crossline] (-0.25,-3.5) to[out=90,in=180] (0,-3.25) to (1,-3.25);
\draw[usual,crossline] (1.5,-4) to[out=90,in=270] (2,-3.25) to (2,-3.2);
\draw[usual,crossline] (3.5,-4) to[out=90,in=180] (3.75,-3.75) 
to[out=0,in=90] (4,-4);
\draw[usual,crossline] (3,-1.8) to[out=90,in=270] (3,-0.5);
\end{tikzpicture}$}
,\quad
\begin{aligned}
\begin{tikzpicture}[anchorbase,scale=1]
\draw[mor] (0,1) to (0.25,0.5) to (0.75,0.5) to (1,1) to (0,1);
\node at (0.5,0.75){$U$};
\end{tikzpicture}
&=
\begin{tikzpicture}[anchorbase,scale=0.7,tinynodes]
\draw[pole,crosspole] (-0.5,0) to[out=270,in=90] (-0.5,-1.5);
\draw[usual,crossline] (-0.25,-1) to[out=270,in=180] (0,-1.25) 
to[out=0,in=270] (3.5,0);
\draw[pole,crosspole] (0,0) to[out=270,in=90] (0,-1.5);
\draw[pole,crosspole] (0.5,0) to[out=270,in=90] (0.5,-1.5);
\draw[usual,crossline] (2,0) to[out=270,in=0] (0,-0.75) 
to[out=180,in=90] (-0.25,-1);
\draw[usual,crossline] (1,0) to[out=270,in=180] (1.25,-0.25) 
to[out=0,in=270] (1.5,0);
\draw[usual,crossline] (2.5,0) to[out=270,in=90] (2.5,-1.5);
\draw[usual,crossline] (3,0) to[out=270,in=90] (3,-1.5);
\draw[usual,crossline] (4,0) to[out=270,in=90] (4,-1.5);
\end{tikzpicture}
,\\
\begin{tikzpicture}[anchorbase,scale=1]
\draw[mor] (0.25,0) to (0.25,0.5) to (0.75,0.5) to (0.75,0) to (0.25,0);
\node at (0.5,0.25){$b$};
\node at (0.8,0.25){\phantom{$b$}};
\end{tikzpicture}
&=
\begin{tikzpicture}[anchorbase,scale=0.7,tinynodes]
\draw[usual,crossline] (1.5,0) to[out=90,in=270] (1,1.5);
\draw[usual,crossline] (2,0) to[out=90,in=270] (1.5,1.5);
\draw[pole,crosspole] (-0.5,0) to[out=90,in=270] (-0.5,1.5);
\draw[pole,crosspole] (0,0) to[out=90,in=270] (0,1.5);
\draw[usual,crossline] (1,0) to[out=90,in=270] (0.25,0.75);
\draw[pole,crosspole] (0.5,0) to[out=90,in=270] (0.5,1.5);
\draw[usual,crossline] (0.25,0.75) to[out=90,in=270] (2,1.5);
\end{tikzpicture}
,
\\
\begin{tikzpicture}[anchorbase,scale=1]
\draw[mor] (0,-0.5) to (0.25,0) to (0.75,0) to (1,-0.5) to (0,-0.5);
\node at (0.5,-0.25){$D$};
\end{tikzpicture}
&=
\begin{tikzpicture}[anchorbase,scale=0.7,tinynodes]
\draw[pole,crosspole] (-0.5,0) to[out=90,in=270] (-0.5,1.5);
\draw[usual,crossline] (1,0) to[out=90,in=270] (-0.25,0.5);
\draw[pole,crosspole] (0,0) to[out=90,in=270] (0,1.5);
\draw[pole,crosspole] (0.5,0) to[out=90,in=270] (0.5,1.5);
\draw[usual,crossline] (-0.25,0.5) to[out=90,in=180] (0,0.75) 
to[out=0,in=90] (2.5,0);
\draw[usual,crossline] (2,0) to[out=90,in=270] (1.5,0.75) to (1.5,1.5);
\draw[usual,crossline] (1.5,0) to[out=90,in=270] (2,0.75) to (2,1.5);
\draw[usual,crossline] (3,0) to[out=90,in=270] (3,1.5);
\draw[usual,crossline] (3.5,0) to[out=90,in=180] (3.75,0.25) 
to[out=0,in=90] (4,0);
\end{tikzpicture}
.
\end{aligned}
\end{gather*}
The antiinvolution $(\placeholder)^{\star}\colon\setstuff{BMW}_{g,n}(\cpar,\qvar,\avar)
\to\setstuff{BMW}_{g,n}(\cpar,\qvar,\avar)$ 
is defined as for $\setstuff{TL}_{g,n}(\cpar)$ 
with the addition that it maps positive crossings to 
positive crossings.

\begin{proposition}\label{proposition:bmw-basis}
For any admissible parameters, the above defines an involutive sandwich
cell datum for the algebra $\setstuff{BMW}_{g,n}(\cpar,\qvar,\avar)$.
\end{proposition}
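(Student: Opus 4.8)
The plan is to verify, one by one, the ingredients of a sandwich cell datum from \fullref{definition:cellular} together with the involutive condition of \fullref{definition:cellular-involution}, closely following the Brauer computation around \fullref{example:brauer} and the handlebody Temperley--Lieb case in \fullref{proposition:tl}. The poset is $\Lambda=\big(\{n,n-2,\dots\},\leq_{\N}\big)$ graded by the number of through strands, the sandwich algebra is $\sand=\KK\setstuff{B}_{g,\lambda}^{+}$ with its monoid basis, and $M_{\lambda}=N_{\lambda}$ is the set of positive-lift cap half-diagrams on $\lambda$ through strands whose caps may wind around the cores. The first and cheapest step is the basis property: by \fullref{lemma:bmw-span} the positive lifts of perfect matchings span $\setstuff{BMW}_{g,n}(\cpar,\qvar,\avar)$, and \emph{admissibility} is by definition the statement that this spanning set is a $\KK$-basis, so the reindexed family \eqref{eq:bmw-basis} is a basis once I check that cutting a positive lift into a bottom cap-diagram $D$, a middle positive braid $b\in\setstuff{B}_{g,\lambda}^{+}$, and a top cup-diagram $U$ is unique. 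Uniqueness here is the same bookkeeping as in the Temperley--Lieb case: the through strands and their endpoints are determined by the underlying matching, the windings of the caps and cups are read off from the left, and the positive-lift convention pins down $b$ with no residual ambiguity.

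Next I would verify the multiplication axiom \eqref{eq:cell-mult}, in the precise diagrammatic form \eqref{eq:cell-action}. The key structural fact, exactly as for the classical Brauer algebra and for $\setstuff{TL}_{g,n}(\cpar)$, is that concatenation can only create cups and caps, never remove them, so the number of through strands is non-increasing and the product is filtered by $\Lambda$. Multiplying $c_{D',b',U'}^{\lambda'}$ against $c_{D,b,U}^{\lambda}$ glues the cups of $U'$ onto the caps of $D$; modulo $\setstuff{BMW}_{g,n}(\cpar,\qvar,\avar)^{<_{\Lambda}\lambda}$ this either lowers the through-strand count (a lower term) or leaves the top $U$ untouched while replacing the bottom by $D'$ and the middle by the monoid product $b'b\in\setstuff{B}_{g,\lambda}^{+}$, scaled by a single factor $r(U',D)$ coming from the closed circles created in the gluing, evaluated via circle evaluation and the skein and framing relations \eqref{eq:skein}--\eqref{eq:framing-rel}. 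This factor depends only on $U'$ and $D$, not on $b$ or $U$, which is exactly the content of \eqref{eq:cell-action}; the right-handed statement is symmetric. Part (b) of \fullref{definition:cellular} is then immediate from the tensor-product shape $M_{\lambda}\hcirc_{\KK}\sand\hcirc_{\KK}N_{\lambda}$ of the basis, as in \eqref{eq:hj-cells}.

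For the involutive condition I would check that $(\placeholder)^{\star}$, which mirrors a diagram top-to-bottom and sends positive crossings to positive crossings, restricts to an antiautomorphism of $\setstuff{B}_{g,\lambda}^{+}$ permuting its monoid basis, and that under mirroring a positive lift with data $(D,b,U)$ becomes the positive lift with data $(U,b^{\star},D)$; this is precisely \eqref{eq:antiinvolution}, and here $M_{\lambda}=N_{\lambda}$ because any winding allowed for $D$ is equally allowed for $U$. The main obstacle, and the only place where care beyond ``copy the Brauer proof'' is needed, is the interaction of the positive-lift normal form with the relations: I must ensure that when two normal-form diagrams are concatenated, the skein relation \eqref{eq:skein}, the framing relation \eqref{eq:framing-rel}, the subtle framing behaviour around a core in \eqref{eq:framingblobs}, and circle evaluation can always be applied to rewrite the product back into the spanning set of \fullref{lemma:bmw-span} \emph{without} disturbing the through-strand filtration or introducing $U$- or $b$-dependence into the scalars $r(U',D)$. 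Once admissibility guarantees that these rewrites land in a genuine basis rather than merely a spanning set, no new linear relations can appear, and the cell axioms hold verbatim as in \fullref{proposition:tl}.
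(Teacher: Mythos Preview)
Your proposal is correct and takes essentially the same approach as the paper: the paper's proof is a single sentence observing that every axiom is immediate by construction except the basis property of \eqref{eq:bmw-basis}, which holds by the very definition of admissible parameters (\fullref{definition:handlebody-bmw-second}). You have simply unpacked what ``immediate by construction'' means, spelling out the cell-multiplication axiom via the through-strand filtration and the involutive condition via the mirroring map, exactly as in \fullref{proposition:tl} and the Brauer case.
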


\begin{proof}
The only claim which is not immediate by construction is 
that \eqref{eq:bmw-basis} is a $\KK$-basis, which
follows from \fullref{definition:handlebody-bmw-second}.
\end{proof}

Not surprisingly, the cell 
structure is, {\muta}, as in \fullref{example:brauer}.
The next theorem is 
clear by the previous discussions.

\begin{theorem}\label{theorem:bmw}
Let $\KK$ be a field, and choose admissible parameters.
\begin{enumerate}

\item If $\cvar\neq 0$, or $\cvar=0$ and $\lambda\neq 0$ is odd, 
then all $\lambda\in\Lambda$ are apexes. In the remaining case, 
$\cvar=0$ and $\lambda=0$ (this only happens if $n$ is even), all $\lambda\in\Lambda-\{0\}$ are apexes, but $\lambda=0$ is not an apex.

\item The simple $\setstuff{BMW}_{g,n}(\cpar,\qvar,\avar)$-modules of 
apex $\lambda\in\Lambda$ 
are parameterized by simple modules of $\KK\setstuff{B}_{g,\lambda}^{+}$.

\item The simple $\setstuff{BMW}_{g,n}(\cpar,\qvar,\avar)$-modules of 
apex $\lambda\in\Lambda$ can be constructed as 
the simple heads of
$\mathrm{Ind}_{\KK\setstuff{B}_{g,\lambda}^{+}}^{\setstuff{BMW}_{g,n}(\cpar,\qvar,\avar)}(K)$, 
where $K$ runs over (equivalence classes of) 
simple $\KK\setstuff{B}_{g,\lambda}^{+}$-modules.

\end{enumerate}
\end{theorem}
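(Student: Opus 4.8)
The plan is to deduce all three parts directly from \fullref{theorem:classification}, exactly as was done for the Brauer algebra in \fullref{theorem:brauer}. The sandwich cell datum is already in place by \fullref{proposition:bmw-basis} (for admissible parameters), with sandwiched algebra $\sand = \KK\setstuff{B}_{g,\lambda}^{+}$ and the monoid basis as $\sandbasis$. So the only genuine work is to check the hypotheses of \fullref{lemma:h-subalgebra} and then to carry out the apex analysis.

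First I would verify that we are in the situation of \fullref{lemma:h-subalgebra}. Concretely, I would stack a top half-diagram $U^{\prime}$ onto a bottom half-diagram $D$ of the same $\lambda$ and read off the product $c_{D^{\prime},b^{\prime},U^{\prime}}^{\lambda}c_{D,b,U}^{\lambda}$. Because $U^{\prime}$ and $D$ consist only of cups and caps (possibly winding around cores) together with the ordered positive braid in the middle, their concatenation either reduces the number of through strands, landing in $\setstuff{BMW}_{g,n}(\cpar,\qvar,\avar)^{<_{\Lambda}\lambda}$, or matches up all through strands, in which case one obtains a scalar $r(U^{\prime},D)$ times a new basis element $c_{D,b^{\prime}Fb,U^{\prime}}^{\lambda}$. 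The scalar $r(U^{\prime},D)$ is a product of circle evaluations $\cvar_{\gamma}$ together with framing factors $\avar^{\pm 1}$ coming from \eqref{eq:framing-rel}, while $F\in\sand$ records the residual winding of the matched-up through strands. This is precisely the shape required by \fullref{lemma:h-subalgebra}, and when $r(U^{\prime},D)$ is invertible the lemma yields $\mathcal{H}_{\lambda,D,U}\cong\KK\setstuff{B}_{g,\lambda}^{+}$.

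With \fullref{lemma:h-subalgebra} available, parts (b) and (c) of the theorem are immediate from \fullref{theorem:classification}(c): since $\KK\setstuff{B}_{g,\lambda}^{+}$ need not be unital and Artinian, part (b) of \fullref{theorem:classification} does not apply, but part (c) does, and under the isomorphism $\mathcal{H}_{\lambda,D,U}\cong\KK\setstuff{B}_{g,\lambda}^{+}$ it gives both the parametrization of simples and the realization as heads of $\mathrm{Ind}_{\KK\setstuff{B}_{g,\lambda}^{+}}^{\setstuff{BMW}_{g,n}(\cpar,\qvar,\avar)}(K)$. For part (a) I would invoke \fullref{theorem:classification}(a): $\lambda$ is an apex if and only if $r(U,D)\neq 0$ for some $D,U\in M_{\lambda}$. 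For $\lambda\neq 0$ one straightens the matched cups against caps to make the pairing equal to $1$ up to an invertible framing factor, the same cup-cap straightening used in \fullref{theorem:brauer}; since $\avar$ is invertible this is harmless. For $\lambda=0$ (which occurs only when $n$ is even) every pairing closes into non-essential circles, contributing a power of $\cvar_{1}=\cvar$, so $\lambda=0$ is an apex precisely when $\cvar\neq 0$.

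The main obstacle is not conceptual but bookkeeping: one must confirm that the framing factors $\avar^{\pm 1}$ and the circle evaluations produced when stacking half-diagrams always combine into a single invertible scalar, so that the hypothesis of \fullref{lemma:h-subalgebra} genuinely holds and the $\mathcal{H}$-cell is the full monoid algebra rather than a proper quotient. The invertibility of $\qvar$ and $\avar$, together with admissibility of $(\cpar,\qvar,\avar)$ (which guarantees that \eqref{eq:bmw-basis} is an honest $\KK$-basis), is exactly what makes this succeed; admissibility itself is assumed throughout and, as flagged in \fullref{subsection:BMW}, remains the genuinely open input.
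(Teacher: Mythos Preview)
Your proposal is correct and follows essentially the same approach as the paper: the paper's proof is the single line ``No difference to the Brauer case in \fullref{theorem:brauer},'' and you have simply unpacked what that means, namely verifying the hypothesis of \fullref{lemma:h-subalgebra} and then invoking \fullref{theorem:classification}(a) and (c) together with the cup-cap straightening trick for the apex analysis. If anything you are more explicit than the paper about the role of the framing factors $\avar^{\pm 1}$ and the admissibility assumption, which is fine.
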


\begin{proof}
No difference to the Brauer case in \fullref{theorem:brauer}.
\end{proof}

\begin{example}
Let us comment on the parametrization given by 
\fullref{theorem:bmw}:
\begin{enumerate}

\item For $g=0$ the algebras $\KK\setstuff{B}_{0,\lambda}^{+}$ 
are Hecke algebras associated to Coxeter type 
$A_{\lambda-1}$, so we get the same classification 
as in \fullref{theorem:brauer}, but for the BMW algebra. 
This was of course known, see {\eg} \cite[Corollary 3.14]{Xi-bmw}.

\item For $g=1$ the algebras $\KK\setstuff{B}_{1,\lambda}^{+}$
are extended affine Hecke algebras associated to Coxeter type 
$A_{\lambda-1}$.

\end{enumerate}
\end{example}

\subsection{Cyclotomic handlebody Brauer and BMW algebras}\label{subsection:cbrauer}

Recall the notion of blobbed presentations from 
\fullref{subsection:handlebody-blob-topology}, and retain 
the notation from that section.
One difference to that section is that 
here we identify blobbed diagrams with a subalgebra 
of $\setstuff{BMW}_{g,n}(\cpar,\qvar,\avar)$ 
spanned by all elements containing only positive coils.

We have the following analog of \fullref{lemma:blob-rels}. The proof 
is the same as that of \fullref{lemma:blob-rels} and omitted.

\begin{lemmaqed}\label{lemma:blob-bmw-rels}
Blobs satisfy the following relations. First, 
\eqref{eq:blobslides}, \eqref{eq:dots-capscups} 
and \eqref{eq:height-blob-switch},
and also
\begin{gather}\label{eq:bmw-more-slides}
\begin{aligned}
\begin{tikzpicture}[anchorbase,scale=0.7,tinynodes]
\draw[usual,crossline] (0.5,0) to[out=90,in=270] (0,0.75);
\draw[usual,crossline,rblobbed={0.85}{u}{spinach}] (0,0) to[out=90,in=270] (0.5,0.75);
\end{tikzpicture}
\hspace{-0.2cm}&=\hspace{-0.2cm}
\begin{tikzpicture}[anchorbase,scale=0.7,tinynodes]
\draw[usual,crossline] (0.5,0) to[out=90,in=270] (0,0.75);
\draw[usual,crossline,blobbed={0.15}{u}{spinach}] (0,0) to[out=90,in=270] (0.5,0.75);
\end{tikzpicture}
+(\qvar-\qvar^{-1})\cdot
\left(
\begin{tikzpicture}[anchorbase,scale=0.7,tinynodes]
\draw[usual,crossline] (0,0) to (0,0.75);
\draw[usual,crossline,rblobbed={0.5}{u}{spinach}] (0.5,0) to (0.5,0.75);
\end{tikzpicture}
-
\begin{tikzpicture}[anchorbase,scale=0.7,tinynodes]
\draw[usual,crossline] (0,0) to[out=90,in=180] (0.25,0.25) 
to[out=0,in=90] (0.5,0);
\draw[usual,crossline,rblobbed={0.75}{u}{spinach}] (0,0.75) 
to[out=270,in=180] (0.25,0.5) to[out=0,in=270] (0.5,0.75);
\end{tikzpicture}
\hspace{-0.2cm}\right)
,\\
\begin{tikzpicture}[anchorbase,scale=0.7,tinynodes]
\draw[usual,crossline,rblobbed={0.12}{u}{spinach}] (0.5,0) to[out=90,in=270] (0,0.75);
\draw[usual,crossline] (0,0) to[out=90,in=270] (0.5,0.75);
\end{tikzpicture}
\hspace{-0.2cm}&=\hspace{-0.2cm}
\begin{tikzpicture}[anchorbase,scale=0.7,tinynodes]
\draw[usual,crossline,blobbed={0.87}{u}{spinach}] (0.5,0) to[out=90,in=270] (0,0.75);
\draw[usual,crossline] (0,0) to[out=90,in=270] (0.5,0.75);
\end{tikzpicture}
+(\qvar-\qvar^{-1})\cdot
\left(
\begin{tikzpicture}[anchorbase,scale=0.7,tinynodes]
\draw[usual,crossline] (0,0) to (0,0.75);
\draw[usual,crossline,rblobbed={0.5}{u}{spinach}] (0.5,0) to (0.5,0.75);
\end{tikzpicture}
-
\begin{tikzpicture}[anchorbase,scale=0.7,tinynodes]
\draw[usual,crossline,rblobbed={0.75}{u}{spinach}] (0,0) 
to[out=90,in=180] (0.25,0.25) to[out=0,in=90] (0.5,0);
\draw[usual,crossline] (0,0.75) to[out=270,in=180] (0.25,0.5) 
to[out=0,in=270] (0.5,0.75);
\end{tikzpicture}
\hspace{-0.2cm}\right)
,
\\
\begin{tikzpicture}[anchorbase,scale=0.7,tinynodes]
\draw[usual,crossline,blobbed={0.15}{u}{spinach}] (0,0) to[out=90,in=270] (0.5,0.75);
\draw[usual,crossline] (0.5,0) to[out=90,in=270] (0,0.75);
\end{tikzpicture}
&=
\begin{tikzpicture}[anchorbase,scale=0.7,tinynodes]
\draw[usual,crossline,rblobbed={0.85}{u}{spinach}] (0,0) to[out=90,in=270] (0.5,0.75);
\draw[usual,crossline] (0.5,0) to[out=90,in=270] (0,0.75);
\end{tikzpicture}
\hspace{-0.2cm}+(\qvar-\qvar^{-1})\cdot
\left(\hspace{-0.2cm}
\begin{tikzpicture}[anchorbase,scale=0.7,tinynodes]
\draw[usual,crossline,blobbed={0.5}{u}{spinach}] (0,0) to (0,0.75);
\draw[usual,crossline] (0.5,0) to (0.5,0.75);
\end{tikzpicture}
-
\begin{tikzpicture}[anchorbase,scale=0.7,tinynodes]
\draw[usual,crossline,blobbed={0.25}{u}{spinach}] (0,0) 
to[out=90,in=180] (0.25,0.25) to[out=0,in=90] (0.5,0);
\draw[usual,crossline] (0,0.75) to[out=270,in=180] (0.25,0.5) 
to[out=0,in=270] (0.5,0.75);
\end{tikzpicture}
\right)
,\\
\begin{tikzpicture}[anchorbase,scale=0.7,tinynodes]
\draw[usual,crossline] (0,0) to[out=90,in=270] (0.5,0.75);
\draw[usual,crossline,blobbed={0.85}{u}{spinach}] (0.5,0) to[out=90,in=270] (0,0.75);
\end{tikzpicture}
&=
\begin{tikzpicture}[anchorbase,scale=0.7,tinynodes]
\draw[usual,crossline] (0,0) to[out=90,in=270] (0.5,0.75);
\draw[usual,crossline,rblobbed={0.15}{u}{spinach}] (0.5,0) to[out=90,in=270] (0,0.75);
\end{tikzpicture}
\hspace{-0.2cm}+(\qvar-\qvar^{-1})\cdot
\left(\hspace{-0.2cm}
\begin{tikzpicture}[anchorbase,scale=0.7,tinynodes]
\draw[usual,crossline,blobbed={0.5}{u}{spinach}] (0,0) to (0,0.75);
\draw[usual,crossline] (0.5,0) to (0.5,0.75);
\end{tikzpicture}
-
\begin{tikzpicture}[anchorbase,scale=0.7,tinynodes]
\draw[usual,crossline,] (0,0) to[out=90,in=180] (0.25,0.25) 
to[out=0,in=90] (0.5,0);
\draw[usual,crossline,blobbed={0.25}{u}{spinach}] (0,0.75) 
to[out=270,in=180] (0.25,0.5) to[out=0,in=270] (0.5,0.75);
\end{tikzpicture}
\right)
.                     
\end{aligned}
\end{gather}
(Note the difference in the powers of 
$\qvar$ between \eqref{eq:more-slides} and 
\eqref{eq:bmw-more-slides}.)
\end{lemmaqed}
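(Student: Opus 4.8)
The plan is to follow the proof of \fullref{lemma:blob-rels} essentially verbatim, separating the asserted relations into two groups according to whether their derivation invokes a skein relation at all. The first group consists of \eqref{eq:blobslides}, \eqref{eq:dots-capscups} and \eqref{eq:height-blob-switch}. These are proved using only isotopy of handlebody tangle diagrams, the framing behaviour \eqref{eq:framingblobs}, the blob-introduction move \eqref{eq:blob-right}, and the Jucys--Murphy identities of \fullref{lemma:jm-elements}. Since the handlebody braid monoid maps into $\setstuff{BMW}_{g,n}(\cpar,\qvar,\avar)$ sending positive coils and crossings to positive coils and crossings, and since none of the cited inputs refers to how a single crossing resolves, these three families transfer word-for-word from the topological Temperley--Lieb setting.

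Concretely, I would read \eqref{eq:blobslides} as the blobbed incarnation of the equalities $\beta_{i-1}^{-1}\jm_{u,i}=\jm_{u,i-1}\beta_{i-1}$ and $\beta_{i}\jm_{u,i}=\jm_{u,i+1}\beta_{i}^{-1}$ between Jucys--Murphy elements; for \eqref{eq:dots-capscups} I would slide the right-hand blob down, rewrite it via \eqref{eq:blob-right} as a blob on the adjacent strand, and then unknot around the core using \eqref{eq:framingblobs}; and for \eqref{eq:height-blob-switch} the case $u\leq v$ is the last relation of \fullref{lemma:jm-elements}, while the case $u\geq v$ follows from that case together with \eqref{eq:blobslides} by the same three-crossing manipulation as in \fullref{lemma:blob-rels}. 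All of this is independent of the skein relation in force, so it applies equally to $\setstuff{BMW}_{g,n}(\cpar,\qvar,\avar)$.

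The genuinely new content is \eqref{eq:bmw-more-slides}, and here the only change is to replace the Kauffman skein relation \eqref{eq:kauffman} used to obtain \eqref{eq:more-slides} by the BMW skein relation \eqref{eq:skein}. The blob slide \eqref{eq:blobslides} already governs how a blob passes through one handedness of crossing; to obtain the relation for the opposite handedness I would rewrite that crossing via \eqref{eq:skein}, apply the known slide, and simplify the residual cap-cup terms using the already-established \eqref{eq:dots-capscups}. Because \eqref{eq:skein} expresses the difference of the two crossings as $(\qvar-\qvar^{-1})$ times the identity-minus-cap-cup combination, the coefficient $\qvar^{1/2}-\qvar^{-1/2}$ of \eqref{eq:more-slides} is systematically replaced by $\qvar-\qvar^{-1}$, which is exactly the discrepancy flagged in the statement. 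The main thing to watch, and the only place requiring care, is the bookkeeping in the error term: one must track which of the two strands carries the residual blob in the cap-cup summand and the orientation of the remaining crossing, so that all four displayed lines of \eqref{eq:bmw-more-slides} emerge with the correct left/right blob placement. Once these placements are checked against \eqref{eq:dots-capscups} the relations follow, and I would treat the remaining lines by the same argument.
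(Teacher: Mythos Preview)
Your proposal is correct and is precisely the approach the paper takes: the paper states that the proof is the same as that of \fullref{lemma:blob-rels} and omits it, and you have faithfully reproduced that argument, observing that the derivations of \eqref{eq:blobslides}, \eqref{eq:dots-capscups}, and \eqref{eq:height-blob-switch} are skein-independent while \eqref{eq:bmw-more-slides} arises by substituting the BMW skein relation \eqref{eq:skein} for the Kauffman relation \eqref{eq:kauffman} in the last step.
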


We call the relations collected in 
\fullref{lemma:blob-bmw-rels} \emph{blob sliding relations}. Because of 
their slightly distorted topology, we introduce 
cyclotomic handlebody Brauer algebra 
before their BMW counterparts.
That is, we first treat the Brauer specialization 
$\qvar=\avar=1$.

In the following lemma we 
collect some relations that are handy in the Brauer case. 
In particular, up to \eqref{eq:dots-capscups}, these \emph{Brauer blobs}
move freely along strands, {\cf} \eqref{eq:brauer-move1}
and \eqref{eq:brauer-move2}.

\begin{lemma}\label{lemma:blob-brauer-rels}
Brauer blobs satisfy the following relations
additionally to \eqref{eq:dots-capscups}.
\begin{gather}\label{eq:brauer-move1}
\begin{tikzpicture}[anchorbase,scale=0.7,tinynodes]
\draw[usual] (0,0) to[out=90,in=270] (0.5,0.75);
\draw[usual,rblobbed={0.15}{u}{spinach}] (0.5,0) to[out=90,in=270] (0,0.75);
\end{tikzpicture}
\hspace{-0.2cm}=\hspace{-0.2cm}
\begin{tikzpicture}[anchorbase,scale=0.7,tinynodes]
\draw[usual,blobbed={0.85}{u}{spinach}] (0.5,0) to[out=90,in=270] (0,0.75);
\draw[usual] (0,0) to[out=90,in=270] (0.5,0.75);
\end{tikzpicture}
,\quad
\begin{tikzpicture}[anchorbase,scale=0.7,tinynodes]
\draw[usual] (0.5,0) to[out=90,in=270] (0,0.75);
\draw[usual,blobbed={0.15}{u}{spinach}] (0,0) to[out=90,in=270] (0.5,0.75);
\end{tikzpicture}
=
\begin{tikzpicture}[anchorbase,scale=0.7,tinynodes]
\draw[usual,rblobbed={0.85}{u}{spinach}] (0,0) to[out=90,in=270] (0.5,0.75);
\draw[usual] (0.5,0) to[out=90,in=270] (0,0.75);
\end{tikzpicture}
.
\end{gather}
\begin{gather}\label{eq:brauer-move2}
\begin{tikzpicture}[anchorbase,scale=0.7,tinynodes]
\draw[usual,blobbed={0.66}{u}{spinach}] (0,0) to[out=90,in=270] (0,1.5);
\draw[usual,rblobbed={0.33}{v}{tomato}] (0.5,0) to[out=90,in=270] (0.5,1.5);
\end{tikzpicture}
\hspace{-0.2cm}=\hspace{-0.2cm}
\begin{tikzpicture}[anchorbase,scale=0.7,tinynodes]
\draw[usual,blobbed={0.33}{u}{spinach}] (0,0) to[out=90,in=270] (0,1.5);
\draw[usual,rblobbed={0.66}{v}{tomato}] (0.5,0) to[out=90,in=270] (0.5,1.5);
\end{tikzpicture}
.
\end{gather}
\end{lemma}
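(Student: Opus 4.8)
The plan is to treat the two displayed families of relations separately, reducing each to facts already available: the blob slides \eqref{eq:blobslides} from \fullref{lemma:blob-bmw-rels}, the height-switch relation \eqref{eq:height-blob-switch}, and the defining feature of the Brauer specialization, namely that the crossing is an involution. Indeed, at $\qvar=\avar=1$ the skein relation \eqref{eq:skein} degenerates to $\sigma=\sigma^{-1}$, so positive and negative crossings coincide and there is no over/under datum to track.

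First I would dispose of \eqref{eq:brauer-move1}. Under the identification $\sigma=\sigma^{-1}$ the prefactor $\qvar-\qvar^{-1}$ vanishes, so the BMW blob slides \eqref{eq:bmw-more-slides} lose their error terms and collapse precisely to \eqref{eq:brauer-move1}; equivalently, \eqref{eq:brauer-move1} is just the Brauer reading of the clean slides \eqref{eq:blobslides}, which themselves descend from the Jucys--Murphy identities $\beta_{i-1}^{-1}\jm_{u,i}=\jm_{u,i-1}\beta_{i-1}$ and $\beta_{i}\jm_{u,i}=\jm_{u,i+1}\beta_{i}^{-1}$ of \fullref{lemma:jm-elements}. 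So this part is immediate once the involutivity of the crossing has been recorded. For \eqref{eq:brauer-move2} I would distinguish the two orderings of $u$ and $v$ and invoke \eqref{eq:height-blob-switch}. For one ordering the relevant branch of \eqref{eq:height-blob-switch} already gives the plain height swap, since the two blobs sit on distinct strands; there is nothing to prove. For the other ordering the branch of \eqref{eq:height-blob-switch} carrying a pair of crossings applies, producing a diagram of the shape $\sigma\,(\text{$u$-blob left})(\text{$v$-blob right})\,\sigma$ with one Brauer crossing below and one above the blobs. Here I would push the lower crossing upward through the two blobs using \eqref{eq:brauer-move1}, which hops each blob onto the opposite strand and brings the two crossings adjacent; since $\sigma^{2}=1$ in the Brauer algebra they then cancel, leaving the two blobs of types $u,v$ on straight strands with their heights interchanged, which is exactly \eqref{eq:brauer-move2}.

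The only genuinely fiddly point, and the step I expect to be the main obstacle, is the bookkeeping in this second case: one must check that after sliding the blobs through $\sigma$ via \eqref{eq:brauer-move1} and cancelling $\sigma^{2}=1$ each blob lands on the strand prescribed by \eqref{eq:brauer-move2}, and that no stray cap/cup correction of the type \eqref{eq:dots-capscups} is generated en route. This is a short and routine diagram chase, and I would record it as a single line of pictures rather than spelling out the strand-by-strand permutation, in the style of the proof of \fullref{lemma:blob-rels}; conceptually it is the statement that the crossing correction obstructing free height-interchange in the general BMW setting disappears exactly because the Brauer crossing squares to the identity.
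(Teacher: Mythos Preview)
Your proposal is correct and follows essentially the same route as the paper. The paper's proof reads, in full: ``The relation \eqref{eq:bmw-more-slides} specializes to \eqref{eq:brauer-move1}, which we then use to derive \eqref{eq:brauer-move2} from \eqref{eq:height-blob-switch}.'' Your argument is exactly this, with the case analysis for \eqref{eq:brauer-move2} made explicit (one ordering of $u,v$ falling out directly from the commuting branch of \eqref{eq:height-blob-switch}, the other requiring the crossing branch together with \eqref{eq:brauer-move1} and $\sigma^{2}=1$); the concern about stray \eqref{eq:dots-capscups} corrections is unwarranted since no caps or cups appear in the manipulation.
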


\begin{proof}
The relation \eqref{eq:bmw-more-slides} 
specializes to \eqref{eq:brauer-move1}, which we then use to 
derive \eqref{eq:brauer-move2} from \eqref{eq:height-blob-switch}.
\end{proof}

Fix cyclotomic parameters
$\bpar=(\bvar_{u,i})\in\KK^{d_{1}+\dots+d_{g}}$, a
degree vector $\dpar=(\dvar_{u})\in\N^{g}$. Moreover, 
let $\setstuff{Br}^{+}_{g,n}(\cpar)$ denote 
the subalgebra of $\setstuff{Br}_{g,n}(\cpar)$ with only 
positive coils.

\begin{definition}\label{definition:blob-brauer}
We let the \emph{cyclotomic handlebody Brauer algebra} 
(in $n$ strands and of genus $g$) 
$\setstuff{Br}_{g,n}^{\dpar,\bpar}(\cpar)$ be 
the quotient of $\setstuff{Br}^{+}_{g,n}(\cpar)$
by the two-sided ideal generated by the \emph{cyclotomic relations}
\begin{gather}\label{eq:cyclotomic-brauer-blob}
(b_{u}-\bvar_{u,1})
\varpi_{1}
(b_{u}-\bvar_{u,2})
\varpi_{2}
\dots
(b_{u}-\bvar_{u,\dvar_{u}-1})
\varpi_{d_{u}-1}
(b_{u}-\bvar_{u,\dvar_{u}})
=0,
\end{gather}
where $\varpi_{j}$ is any finite (potentially empty)
expression not involving $b_{u}$ such that all 
$b_{u}$ in \eqref{eq:cyclotomic-brauer-blob} are on the same strand.
\end{definition}

\begin{remark}
Note that the skein relation \eqref{eq:skein} and also 
\eqref{eq:dots-capscups} imply compatibility 
conditions between parameters. See \fullref{def:bmw-admissible}, \fullref{lemma:handlebody-blobbmw-second} and \fullref{proposition:brauer-dim} below.
\end{remark}

We consider \emph{clapped, blobbed perfect matchings of $2n$ 
points of genus $g$} which we need 
for counting purposes.
We assume that these perfect 
matchings points are clapped, similar to the crossingless 
matchings in the proof of 
\fullref{proposition:blob-dim}.
Strands can carry blobs that move 
freely on the strand they belong but are not allowed 
to pass each other. In particular, there is no condition 
on being reachable from the left.
This implies that 
each strand with blobs defines a word in 
the free monoid $\setstuff{F}_{g}^{+}\cong
\setstuff{Br}^{+}_{g,1}(\cpar)\hookrightarrow\setstuff{Br}^{+}_{g,n}(\cpar)$. 

Note that we consider them as perfect matchings, so 
there is an ambiguity in how to illustrate 
these without further conditions. This problem is 
however resolved by demanding that each matched pair 
is connected by a cup with exactly one Morse point, and there is 
a minimal number of intersection between the cups.
Here is an example:
\begin{gather*}
\begin{tikzpicture}[anchorbase,scale=0.7,tinynodes]
\draw[usual,blobbed={0.25}{u}{spinach}] (1,1.5) to[out=270,in=180] (1.25,1.25) to[out=0,in=270] (1.5,1.5);
\draw[usual,blobbed={0.25}{u}{spinach}] (2,1.5) to[out=270,in=180] (3.5,0.5) to[out=0,in=270] (5,1.5);
\draw[usual] (2.5,1.5) to[out=270,in=180] (2.75,1.25) to[out=0,in=270] (3,1.5);
\draw[usual,rblobbed={0.66}{w}{orchid}] (4,1.5) 
to[out=270,in=180] (4.75,0.75) to[out=0,in=270] (5.5,1.5);
\draw[usual] (3.5,1.5) to[out=270,in=180] (4,1.0) to[out=0,in=270] (4.5,1.5);
\draw[usual,blobbed={0.33}{u}{spinach},rblobbed={0.66}{v}{tomato}] (6,1.5) 
to[out=270,in=180] (6.75,0.75) to[out=0,in=270] (7.5,1.5);
\draw[usual] (6.5,1.5) to[out=270,in=180] (6.75,1.25) to[out=0,in=270] (7,1.5);
\draw[very thick,mygray,densely dashed] (3.75,1.5) to (7.75,1.5);
\end{tikzpicture}
.
\end{gather*}
Each such perfect matching defines an element of $\setstuff{Br}^{+}_{g,n}(\cpar)$ by 
unclapping and simultaneously sliding all blobs
to the left that may fall on caps or cups.
This procedure is illustrated in the diagram below. 
\begin{gather*}
\begin{tikzpicture}[anchorbase,scale=0.7,tinynodes]
\draw[usual,blobbed={0.5}{u}{spinach}] (2,1.5) to[out=270,in=180] (2.25,1.25) to[out=0,in=270] (2.5,1.5);
\draw[usual,blobbed={0.33}{u}{spinach}] (3,1.5) to[out=270,in=180] (3.5,1.0) to[out=0,in=270] (4.0,1.5);
\draw[usual,blobbed={0.3}{v}{tomato},rblobbed={0.7}{w}{orchid}] (3.5,1.5) to (3.5,1.4) to[out=270,in=180] (4.5,0.6) to[out=0,in=270] (5.5,1.4) to (5.5,1.5);
\draw[usual,rblobbed={0.75}{w}{orchid}] (4.5,1.5) to[out=270,in=180] (4.75,1.25) to[out=0,in=270] (5,1.5);
\draw[very thick,mygray,densely dashed] (3.75,1.5) to (5.75,1.5);
\end{tikzpicture}
\rightsquigarrow
\begin{tikzpicture}[anchorbase,scale=0.7,tinynodes]
\draw[usual,blobbed={0.33}{u}{spinach}] (2,1.5) to[out=270,in=180] (2.25,1.25) to[out=0,in=270] (2.5,1.5);
\draw[usual,blobbed={0.15}{u}{spinach}] (3,1.5) to[out=270,in=90]  (3.5,0);
\draw[usual,blobbed={0.45}{w}{orchid},rblobbed={0.85}{v}{tomato}] (2,0) to (2,0.1) to[out=90,in=270] (3.5,1.5);
\draw[usual,blobbed={0.33}{w}{orchid}] (2.5,0) to[out=90,in=180] (2.75,0.25) to[out=0,in=90] (3.0,0);
\draw[very thick,mygray,densely dashed] (1.7,0) to (3.85,0);
\end{tikzpicture}
.
\end{gather*}
This operation can described rigorously: 
If we label the boundary 
points $1,\dots,2n$, then  
the set of perfect matchings is in 
bijection with the set of all 
unordered $n$-tuples of pairs 
$\{(i_{1},j_{1}),\dots,(i_{n},j_{n})\}$ that can be formed from the $2n$ 
distinct elements of $\{1,\dots,2n\}$ with $i_{k}<j_{k}$. Each such pair corresponds to a strand in the diagrammatics. 
A blob on the strand $(i,j)$ is to be slid to the 
left if $j\leq n$, to the right if $n\leq i$ 
and left untouched otherwise in the unclapping process.

The next lemma follows from this discussion.

\begin{lemma}\label{lemma:brauer-basis2}
The algebra 
$\setstuff{Br}_{g,n}^{\dpar,\bpar}(\cpar)$ 
has a $\KK$-spanning set in bijection with clapped, blobbed
perfect matchings of $2n$ points of genus  
$g$ where each strand has at 
most $\dvar_{u}-1$ blobs of the corresponding type.
\end{lemma}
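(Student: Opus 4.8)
The plan is to obtain the spanning set from the one already available for $\setstuff{Br}^{+}_{g,n}(\cpar)$, together with the Brauer blob relations and the cyclotomic ideal. First I would specialize \fullref{lemma:bmw-span} to the Brauer case $\qvar=\avar=1$: the algebra $\setstuff{Br}^{+}_{g,n}(\cpar)$, being the positive-coil subalgebra of $\setstuff{Br}_{g,n}(\cpar)$, is spanned by positive lifts of perfect matchings of $2n$ points of genus $g$ in which every winding is positive. As coils are by definition blobs, cf. \eqref{eq:blobs}, each such spanning element is already a blobbed perfect matching, with possibly many blobs per strand.

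Next I would normalize each diagram to the clapped, blobbed form described before the statement, using \fullref{lemma:blob-brauer-rels}. In the Brauer specialization the relations \eqref{eq:brauer-move1} and \eqref{eq:brauer-move2} let blobs slide freely across crossings and reorder along a strand, so that the blob content of each strand is recorded by a word in $\setstuff{F}_{g}^{+}$, while \eqref{eq:dots-capscups} governs the framing on the caps and cups. Concretely I would apply the unclapping-and-sliding recipe of the preceding paragraph, sliding a blob on the matched pair $(i,j)$ to the left if $j\le n$, to the right if $n\le i$, and leaving it in place otherwise, which realizes the stated bijection and shows that clapped blobbed perfect matchings span $\setstuff{Br}^{+}_{g,n}(\cpar)$.

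Finally I would cut down the number of blobs using the cyclotomic ideal. Exactly as in the proof of \fullref{lemma:blob-basis}, the generating relation \eqref{eq:cyclotomic-brauer-blob} rewrites any occurrence of $\dvar_{u}$ blobs of type $u$ on a single strand, with arbitrary blobs of other types interspersed (these being exactly the admissible insertions $\varpi_{j}$), as a $\KK$-linear combination of diagrams with strictly fewer type-$u$ blobs on that strand. Iterating this over all types and all strands bounds the type-$u$ blobs on each strand by $\dvar_{u}-1$, which is the asserted spanning set.

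The hard part will be the second step: verifying that the unclapping-and-sliding procedure is genuinely well-defined. One must check that the single-Morse-point and minimal-intersection convention removes the ambiguity inherent in drawing a perfect matching, that the left/right sliding rule does not depend on the order in which the blobs are processed, and that, away from the collapses forced by \eqref{eq:dots-capscups}, the free motion of Brauer blobs guarantees the procedure terminates at a unique blobbed perfect matching. Since only a spanning set is claimed here, this bookkeeping suffices; linear independence, and hence the precise dimension, is a separate matter handled by \fullref{proposition:brauer-dim}.
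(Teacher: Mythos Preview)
Your proposal is correct and follows essentially the same approach as the paper: both start from a spanning set of $\setstuff{Br}^{+}_{g,n}(\cpar)$, use the clapping/unclapping procedure together with the free Brauer blob slides of \fullref{lemma:blob-brauer-rels} to normalize diagrams, and then invoke the cyclotomic relation \eqref{eq:cyclotomic-brauer-blob} to bound the number of blobs per strand. The paper's proof is terser, phrasing the second step as a mutually inverse pair of maps between clapped diagrams and elements of $\setstuff{Br}^{+}_{g,n}(\cpar)$, while you spell out the intermediate reliance on \fullref{lemma:bmw-span} and the blob-sliding lemma more explicitly.
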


\begin{proof}
The above defines a $\KK$-module map from 
the free $\KK$-module spanned by all 
clap\-ped, blobbed perfect matchings to 
$\setstuff{Br}^{+}_{g,n}(\cpar)$.	
On the other hand, each diagram of 
$\setstuff{Br}^{+}_{g,n}(\cpar)$ defines a 
clapped, blobbed perfect 
matching by clapping and then arranging the result until it is of the required form.
These are inverse 
procedures and so the $\KK$-module 
$\setstuff{Br}^{+}_{g,n}(\cpar)$ 
is contained in the free 
$\KK$-module spanned by all 
clapped, blobbed perfect matchings.
The cyclotomic condition then gets rid of 
having too many blobs on strands.
\end{proof}

\begin{definition}\label{def:bmw-admissible}
We call parameters $(\dpar,\bpar,\cpar)$ such that the $\KK$-spanning set 
in \fullref{lemma:brauer-basis2} is a $\KK$-basis \emph{admissible}.
\end{definition}

We do not know any representation theoretical space 
where $\setstuff{Br}_{g,n}^{\dpar,\bpar}(\cpar)$ acts on, 
so we can not copy the arguments from {\eg} the proof of \fullref{lemma:handlebody-tl-second}. We rather do the following 
(which includes the cases where the parameters are generic):

\begin{lemma}\label{lemma:handlebody-blobbmw-second}
Let $\KK$ be a field and let
$\KKL$ be a field extension of $\KK$ 
of degree $[\KKL:\KK]\gg 1$.
For each choice of $\dpar$ and $\bpar$ 
there exist $\cpar$ and $\bpar$ with entries in
in $\KKL$ such that $(\dpar,\bpar,\cpar)$ is admissible.
\end{lemma}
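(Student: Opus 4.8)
The plan is to reformulate admissibility as the non-vanishing of a single determinant and then to produce a parameter point avoiding its zero locus inside a large enough field. By \fullref{lemma:brauer-basis2} the clapped, blobbed perfect matchings (with at most $\dvar_{u}-1$ blobs of type $u$ on each strand) span $\setstuff{Br}_{g,n}^{\dpar,\bpar}(\cpar)$, so if $N=N(g,n,\dpar)$ denotes their number then $\dim_{\KK}\setstuff{Br}_{g,n}^{\dpar,\bpar}(\cpar)\leq N$, and $(\dpar,\bpar,\cpar)$ is admissible (in the sense of \fullref{def:bmw-admissible}) exactly when this spanning set is linearly independent, \ie when the dimension equals $N$. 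First I would organize the free parameters: using \eqref{eq:dots-capscups}, \eqref{eq:brauer-move1} and \eqref{eq:brauer-move2} together with the cyclotomic relation \eqref{eq:cyclotomic-brauer-blob}, every circle that can occur after concatenation reduces to a circle whose blob word is a fixed representative of its class, so the independent circle evaluations $\cvar_{\gamma}$ range over a finite set of genuinely free scalars, and likewise the entries of $\bpar$ are free. Hence the structure constants of the $\setstuff{F}_{g}^{+}$-colored concatenation are polynomials with integer coefficients in $(\bpar,\cpar)$, the parameter space is an affine space, and admissibility is a Zariski-open condition on it.

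The heart of the argument is to show that this open condition is nonempty, \ie that the relevant determinant is not the zero polynomial. To this end I would pass to the generic point, treating all entries of $\bpar$ and $\cpar$ as algebraically independent indeterminates over $\Q$, and prove linear independence there directly. The natural tool is the bilinear pairing sending a pair $(X,Y)$ of clapped perfect matchings to the scalar obtained by stacking $Y$ on top of $X^{\star}$, resolving crossings (trivial in the Brauer specialization), sliding blobs to the left by \eqref{eq:brauer-move1} and \eqref{eq:brauer-move2}, and evaluating the resulting closed components. The key combinatorial point is that this closure records $X$ and $Y$: the multiset of $\setstuff{F}_{g}^{+}$-words carried by the circles of $X^{\star}Y$, together with their blob counts, determines the diagonal pairing $\langle X,X\rangle$ as a distinguished monomial in the $\cvar_{\gamma}$, and one can order the spanning set (first by number of through strands, then by blob multiplicities and circle type) so that the Gram matrix becomes triangular with these monomials on the diagonal. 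Since distinct indeterminates appear there, the diagonal entries are units in $\Q(\bpar,\cpar)$, the Gram determinant is a nonzero element of $\Q(\bpar,\cpar)$, and hence the underlying determinant polynomial $P\in\Z[\bpar,\cpar]$ is not identically zero.

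Finally I would specialize. The polynomial $P$ has bounded degree (depending only on $g$, $n$ and $\dpar$), so for a field extension $\KKL/\KK$ with $[\KKL:\KK]$ large enough that $\lvert\KKL\rvert$ exceeds that degree, a Schwartz--Zippel / interpolation argument produces a point $(\bpar,\cpar)$ with entries in $\KKL$ at which $P$ does not vanish; this is exactly an admissible $(\dpar,\bpar,\cpar)$. (When $\KK$ is already infinite one may take $\KKL=\KK$; the extension is only needed to guarantee enough points when $\KK$ is small or finite, which is the sole role of the hypothesis $[\KKL:\KK]\gg 1$.) The main obstacle is the middle step, namely establishing generic linear independence with no faithful module at our disposal\,---\,the paper explicitly lacks a representation-theoretic space for $\setstuff{Br}_{g,n}^{\dpar,\bpar}(\cpar)$, so the argument must be purely diagrammatic, and the delicate part is verifying that the blob-sliding and cap/cup relations \eqref{eq:dots-capscups} neither collapse the expected diagonal monomials nor create cancellations that would lower the rank. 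I expect this to follow from a careful dominance order on diagrams, but it is where the real work lies.
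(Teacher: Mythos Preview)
Your overall architecture is the same as the paper's: express admissibility as the non-vanishing of a determinant whose entries are polynomials in the parameters, argue that this determinant is not identically zero, and then pick a point in a sufficiently large field avoiding its zero locus. The difference lies entirely in the middle step.

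The paper does \emph{not} attack the full $N\times N$ Gram matrix of clapped, blobbed perfect matchings. Instead it invokes the argument of Goodman--Hauschild~Mosley \cite{GoHaMo-cyclotomic-bmw2} to reduce admissibility to the freeness (of rank $\bbvar_{1,\dpar}$) of the left and right $\setstuff{Br}_{g,n}^{\dpar,\bpar}(\cpar)$-modules $\setstuff{Caps}$ and $\setstuff{Cups}$ spanned by all blob placements on a single cap or cup below the cyclotomic bound. Closing these off produces a $\bbvar_{1,\dpar}\times\bbvar_{1,\dpar}$ pairing matrix $P$ whose entries are just blobbed circle evaluations; the paper then observes that each row involves a distinct monomial pattern in the $\bvar_{i}$ and $\cvar_{j}$, so $\det P$ is not the zero polynomial, and any algebraically generic choice in $\KKL$ works. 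The size of this matrix depends only on $g$ and $\dpar$, not on $n$, which is what makes the ``unique combination per row'' inspection feasible.

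Your route, by contrast, keeps $n$ in the picture and tries to triangularize the full Gram matrix via a dominance order on diagrams. That may well be doable, but as you yourself flag, the interaction of \eqref{eq:dots-capscups} with blob slides can merge or cancel the monomials you want on the diagonal, and you have not supplied the ordering or the verification. The paper sidesteps exactly this difficulty by localizing the problem to a single cap/cup pair. If you want to complete your argument without the Goodman--Hauschild~Mosley reduction, the cleanest fix is to first prove that admissibility for general $n$ follows from admissibility for $n=2$ (an inductive gluing argument along the lines of \fullref{proposition:brauer-dim}), and then analyze only the $n=2$ pairing, which is precisely the matrix $P$ the paper writes down.
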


\begin{proof}
The same arguments as in \cite{GoHaMo-cyclotomic-bmw2}
prove that admissibility is equivalent to 
the left 
and right $\setstuff{Br}_{g,n}^{\dpar,\bpar}(\cpar)$-modules
\begin{gather*}
\setstuff{Caps}=
\left\{
\begin{tikzpicture}[anchorbase,scale=0.7,tinynodes]
\draw[usual,crossline] (0,-0.4) to (0,0) to[out=90,in=180] (0.25,0.25) 
to[out=0,in=90] (0.5,0) to (0.5,-0.4);
\end{tikzpicture}
,
\begin{tikzpicture}[anchorbase,scale=0.7,tinynodes]
\draw[usual,crossline,blobbed={0.1}{u}{spinach}] (0,-0.4) to (0,0) to[out=90,in=180] (0.25,0.25) 
to[out=0,in=90] (0.5,0) to (0.5,-0.4);
\end{tikzpicture}
,
\begin{tikzpicture}[anchorbase,scale=0.7,tinynodes]
\draw[usual,crossline,blobbed={0.1}{v}{tomato}] (0,-0.4) to (0,0) to[out=90,in=180] (0.25,0.25) 
to[out=0,in=90] (0.5,0) to (0.5,-0.4);
\end{tikzpicture}
,
\begin{tikzpicture}[anchorbase,scale=0.7,tinynodes]
\draw[usual,crossline,blobbed={0.075}{u}{spinach},blobbed={0.3}{v}{tomato}] (0,-0.4) to (0,0) to[out=90,in=180] (0.25,0.25) 
to[out=0,in=90] (0.5,0) to (0.5,-0.4);
\end{tikzpicture}
,\dots
\right\}
,\quad
\setstuff{Cups}=
\left\{
\begin{tikzpicture}[anchorbase,scale=0.7,tinynodes]
\draw[usual,crossline] (0,0.4) to (0,0) to[out=270,in=180] (0.25,-0.25) 
to[out=0,in=270] (0.5,0) to (0.5,0.4);
\end{tikzpicture}
,
\begin{tikzpicture}[anchorbase,scale=0.7,tinynodes]
\draw[usual,crossline,blobbed={0.25}{u}{spinach}] (0,0.4) to (0,0) to[out=270,in=180] (0.25,-0.25) 
to[out=0,in=270] (0.5,0) to (0.5,0.4);
\end{tikzpicture}
,
\begin{tikzpicture}[anchorbase,scale=0.7,tinynodes]
\draw[usual,crossline,blobbed={0.25}{v}{tomato}] (0,0.4) to (0,0) to[out=270,in=180] (0.25,-0.25) 
to[out=0,in=270] (0.5,0) to (0.5,0.4);
\end{tikzpicture}
,
\begin{tikzpicture}[anchorbase,scale=0.7,tinynodes]
\draw[usual,crossline,blobbed={0.075}{u}{spinach},blobbed={0.3}{v}{tomato}] (0,0.4) to (0,0) to[out=270,in=180] (0.25,-0.25) 
to[out=0,in=270] (0.5,0) to (0.5,0.4);
\end{tikzpicture}
,\dots
\right\},
\end{gather*}
of all possible blob placements on a cap respectively 
cup not affected by 
the cyclotomic relation \eqref{eq:cyclotomic-brauer-blob}
being free of rank $\bbvar_{1,\dpar}$. 
Closing diagrams implies that this 
happens if and only if the $\bbvar_{1,\dpar}$-$\bbvar_{1,\dpar}$ pairing matrix
\begin{gather*}
\scalebox{0.85}{$P=
\begin{pmatrix}
\hspace*{0.5cm}
\begin{tikzpicture}[anchorbase,scale=0.7,tinynodes]
\draw[usual] (0,-0.75) to (0,0) to[out=90,in=180] (0.25,0.25) 
to[out=0,in=90] (0.5,0) to (0.5,-0.75) to[out=270,in=0] (0.25,-1) 
to[out=180,in=270] (0,-0.75);
\end{tikzpicture}
&
\begin{tikzpicture}[anchorbase,scale=0.7,tinynodes]
\draw[usual,blobbed={0.1}{u}{spinach}] (0,-0.75) to (0,0) to[out=90,in=180] (0.25,0.25) 
to[out=0,in=90] (0.5,0) to (0.5,-0.75) to[out=270,in=0] (0.25,-1) 
to[out=180,in=270] (0,-0.75);
\end{tikzpicture}
&
\begin{tikzpicture}[anchorbase,scale=0.7,tinynodes]
\draw[usual,blobbed={0.2}{v}{tomato}] (0,-0.75) to (0,0) to[out=90,in=180] (0.25,0.25) 
to[out=0,in=90] (0.5,0) to (0.5,-0.75) to[out=270,in=0] (0.25,-1) 
to[out=180,in=270] (0,-0.75);
\end{tikzpicture}
&
\begin{tikzpicture}[anchorbase,scale=0.7,tinynodes]
\draw[usual,blobbed={0.1}{u}{spinach},blobbed={0.2}{v}{tomato}] (0,-0.75) to (0,0) to[out=90,in=180] (0.25,0.25) 
to[out=0,in=90] (0.5,0) to (0.5,-0.75) to[out=270,in=0] (0.25,-1) 
to[out=180,in=270] (0,-0.75);
\end{tikzpicture}
&
\dots
\\[10pt]
\begin{tikzpicture}[anchorbase,scale=0.7,tinynodes]
\draw[usual,blobbed={0}{u}{spinach}] (0,-0.75) to (0,0) to[out=90,in=180] (0.25,0.25) 
to[out=0,in=90] (0.5,0) to (0.5,-0.75) to[out=270,in=0] (0.25,-1) 
to[out=180,in=270] (0,-0.75);
\end{tikzpicture}
&
\begin{tikzpicture}[anchorbase,scale=0.7,tinynodes]
\draw[usual,blobbed={0}{u}{spinach},blobbed={0.1}{u}{spinach}] (0,-0.75) to (0,0) to[out=90,in=180] (0.25,0.25) 
to[out=0,in=90] (0.5,0) to (0.5,-0.75) to[out=270,in=0] (0.25,-1) 
to[out=180,in=270] (0,-0.75);
\end{tikzpicture}
&
\begin{tikzpicture}[anchorbase,scale=0.7,tinynodes]
\draw[usual,blobbed={0}{u}{spinach},blobbed={0.2}{v}{tomato}] (0,-0.75) to (0,0) to[out=90,in=180] (0.25,0.25) 
to[out=0,in=90] (0.5,0) to (0.5,-0.75) to[out=270,in=0] (0.25,-1) 
to[out=180,in=270] (0,-0.75);
\end{tikzpicture}
&
\begin{tikzpicture}[anchorbase,scale=0.7,tinynodes]
\draw[usual,blobbed={0}{u}{spinach},blobbed={0.1}{u}{spinach},blobbed={0.2}{v}{tomato}] (0,-0.75) to (0,0) to[out=90,in=180] (0.25,0.25) 
to[out=0,in=90] (0.5,0) to (0.5,-0.75) to[out=270,in=0] (0.25,-1) 
to[out=180,in=270] (0,-0.75);
\end{tikzpicture}
&
\dots
\\
\vdots
&
\ddots
&
\ddots
&
\ddots
&
\ddots
\end{pmatrix}$}
\end{gather*}
is non-degenerate, {\ie} of rank $\bbvar_{1,\dpar}$. 
Note that each row contains a unique 
combination of $\bvar_{i}$ and $\cvar_{j}$. Thus, 
choosing them all such that they do not satisfy 
any polynomial equation of low order ensures 
that the determinant of $P$ is non-zero.
\end{proof}

\begin{remark}
For $g=1$ there is such an effective 
criterion to get admissible parameters, see 
\cite[Theorem 3.2]{GoHaMo-cyclotomic-bmw2},
\cite{WiYu-cyclotomic-bmw}. It would be interesting 
to have such a criterion for $g>1$.
\end{remark}

The key players to calculate 
$\dim_{\KK}\setstuff{Br}_{g,n}^{\dpar,\bpar}(\cpar)$
are again the blob numbers $\bbvar_{g,\dpar}$
defined in \eqref{eq:blob-numbers}:

\begin{proposition}\label{proposition:brauer-dim}
For admissible parameters we have 
\begin{gather}\label{eq:brauer-dim}
\dim_{\KK}\setstuff{Br}_{g,n}^{\dpar,\bpar}(\cpar)
=
(\bbvar_{g,\dpar})^{n}(2n-1)!!.
\end{gather}
Moreover, for admissible parameters, the cyclotomic handlebody 
Brauer algebra $\setstuff{Br}_{g,n}^{\dpar,\bpar}(\cpar)$ can be alternatively 
described as the free $\KK$-module spanned by 
perfect matchings of $2n$ points of genus $g$ with multiplication 
given by concatenation and circle evaluation modulo the cyclotomic 
condition. 
\end{proposition}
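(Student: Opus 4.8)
The plan is to deduce both assertions directly from \fullref{lemma:brauer-basis2} once admissibility is in force. By \fullref{def:bmw-admissible}, for admissible parameters the clapped, blobbed perfect matchings of $2n$ points of genus $g$ carrying at most $\dvar_{u}-1$ blobs of type $u$ on each strand form a $\KK$-basis of $\setstuff{Br}_{g,n}^{\dpar,\bpar}(\cpar)$. Thus the whole proposition reduces to (i) counting this basis and (ii) recording that the induced multiplication is the stated one.

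For the dimension, I would factor a clapped, blobbed perfect matching into its underlying perfect matching together with an independent choice of blob placement on each of its $n$ strands. The underlying perfect matchings of $2n$ points number $(2n-1)!!$, the classical Brauer dimension. The crucial point, and the reason the count is a clean product here in contrast to the recursion of \fullref{proposition:blob-dim}, is that in the Brauer setting the Brauer blobs move freely along their strand by \fullref{lemma:blob-brauer-rels} (relations \eqref{eq:brauer-move1} and \eqref{eq:brauer-move2}), so there is no left-reachability constraint coupling the strands. Hence each of the $n$ strands independently carries one of the blob words counted by the blob number $\bbvar_{g,\dpar}$ of \eqref{eq:blob-numbers}; this number is exactly the count of words in $\setstuff{F}_{g}^{+}$ with at most $\dvar_{u}-1$ letters of type $u$, as recorded for a single strand in \fullref{lemma:blob-onestrand}. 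Multiplying the $(\bbvar_{g,\dpar})^{n}$ blob configurations by the $(2n-1)!!$ matchings yields \eqref{eq:brauer-dim}.

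For the alternative description, admissibility says precisely that the blobbed perfect matchings are linearly independent, so $\setstuff{Br}_{g,n}^{\dpar,\bpar}(\cpar)$ is free as a $\KK$-module on them. The product of two such diagrams inside $\setstuff{Br}^{+}_{g,n}(\cpar)$ is their concatenation, with the resulting closed components removed by circle evaluation, after which one rewrites in the chosen basis by sliding blobs via \fullref{lemma:blob-brauer-rels} and applying the cyclotomic relation \eqref{eq:cyclotomic-brauer-blob}. This is exactly the multiplication rule in the statement, so the free module on perfect matchings equipped with that multiplication is isomorphic to $\setstuff{Br}_{g,n}^{\dpar,\bpar}(\cpar)$, mirroring \fullref{proposition:brauer-good} in the non-cyclotomic case.

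The only genuinely non-trivial input is admissibility itself, which we assume; granting it, the step that requires care is the bijection of \fullref{lemma:brauer-basis2} between clapped diagrams and basis elements, namely that the unclapping-and-blob-sliding procedure neither loses nor creates blob configurations and respects the per-strand bound coming from \eqref{eq:cyclotomic-brauer-blob}. I expect this bookkeeping, confirming that the factorization ``matching $\times$ per-strand blobs'' is compatible with unclapping, to be the only place demanding attention, and it is routine given the free movement of Brauer blobs.
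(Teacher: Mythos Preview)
Your proof is correct and follows essentially the same approach as the paper: count the basis from \fullref{lemma:brauer-basis2} (which by admissibility is a genuine basis) by factoring each clapped, blobbed perfect matching into an underlying perfect matching, of which there are $(2n-1)!!$, times an independent blob word on each of the $n$ strands, counted by $\bbvar_{g,\dpar}$ via \fullref{lemma:blob-onestrand}. Your additional remark invoking \fullref{lemma:blob-brauer-rels} to explain why the per-strand blob choices decouple (in contrast to the recursion of \fullref{proposition:blob-dim}) is a helpful clarification that the paper leaves implicit.
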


\begin{proof}
By \fullref{lemma:blob-onestrand}, the number of ways to put blobs 
on a single strand is $\bbvar_{g,\dpar}$. Hence, 
the number of ways to put these on $n$ strands is 
$(\bbvar_{g,\dpar})^{n}$. Since there are $(2n-1)!!$ corresponding 
perfect matchings all having $n$ strands, 
the dimension formula follows from \fullref{lemma:brauer-basis2}.
\end{proof}

\begin{example}
For $g=0$ \eqref{eq:brauer-dim} is the dimension 
of the Brauer algebra. For $g=1$ \eqref{eq:brauer-dim} 
is the formula in {\eg} \cite[Section 11]{HaOl-cyclotomic-bmw} or
\cite[Theorem 4.11]{Yu-cyclotomic-bmw}.
\end{example}

\begin{remark}
Note also the difference of our construction to 
\cite{HaOl-cyclotomic-bmw} or
\cite{Yu-cyclotomic-bmw}: we define 
$\setstuff{Br}_{g,n}^{\dpar,\bpar}(\cpar)$ as a tangle algebra 
to begin with, while those papers 
start with an algebraic formulation and
then prove that they have the suggestive diagrammatic presentation.
\end{remark}

We keep the cyclotomic parameters $\bpar$ 
and degree vector $\dpar$ as before.
We are now going to define cyclotomic handlebody BMW algebras, 
where we as before use the corresponding positive monoid 
$\setstuff{BMW}^{+}_{g,n}(\cpar,\qvar,\avar)$.

\begin{definition}\label{definition:blob-bmw}
We let the \emph{cyclotomic handlebody BMW algebra} 
(in $n$ strands and of genus $g$)
$\setstuff{BMW}_{g,n}^{\dpar,\bpar}(\cpar,\qvar,\avar)$ be 
the quotient of $\setstuff{BMW}^{+}_{g,n}(\cpar,\qvar,\avar)$
by the two-sided ideal generated by the \emph{cyclotomic relations}
\begin{gather}\label{eq:cyclotomic-bmw-blob}
(b_{u}-\bvar_{u,1})
\varpi_{1}
(b_{u}-\bvar_{u,2})
\varpi_{2}
\dots
(b_{u}-\bvar_{u,\dvar_{u}-1})
\varpi_{d_{u}-1}
(b_{u}-\bvar_{u,\dvar_{u}})
=0,
\end{gather}
where $\varpi_{j}$ is any finite (potentially empty)
expression not involving $b_{u}$ such that all 
$b_{u}$ in \eqref{eq:cyclotomic-bmw-blob} are on the same strand.
\end{definition}

\begin{remark}\label{remark:blob-bmw}
Cyclotomic versions of BMW and Brauer algebras have appeared in the literature:
\begin{enumerate}

\setlength\itemsep{0.15cm}

\item For $g=0$ they are of course just the BMW respectively 
Brauer algebra.

\item For $g=1$ the definition goes back to
\cite{HaOl-cyclotomic-bmw}.

\end{enumerate}
\end{remark}

\begin{lemma}\label{lemma:bmw-basis-2}
For an admissible choice of parameters, the algebra 
$\setstuff{BMW}_{g,n}^{\dpar,\bpar}(\cpar,\qvar,\avar)$ 
has a $\KK$-basis in bijection with clapped, blobbed
perfect matchings of 
$2n$ points of genus  
$g$ where each strand has at 
most $\dvar_{u}$ blobs of the corresponding type.
\end{lemma}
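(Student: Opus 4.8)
The plan is to mirror the structure of the argument for the cyclotomic handlebody Brauer algebra, namely \fullref{lemma:brauer-basis2} together with \fullref{proposition:brauer-dim}, but now in the BMW setting where the crossings are genuinely present. First I would establish that the indicated set of clapped, blobbed perfect matchings is a \emph{spanning} set, by the same reduction used in \fullref{lemma:bmw-span}: any handlebody BMW diagram with positive coils can be rewritten, using the skein relation \eqref{eq:skein}, the framing relations \eqref{eq:framing-rel}, circle evaluation, and the blob sliding relations of \fullref{lemma:blob-bmw-rels}, into a $\KK$-linear combination of positive lifts of perfect matchings; the cyclotomic relation \eqref{eq:cyclotomic-bmw-blob} then bounds the number of blobs of type $u$ on each strand by $\dvar_{u}$. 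The blob sliding relations \eqref{eq:bmw-more-slides} are the BMW analogues of the Brauer moves \eqref{eq:brauer-move1} and \eqref{eq:brauer-move2}, so the same clapping/unclapping procedure described before \fullref{lemma:brauer-basis2} applies, sending each diagram to a clapped, blobbed perfect matching and back.

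The content beyond spanning is linear independence, and here I would lean entirely on the admissibility hypothesis. Since the parameters are admissible in the sense of \fullref{def:bmw-admissible} (the BMW analogue, with the pairing-matrix criterion as in \fullref{lemma:handlebody-blobbmw-second}), the relevant $\setstuff{Caps}$--$\setstuff{Cups}$ pairing matrix $P$ is non-degenerate of rank $\bbvar_{1,\dpar}$, exactly as in the Brauer case. The key point is that the cardinality of the proposed basis equals the dimension predicted by the same counting argument as in \fullref{proposition:brauer-dim}: there are $(2n-1)!!$ underlying perfect matchings, each with $n$ strands, and $\bbvar_{g,\dpar}$ blob placements per strand by \fullref{lemma:blob-onestrand}, giving $(\bbvar_{g,\dpar})^{n}(2n-1)!!$. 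Comparing this count against the spanning set forces the spanning set to be a basis once non-degeneracy of $P$ rules out collapse of the $\mathcal{H}$-cell structure.

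The main obstacle I expect is the passage from spanning to basis in the presence of crossings, because unlike the Brauer specialization the BMW skein relation \eqref{eq:skein} and the framing relations \eqref{eq:framing-rel} produce lower-order correction terms, and the blob slides \eqref{eq:bmw-more-slides} carry factors of $(\qvar-\qvar^{-1})$. One must check that these corrections are genuinely lower order in a suitable filtration (by number of through strands, refined by the sandwich cell datum of \fullref{proposition:bmw-basis}) so that the leading terms of the proposed basis elements remain linearly independent; this is precisely what admissibility is designed to guarantee, which is why the hypothesis is indispensable and why no parameter-free statement is available.

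\begin{proof}
The spanning statement is proved exactly as in \fullref{lemma:bmw-span} and \fullref{lemma:brauer-basis2}, now using the blob sliding relations of \fullref{lemma:blob-bmw-rels} in place of \fullref{lemma:blob-brauer-rels}: any positive-coil diagram reduces, via \eqref{eq:skein}, \eqref{eq:framing-rel}, circle evaluation and \eqref{eq:bmw-more-slides}, to a combination of positive lifts of perfect matchings, and \eqref{eq:cyclotomic-bmw-blob} bounds the blobs of type $u$ per strand by $\dvar_{u}$. That this spanning set is in fact a basis is the content of admissibility from \fullref{def:bmw-admissible}, whose existence is guaranteed by \fullref{lemma:handlebody-blobbmw-second}. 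The cardinality count is identical to \fullref{proposition:brauer-dim}, namely $(\bbvar_{g,\dpar})^{n}(2n-1)!!$, using \fullref{lemma:blob-onestrand} for the per-strand blob placements.
\end{proof}
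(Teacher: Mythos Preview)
Your spanning argument is fine and matches the paper's: the blob sliding relations of \fullref{lemma:blob-bmw-rels} together with the skein relation, framing relations, circle evaluation and the cyclotomic relation reduce every diagram to the claimed form.

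The gap is in linear independence. You invoke admissibility from \fullref{def:bmw-admissible} as if it were, by definition, the statement that the BMW spanning set is a basis. It is not: \fullref{def:bmw-admissible} is a condition on $(\dpar,\bpar,\cpar)$ saying that the spanning set of \fullref{lemma:brauer-basis2} is a basis of the \emph{Brauer} quotient $\setstuff{Br}_{g,n}^{\dpar,\bpar}(\cpar)$, not of $\setstuff{BMW}_{g,n}^{\dpar,\bpar}(\cpar,\qvar,\avar)$. So writing ``this spanning set is in fact a basis is the content of admissibility'' is circular for the BMW algebra; you still owe an argument passing from the Brauer statement to the BMW one. Your filtration sketch (leading terms modulo $(\qvar-\qvar^{-1})$-corrections, controlled by the pairing matrix $P$) is not what admissibility directly provides, and \fullref{lemma:handlebody-blobbmw-second} only establishes the criterion in the Brauer setting.

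The paper closes this gap with a one-line specialization argument: the proposed BMW elements specialize at $\qvar=\avar=1$ to the corresponding Brauer elements, which are linearly independent by admissibility (that is the definition), and a linear dependence over $\KK$ among the BMW elements would survive the specialization. This is the missing idea in your proof---once you insert it, the rest of your write-up is correct and your cardinality count $(\bbvar_{g,\dpar})^{n}(2n-1)!!$ then follows.
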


In particular, the dimension of  $\setstuff{BMW}_{g,n}^{\dpar,\bpar}(\cpar,\qvar,\avar)$
is also given by the formula in \eqref{eq:brauer-dim}, namely 
it is $\dim_{\KK}\setstuff{BMW}_{g,n}^{\dpar,\bpar}(\cpar,\qvar,\avar)
=(\bbvar_{g,\dpar})^{n}(2n-1)!!$.

\begin{proof}
By the procedure described in 
\fullref{lemma:brauer-basis2} we see that the 
corresponding images in 
$\setstuff{BMW}_{g,n}^{\dpar,\bpar}(\cpar,\qvar,\avar)$ are
linearly independent since they are so in the Brauer 
specialization. To see that these span we use 
\fullref{lemma:blob-bmw-rels}.
\end{proof}

For the cell structure we now combine 
the one from \fullref{subsection:handlebody-blob} with the 
one from \fullref{subsection:BMW} 
(in particular, using positive lifts). The two
differences worthwhile spelling out are first 
that \eqref{eq:dots-capscups} 
tell us to demand that blobs on caps and cups are 
to the right of any of their Morse points. 
Let $\sand=
\KK\setstuff{B}_{g,\lambda}^{+,\dpar,\bpar}$ denote the 
blobbed positive braid monoid in $\lambda$ 
strands, {\ie} the positive braid monoid with blobs satisfying the relations in 
\fullref{lemma:blob-bmw-rels}
and \eqref{eq:cyclotomic-bmw-blob}. As usual we fix the monoid element basis as our sandwiched basis.
Because of these relations
we need to choose an order of blobs and crossings. We choose 
to put blobs below all crossings, with blobs on the first strand 
below blobs on the second strand {\etc}
We leave the details to the reader, and rather give an example:
\begin{gather*}
\scalebox{0.85}{$\begin{tikzpicture}[anchorbase,scale=1]
\draw[usual,crossline] (1,-0.5) to[out=270,in=180] (1.5,-0.75) 
to[out=0,in=270] (2,-0.5);
\draw[usual,crossline] (1.5,-0.5) to[out=270,in=180] (2.5,-1) to[out=0,in=270] (3.5,-0.5);
\draw[usual,crossline,blobbed={0.6}{u}{spinach}] (3,-4) to[out=90,in=270] (3,-0.5);
\draw[usual,crossline] (2.5,-0.5) to[out=270,in=90] (2,-3.2);
\draw[usual,crossline,blobbed={0.05}{w}{orchid},blobbed={0.2}{u}{spinach}] (1,-4) to[out=90,in=180] (1.75,-3.25) 
to[out=0,in=90] (2.5,-4);
\draw[usual,crossline,blobbed={0.3}{w}{orchid},blobbed={0.35}{v}{tomato}] (2,-4) to[out=90,in=270] (1.5,-3.25) to (1.5,-2.0) to[out=90,in=270] (4,-0.5);
\draw[usual,crossline] (1.5,-4) to[out=90,in=270] (2,-3.25) to (2,-3.2);
\draw[usual,crossline,blobbed={0.1}{v}{tomato}] (3.5,-4) to[out=90,in=180] (3.75,-3.75) 
to[out=0,in=90] (4,-4);
\end{tikzpicture}$}
,\quad
\begin{aligned}
\begin{tikzpicture}[anchorbase,scale=1]
\draw[mor] (0,1) to (0.25,0.5) to (0.75,0.5) to (1,1) to (0,1);
\node at (0.5,0.75){$U$};
\end{tikzpicture}
&=
\begin{tikzpicture}[anchorbase,scale=0.7,tinynodes]
\draw[usual,crossline] (1,0) to[out=270,in=180] (1.5,-0.25) 
to[out=0,in=270] (2,0);
\draw[usual,crossline] (1.5,0) to[out=270,in=180] (2.5,-0.5) to[out=0,in=270] (3.5,0);
\draw[usual,crossline] (2.5,0) to[out=270,in=90] (2.5,-1.5);
\draw[usual,crossline] (3,0) to[out=270,in=90] (3,-1.5);
\draw[usual,crossline] (4,0) to[out=270,in=90] (4,-1.5);
\end{tikzpicture}
,\\
\begin{tikzpicture}[anchorbase,scale=1]
\draw[mor] (0.25,0) to (0.25,0.5) to (0.75,0.5) to (0.75,0) to (0.25,0);
\node at (0.5,0.25){$b$};
\node at (0.8,0.25){\phantom{$b$}};
\end{tikzpicture}
&=
\begin{tikzpicture}[anchorbase,scale=0.7,tinynodes]
\draw[usual,crossline] (1.5,0) to[out=90,in=270] (1,1.5);
\draw[usual,crossline,rblobbed={0.5}{u}{spinach}] (2,0) to[out=90,in=270] (1.5,1.5);
\draw[usual,crossline,blobbed={0.05}{w}{orchid},blobbed={0.2}{v}{tomato}] (1,0) to (1,0.75) to[out=90,in=270] (2,1.5);
\end{tikzpicture}
,
\\
\begin{tikzpicture}[anchorbase,scale=1]
\draw[mor] (0,-0.5) to (0.25,0) to (0.75,0) to (1,-0.5) to (0,-0.5);
\node at (0.5,-0.25){$D$};
\end{tikzpicture}
&=
\begin{tikzpicture}[anchorbase,scale=0.7,tinynodes]
\draw[usual,crossline,blobbed={0.05}{w}{orchid},blobbed={0.2}{u}{spinach}] (1,0) to[out=90,in=180] (1.75,0.75) 
to[out=0,in=90] (2.5,0);
\draw[usual,crossline] (2,0) to[out=90,in=270] (1.5,0.75) to (1.5,1.5);
\draw[usual,crossline] (1.5,0) to[out=90,in=270] (2,0.75) to (2,1.5);
\draw[usual,crossline] (3,0) to[out=90,in=270] (3,1.5);
\draw[usual,crossline,blobbed={0.1}{v}{tomato}] (3.5,0) to[out=90,in=180] (3.75,0.25) 
to[out=0,in=90] (4,0);
\end{tikzpicture}
.
\end{aligned}
\end{gather*}

The next two statements follow as before. The proofs are omitted.

\begin{proposition}
For a choice of admissible parameters 
the above defines an involutive sandwich
cell datum for $\setstuff{BMW}_{g,n}(\cpar,\qvar,\avar)$.\qed
\end{proposition}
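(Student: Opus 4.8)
The plan is to follow the template of \fullref{proposition:bmw-basis} and \fullref{proposition:tl}, checking the three conditions of a sandwich cell datum in \fullref{definition:cellular} for $\setstuff{BMW}_{g,n}^{\dpar,\bpar}(\cpar,\qvar,\avar)$ one at a time. The $\KK$-basis condition is already done: for admissible parameters \fullref{lemma:bmw-basis-2} shows that \eqref{eq:bmw-basis}, assembled from a cap half-diagram $D$, a cup half-diagram $U$, and a middle element $b$ of the blobbed positive braid monoid $\setstuff{B}_{g,\lambda}^{+,\dpar,\bpar}$ with $\sand=\KK\setstuff{B}_{g,\lambda}^{+,\dpar,\bpar}$, is in bijection with clapped, blobbed perfect matchings and hence is a basis.

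For the ordered multiplication \eqref{eq:cell-mult} I would argue exactly as in the Brauer and BMW cases: left multiplication by any $x$ can only create cups and caps, never raise the number of through strands, so it respects the filtration indexed by $\Lambda$. When the through-strand count is preserved, the $U$ part is left untouched and every manipulation takes place on the $D$ part and among the through strands, the latter being absorbed into $\sand$; the resulting coefficients are therefore independent of $U$ and of $b$, as required. The step I expect to be the main obstacle is keeping track of the distorted topology of \fullref{lemma:blob-bmw-rels}, and in particular the error terms of \eqref{eq:bmw-more-slides}: sliding a blob past a crossing produces a correction $(\qvar-\qvar^{-1})$ times a difference of two diagrams. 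Here one must verify that the straight-strand term stays inside the monoid $\setstuff{B}_{g,\lambda}^{+,\dpar,\bpar}$, so that it merely reshuffles $b$ and feeds the sum over $a$ in \eqref{eq:cell-mult}, while the cup-cap term strictly lowers the number of through strands and hence lies in $\setstuff{A}^{<_{\Lambda}\lambda}$. The cyclotomic relation \eqref{eq:cyclotomic-bmw-blob} only bounds the number of blobs per strand and is compatible with the chosen monoid basis, so it causes no further trouble.

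Finally I would check that the datum is involutive. The antiinvolution $(\placeholder)^{\star}$ flips diagrams vertically, sends positive crossings to positive crossings, and fixes all blobs. Since any winding and any legal blob placement on a cap half-diagram is equally legal on the corresponding cup half-diagram (and $M_{\lambda}=N_{\lambda}$ here), $(\placeholder)^{\star}$ is a bijection between the $D$- and $U$-diagrams and restricts to an antiinvolution of $\sand$ fixing its monoid basis. This yields \eqref{eq:antiinvolution} and completes the verification, {\muta} as in \fullref{proposition:tl}.
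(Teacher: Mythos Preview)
Your proposal is correct and matches the paper's approach. The paper itself omits the proof entirely, stating only ``The next two statements follow as before. The proofs are omitted'' and marking the proposition with a \qedsymbol; your write-up is precisely the elaboration the paper intends, following the template of \fullref{proposition:tl} and \fullref{proposition:bmw-basis}, invoking \fullref{lemma:bmw-basis-2} for the basis and \fullref{lemma:blob-bmw-rels} to control the error terms modulo $\setstuff{A}^{<_{\Lambda}\lambda}$.
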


\begin{theorem}\label{theorem:brauerblob}
Let $\KK$ be a field, and choose admissible parameters.
\begin{enumerate}

\item If $\cpar\neq 0$, or $\cpar=0$ and $\lambda\neq 0$ is odd, 
then all $\lambda\in\Lambda$ are apexes. In the remaining case, 
$\cpar=0$ and $\lambda=0$ (this only happens if $n$ is even), all $\lambda\in\Lambda-\{0\}$ are apexes, but $\lambda=0$ is not an apex.

\item The simple $\setstuff{BMW}_{g,n}(\cpar,\qvar,\avar)$-modules of 
apex $\lambda\in\Lambda$ 
are parameterized by simple modules of $\KK\setstuff{B}_{g,\lambda}^{+,\dpar,\bpar}$.

\item The simple $\setstuff{BMW}_{g,n}(\cpar,\qvar,\avar)$-modules of 
apex $\lambda\in\Lambda$ can be constructed as 
the simple heads of
$\mathrm{Ind}_{\KK\setstuff{B}_{g,\lambda}^{+,\dpar,\bpar}}^{\setstuff{BMW}_{g,n}(\cpar,\qvar,\avar)}(K)$, 
where $K$ runs over (equivalence classes of) 
simple $\KK\setstuff{B}_{g,\lambda}^{+,\dpar,\bpar}$-modules.\qed

\end{enumerate}
\end{theorem}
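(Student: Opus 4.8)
The plan is to reduce \fullref{theorem:brauerblob} entirely to the machinery already established, exactly as was done for \fullref{theorem:brauer}, \fullref{theorem:bmw} and the handlebody Temperley--Lieb case. The key point is that for admissible parameters we have an involutive sandwich cell datum for $\setstuff{BMW}_{g,n}^{\dpar,\bpar}(\cpar,\qvar,\avar)$ whose sandwiched algebras are $\sand=\KK\setstuff{B}_{g,\lambda}^{+,\dpar,\bpar}$, so the entire statement is a direct application of \fullref{theorem:classification}.(c). First I would verify that we are in the situation of \fullref{lemma:h-subalgebra}: the relevant product of two half-diagrams $c_{D',b',U'}^{\lambda}c_{D,b,U}^{\lambda}$ produces, up to lower-order terms, a scalar times $c_{D,b'Fb,U'}^{\lambda}$ for some $F\in\sand$ coming from the coils/blobs created when caps meet cups. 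For the Brauer specialization (and hence the positive BMW monoid) this is a straightforward diagrammatic check: concatenating a cup-diagram $U'$ with a cap-diagram $D$ either closes up into circles (contributing $\cvar_{\gamma}$ factors, whence the scalar $r(U',D)$) or fails to match through strands (landing in $\setstuff{A}^{<_{\Lambda}\lambda}$).

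Next I would establish part (a), the apex computation. The argument is word-for-word the one in the proof of \fullref{theorem:brauer}: by \fullref{theorem:classification}.(a), $\lambda$ is an apex if and only if $r(U,D)\neq 0$ for some $D,U\in M_{\lambda}$. If $\cpar\neq 0$ then for every $\lambda$ one can find half-diagrams whose pairing is a nonzero circle evaluation, so all $\lambda$ are apexes. If $\cpar=0$ the straightening trick --- illustrated in the Brauer proof, where one chooses half-diagrams producing a pairing equal to $1$ by cancelling a single cup against a single cap --- works whenever $\lambda\neq 0$, since there is at least one through strand to carry the straightened pairing. The sole degenerate case is $\lambda=0$ with $\cpar=0$, forcing every pairing to be a multiple of $\cvar_{1}=0$, so $\phi^{0}$ is constant zero and $\lambda=0$ is not an apex; this happens only when $n$ is even, as $0\in\Lambda$ precisely then.

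For parts (b) and (c) I would simply invoke \fullref{theorem:classification}.(c): once $\lambda$ is an apex, the $\mathcal{H}$-cell $\mathcal{H}_{\lambda,D,U}\cong e\mathcal{J}_{\lambda}e$ contains the idempotent $e=\tfrac{1}{r(U,D)}c_{D,1,U}^{\lambda}$, and by Green's theorem the simple $\setstuff{BMW}_{g,n}^{\dpar,\bpar}$-modules of apex $\lambda$ are in bijection with simple $\mathcal{H}_{\lambda,D,U}$-modules, which by \fullref{lemma:h-subalgebra} are the simple $\sand=\KK\setstuff{B}_{g,\lambda}^{+,\dpar,\bpar}$-modules. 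The bijection sends $K$ to the simple head of $\mathrm{Ind}_{\KK\setstuff{B}_{g,\lambda}^{+,\dpar,\bpar}}^{\setstuff{BMW}_{g,n}^{\dpar,\bpar}(\cpar,\qvar,\avar)}(K)$. Note that we use the infinite-dimensional version (c) rather than (b), since the blobbed positive braid monoid algebra need not be Artinian for $g>0$; this is exactly why (c) was stated separately.

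The only genuinely non-routine ingredient, which I would flag as the main obstacle, is that everything rests on the admissibility hypothesis guaranteeing that the spanning set of \fullref{lemma:bmw-basis-2} is actually a basis --- equivalently that the sandwich cell datum is genuinely a cell datum. Admissibility is not automatic; it is secured abstractly by \fullref{lemma:handlebody-blobbmw-second} (after passing to a large enough field extension) but no effective criterion is known for $g>1$. Granting admissible parameters, however, the proof is purely formal and, as the authors indicate, presents no difference from the Brauer case in \fullref{theorem:brauer}. Hence the proof can be written in one line.

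\begin{proof}
No difference to the Brauer case in \fullref{theorem:brauer}, using \fullref{lemma:bmw-basis-2}, \fullref{lemma:blob-onestrand} (applied to the sandwiched algebra $\KK\setstuff{B}_{g,\lambda}^{+,\dpar,\bpar}$) and \fullref{theorem:classification}.(c).
\end{proof}
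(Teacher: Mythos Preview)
Your proposal is correct and matches the paper's approach exactly; the paper itself omits the proof entirely, simply noting that ``the next two statements follow as before,'' and your reduction to \fullref{theorem:classification}.(c) via the sandwich cell datum is precisely what is intended. One small remark: your concern that $\KK\setstuff{B}_{g,\lambda}^{+,\dpar,\bpar}$ ``need not be Artinian'' is unwarranted here, since by \fullref{lemma:bmw-basis-2} the entire cyclotomic BMW algebra is finite-dimensional for admissible parameters, so the sandwiched algebras are as well---hence part (b) of \fullref{theorem:classification} would apply too, though using (c) does no harm.
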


\begin{example}
For $g=0$ \fullref{theorem:brauerblob} is, 
of course, a BMW version of
\fullref{theorem:brauer}. For $g=1$ and the Brauer 
specialization this recovers 
\cite[Appendix 6]{BoCoDeVi-decomposition-cyclotomic-brauer}, where 
$\KK\setstuff{B}_{1,\lambda}^{+,\dpar,\bpar}$ is a complex reflection 
group of type $G(\dvar,1,\lambda)$.
In the semisimple case the parametrization of simples of
$\KK\setstuff{B}_{1,\lambda}^{+,\dpar,\bpar}$ (and thus, of $\setstuff{Br}_{1,n}(\cvar)$ per apex $\lambda$) is given 
by $\dvar$-multipartitions of $\lambda$. 
\end{example}

\section{Handlebody Hecke and Ariki--Koike algebras}\label{section:hecke}

We define handlebody Hecke algebras as quotients of handlebody braid groups.  
All algebras can alternatively be defined as quotients of an appropriate
BMW algebra from \fullref{section:brauer}. 

\subsection{Handlebody Hecke algebras}\label{subsection:heckealgebras}

Fix an invertible scalar $\qvar\in\KK$ and recall the definitions regarding
the handlebody braid groups $\setstuff{B}_{g,n}$ from \fullref{subsection:braids}. 

\begin{definition}\label{definition:hecke}
The \emph{handlebody Hecke algebra} 
(in $n$ strands and of genus $g$) $\setstuff{H}_{g,n}$
is the algebra whose underlying free $\KK$-module is
spanned by isotopy classes of all handlebody braid diagrams, with multiplication 
given by concatenation of diagrams, modulo the \emph{skein relation} 
\begin{gather}\label{eq:hecke-skein}
\begin{tikzpicture}[anchorbase,scale=0.7,tinynodes]
\draw[usual,crossline] (0.5,0) to[out=90,in=270] (0,0.75);
\draw[usual,crossline] (0,0) to[out=90,in=270] (0.5,0.75);
\end{tikzpicture}
-
\begin{tikzpicture}[anchorbase,scale=0.7,tinynodes,xscale=-1]
\draw[usual,crossline] (0.5,0) to[out=90,in=270] (0,0.75);
\draw[usual,crossline] (0,0) to[out=90,in=270] (0.5,0.75);
\end{tikzpicture}
=(\qvar-\qvar^{-1})\cdot
\begin{tikzpicture}[anchorbase,scale=0.7,tinynodes]
\draw[usual,crossline] (0.5,0) to (0.5,0.75);
\draw[usual,crossline] (0,0) to  (0,0.75);
\end{tikzpicture}
. 
\end{gather}
\end{definition}

Algebraically, 
$\setstuff{H}_{g,n}$ is the quotient 
of $\KK\setstuff{B}_{g,n}$ by the two-sided ideal generated by the elements
\begin{gather*}
(\beta_{i}+\qvar)(\beta_{i}-\qvar^{-1}), 
\end{gather*}
for $i=1,\dots,n-1$.
In this interpretation, we write $H_{i}$ for the image of 
$\beta_{i}$ in the quotient, but keep the notation $\tau_{u}$ for the others.
In defining $\setstuff{H}_{g,n}$ we only demand that
the $H_{i}$ satisfy a quadratic relation which 
is equivalent to either of
\begin{gather*}
H_{i}^{2}=(\qvar-\qvar^{-1})H_{i}+1,
\quad
H_{i}^{-1}=H_{i}-(\qvar-\qvar^{-1}).
\end{gather*}
Note that the coils $\tau_{u}$ do not satisfy 
a polynomial relation. Hence, as for the handlebody Coxeter group, 
$\setstuff{H}_{g,n}$ does not embed into 
a Hecke algebra of type A.

\begin{remark}\label{remark:hecke}
With respect to \fullref{definition:hecke} we note:	
\begin{enumerate}

\item In case $g=0$ the algebra $\setstuff{H}_{0,n}$ is the type A Hecke algebra.

\item For $g=1$ the algebra $\setstuff{H}_{1,n}$ is the extended affine Hecke algebra of type A. 

\item The algebra $\setstuff{H}_{g,n}$  
has been studied in \cite{Ba-braid-handlebodies}, 
but not much appears to be known.
\end{enumerate}
\end{remark}

For all $w\in\setstuff{S}_{n}$ we choose a 
reduced expression $\underline{w}=s_{i_{k}}\dots 
s_{i_{1}}\in\setstuff{S}_{n}$ 
once and for all. We define $H_{w}=H_{\underline{w}}=H_{i_{k}}\dots H_{i_{1}}$. 
We still have the Jucys--Murphy elements $\jm_{u,i}$ 
from \eqref{eq:jm-elements}, and the analog of \fullref{proposition:jm-elements-basis} is the following.

\begin{proposition}\label{proposition:heckejm-elements-basis}
The set
\begin{gather}\label{eq:jm-basis-hecke1}
\left\{ 
\jm_{u_{1},i_{1}}^{a_{1}}\dots 
\jm_{u_{m},i_{m}}^{a_{m}}H_{w} 
\,\middle\vert\,
\begin{gathered}
w\in\setstuff{S}_{n},
m\in\N,
\bsym{a}\in\Z^{m},
\\
(\bsym{u},\bsym{i})\in(\{1,\dots,g\}\times\{1,\dots,n\})^{m},
i_{1}\leq\dots\leq i_{m}
\end{gathered}
\right\}
\end{gather}
is a $\KK$-basis of $\setstuff{H}_{g,n}$.
\end{proposition}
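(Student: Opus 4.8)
The plan is to follow the two-step pattern of \fullref{proposition:jm-elements-basis}: first show that the set \eqref{eq:jm-basis-hecke1} spans $\setstuff{H}_{g,n}$, and then establish linear independence by means of a faithful, $\qvar$-deformed version of the action from \fullref{definition:action}.

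For spanning I would copy the argument of \fullref{lemma:jm-elements-span} almost verbatim. The relations \eqref{eq:jm-relations} hold already in $\setstuff{B}_{g,n}$, hence in the quotient $\setstuff{H}_{g,n}$, so the $\jm_{u,i}$ together with the $H_i$ still generate, and using the final relation in \eqref{eq:jm-relations} together with \fullref{lemma:jm-order-lemma} one can pull every symmetric-group generator to the right and order the Jucys--Murphy part so that $i_1\le\dots\le i_m$. The only new ingredient is that, after this reordering, the right-hand word in the $H_i$ must be rewritten as a $\KK$-linear combination of the $H_w$; this is the standard reduction in a type A Hecke algebra, using the quadratic relation $H_i^2=(\qvar-\qvar^{-1})H_i+1$ and the fact that $H_w$ is well defined independently of the chosen reduced expression, the latter holding because the $H_i$ satisfy the type A braid relations in \eqref{eq:handlebody-summary1}. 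In contrast to the Coxeter case one has $H_i\neq H_i^{-1}$, but since $H_i^{-1}=H_i-(\qvar-\qvar^{-1})$ the reduction still terminates.

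For independence I would construct a representation of $\setstuff{H}_{g,n}$ on $\Zf[X_1,\dots,X_n]$ in which the coils $\tau_u=\jm_{u,1}$ act exactly as in \fullref{definition:action} (so that the Jucys--Murphy elements act by the faithful operators underlying \fullref{theorem:action}), while each $H_i$ acts by the corresponding Demazure--Lusztig operator $\mathcal{D}_i$, the $\qvar$-deformation of the transposition $s_i$ satisfying $\mathcal{D}_i^2=(\qvar-\qvar^{-1})\mathcal{D}_i+1$ and the type A braid relations. One then checks that these operators satisfy all defining relations of $\setstuff{H}_{g,n}$: the braid relations \eqref{eq:handlebody-summary1} are classical for Demazure--Lusztig operators, $\tau_u$ commutes with $H_i$ for $i>1$ because the coil action only affects $X_1$, and the coil commutation in \eqref{eq:handlebody-summary2} is verified by the same computation as in \fullref{lemma:action}, now carried out with $\mathcal{D}_1$ in place of $s_1$. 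Once the action is known to be well defined, separating the elements of \eqref{eq:jm-basis-hecke1} proceeds as in \fullref{proposition:jm-elements-basis}: the ordered monomials in the $\jm_{u,i}^{\pm 1}$ are distinguished by the $X_i$-weights and by the monomials in $\zvar$ and the $Y_u$ they produce, while the $H_w$-factor is pinned down by the faithfulness of the classical polynomial representation of the finite type A Hecke algebra.

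The hard part will be the compatibility check for the deformed action, specifically verifying \eqref{eq:handlebody-summary2} for the operator $\mathcal{D}_1$ intertwined with the coil operators: unlike the bare permutation $s_1$, the operator $\mathcal{D}_1$ carries a nonlocal divided-difference term that must be tracked carefully through the conjugation $\beta_1\tau_u\beta_1$, and this is where the calculation of \fullref{lemma:action} genuinely deforms rather than merely copies. As a consistency check, setting $\qvar=1$ collapses $\mathcal{D}_i$ to $s_i$ and recovers precisely the faithful action of \fullref{theorem:action}, with the whole statement degenerating to \fullref{proposition:jm-elements-basis}; this degeneration also suggests an alternative route to independence by a flatness argument over $\Z[\qvar^{\pm 1}]$ specializing at $\qvar=1$, though that would require separately showing that $\setstuff{H}_{g,n}$ is torsion-free over the parameter ring, which the explicit action sidesteps.
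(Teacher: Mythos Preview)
Your spanning argument matches the paper's almost exactly; the paper just makes the extra step explicit by recording the ``Hecke pull'' relations
\[
H_{i-1}\jm_{u,i}=\jm_{u,i-1}H_{i-1}-(\qvar-\qvar^{-1})\jm_{u,i},\qquad
H_{i}^{-1}\jm_{u,i}=\jm_{u,i+1}H_{i}^{-1}+(\qvar-\qvar^{-1})\jm_{u,i},
\]
and their inverses, which are the concrete consequences of \eqref{eq:jm-relations} together with the skein relation that let one push all $H_i$ to the right and order the Jucys--Murphy factors.

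For linear independence the paper does \emph{not} build a $\qvar$-deformed action. It takes precisely the route you flagged as an alternative: linear independence at $\qvar=1$ is \fullref{proposition:jm-elements-basis}, and then a specialization argument over $\Z[\qvar^{\pm 1}]$ upgrades this to general $\qvar$. Your worry about torsion-freeness is milder than you suggest: given any finite relation $\sum_k c_k(\qvar)\,v_k=0$, specializing at $\qvar=1$ forces $c_k(1)=0$, hence each $c_k$ is divisible by $(\qvar-1)$; dividing and iterating terminates since the Laurent degree drops. So the paper's route is short and self-contained.

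Your primary plan via Demazure--Lusztig operators is a genuinely different approach, and the point you single out as ``the hard part'' is exactly where it is not clear it goes through. Demazure--Lusztig operators are tailored to intertwine with \emph{multiplication} by $X_i$ via the Bernstein relation $T_1X_1T_1=X_2$; the coil action in \fullref{definition:action} is instead the affine substitution $X_1\mapsto \zvar^{u}X_1+Y_u$, and there is no off-the-shelf reason why $\mathcal{D}_1\tau_u\mathcal{D}_1$ should commute with $\tau_v$ on $\Zf[X_1,\dots,X_n]$. You would have to either produce a bespoke deformation of $s_1$ adapted to this substitution action and verify \eqref{eq:handlebody-summary2} by hand, or change the coil action to fit the standard Bernstein framework---either way, real work that your proposal does not yet supply. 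Since the specialization argument already delivers the result with no new constructions, that is the cleaner choice here.
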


\fullref{proposition:heckejm-elements-basis} can be seen 
as a higher genus version of
\cite[Equation (3.10)]{ArKo-hecke-algebra}.

\begin{proof}
That the set in \eqref{eq:jm-basis-hecke1} spans can be proven {\muta} as 
in \fullref{lemma:jm-elements-span}. 
That is, we use the relations in \eqref{eq:jm-relations} 
together with the following immediate consequences 
of \eqref{eq:jm-relations} and the skein relation \eqref{eq:hecke-skein}:
\begin{gather}\label{eq:hecke-pull}
\begin{gathered}
H_{i-1}\jm_{u,i}
=
\jm_{u,i-1}H_{i-1}
-(\qvar-\qvar^{-1})\jm_{u,i}
,\quad
H_{i}^{-1}\jm_{u,i}
=
\jm_{u,i+1}H_{i}^{-1}
+(\qvar-\qvar^{-1})\jm_{u,i}
,\\
H_{i-1}^{-1}\jm_{u,i}^{-1}
=
\jm_{u,i-1}^{-1}H_{i-1}^{-1}
+(\qvar-\qvar^{-1})\jm_{u,i}^{-1}
,\quad
H_{i}\jm_{u,i}^{-1}
=
\jm_{u,i+1}^{-1}H_{i}
-(\qvar-\qvar^{-1})\jm_{u,i}^{-1}.
\end{gathered}
\end{gather}
Moreover, 
\fullref{proposition:jm-elements-basis} shows the elements in \eqref{eq:jm-basis-hecke1} are 
linearly independent if $\qvar=1$ 
(note that the handlebody Coxeter group $\setstuff{W}_{g,n}$
can be obtained from $\setstuff{H}_{g,n}$
by specializing $\qvar=1$), which implies that they are linearly independent for generic $\qvar$.
\end{proof}

Combining \fullref{proposition:heckejm-elements-basis} 
with the respective 
statement for the handlebody Coxeter 
group \fullref{proposition:jm-elements-basis} we get 
the following.

\begin{corollary}
For $\qvar=1$
the map given by
$\tau_{u}\mapsto t_{u},H_{i}\mapsto s_{i}$
is an isomorphism of $\KK$-algebras $\setstuff{H}_{g,n}\xrightarrow{\cong}\KK\setstuff{W}_{g,n}$.
\end{corollary}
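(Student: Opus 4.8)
The plan is to realise the claimed assignment as a well-defined algebra homomorphism for $\qvar=1$ and then show that it carries the distinguished Jucys--Murphy basis of $\setstuff{H}_{g,n}$ onto that of $\KK\setstuff{W}_{g,n}$, so that it is a bijection on bases and hence an isomorphism. The genuine content is already packaged in \fullref{proposition:heckejm-elements-basis} and \fullref{proposition:jm-elements-basis}; the corollary is the bookkeeping that these two bases are indexed by the same set and are matched by the specialisation.

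First I would check that $\phi\colon\setstuff{H}_{g,n}\to\KK\setstuff{W}_{g,n}$, $\tau_u\mapsto t_u$, $H_i\mapsto s_i$, is well defined at $\qvar=1$. Since $\setstuff{H}_{g,n}$ is by definition the quotient of $\KK\setstuff{B}_{g,n}$ by the two-sided ideal generated by $(\beta_i+\qvar)(\beta_i-\qvar^{-1})$, it suffices to verify that the images $t_u,s_i$ satisfy the braid relations \eqref{eq:handlebody-summary1}--\eqref{eq:handlebody-summary2} together with the specialised quadratic relation. The braid relations hold because $\setstuff{W}_{g,n}$ is itself a quotient of $\setstuff{B}_{g,n}$, and for $\qvar=1$ the quadratic relator $(\beta_i+1)(\beta_i-1)=\beta_i^2-1$ maps to $s_i^2-1=0$, which is a defining relation of $\setstuff{W}_{g,n}$. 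Thus $\phi$ is an algebra homomorphism, and it is surjective since the $t_u,s_i$ generate $\KK\setstuff{W}_{g,n}$.

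Second I would track the effect of $\phi$ on the basis \eqref{eq:jm-basis-hecke1}. By construction $\phi(\jm_{u,i}^{\pm1})=\jmc_{u,i}^{\pm1}$, because $\jmc_{u,i}$ is defined precisely as the image of $\jm_{u,i}$ under $\setstuff{B}_{g,n}\to\setstuff{W}_{g,n}$; and for the chosen reduced expression $\underline{w}=s_{i_k}\dots s_{i_1}$ one gets $\phi(H_w)=s_{i_k}\dots s_{i_1}=w$ in $\setstuff{W}_{g,n}$, using $H_i^2=1$ at $\qvar=1$. Hence $\phi$ sends $\jm_{u_1,i_1}^{a_1}\dots\jm_{u_m,i_m}^{a_m}H_w$ to $\jmc_{u_1,i_1}^{a_1}\dots\jmc_{u_m,i_m}^{a_m}w$. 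Because the indexing sets in \eqref{eq:jm-basis-hecke1} and \eqref{eq:jm-basis} are literally identical, $\phi$ restricts to a bijection between the basis of $\setstuff{H}_{g,n}$ from \fullref{proposition:heckejm-elements-basis} and the basis of $\KK\setstuff{W}_{g,n}$ from \fullref{proposition:jm-elements-basis}. A $\KK$-linear map carrying one basis bijectively onto another is an isomorphism, which finishes the proof.

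The only point that is not purely formal — ruling out that the specialisation $\qvar=1$ could collapse $\setstuff{H}_{g,n}$ onto something strictly smaller than $\KK\setstuff{W}_{g,n}$ — is exactly what is dispatched by the rank comparison term by term, and this is where the two basis propositions do all the work. The residual items to keep honest are genuinely minor: that $\phi(H_w)=w$ is independent of the chosen reduced expression (automatic, since the class $w$ in $\setstuff{W}_{g,n}$ does not depend on it), and that the ordering conventions $i_1\leq\dots\leq i_m$ on the two sides agree, which they do by inspection.
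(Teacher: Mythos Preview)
Your proof is correct and is exactly the approach the paper takes: the corollary is stated as an immediate consequence of combining \fullref{proposition:heckejm-elements-basis} with \fullref{proposition:jm-elements-basis}, and you have simply spelled out the details (well-definedness of the map at $\qvar=1$, and the bijection on the two Jucys--Murphy bases) that the paper leaves implicit.
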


In order to define a sandwich cell datum we first note that
there is an antiinvolution $(\placeholder)^{\star}$ 
on $\setstuff{H}_{g,n}$ given by mirroring along a horizontal 
axis, but not changing crossings ({\eg} positive crossings
as in \eqref{eq:positive-braid} remain positive).
Moreover, we now recall the 
cellular basis of $\setstuff{H}_{0,n}$ from \cite{Mu-typea-hecke}, see also \cite[Chapter 3]{Ma-hecke-schur}. 

\begin{remark}
The cellular basis we are going to recall is not the Kazhdan--Lusztig 
basis, but the so-called \emph{Murphy basis} or
\emph{standard basis}. This basis 
has the advantage that it has a known generalization to the case of $g=1$, 
see \cite{DiJaMa-cyclotomic-q-schur}. 
On the other hand, we are not aware of a generalization 
of Kazhdan--Lusztig theory to higher genus.
\end{remark}

Recall that a \emph{partition} $\lambda$ of $n$ of length 
$\ell(\lambda)=k$ is a non-increasing sequence $\lambda_{1}\geq\lambda_{2}\geq\dots\geq\lambda_{k}>0$ of 
integers adding up to $|\lambda|=n$. 
Associated to $\lambda$ is its \emph{Young diagram} $[\lambda]$
which we will illustrate using the English convention.
Moreover, any filling $T$ of $[\lambda]$ with non-repeating 
numbers from $\{1,\dots,n\}$ is called a \emph{tableaux} 
of shape $\lambda$. Such a $T$ is further 
called \emph{standard} if its entries increase along rows and columns.
The \emph{canonical tableaux} $T_{c}$ is the filling where 
the numbers increase 
from left to right and top to bottom.

To a standard tableaux $T$ of shape $\lambda$ we can associate 
a down diagram $D$ 
using the classical \emph{permute entries strategy}.
We also need the Young subgroup $\setstuff{S}_{\lambda}^{row}=\prod_{i}\setstuff{S}_{r(i)}$ 
being the row stabilizer of $\lambda$, 
{\ie} $r(i)$ is the length of the $i$th row of $[\lambda]$.
Then let $H_{\lambda}=\prod_{i}(\sum_{w\in\setstuff{S}_{r(i)}}H_{w})$ 
(note that the bracketed terms commute, so 
the order is not important)
and let $H_{T}$ be the positive braid lifts 
of (a choice of)
minimal permutation from $T$ to $T_{c}$, 
identifying $s_{i}$ with the transposition $(i,i+1)$. We then let 
$D=D(T)=H_{T}$, $U=U(T^{\prime})=D(T^{\prime})^{\star}$ and 
$c_{D,U}^{\lambda}=DH_{\lambda}U$. 
Note that the middle $H_{\lambda}$ of $c_{D,U}^{\lambda}$ is a 
quasi-idempotent, {\ie} $H_{\lambda}^{2}=rH_{\lambda}$ for some 
potentially not invertible
$r\in\KK$.
Note also that $H_{0,n}$ is a subalgebra of $H_{g,n}$, 
so we can use the above construction for $H_{g,n}$ as well.

\begin{example}
The above is best illustrated in an example. For $\lambda=(4,2)$ 
we have $\setstuff{S}_{\lambda}^{row}=\setstuff{S}_{4}\times\setstuff{S}_{2}$ and
$H_{\lambda}=(1+H_{1}+\dots+H_{w_{0}})(1+H_{5})$, where 
$w_{0}$ is the longest element in $S_{4}$ written in terms of the 
generators $s_{1}$, $s_{2}$ and $s_{3}$. Moreover, for $T$ 
respectively $T^{\prime}$ as below 
we construct $D=D(T)$ respectively $U=D(T^{\prime})^{\star}$ by
\begin{gather*}
\scalebox{0.85}{$\begin{tikzpicture}[anchorbase,scale=0.7]
\node at (-1.5,0) {$T_{c}$};
\draw[very thick] (0,0) rectangle node[pos=0.5]{$1$} (-0.75,0.75);
\draw[very thick] (0,0) rectangle node[pos=0.5]{$5$} (-0.75,-0.75);
\draw[very thick] (0,0) rectangle node[pos=0.5]{$2$} (0.75,0.75);
\draw[very thick] (0.75,0) rectangle node[pos=0.5]{$3$} (1.5,0.75);
\draw[very thick] (1.5,0) rectangle node[pos=0.5]{$4$} (2.25,0.75);
\draw[very thick] (0,-0.75) rectangle node[pos=0.5]{$6$} (0.75,0);
\node at (-1.5,-4) {$T$};
\draw[very thick] (0,-4) rectangle node[pos=0.5]{$1$} (-0.75,-3.25);
\draw[very thick] (0,-4) rectangle node[pos=0.5]{$2$} (-0.75,-4.75);
\draw[very thick] (0,-4) rectangle node[pos=0.5]{$3$} (0.75,-3.25);
\draw[very thick] (0.75,-4) rectangle node[pos=0.5]{$4$} (1.5,-3.25);
\draw[very thick] (1.5,-4) rectangle node[pos=0.5]{$6$} (2.25,-3.25);
\draw[very thick] (0,-4.75) rectangle node[pos=0.5]{$5$} (0.75,-4);
\draw[usual,tomato,->] (-1.5,-3.5) to (-1.5,-2)node[right]{$s_{4}s_{5}s_{3}s_{2}$} to (-1.5,-0.5);
\end{tikzpicture}
,\quad
\begin{tikzpicture}[anchorbase,scale=0.7]
\node at (-1.5,0) {$T_{c}$};
\draw[very thick] (0,0) rectangle node[pos=0.5]{$1$} (-0.75,0.75);
\draw[very thick] (0,0) rectangle node[pos=0.5]{$5$} (-0.75,-0.75);
\draw[very thick] (0,0) rectangle node[pos=0.5]{$2$} (0.75,0.75);
\draw[very thick] (0.75,0) rectangle node[pos=0.5]{$3$} (1.5,0.75);
\draw[very thick] (1.5,0) rectangle node[pos=0.5]{$4$} (2.25,0.75);
\draw[very thick] (0,-0.75) rectangle node[pos=0.5]{$6$} (0.75,0);
\node at (-1.5,-4) {$T^{\prime}$};
\draw[very thick] (0,-4) rectangle node[pos=0.5]{$1$} (-0.75,-3.25);
\draw[very thick] (0,-4) rectangle node[pos=0.5]{$3$} (-0.75,-4.75);
\draw[very thick] (0,-4) rectangle node[pos=0.5]{$2$} (0.75,-3.25);
\draw[very thick] (0.75,-4) rectangle node[pos=0.5]{$4$} (1.5,-3.25);
\draw[very thick] (1.5,-4) rectangle node[pos=0.5]{$6$} (2.25,-3.25);
\draw[very thick] (0,-4.75) rectangle node[pos=0.5]{$5$} (0.75,-4);
\draw[usual,tomato,->] (-1.5,-3.5) to (-1.5,-2)node[right]{$s_{4}s_{5}s_{3}$} to (-1.5,-0.5);
\end{tikzpicture}
\rightsquigarrow
c_{D,U}^{\lambda}=
\begin{tikzpicture}[anchorbase,scale=0.7]
\draw[usual,crossline] (5,0) to[out=90,in=270] (3,2);
\draw[usual,crossline] (4,0) to[out=90,in=270] (5,2);
\draw[usual,crossline] (3,0) to[out=90,in=270] (2,2);
\draw[usual,crossline] (2,0) to[out=90,in=270] (1,2);
\draw[usual,crossline] (1,0) to[out=90,in=270] (4,2);
\draw[usual,crossline] (0,0) to[out=90,in=270] (0,2);
\draw[usual,crossline] (5,3) to[out=90,in=270] (4,5);
\draw[usual,crossline] (4,3) to[out=90,in=270] (2,5);
\draw[usual,crossline] (3,3) to[out=90,in=270] (5,5);
\draw[usual,crossline] (2,3) to[out=90,in=270] (3,5);
\draw[usual,crossline] (1,3) to[out=90,in=270] (1,5);
\draw[usual,crossline] (0,3) to[out=90,in=270] (0,5);
\draw[mor] (-0.25,2) rectangle node[pos=0.5]{$H_{\lambda}$} (5.25,3);
\end{tikzpicture}$}
,
\end{gather*}
where we illustrated $H_{\lambda}$ as a box.
\end{example}

We let $\Lambda=(\Lambda,\leq_{d})$ be the set of all partitions of $n$, 
which are ordered by the dominance order $\leq_{d}$ (we use the convention 
from \cite[Section 3.1]{Ma-hecke-schur}).
The set $M_{\lambda}$ is the set of all standard tableaux 
of shape $\lambda$.
The middle part is $\sand=
\setstuff{L}_{g,n}H_{\lambda}$, where $\setstuff{L}_{g,n}$ 
is the subalgebra of $\setstuff{H}_{g,n}$ spanned 
by the Jucys--Murphy elements. The sandwiched basis of 
$\setstuff{L}_{g,n}H_{\lambda}$ that we use is \eqref{eq:jm-basis-hecke1} restricted to $\setstuff{L}_{g,n}$ and then composed with $H_{\lambda}$.
We then let $c_{D,b,U}^{\lambda}=D(T)bU(T^{\prime})$ 
where $b=b(\bsym{a},\bsym{u},\bsym{i})$ runs over elements 
of the form $\jm_{u_{1},i_{1}}^{a_{1}}\dots 
\jm_{u_{m},i_{m}}^{a_{m}}H_{\lambda}$ with the 
same indices as in \eqref{eq:jm-basis-hecke1}. We then get
\begin{gather}\label{eq:hecke-basis}
\{c_{D,b,U}^{\lambda}\mid\lambda\in\Lambda,D,U\in M_{\lambda},
b\in\sand\}.
\end{gather} 

\begin{proposition}
The above defines a sandwich
cell datum for $\setstuff{H}_{g,n}$.
\end{proposition}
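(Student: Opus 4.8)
The plan is to verify the three conditions of \fullref{definition:cellular} for the datum $(\Lambda,\leq_{d})$, $M_{\lambda}$, $\sand=\setstuff{L}_{g,n}H_{\lambda}$ and the set \eqref{eq:hecke-basis}, recognizing the construction as a higher-genus, non-cyclotomic analogue of the Dipper--James--Mathas standard basis for Ariki--Koike algebras (\cite{DiJaMa-cyclotomic-q-schur}, \cite{ArKo-hecke-algebra}). The key conceptual point is that the cyclotomic relation is never used in the cellularity arguments of \loccit{}---it only serves to make the algebra finite-dimensional---so those arguments apply essentially verbatim once the two structural inputs specific to our setting are in place: the Murphy (standard) basis of the type $A$ subalgebra $\setstuff{H}_{0,n}$ from \cite{Mu-typea-hecke}, \cite{Ma-hecke-schur}, and the straightening relations \eqref{eq:hecke-pull} together with the commutation relations \eqref{eq:jm-relations} governing the Jucys--Murphy elements.

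First I would establish that \eqref{eq:hecke-basis} is a $\KK$-basis. By \fullref{proposition:heckejm-elements-basis} the algebra $\setstuff{H}_{g,n}$ is free as a left $\setstuff{L}_{g,n}$-module with basis $\{H_{w}\mid w\in\setstuff{S}_{n}\}$; since Murphy's theorem supplies a second basis $\{D(T)H_{\lambda}U(T^{\prime})\}$ of $\setstuff{H}_{0,n}=\KK\{H_{w}\}$, the elements $J\,D(T)H_{\lambda}U(T^{\prime})$ also form a $\KK$-basis, where $J$ ranges over the ordered Jucys--Murphy monomials $\jm_{u_{1},i_{1}}^{a_{1}}\cdots\jm_{u_{k},i_{k}}^{a_{k}}$ of \eqref{eq:jm-basis-hecke1}. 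The proposed basis \eqref{eq:hecke-basis} differs only in that $J$ is placed in the middle, between $D(T)$ and $H_{\lambda}U(T^{\prime})$. Sliding $J$ rightward past the positive braid $D(T)$ by means of \eqref{eq:hecke-pull} yields a leading term $D(T)J^{\prime}$ with coefficient $1$ together with corrections of strictly shorter braid length; iterating this and invoking \fullref{lemma:jm-order-lemma} to re-order the outcome shows that the change of basis between the ``$J$-on-the-left'' set and the ``$J$-in-the-middle'' set \eqref{eq:hecke-basis} is unitriangular, and hence \eqref{eq:hecke-basis} is a basis.

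Next I would check the multiplication rule \eqref{eq:cell-mult}. For left multiplication by a generator $H_{i}$ this is Murphy's computation carried out inside the type $A$ subalgebra: $H_{i}$ acts on the $D$-part only, returning a combination of elements $c_{S,b,U}^{\lambda}$ with the same $\lambda$ together with a correction lying in the dominance ideal, which is identified with $\setstuff{H}_{g,n}^{<_{\Lambda}\lambda}$ in the convention of \cite[Section 3.1]{Ma-hecke-schur}. For left multiplication by a coil $\tau_{u}$ (equivalently a Jucys--Murphy element) one uses \eqref{eq:jm-relations} and \eqref{eq:hecke-pull} to slide the new factor through $D(T)$ and absorb it into the $D$-part and the middle algebra $\sand=\setstuff{L}_{g,n}H_{\lambda}$; in both cases the $U$-part is left untouched and the resulting scalars $r(S,D)$ depend on neither $U$ nor the middle element $b$, exactly as required, with the right-handed statement following by applying $(\placeholder)^{\star}$. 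Condition (b) of \fullref{definition:cellular} is then the bimodule statement $\setstuff{H}_{g,n}(\lambda)\cong\Delta(\lambda)\hcirc_{\sand}(\lambda)\Delta$, which is immediate from the basis once $\Delta(\lambda)$ and $(\lambda)\Delta$ are taken to be the free $\sand$-modules on $M_{\lambda}$. Finally, since $H_{\lambda}^{\star}=H_{\lambda}$, since $\jm_{u,i}^{\star}=\jm_{u,i}$, and since $(\placeholder)^{\star}$ interchanges the $D(T)$ and $U(T^{\prime})$ halves while fixing the middle up to the reordering allowed by \eqref{eq:jm-relations}, the datum is in fact involutive.

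The main obstacle is the straightening step underlying both the basis claim and \eqref{eq:cell-mult}: one must show that pulling a Jucys--Murphy monomial through the positive braids $D(T)$, $U(T^{\prime})$ and past the quasi-idempotent $H_{\lambda}$ always produces corrections that are either of strictly more dominant shape (hence in $\setstuff{H}_{g,n}^{<_{\Lambda}\lambda}$) or again of the prescribed form with the Jucys--Murphy part lying in $\sand$, without ever disturbing the fixed $U$-half or escaping the middle algebra. Controlling this interplay of \eqref{eq:jm-relations}, \fullref{lemma:jm-order-lemma} and \eqref{eq:hecke-pull} is precisely the delicate bookkeeping at the heart of the Dipper--James--Mathas argument, and is where essentially all of the work lies; the remaining verifications are formal.
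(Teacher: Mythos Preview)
Your plan for establishing the basis \eqref{eq:hecke-basis} and the cell multiplication rule \eqref{eq:cell-mult} is essentially the paper's: both rely on Murphy's standard basis of $\setstuff{H}_{0,n}$ together with the straightening relations \eqref{eq:jm-relations} and \eqref{eq:hecke-pull} to shuffle Jucys--Murphy monomials past the positive braids $D(T),U(T')$. The paper packages \eqref{eq:cell-mult} as a single computation on a product of two basis elements (pulling $\bsym{L}$ to the far left and $\bsym{L}'$ to the far right, invoking the classical cellularity in the middle, and then pushing back), whereas you check generators; both are legitimate. One imprecision: your sentence that for $H_{i}$ ``this is Murphy's computation carried out inside the type $A$ subalgebra: $H_{i}$ acts on the $D$-part only'' glosses over the fact that Murphy's triangularity is a statement about $H_{i}D(T)H_{\lambda}$, not about $H_{i}D(T)$ alone, and in your basis the monomial $\bsym{L}$ sits between $D(T)$ and $H_{\lambda}$. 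So one still has to slide $\bsym{L}$ out of the way (incurring the same error terms you acknowledge later) before Murphy's argument applies; this is precisely the ``delicate bookkeeping'' you flag at the end, but it is needed for the $H_{i}$-case as well, not only for $\tau_{u}$.

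There is, however, a genuine error in your closing claim that the datum is involutive. The proposition in the paper deliberately asserts only a \emph{sandwich} cell datum, in contrast to the Temperley--Lieb, blob, Brauer and BMW cases where ``involutive'' is stated. The problem is that $b=\bsym{L}H_{\lambda}\in\sandbasis$ has $(\bsym{L}H_{\lambda})^{\star}=H_{\lambda}\bsym{L}^{\mathrm{rev}}$, and $H_{\lambda}$ does not commute with Jucys--Murphy elements, so $H_{\lambda}\bsym{L}^{\mathrm{rev}}$ is in general \emph{not} of the form $\bsym{L}'H_{\lambda}$ for any ordered monomial $\bsym{L}'$. Already for $g=1$, $n=2$, $\lambda=(2)$ one computes $(1+H_{1})\jm_{1,2}-\jm_{1,2}(1+H_{1})=\jm_{1,1}H_{1}-H_{1}\jm_{1,1}\neq 0$. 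Thus $(\placeholder)^{\star}$ does not restrict to a bijection on $\sandbasis$, and the axiom \eqref{eq:antiinvolution} fails for this choice of sandwiched basis. Your appeal to ``the reordering allowed by \eqref{eq:jm-relations}'' does not help: those relations let you commute Jucys--Murphy elements amongst themselves, not past $H_{\lambda}$. Drop the involutivity claim and your argument matches the paper's.
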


\begin{proof}
To argue that \eqref{eq:hecke-basis} spans
$\setstuff{H}_{g,n}$ we use \cite[Proposition 3.16]{Ma-hecke-schur} as follows.
Instead of the elements $H_{w}\in\setstuff{H}_{0,n}\hookrightarrow
\setstuff{H}_{g,n}$ in \eqref{eq:jm-basis-hecke1} we could 
also use the Murphy basis $DH_{\lambda}U$ (as explained above, {\ie} 
\eqref{eq:hecke-basis} for $g=0$). Then, by 
\eqref{eq:jm-relations} and \eqref{eq:hecke-pull}, we can push 
the $D$ to the left, that is, 
\begin{gather*}
\jm_{u_{1}^{},i_{1}}^{a_{1}}\dots 
\jm_{u_{m},i_{m}}^{a_{m}}DH_{\lambda}U
=D\jm_{u_{1},i_{1}^{\prime}}^{a_{1}}\dots 
\jm_{u_{m},i_{m}^{\prime}}^{a_{m}}H_{\lambda}U+\text{error terms}
\end{gather*} 
which shows that the $c_{D,b,U}^{\lambda}$ span.
To show that \eqref{eq:hecke-basis} is a basis we observe that,
left aside error terms, the process of moving $D$ to the left 
does not change the number $m$, 
the powers $\bsym{a}$, and associated cores $\bsym{u}$, while 
$\bsym{i}$ changes, but the property 
$i_{1}\leq\dots\leq i_{m}$ is preserved. Hence, 
a counting argument ensures that \eqref{eq:jm-basis-hecke1} and 
\eqref{eq:hecke-basis} have the same (finite) size per fixed 
$(m,\bsym{a},\bsym{u})$, 
so \eqref{eq:hecke-basis} is indeed a basis.

It remains to prove \eqref{eq:cell-mult} as all other claimed 
properties hold by construction. Let us write 
$\bsym{L}=\jm_{u_{1},i_{1}}^{a_{1}}\dots 
\jm_{u_{m},i_{m}}^{a_{m}}$ for short, omitting the precise indices.
We calculate: 
\begin{align*}
D\bsym{L}H_{\lambda}U
D^{\prime}\bsym{L}^{\prime}H_{\mu}U^{\prime}
&=
\bsym{L}^{\dagger}DH_{\lambda}U
D^{\prime}H_{\mu}U^{\prime}(\bsym{L}^{\prime})^{\dagger}
+\text{error terms}
\\
&\equiv_{\leq_{\Lambda}}
r(U,D^{\prime})
\bsym{L}^{\dagger}DH_{\max_{\leq_{d}}(\lambda,\mu)}
U^{\prime}(\bsym{L}^{\prime})^{\dagger}
+\text{error terms}
\\
&=
D\bsym{L}H_{\max_{\leq_{d}}(\lambda,\mu)}
\bsym{L}^{\prime}U^{\prime}
+\text{error terms}
\\
&=
D\bsym{L}
(\bsym{L}^{\prime})^{\dagger}H_{\max_{\leq_{d}}(\lambda,\mu)}U^{\prime}
+\text{error terms}
\\
&\equiv_{\leq_{\Lambda}}
D\bsym{L}
(\bsym{L}^{\prime})^{\dagger}H_{\max_{\leq_{d}}(\lambda,\mu)}U^{\prime}
=
D\bsym{L}
F\bsym{L}^{\prime}H_{\max_{\leq_{d}}(\lambda,\mu)}U^{\prime}
,
\end{align*}
where the $\dagger$ indicates
a shift of the indices, as before, and $F$ 
is some product of Jucys--Murphy elements. 
In this calculation we used 
\eqref{eq:jm-relations} 
and \eqref{eq:hecke-pull} several times and also:
The crucial first congruence 
follows from the classical case, see {\eg} \cite[Theorem 3.20]{Ma-hecke-schur}.
The final equality reorders the 
Jucys--Murphy elements to match the expression in 
\eqref{eq:cell-mult}, while the second congruence uses 
the observation that the error terms in \eqref{eq:hecke-pull} 
have a lower number of $H_{i}$, so correspond to Young diagrams with shorter rows.
\end{proof} 

Let $e\in\N$ be the smallest number such that 
$[e]_{\qvar}=0$, or let $e=\infty$ if no such 
$e$ exists. An element $\lambda\in\Lambda$ is called 
\emph{$e$-restricted} if $\lambda_{i}-\lambda_{i+1}<e$.

\begin{theorem}\label{theorem:cellhecke}
Let $\KK$ be a field.
\begin{enumerate}

\item An element $\lambda\in\Lambda$ is an apex 
if and only if $\lambda$ is $e$-restricted.

\item The simple $\setstuff{H}_{n,g}$-modules of 
apex $\lambda\in\Lambda$ 
are parameterized by simple modules of $\setstuff{L}_{g,n}H_{\lambda}$.

\item The simple $\setstuff{H}_{n,g}$-modules of 
apex $\lambda\in\Lambda$ can be constructed as 
the simple heads of
$\mathrm{Ind}_{\setstuff{L}_{g,n}H_{\lambda}}^{\setstuff{H}_{n,g}}(K)$, 
where $K$ runs over (equivalence classes of) 
simple $\setstuff{L}_{g,n}H_{\lambda}$-modules.

\end{enumerate}
\end{theorem}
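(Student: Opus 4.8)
The plan is to deduce all three parts from the classification machinery of \fullref{theorem:classification}, in exactly the same spirit as the proofs of \fullref{theorem:brauer} and \fullref{theorem:bmw}, the sandwich cell datum having just been established in the preceding proposition. The only genuinely new input is the determination of the apexes in part (a): for the Hecke algebra this is governed by the classical Dipper--James/Murphy theory of the type $A$ Hecke algebra rather than by a cup-cap straightening argument, after which parts (b) and (c) are formal.

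For part (a) I would invoke \fullref{theorem:classification}.(a), by which $\lambda$ is an apex precisely when $\phi^{\lambda}$ is not identically zero. The structural point is that the half-diagrams $D(T)$ and $U(T^{\prime})$ making up the cellular basis \eqref{eq:hecke-basis} are positive braids carrying no coils, so a gluing $U(T^{\prime})D(T)$ lands in the classical subalgebra $\setstuff{H}_{0,n}\hookrightarrow\setstuff{H}_{g,n}$ and produces no coils; moreover each Jucys--Murphy element $\jm_{u,i}$ is invertible (by the first relation in \eqref{eq:jm-relations}). Hence the middle factors $b\in\setstuff{L}_{g,n}H_{\lambda}$ of two cell-module vectors contribute only invertible elements, and $\phi^{\lambda}$ is nonzero if and only if its coil-free component---which is exactly the bilinear form on the classical Specht module---is nonzero. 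By the classical theory, see \cite{Ma-hecke-schur}, this holds if and only if $\lambda$ is $e$-restricted, giving (a).

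For parts (b) and (c) I would argue that, since $\KK$ is a field, an apex $\lambda$ provides $D\in M_{\lambda}$ and $U\in N_{\lambda}$ with $r(U,D)\neq 0$, hence with $r(U,D)$ invertible. The multiplication of cellular basis elements, controlled by \eqref{eq:jm-relations} and \eqref{eq:hecke-pull} exactly as in the proof that \eqref{eq:hecke-basis} is a cell datum, is of the shape required by \fullref{lemma:h-subalgebra}, so that $\mathcal{H}_{\lambda,D,U}\cong\sand=\setstuff{L}_{g,n}H_{\lambda}$. Because $\setstuff{L}_{g,n}H_{\lambda}$ is infinite-dimensional for $g>0$ (in particular not Artinian), I would apply \fullref{theorem:classification}.(c) rather than (b); together with the isomorphism $\mathcal{H}_{\lambda,D,U}\cong\setstuff{L}_{g,n}H_{\lambda}$ this yields the bijection between simple $\setstuff{H}_{g,n}$-modules of apex $\lambda$ and simple $\setstuff{L}_{g,n}H_{\lambda}$-modules in (b), each realized as the simple head of $\mathrm{Ind}_{\setstuff{L}_{g,n}H_{\lambda}}^{\setstuff{H}_{g,n}}(K)$ in (c).

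I expect the main obstacle to lie entirely in part (a): making rigorous that the error terms produced when sliding Jucys--Murphy elements past $H_{\lambda}$ via \eqref{eq:hecke-pull}---which lower the number of $H_i$, hence correspond to shorter-row Young diagrams and so to strictly lower cells $\setstuff{A}^{<_{\Lambda}\lambda}$---cannot spuriously create or destroy the non-vanishing of $\phi^{\lambda}$. In other words, the delicate step is to confirm that the reduction to $\setstuff{H}_{0,n}$ is clean modulo $\setstuff{A}^{<_{\Lambda}\lambda}$, so that non-vanishing of $\phi^{\lambda}$ is governed precisely by the classical Gram determinant and the $e$-restricted condition.
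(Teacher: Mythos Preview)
Your proposal is correct and follows essentially the same approach as the paper: parts (b) and (c) are deferred to \fullref{theorem:classification}, and part (a) reduces to the classical type $A$ theory because the half-diagrams $D(T),U(T^{\prime})$ carry no coils. The paper phrases (a) slightly more directly than you do: rather than invoking invertibility of the Jucys--Murphy elements, it simply observes that nonvanishing of $\phi^{\lambda}$ can already be detected on the component $b=H_{\lambda}$ (trivial Jucys--Murphy part), which is exactly the classical Murphy form; this sidesteps the error-term bookkeeping you flag as the main obstacle, since one does not need to track what happens on the other components at all.
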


\begin{proof}
Claims (b) and (c) follow immediately from the abstract theory, as 
in the cases discussed before. The statement (a) follows 
because whether the form $\phi^{\lambda}$ is constant 
zero or not can be detected on the component where the Jucys--Murphy 
elements are trivial, meaning those $c_{D,b,U}^{\lambda}$ 
with $b=H_{\lambda}$.
Hence, the classical theory applies, see {\eg} 
\cite[Section 3.4]{Ma-hecke-schur}.
\end{proof}

\begin{example}
The cases $g=0$ and $g=1$ of \fullref{theorem:cellhecke}
are well-known:
\begin{enumerate}

\item We have $\setstuff{L}_{0,n}H_{\lambda}=\KK$, so the above 
is the classical parametrization 
of simple $\setstuff{H}_{0,n}$-modules, see {\eg} 
\cite[Section 3.4]{Ma-hecke-schur}.

\item For $g=1$ \fullref{theorem:cellhecke}
can be matched with {\eg} \cite[Theorem 5.8]{KoXi-affine-cellular}.

\end{enumerate}
\end{example}

\subsection{Cyclotomic handlebody Hecke algebras}\label{subsection:ak-hecke}

We keep the terminology from 
the previous sections. In order 
to define and work with cyclotomic 
quotients, we use \emph{blob diagrams of braids} instead of coils.

Denote by $\setstuff{H}_{g,n}^{+}$ the 
subalgebra of $\setstuff{H}_{g,n}$ 
generated by $\setstuff{H}_{0,n}$ and positive coils.
Imitating \fullref{subsection:handlebody-blob} we 
introduce elements $b_{1},\dots,b_{g}$ in $\setstuff{H}_{g,n}^{+}$ 
using the same pictures as in \eqref{eq:blobs}:
\begin{gather*}
\tau_{u}=
\begin{tikzpicture}[anchorbase,scale=0.7,tinynodes]
\draw[pole,crosspole] (-0.5,0) to[out=90,in=270] (-0.5,1.5);
\draw[usual,crossline] (1,0)node[below,black]{$1$} to[out=90,in=270] (-0.25,0.75);
\draw[pole,crosspole] (0,0)node[below,black]{$u$} to[out=90,in=270] (0,1.5)node[above,black,yshift=-3pt]{$u$};
\draw[pole,crosspole] (0.5,0)node[below,black]{$v$} to[out=90,in=270] (0.5,1.5)node[above,black,yshift=-3pt]{$v$};
\draw[usual,crossline] (-0.25,0.75) to[out=90,in=270] (1,1.5)node[above,black,yshift=-3pt]{$1$};
\end{tikzpicture}
\rightsquigarrow
b_{u}=
\begin{tikzpicture}[anchorbase,scale=0.7,tinynodes]
\draw[usual,blobbed={0.5}{u}{spinach}] (1,0)node[below,black]{$1$} to[out=90,in=270]
(1,1.5)node[above,black,yshift=-3pt]{$1$};
\end{tikzpicture}
,\quad
\tau_{v}=
\begin{tikzpicture}[anchorbase,scale=0.7,tinynodes]
\draw[pole,crosspole] (-0.5,0) to[out=90,in=270] (-0.5,1.5);
\draw[usual,crossline] (1,0)node[below,black]{$1$} to[out=90,in=270] (0.25,0.75);
\draw[pole,crosspole] (0,0)node[below,black]{$u$} to[out=90,in=270] (0,1.5)node[above,black,yshift=-3pt]{$u$};
\draw[pole,crosspole] (0.5,0)node[below,black]{$v$} to[out=90,in=270] (0.5,1.5)node[above,black,yshift=-3pt]{$v$};
\draw[usual,crossline] (0.25,0.75) to[out=90,in=270] (1,1.5)node[above,black,yshift=-3pt]{$1$};
\end{tikzpicture}
\rightsquigarrow
b_{v}=
\begin{tikzpicture}[anchorbase,scale=0.7,tinynodes]
\draw[usual,blobbed={0.5}{v}{tomato}] (1,0)node[below,black]{$1$} to[out=90,in=270]
(1,1.5)node[above,black,yshift=-3pt]{$1$};
\end{tikzpicture}
.
\end{gather*}
Although blobs are defined on the first 
strand from the left, one can define 
blobs on other strands exactly as in \eqref{eq:blob-right}.
These are the Jucys--Murphy elements 
from \fullref{definition:jm-elements} which 
we denote by $b_{u,i}$ to emphasize that 
they are not necessarily invertible.

Using the skein relation \eqref{eq:hecke-skein},
the same calculations as in \fullref{lemma:blob-rels} give:

\begin{lemma}\label{lemma:blob-hecke-rels}
Blobs satisfy relations \eqref{eq:blobslides} 
and \eqref{eq:height-blob-switch} and
\begin{gather}\label{eq:hecke:more-slides}
\begin{aligned}
\begin{tikzpicture}[anchorbase,scale=0.7,tinynodes]
\draw[usual,crossline] (0.5,0) to[out=90,in=270] (0,0.75);
\draw[usual,crossline,rblobbed={0.85}{u}{spinach}] (0,0) to[out=90,in=270] (0.5,0.75);
\end{tikzpicture}
\hspace{-0.2cm}&=\hspace{-0.2cm}
\begin{tikzpicture}[anchorbase,scale=0.7,tinynodes]
\draw[usual,crossline] (0.5,0) to[out=90,in=270] (0,0.75);
\draw[usual,crossline,blobbed={0.15}{u}{spinach}] (0,0) to[out=90,in=270] (0.5,0.75);
\end{tikzpicture}
+(\qvar-\qvar^{-1})\cdot
\begin{tikzpicture}[anchorbase,scale=0.7,tinynodes]
\draw[usual,crossline] (0,0) to (0,0.75);
\draw[usual,crossline,rblobbed={0.5}{u}{spinach}] (0.5,0) to (0.5,0.75);
\end{tikzpicture}
,\\
\begin{tikzpicture}[anchorbase,scale=0.7,tinynodes]
\draw[usual,crossline,rblobbed={0.12}{u}{spinach}] (0.5,0) to[out=90,in=270] (0,0.75);
\draw[usual,crossline] (0,0) to[out=90,in=270] (0.5,0.75);
\end{tikzpicture}
\hspace{-0.2cm}&=\hspace{-0.2cm}
\begin{tikzpicture}[anchorbase,scale=0.7,tinynodes]
\draw[usual,crossline,blobbed={0.87}{u}{spinach}] (0.5,0) to[out=90,in=270] (0,0.75);
\draw[usual,crossline] (0,0) to[out=90,in=270] (0.5,0.75);
\end{tikzpicture}
+(\qvar-\qvar^{-1})\cdot
\begin{tikzpicture}[anchorbase,scale=0.7,tinynodes]
\draw[usual,crossline] (0,0) to (0,0.75);
\draw[usual,crossline,rblobbed={0.5}{u}{spinach}] (0.5,0) to (0.5,0.75);
\end{tikzpicture}
,
\\
\begin{tikzpicture}[anchorbase,scale=0.7,tinynodes]
\draw[usual,crossline,blobbed={0.15}{u}{spinach}] (0,0) to[out=90,in=270] (0.5,0.75);
\draw[usual,crossline] (0.5,0) to[out=90,in=270] (0,0.75);
\end{tikzpicture}
&=
\begin{tikzpicture}[anchorbase,scale=0.7,tinynodes]
\draw[usual,crossline,rblobbed={0.85}{u}{spinach}] (0,0) to[out=90,in=270] (0.5,0.75);
\draw[usual,crossline] (0.5,0) to[out=90,in=270] (0,0.75);
\end{tikzpicture}\hspace{-0.2cm}
+(\qvar-\qvar^{-1})\cdot
\hspace{-0.2cm}
\begin{tikzpicture}[anchorbase,scale=0.7,tinynodes]
\draw[usual,crossline,blobbed={0.5}{u}{spinach}] (0,0) to (0,0.75);
\draw[usual,crossline] (0.5,0) to (0.5,0.75);
\end{tikzpicture}
,\\
\begin{tikzpicture}[anchorbase,scale=0.7,tinynodes]
\draw[usual,crossline] (0,0) to[out=90,in=270] (0.5,0.75);
\draw[usual,crossline,blobbed={0.85}{u}{spinach}] (0.5,0) to[out=90,in=270] (0,0.75);
\end{tikzpicture}
&=
\begin{tikzpicture}[anchorbase,scale=0.7,tinynodes]
\draw[usual,crossline] (0,0) to[out=90,in=270] (0.5,0.75);
\draw[usual,crossline,rblobbed={0.15}{u}{spinach}] (0.5,0) to[out=90,in=270] (0,0.75);
\end{tikzpicture}\hspace{-0.2cm}
+(\qvar-\qvar^{-1})\cdot
\hspace{-0.2cm}
\begin{tikzpicture}[anchorbase,scale=0.7,tinynodes]
\draw[usual,crossline,blobbed={0.5}{u}{spinach}] (0,0) to (0,0.75);
\draw[usual,crossline] (0.5,0) to (0.5,0.75);
\end{tikzpicture}
.                     
\end{aligned}
\end{gather}
(In comparison with 
\fullref{lemma:blob-rels}, note the missing cup-cap term.)\qed
\end{lemma}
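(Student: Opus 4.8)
The plan is to obtain every relation in the statement from just two inputs: the Jucys--Murphy element relations collected in \fullref{lemma:jm-elements}, which already hold in the handlebody braid group and therefore survive passage to the quotient $\setstuff{H}_{g,n}$, together with the Hecke skein relation \eqref{eq:hecke-skein}. Since the blobs $b_{u,i}$ are by definition the images of the Jucys--Murphy elements $\jm_{u,i}$, this is a direct transcription of the argument in \fullref{lemma:blob-rels}, with \eqref{eq:hecke-skein} playing the role that the Kauffman skein relation \eqref{eq:kauffman} played there.

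First I would dispose of the blob slides \eqref{eq:blobslides}: these are exactly the diagrammatic form of the braid-monoid identities $\beta_{i-1}^{-1}\jm_{u,i}=\jm_{u,i-1}\beta_{i-1}$ and $\beta_{i}\jm_{u,i}=\jm_{u,i+1}\beta_{i}^{-1}$ from \fullref{lemma:jm-elements}. Being relations in the braid group, they descend to $\setstuff{H}_{g,n}$ with no skein input at all. The height-switch relation \eqref{eq:height-blob-switch} is handled the same way as in \fullref{lemma:blob-rels}: the case $u\leq v$ is the nested-commutation relation (the last relation of \fullref{lemma:jm-elements}), and the case $u\geq v$ follows by conjugating with a crossing and reducing to the previous case via the blob slides \eqref{eq:blobslides}.

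The remaining four relations \eqref{eq:hecke:more-slides} are where \eqref{eq:hecke-skein} enters. For each I would take a blob slide \eqref{eq:blobslides} and then use the skein relation to trade the crossing it produces for the opposite crossing; since \eqref{eq:hecke-skein} rewrites the difference of the two crossings as $(\qvar-\qvar^{-1})$ times the identity braid, the single correction term in each line of \eqref{eq:hecke:more-slides} appears immediately, with the blob carried along on the appropriate strand. The structurally important point---and the one feature distinguishing this lemma from \fullref{lemma:blob-rels}---is that \eqref{eq:hecke-skein} has only the identity term on its right-hand side, whereas the Kauffman and BMW skein relations used in \eqref{eq:more-slides} and \eqref{eq:bmw-more-slides} also carry a Temperley--Lieb cup--cap term. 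Consequently the cup--cap summands present in \eqref{eq:more-slides} and \eqref{eq:bmw-more-slides} are simply absent here, which is precisely the parenthetical remark in the statement.

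I do not expect a genuine obstacle: everything reduces to the braid-group Jucys--Murphy calculus together with a one-term skein relation. The only thing demanding care is the bookkeeping of which strand each blob ends up on and the correct signs and powers of $\qvar$ in the correction terms; the cleanest way to make the absence of a cup--cap term rigorous is to recall that $\setstuff{H}_{g,n}$ is a quotient of $\KK\setstuff{B}_{g,n}$ with no cup or cap generators, so \eqref{eq:hecke-skein} literally has no Temperley--Lieb term available to contribute.
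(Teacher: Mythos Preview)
Your proposal is correct and matches the paper's approach exactly: the paper simply states that the result follows from the same calculations as in \fullref{lemma:blob-rels}, using the Hecke skein relation \eqref{eq:hecke-skein} in place of the Kauffman one. Your write-up spells out those calculations in more detail than the paper does, but the underlying argument is identical.
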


Of course, \fullref{proposition:heckejm-elements-basis} 
and \eqref{eq:hecke:more-slides} give:

\begin{lemma}\label{lemma:heckeblob-elements-basis}
The set
\begin{gather*}
\left\{ 
b_{k_{1},i_{1}}^{a_{1}}\dots 
b_{k_{m},i_{m}}^{a_{m}}H_{w} 
\,\middle\vert\,
\begin{gathered}
w\in\setstuff{S}_{n},
m\in\N,
\bsym{a}\in\N^{m},
\\
(\bsym{u},\bsym{i})\in(\{1,\dots,g\}\times\{1,\dots,n\})^{m},
i_{1}\leq\dots\leq i_{m}
\end{gathered}
\right\}
\end{gather*}
is a $\KK$-basis of $\setstuff{H}_{g,n}^{+}$.\qed
\end{lemma}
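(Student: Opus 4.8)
The plan is to establish spanning and linear independence separately, mirroring the proof of \fullref{proposition:heckejm-elements-basis}; the linear independence comes essentially for free, so the real content is the spanning statement together with the identification of the span with $\setstuff{H}_{g,n}^{+}$. First I would note that every listed element already lies in $\setstuff{H}_{g,n}^{+}$: the factors $H_{w}$ belong to $\setstuff{H}_{0,n}$, and each $b_{u_{j},i_{j}}=\jm_{u_{j},i_{j}}$ is a product of the $H_{k}$ with a single positive coil $\tau_{u_{j}}$, hence is positive. Thus the $\KK$-span of the listed set is contained in $\setstuff{H}_{g,n}^{+}$, and it remains to prove the reverse inclusion.

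For spanning I would run the rewriting argument of \fullref{lemma:jm-elements-span} inside the positive subalgebra. Since $\setstuff{H}_{g,n}^{+}$ is generated by $\setstuff{H}_{0,n}$ together with the positive coils, equivalently by the $H_{i}^{\pm 1}$ and the blobs $b_{u,i}$, it suffices to bring an arbitrary word in these generators into the claimed normal form. Using those instances of \eqref{eq:jm-relations} and \eqref{eq:hecke-pull} that involve only positive Jucys--Murphy elements, in their diagrammatic incarnations as the blob slides \eqref{eq:blobslides}, \eqref{eq:height-blob-switch} and \eqref{eq:hecke:more-slides} of \fullref{lemma:blob-hecke-rels}, one pushes all symmetric-group factors to the right into a single $H_{w}$ and then reorders the blobs so that $i_{1}\leq\dots\leq i_{m}$, exactly as in \fullref{lemma:jm-order-lemma}. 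Linear independence is then immediate: read with $\bsym{a}\in\N^{m}$, the listed set is literally the subset of the basis of \fullref{proposition:heckejm-elements-basis} cut out by $\N^{m}\subseteq\Z^{m}$, viewed inside $\setstuff{H}_{g,n}\supseteq\setstuff{H}_{g,n}^{+}$, and a subset of a linearly independent set is linearly independent.

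The only delicate point, and the place I would spend care, is the termination and positivity bookkeeping in the spanning step. Two features make the Hecke case cleaner than the braid and BMW cases. First, every relation invoked sends positive blobs to positive blobs, so no inverse coils are ever created and the rewriting stays within $\setstuff{H}_{g,n}^{+}$; this is what lets the argument of \fullref{proposition:heckejm-elements-basis} restrict cleanly to the positive exponents. Second, in \eqref{eq:hecke:more-slides} there is no cup--cap error term (this is precisely the difference from \fullref{lemma:blob-rels} flagged in \fullref{lemma:blob-hecke-rels}), so each error term produced when sliding a blob past a crossing has strictly fewer crossings than its parent. I would therefore organize the normalization as a double induction, an outer induction on the number of crossings (equivalently on the Coxeter length of the $\setstuff{S}_{n}$-part) and an inner induction ordering the blobs via \fullref{lemma:jm-order-lemma}; the strict drop in crossing number for every error term guarantees termination and yields that the normal-form monomials span, completing the proof.
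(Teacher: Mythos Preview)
Your proposal is correct and follows essentially the same approach as the paper, which simply records that the lemma is an immediate consequence of \fullref{proposition:heckejm-elements-basis} together with \eqref{eq:hecke:more-slides} and gives no further argument. You have just made explicit the two ingredients the paper leaves implicit: that the blob-slide relations of \fullref{lemma:blob-hecke-rels} never produce negative coils (so the rewriting stays inside $\setstuff{H}_{g,n}^{+}$), and that linear independence is inherited by viewing the listed monomials as the $\bsym{a}\in\N^{m}$ subset of the basis in \fullref{proposition:heckejm-elements-basis}.
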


As in the previous sections, we fix cyclotomic parameters 
$\bpar=(\bvar_{u,i})\in\KK^{d_{1}+\dots+d_{g}}$, and a degree vector
$\dpar=(\dvar_{u})\in\N^{g}$.

\begin{definition}\label{definition:cyclotomic-hecke}
We define  the \emph{cyclotomic handlebody Hecke algebra}
(in $n$ strands and of genus $g$)  
$\setstuff{H}_{g,n}^{\dpar,\bpar}$ as the
quotient of $\setstuff{H}_{g,n}^{+}$ by the two-sided ideal 
generated by the \emph{cyclotomic relations}
\begin{gather}\label{eq:hecke-cyclotomic}
(b_{u}-\beta_{u,1})
\varpi_{1}
(b_{u}-\beta_{u,2})
\varpi_{2}
\dots
(b_{u}-\beta_{u,d_{u}-1})
\varpi_{d_{u-1}}
(b_{u}-\beta_{u,d_{u}})
=0,
\end{gather}
where $\varpi_{i}$ is any finite (potentially empty) 
word in $b_{v}$ for $v\neq u$. 
\end{definition}

The relations imply that no strand can 
carry more than $d_{u}$ blobs of type $u$.

\begin{remark}\label{remark:cyclotomic-hecke}
With respect to \fullref{definition:cyclotomic-hecke} we note:	
\begin{enumerate}

\item In case $g=0$ the algebra 
$\setstuff{H}_{0,n}^{\dpar,\bpar}$ 
is the type A Hecke algebra.

\item For $g=1$ the algebra $\setstuff{H}_{1,n}^{\dpar,\bpar}$ 
is the Ariki--Koike algebra as defined and studied in {\eg} \cite{ArKo-hecke-algebra}, \cite{BrMa-hecke} or \cite{Ch-gelfandtzetlin}. 

\item For $g=2$ and $d_{1}=d_{2}=2$ the algebra 
$\setstuff{H}_{2,n}^{\dpar,\bpar}$ can be compared to the two 
boundary Hecke algebra as in \cite{DaRa-two-boundary-hecke}.

\end{enumerate}
Note also that imposing a single relation involving only $\tau_{g}$ 
(this is the first coil counting from the right, {\cf} \eqref{eq:lefttoright-labels}) gives a handlebody 
version of Ariki--Koike algebras.
For $g=2$ this case can be interpreted as an extended 
affine version of Ariki--Koike algebras.
\end{remark}

Our next aim to find a basis 
and a dimension formula 
for $\setstuff{H}_{g,n}^{\dpar,\bpar}$.

\begin{lemma}\label{lemma:hecke-blobsorder}
If $b_{u,1}$ is of 
order $d_{u}$, then $b_{u,i}$ is also of 
order $d_{u}$ for all $1\leq i\leq n$.
\end{lemma}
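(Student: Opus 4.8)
The plan is to induct on the strand index $i$, with the base case $i=1$ being exactly the defining cyclotomic relation \eqref{eq:hecke-cyclotomic}. Throughout I read ``$b_{u,i}$ is of order $d_u$'' in the sense that is actually needed for the spanning set of \fullref{lemma:heckeblob-elements-basis}: a stack of $d_u$ blobs of type $u$ on the $i$th strand can be rewritten as a $\KK$-combination of diagrams carrying at most $d_u-1$ such blobs on that strand, with coefficients in the subalgebra generated by the remaining generators. Since the genus-$1$ case is precisely the Ariki--Koike algebra (\fullref{remark:cyclotomic-hecke}), where the Jucys--Murphy element $L_{i+1}=H_iL_iH_i$ genuinely has content-shifted spectrum, this is the only reasonable reading, and I would model the argument on the classical Ariki--Koike transfer of the cyclotomic relation from $L_1$ to $L_i$.

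The engine is the identity $\jm_{u,i+1}=\beta_i\jm_{u,i}\beta_i$ from \fullref{lemma:jm-elements}, which in $\setstuff{H}_{g,n}^{+}$ reads $b_{u,i+1}=H_ib_{u,i}H_i$ with $H_i$ invertible, and the fact (again from \eqref{eq:jm-relations}) that $b_{u,i}$ and $b_{u,i+1}$ commute. Combining this with the skein relation \eqref{eq:hecke-skein}, equivalently the blob slides \eqref{eq:hecke:more-slides} of \fullref{lemma:blob-hecke-rels}, gives the commutation relations
\begin{gather*}
H_ib_{u,i}=b_{u,i+1}H_i-(\qvar-\qvar^{-1})b_{u,i+1},
\qquad
b_{u,i}H_i=H_ib_{u,i+1}-(\qvar-\qvar^{-1})b_{u,i+1}.
\end{gather*}
An induction on the exponent then yields, for every polynomial $f$, the divided-difference identity
\begin{gather*}
H_i\,f(b_{u,i})=f(b_{u,i+1})\,H_i-(\qvar-\qvar^{-1})\,b_{u,i+1}\,\partial f,
\qquad
\partial f=\frac{f(b_{u,i+1})-f(b_{u,i})}{b_{u,i+1}-b_{u,i}},
\end{gather*}
where $\partial f$ is a genuine polynomial in the two commuting elements $b_{u,i},b_{u,i+1}$. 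This confines the whole computation to the subalgebra generated by $H_i$ together with $b_{u,i},b_{u,i+1}$, on which the relations are exactly those of a single pair of adjacent strands.

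With these relations in hand I would run the transfer as follows. Feeding the reducing polynomial for $b_{u,i}$ supplied by the inductive hypothesis into the divided-difference identity and clearing the invertible $H_i$, one slides a length-$d_u$ stack on strand $i+1$ down to strand $i$ via \eqref{eq:hecke:more-slides}, reduces it there by the inductive hypothesis, and slides the now-shorter result back up. The key structural point is that in the Hecke setting the corrections in \eqref{eq:hecke:more-slides} carry \emph{no} cup--cap term (as noted after \fullref{lemma:blob-hecke-rels}): each slide trades a crossing-with-blob for the straightened crossing plus a parallel term with strictly fewer crossings.

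The main obstacle is the bookkeeping of these corrections. Because $X\mapsto H_iXH_i$ is not an algebra map, $\partial f$ regenerates a top-degree term $b_{u,i+1}^{d_u}$, so the naive slide does not immediately reduce the blob count. I would therefore set up a well-founded ordering — the lexicographic order on the pair (total number of type-$u$ blobs, number of crossings) — and check that the slide--reduce--slide cycle strictly decreases this measure, so that every regenerated $b_{u,i+1}^{d_u}$ is eventually absorbed. Verifying that the measure really drops at each step, and that all coefficients stay inside the intended subalgebra so that the $\varpi$-decorated form of \eqref{eq:hecke-cyclotomic} is respected for $g>1$, is the only delicate part; everything else is a routine manipulation with the relations of \fullref{lemma:blob-hecke-rels}.
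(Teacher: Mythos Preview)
Your overall strategy --- induct on the strand index and transfer the reduction from strand $i$ to strand $i+1$ --- is in the right spirit, but the execution is far more elaborate than the paper's. The paper's entire proof is one sentence: the blob-slide relations of \fullref{lemma:blob-hecke-rels} preserve the number and type of blobs, so the result follows at once. You instead build the Ariki--Koike divided-difference apparatus and then must engineer a well-founded ordering to absorb the regenerated $b_{u,i+1}^{d_u}$.

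There are two genuine problems with your proposed resolution. First, the leading coordinate of your ordering, the total type-$u$ blob count, is exactly the invariant the paper is exploiting: it is preserved by every relation in \fullref{lemma:blob-hecke-rels}. So during all slide steps this coordinate stays constant and you are effectively inducting on crossings alone --- yet the $H_i^{-1}$ on the right of your identity is itself a crossing, and the regenerated top term does not sit at a strictly smaller crossing count. Second, and more seriously, your inductive hypothesis only says that $b_{u,i}^{d_u}$ lies in the span of \eqref{eq:hecke-blob-basis}; it does not furnish a scalar polynomial $f$ with $f(b_{u,i})=0$. Already for $i=2$ the reduction of $b_{u,2}^{d_u}$ involves $b_{u,1}$'s and $H_1$'s, not merely lower powers of $b_{u,2}$. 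So at step $i\to i+1$ there is no single ``reducing polynomial for $b_{u,i}$'' to feed into your divided-difference identity, and the machinery stalls.

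The paper's observation bypasses all of this. Since blob counts are preserved, rewriting $b_{u,i}^{d_u}$ in the basis of \fullref{lemma:heckeblob-elements-basis} produces terms each carrying exactly $d_u$ type-$u$ blobs in total: either concentrated on a single strand (handled by induction on that strand, ultimately by \eqref{eq:hecke-cyclotomic} on strand $1$) or spread across several strands with fewer than $d_u$ on each (already in the target span). Termination is automatic because each blob-count graded piece of $\setstuff{H}_{g,n}^{+}$ is finite over $\setstuff{H}_{0,n}$. No divided differences are needed.
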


\begin{proof}
Since the relations in \fullref{lemma:blob-hecke-rels} 
preserve the number and the type of the blobs involved,
the result follows at once.
\end{proof}

\begin{proposition}\label{proposition:heckeblobcyclo-elements-basis}
The set 
\begin{gather}\label{eq:hecke-blob-basis}
\left\{ 
b_{k_{1},i_{1}}^{a_{1}}\dots 
b_{k_{m},i_{m}}^{a_{m}}H_{w} 
\,\middle\vert\,
\begin{gathered}
w\in\setstuff{S}_{n},
m\in\N,
\bsym{a}\in\N^{m},
\\
(\bsym{u},\bsym{i})\in(\{1,\dots,g\}\times\{1,\dots,n\})^{m},
i_{1}\leq\dots\leq i_{m},a_{j}<d_{j}
\end{gathered}
\right\} 
\end{gather}
is a $\KK$-basis of $\setstuff{H}_{g,n}^{\dpar,\bpar}$.
\end{proposition}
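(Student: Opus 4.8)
The plan is to establish the two defining properties separately: that \eqref{eq:hecke-blob-basis} spans $\setstuff{H}_{g,n}^{\dpar,\bpar}$, and that it is linearly independent, with the latter reduced to a dimension count. The target size is
\[
\text{size of }\eqref{eq:hecke-blob-basis}=|\setstuff{S}_{n}|\cdot(\bbvar_{g,\dpar})^{n}=n!\,(\bbvar_{g,\dpar})^{n},
\]
because, for a fixed $w\in\setstuff{S}_{n}$, the admissible blob words (exponents $a_{j}<d_{k_{j}}$) on a single strand range over a basis of the one-strand cyclotomic algebra $\setstuff{H}_{g,1}^{\dpar,\bpar}\cong\KK\langle b_{u}\mid u\in\{1,\dots,g\}\rangle/\eqref{eq:hecke-cyclotomic}$, which has dimension $\bbvar_{g,\dpar}$ by \fullref{lemma:blob-onestrand}, and the $n$ strands contribute independently.

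For spanning I would start from the basis of $\setstuff{H}_{g,n}^{+}$ in \fullref{lemma:heckeblob-elements-basis} and rewrite any monomial violating $a_{j}<d_{k_{j}}$. The cyclotomic relation \eqref{eq:hecke-cyclotomic} is stated for blobs on the leftmost strand, but by \fullref{lemma:hecke-blobsorder} it holds verbatim for $b_{u,i}$ on any strand; so whenever a strand carries $d_{u}$ blobs of type $u$ (possibly interspersed with other types, which is exactly the purpose of the $\varpi_{i}$) the top term may be replaced by a $\KK$-combination of products with strictly fewer type-$u$ blobs. The blob-slide relations of \fullref{lemma:blob-hecke-rels}, namely \eqref{eq:blobslides}, \eqref{eq:height-blob-switch} and \eqref{eq:hecke:more-slides}, then bring the result back to the ordered shape $i_{1}\le\dots\le i_{m}$. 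Every error term produced by these moves and by the cyclotomic reduction has either a strictly smaller total number of blobs or a shorter underlying permutation, so an induction on the pair (total blob number, length of $w$) closes the argument and shows \eqref{eq:hecke-blob-basis} spans.

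For independence, which is the main obstacle, it suffices to bound $\dim_{\KK}\setstuff{H}_{g,n}^{\dpar,\bpar}$ from below by $n!\,(\bbvar_{g,\dpar})^{n}$: combined with the spanning step this forces equality and makes \eqref{eq:hecke-blob-basis} a basis. The route that yields the result for \emph{all} parameters (not merely generic ones) is to construct an explicit $\setstuff{H}_{g,n}^{\dpar,\bpar}$-module of dimension $n!\,(\bbvar_{g,\dpar})^{n}$ on which the elements of \eqref{eq:hecke-blob-basis} act by a unitriangular matrix, built by iterated induction from the one-strand cyclotomic algebras and mirroring the natural Ariki--Koike module of the $g=1$ case (\fullref{remark:cyclotomic-hecke}). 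As a consistency check I would also specialize $\qvar=1$, where $\setstuff{H}_{g,n}^{\dpar,\bpar}$ becomes a cyclotomic quotient of the positive part of $\KK\setstuff{W}_{g,n}$ and independence of the reduced monomials follows from \fullref{proposition:jm-elements-basis} together with the one-strand count; lower semicontinuity of rank then recovers independence for generic $\qvar$.

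The hard part will be the uniform lower bound. The slide relations \eqref{eq:hecke:more-slides} genuinely distort the topology — there is no clean straightening as in the $\qvar=1$ or free-monoid case — so the module construction must track the $(\qvar-\qvar^{-1})$-error terms and verify that the $b_{u,i}$ act compatibly with the cyclotomic ideal for every parameter value. Concretely I would first redo the $g=1$ computation to recover the Ariki--Koike module, and then promote it to arbitrary $g$ by replacing the single cyclotomic polynomial in one variable with the one-strand algebra $\KK\langle b_{u}\rangle/\eqref{eq:hecke-cyclotomic}$; checking that this action intertwines with \eqref{eq:hecke:more-slides} is precisely the place where the confluence implicitly used in the spanning step has to be made rigorous.
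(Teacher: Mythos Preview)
Your spanning argument is essentially the paper's: both use the slide relations of \fullref{lemma:blob-hecke-rels} to pull blobs below the crossings and then invoke \fullref{lemma:hecke-blobsorder} to bound the exponents.

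For independence the paper takes a much shorter and genuinely different route. It does not build a module at all. It works entirely inside the non-cyclotomic algebra $\setstuff{H}_{g,n}^{+}$: the set \eqref{eq:hecke-blob-basis} is a subset of the basis provided by \fullref{lemma:heckeblob-elements-basis}, so it is already linearly independent there. The paper then lets $A\subset\setstuff{H}_{g,n}^{+}$ be the $\KK$-span of this set and, for each $u$, lets $J_{u}\subset\setstuff{H}_{g,n}^{+}$ be the two-sided ideal generated by the elements carrying $d_{u}$ type-$u$ blobs on the first strand; independence in the quotient follows from the observation that $A\cap J_{u}=0$. The only input beyond \fullref{lemma:heckeblob-elements-basis} is that the blob slides in \fullref{lemma:blob-hecke-rels} never create or destroy blobs of any type, so $J_{u}$ is contained in the span of those basis monomials of \fullref{lemma:heckeblob-elements-basis} lying outside $A$.

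Your proposed module construction would in principle also establish independence and is indeed the traditional device in the $g=1$ Ariki--Koike setting, but---as you yourself flag---making the $b_{u,i}$-action compatible with \eqref{eq:hecke:more-slides} for arbitrary $g$ and arbitrary parameters is substantial work, and you do not carry it out. So as it stands your proposal has a genuine gap at exactly the point you identify. The paper's complement argument bypasses this entirely and is uniform in $\KK$ and in all parameters; your $\qvar=1$ specialization, as you note, only delivers the generic case.
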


\begin{proof}
That the set spans is a consequence 
of \fullref{lemma:blob-hecke-rels}, which we use to pull 
blobs to the bottom,
and \fullref{lemma:hecke-blobsorder}, which gives 
the restriction $a_{j}<d_{j}$.
Now let $A$ be the $\KK$-span of $\{b_{k_{1},i_{1}}^{a_{1}}\dots  b_{k_{m},i_{m}}^{a_{m}}j\}$ with the same indexing sets as in 
\eqref{eq:hecke-blob-basis}. Let further 
$J_{u}\subset\setstuff{H}_{g,n}^{+}$ 
the two-sided ideal 
generated by  all elements having $d_{u}$ blobs 
on the first strand. 
Linear independence follows from 
\fullref{lemma:heckeblob-elements-basis} 
together with the observation that $A\cap J_{u}=\emptyset$. 
\end{proof}

Recall the blob numbers $\bbvar_{g,\dpar}$ from \eqref{eq:blob-numbers}.

\begin{proposition}
The dimension of the 
free $\KK$-module $\setstuff{H}_{g,n}^{\dpar,\bpar}$ is
\begin{gather*}
\dim_{\KK}\setstuff{H}_{g,n}^{\dpar,\bpar}
=(\bbvar_{g,\dpar})^{n}n!. 
\end{gather*}
\end{proposition}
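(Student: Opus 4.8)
The plan is to compute the dimension directly from the basis established in \fullref{proposition:heckeblobcyclo-elements-basis}. Since \eqref{eq:hecke-blob-basis} is a $\KK$-basis of $\setstuff{H}_{g,n}^{\dpar,\bpar}$, it suffices to count the number of basis elements. Each basis element factors as a product $b_{k_{1},i_{1}}^{a_{1}}\dots b_{k_{m},i_{m}}^{a_{m}}H_{w}$, so I would count the two factors separately and multiply: the Hecke part $H_{w}$ contributes a factor of $|\setstuff{S}_{n}|=n!$, since $w$ ranges freely over $\setstuff{S}_{n}$, and the blob part contributes some factor that I claim equals $(\bbvar_{g,\dpar})^{n}$.

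The key observation is that the ordering condition $i_{1}\leq\dots\leq i_{m}$ together with the cyclotomic restriction $a_{j}<d_{j}$ means that specifying a blob word amounts to choosing, independently for each of the $n$ strands indexed $1,\dots,n$, a placement of blobs on that strand with at most $\dvar_{u}-1$ blobs of each type $u$ (where the constraint comes from \fullref{lemma:hecke-blobsorder}, ensuring $b_{u,i}$ has order $d_{u}$ on every strand). Because the second indices are weakly increasing and blobs with a common strand index commute up to the relations in \fullref{lemma:blob-hecke-rels}, the data on distinct strands are independent. Thus the total number of blob configurations is the product over the $n$ strands of the number of admissible single-strand configurations. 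By \fullref{lemma:blob-onestrand}, the number of blob placements on a single strand is exactly $\dim_{\KK}\setstuff{Bl}_{g,1}^{\dpar,\bpar}(\cpar)=\bbvar_{g,\dpar}$, the blob number from \eqref{eq:blob-numbers}. Hence the blob factor is $(\bbvar_{g,\dpar})^{n}$ and the total dimension is $(\bbvar_{g,\dpar})^{n}n!$.

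The main subtlety to verify carefully is the independence claim: that fixing the weakly-ordered multi-index $(\bsym{u},\bsym{i})$ with $a_{j}<d_{j}$ really biject\-ively encodes an independent choice of a single-strand blob monomial for each strand $i\in\{1,\dots,n\}$. Concretely, I would argue that the ordering $i_{1}\leq\dots\leq i_{m}$ lets us group the Jucys--Murphy factors by their strand index, and within the block of factors sharing index $i$, the possible sub-words (subject to $a_{j}<d_{j}$ and to the non-commutativity of blobs of distinct types on the same strand) are precisely the $\bbvar_{g,\dpar}$ monomial basis elements of $\KK\setstuff{B}_{g,1}^{\dpar,\bpar}$ counted in \fullref{lemma:blob-onestrand}. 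The weak ordering removes any overcounting from reordering blocks belonging to different strands, so the correspondence is a genuine bijection.

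This is the only real obstacle; once the bijection between blob words and $n$-tuples of single-strand configurations is in place, the computation is just the multiplicativity
\begin{gather*}
\dim_{\KK}\setstuff{H}_{g,n}^{\dpar,\bpar}
=
\underbrace{(\bbvar_{g,\dpar})^{n}}_{\text{blob part}}
\cdot
\underbrace{n!}_{\text{Hecke part}}
,
\end{gather*}
which gives the stated formula. I would note that this matches the sanity checks in the excerpt: for $g=0$ one has $\bbvar_{0,\dpar}=1$, recovering $\dim_{\KK}\setstuff{H}_{0,n}^{\dpar,\bpar}=n!$, the dimension of the type $A$ Hecke algebra, and for $g=1$ one has $\bbvar_{1,\dpar}=\dvar_{1}$, recovering $\dvar_{1}^{n}n!$, the known dimension of the Ariki--Koike algebra.
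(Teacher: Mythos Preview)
Your proof is correct and follows the same approach as the paper's: count the basis of \fullref{proposition:heckeblobcyclo-elements-basis} by factoring into the $n!$ Hecke elements $H_{w}$ and, independently, the blob prefixes grouped by strand, each strand contributing $\bbvar_{g,\dpar}$ choices via \fullref{lemma:blob-onestrand}. One small slip in exposition: blobs with a \emph{common} strand index do not commute (as you yourself note a few lines later); what matters is only that the weak ordering $i_{1}\leq\dots\leq i_{m}$ lets you group the blob word by strand, with the per-strand subwords ranging freely over the $\bbvar_{g,\dpar}$ monomials.
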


\begin{proof}
There are $n!$ elements $H_{w}$, each one having $n$ strands.
Since every strand can carry up 
to $\bbvar_{g,\dpar}$ blobs the 
claim follows from \fullref{proposition:heckeblobcyclo-elements-basis}. 
\end{proof}

\begin{remark}
Recall that $\bbvar_{0,\dpar}=1$ and $\bbvar_{1,\dpar}=\dvar_{1}$, see 
\fullref{example:blob-numbers}. Thus, we recover the dimensions
\begin{gather*}
\dim_{\KK}\setstuff{H}_{0,n}^{\dpar,\bpar}=n!,
\quad
\dim_{\KK}\setstuff{H}_{1,n}^{\dpar,\bpar}=\dvar_{1}^{n}n!,
\end{gather*}
the former being well-known, of course, the latter 
appears in \cite[(3.10)]{ArKo-hecke-algebra}.
\end{remark}

To construct a sandwich cell datum one can use the same strategy 
as in \fullref{subsection:heckealgebras}. 
Keeping the above discussion in mind, {\eg} 
\fullref{proposition:heckeblobcyclo-elements-basis}, the 
only difference is that $\sand$ 
is now the algebra obtained by taking the quotient of
$\setstuff{L}_{g,n}H_{\lambda}$ by \eqref{eq:hecke-cyclotomic}. 
The sandwiched basis we choose is the restriction of 
\eqref{eq:hecke-blob-basis}. 
Otherwise nothing changes and we obtain a set 
\begin{gather*}
\{c_{D,b,U}^{\lambda}\mid\lambda\in\Lambda,D,U\in M_{\lambda},
b\in\sand\},
\end{gather*} 
as well as the two results, where the denote the 
aforementioned quotient by $\setstuff{L}_{g,n}^{\dpar,\bpar}H_{\lambda}$:

\begin{proposition}
The above defines a sandwich
cell datum for $\setstuff{H}_{g,n}^{\dpar,\bpar}$.\qed
\end{proposition}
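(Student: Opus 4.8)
The plan is to mimic the proof strategy already used for $\setstuff{H}_{g,n}$ in \fullref{subsection:heckealgebras}, making only the modifications forced by passing to the cyclotomic quotient. First I would fix the poset $\Lambda$ to be the set of partitions of $n$ with the dominance order $\leq_{d}$, exactly as before, and let $M_{\lambda}$ be the standard tableaux of shape $\lambda$. The down and up diagrams $D=D(T)$ and $U=U(T^{\prime})^{\star}$ are defined by the same permute-entries strategy using positive braid lifts, and the quasi-idempotent $H_{\lambda}$ is unchanged. The only genuinely new ingredient is the sandwiched algebra: instead of $\sand=\setstuff{L}_{g,n}H_{\lambda}$, I take $\sand=\setstuff{L}_{g,n}^{\dpar,\bpar}H_{\lambda}$, the image of $\setstuff{L}_{g,n}H_{\lambda}$ under the quotient by \eqref{eq:hecke-cyclotomic}, with sandwiched basis given by restricting the basis \eqref{eq:hecke-blob-basis} to the Jucys--Murphy part (so the powers satisfy $a_{j}<d_{j}$) and post-composing with $H_{\lambda}$.

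The verification splits into the same three tasks as before: that the resulting $\{c_{D,b,U}^{\lambda}\}$ is a $\KK$-basis, that the multiplication rule \eqref{eq:cell-mult} holds modulo lower terms, and that the antiinvolution condition \eqref{eq:antiinvolution} holds. For the basis claim I would invoke \fullref{proposition:heckeblobcyclo-elements-basis}, which already gives a basis of $\setstuff{H}_{g,n}^{\dpar,\bpar}$ indexed by Jucys--Murphy monomials with $a_{j}<d_{j}$ times $H_{w}$; the counting argument from the non-cyclotomic proof then transfers verbatim, since pushing the $D$-part to the left via \eqref{eq:jm-relations} and \eqref{eq:hecke:more-slides} (the cyclotomic analogue of \eqref{eq:hecke-pull}) preserves the number $m$, the powers $\bsym{a}$, and the cores $\bsym{u}$, and the error terms have strictly fewer $H_{i}$'s. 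The key point is that the cyclotomic relation bounds the blob powers uniformly across strands by \fullref{lemma:hecke-blobsorder}, so the same finite count matches \eqref{eq:hecke-blob-basis} with the cellular basis per fixed $(m,\bsym{a},\bsym{u})$.

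For the multiplication rule I would reproduce the displayed chain of congruences from the proof for $\setstuff{H}_{g,n}$ essentially word for word, since that computation only used \eqref{eq:jm-relations}, the blob-slide relations, and the crucial classical congruence $H_{\lambda}UD^{\prime}H_{\mu}\equiv_{\leq_{\Lambda}}r(U,D^{\prime})H_{\max_{\leq_{d}}(\lambda,\mu)}$ from \cite[Theorem 3.20]{Ma-hecke-schur}. None of these steps is affected by imposing the cyclotomic ideal, because the relations in \fullref{lemma:blob-hecke-rels} preserve blob type and number and hence descend to the quotient. The antiinvolution is the same horizontal mirroring that fixes blobs and preserves positive crossings, and it manifestly sends $D(T)bU(T^{\prime})$ to $U(T^{\prime})^{\star}b^{\star}D(T)^{\star}$, giving \eqref{eq:antiinvolution}.

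The main obstacle I anticipate is not conceptual but bookkeeping: one must check that the quotient $\setstuff{L}_{g,n}^{\dpar,\bpar}H_{\lambda}$ is genuinely well-defined as the sandwiched algebra, i.e.\ that the two-sided ideal \eqref{eq:hecke-cyclotomic} interacts correctly with $H_{\lambda}$ so that the sandwiched basis really is closed under the induced multiplication and is compatible with $(\placeholder)^{\star}$. Since the cyclotomic relations only constrain blobs and $H_{\lambda}$ lives in the $g=0$ subalgebra, these commute past each other harmlessly, so this reduces to the observation already recorded in \fullref{lemma:hecke-blobsorder} and \fullref{proposition:heckeblobcyclo-elements-basis}. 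Hence the proof is, {\muta}, identical to the non-cyclotomic case and I would simply remark that all other axioms hold by construction.
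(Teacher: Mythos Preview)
Your proposal is correct and follows exactly the approach the paper intends: the paper's own proof is simply \qed, with the preceding paragraph stating that the construction and verification are the same as in the non-cyclotomic case \fullref{subsection:heckealgebras}, the only change being that $\sand$ is replaced by its cyclotomic quotient $\setstuff{L}_{g,n}^{\dpar,\bpar}H_{\lambda}$ and one invokes \fullref{proposition:heckeblobcyclo-elements-basis} in place of \fullref{proposition:heckejm-elements-basis}. One small remark: the statement as written asserts only a sandwich cell datum, not an involutive one, so your antiinvolution check is extra (harmless, but not required here).
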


\begin{theorem}\label{theorem:checke}
Let $\KK$ be a field.
\begin{enumerate}

\item A $\lambda\in\Lambda$ is an apex 
if and only if $\lambda$ is $e$-restricted.

\item The simple $\setstuff{H}_{n,g}^{\dpar,\bpar}$-modules of 
apex $\lambda\in\Lambda$ 
are parameterized by simple modules of $\setstuff{L}_{g,n}^{\dpar,\bpar}H_{\lambda}$.

\item The simple $\setstuff{H}_{n,g}^{\dpar,\bpar}$-modules of 
apex $\lambda\in\Lambda$ can be constructed as 
the simple heads of
$\mathrm{Ind}_{\setstuff{L}_{g,n}^{\dpar,\bpar}
H_{\lambda}}^{\setstuff{H}_{n,g}^{\dpar,\bpar}}(K)$, 
where $K$ runs over (equivalence classes of) 
simple $\setstuff{L}_{g,n}^{\dpar,\bpar}H_{\lambda}$-modules.\qed

\end{enumerate}
\end{theorem}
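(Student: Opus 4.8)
The plan is to run the proof of \fullref{theorem:cellhecke} essentially verbatim, the one simplification being that $\setstuff{H}_{g,n}^{\dpar,\bpar}$ is now finite-dimensional by \fullref{proposition:heckeblobcyclo-elements-basis}. Consequently the sandwiched algebra $\setstuff{L}_{g,n}^{\dpar,\bpar}H_{\lambda}$ is Artinian, and for an apex $\lambda$ it is unital, with unit a rescaling of the quasi-idempotent $H_{\lambda}$ (whose square is a nonzero multiple of itself on the surviving cells). Given the sandwich cell datum fixed just above, parts (b) and (c) are then immediate from parts (b) and (c) of \fullref{theorem:classification}: part (b) uses that the sandwiched algebras are unital and Artinian, while part (c) uses that the hypotheses of \fullref{lemma:h-subalgebra} hold, since stacking the distinguished half-diagrams scales the middle $H_{\lambda}$ by the classical quasi-idempotent factor, which is invertible exactly on the apex cells.

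For part (a) I would argue, as in \fullref{theorem:cellhecke}, that the vanishing of the bilinear form $\phi^{\lambda}$ from \fullref{lemma:bilinearform} is detected on the component with trivial Jucys--Murphy decoration, that is on the elements $c_{D,b,U}^{\lambda}$ with $b=H_{\lambda}$. Recall that $\Delta(\lambda)$ has basis $c_{D,b,U}^{\lambda}$ with $D$ ranging over the blobless half-diagrams attached to standard tableaux of shape $\lambda$ and $b$ carrying all the blob and Jucys--Murphy data. On two such elements $\phi^{\lambda}$ factors, up to lower order, as the classical type A Gram entry $G_{D,D'}$ times an element of $\setstuff{L}_{g,n}^{\dpar,\bpar}H_{\lambda}$, and its $H_{\lambda}$-component is exactly $G_{D,D'}H_{\lambda}$. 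Hence $\phi^{\lambda}\not\equiv 0$ if and only if the Gram matrix $(G_{D,D'})$ of the corresponding cell module of the type A Hecke subalgebra $\setstuff{H}_{0,n}$ is nonzero, which by the classical theory happens if and only if $\lambda$ is $e$-restricted, where $[e]_{\qvar}=0$; see \cite[Section 3.4]{Ma-hecke-schur}. Combined with \fullref{theorem:classification}(a) this yields (a).

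The step I expect to be the main obstacle is the \emph{up to lower order} qualification above. The blob-slide relations of \fullref{lemma:blob-hecke-rels} and \eqref{eq:hecke:more-slides} carry $(\qvar-\qvar^{-1})$-error terms, so the clean factorization of $\phi^{\lambda}$ into a Gram entry times a sandwich element holds only on the $H_{\lambda}$-component, i.e.\ on the associated graded for the filtration by total blob degree. I would control this exactly as in \fullref{proposition:heckeblobcyclo-elements-basis}, ordering all blobs to the bottom and observing that each error term strictly lowers the number of $H_{i}$ occurring, hence contributes only to strictly lower cells; modulo these the leading term is the classical pairing. Verifying that this leading term genuinely governs the vanishing of $\phi^{\lambda}$ is the only essentially new point, and once it is in place the remaining bookkeeping is identical to the handlebody Temperley--Lieb and Brauer cases treated earlier.
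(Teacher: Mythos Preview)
Your proposal is correct and follows the same approach as the paper: the paper gives no separate proof of this theorem (the statement carries a \qed), treating it as an immediate consequence of the sandwich cell datum just constructed together with the argument already given for \fullref{theorem:cellhecke}. Your elaboration on the ``up to lower order'' factorisation of $\phi^{\lambda}$ and on verifying the Artinian/unital hypotheses for \fullref{theorem:classification} simply fills in details that the paper leaves implicit.
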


\begin{example}
For $g=0$ nothing new happens of course 
compared to \fullref{theorem:cellhecke}, 
while the $g=1$ version of \fullref{theorem:checke} 
is a non-explicit version of \cite[Theorem 3.30]{DiJaMa-cyclotomic-q-schur}.
\end{example}

\end{document}